%% file: main.tex
\documentclass[reqno]{amsart}

\input{preamble}

\begin{document}

\title{Nonproduct minimal submanifolds \\ with cylindrical tangent cones}

\author{Rafe Mazzeo}
\address{Department of Mathematics, Stanford University}
\email{rmazzeo@stanford.edu}

\author{Gregory J. Parker}
\address{Department of Mathematics, University of California, Berkeley}
\email{gjparker@berkeley.edu}

\begin{abstract}
This article constructs, via perturbation methods, infinite-dimensional families of non-product minimal 
submanifolds which have singular sets locally modeled on cylindrical tangent cones. This answers
a question raised by Leon Simon in the 1980s, and generalizes work Caffarelli-Hardt-Simon, who treated 
the case of isolated singularities.  This setting requires substantially more work, however, since the 
natural obstruction space for the problem is infinite dimensional and the map which is meant to 
compensate for this obstruction using deformations of the singular set exhibits a loss of regularity. This necessitates working in the category of tame Frechet spaces and 
using the Nash-Moser Implicit Function Theorem.
\end{abstract}

\maketitle
\tableofcontents

%%
%% Body: each \input pulls in one section file.
%% To work on a single section, open its file directly.
%% Labels and \ref{} cross-references work globally because
%% all files are compiled together through this master document.
%%

\input{sections/sec_introduction}

%\partbreaktoc{Part I: The Jacobi Operator}
\input{sections/sec_chs_deformations}

\input{sections/sec_edge_calculus}
\input{sections/sec_infinite_obstruction}

%\input{sections/sec_function_spaces_streamlined}
%\input{sections/sec_linear_edge_theory}
%\input{sections/sec_poisson_operator}
%\input{sections/sec_boundary_deformation}

%\partbreaktoc{Part II: Nash-Moser Theory}

\input{sections/PartII_intro}
\input{sections/sec_nash_moser}

\input{sections/sec_perturbations}

\input{sections/sec_deformation_operator}

\input{sections/sec_perturbed_obstructions_quicker}

\input{sections/sec_tame_estimates}

%\partbreaktoc{Part III: Elliptic Edge Theory on Analytic Spaces}

%\input{sections/sec_deformations_singsets}
%\input{sections/sec_deformations_minimal}
%\input{sections/sec_uniformization_gauge}
%\input{sections/sec_perturbed_obstruction}

%%
%% References
%%
{\small
\medskip
\bibliographystyle{amsalpha}
{\small 
\bibliography{Bibliography_CylindricalTC}}
}

\end{document}

%% file: preamble.tex
\usepackage{amsthm}   
\usepackage{amsfonts, amsmath, amscd}
\usepackage{amssymb, latexsym}
\usepackage[all,cmtip]{xy}
\usepackage{enumerate} 
\usepackage{color}
\usepackage[usenames,dvipsnames]{xcolor}
\usepackage{verbatim}
\usepackage{upgreek}
\usepackage{hyperref}
\hypersetup{colorlinks=false}
\usepackage{cite}
\usepackage{enumitem}

\usepackage[OT2,T1]{fontenc}

\usepackage{graphicx, psfrag}
\usepackage{pstool}

\usepackage{xspace}
\usepackage{tikz-cd}
\usepackage{tikz}
\usetikzlibrary{calc}
\usetikzlibrary{arrows,shapes,decorations.pathmorphing}
\usetikzlibrary{decorations.pathreplacing}

\usepackage[titletoc]{appendix}
\usepackage[normalem]{ulem}
\usepackage{amsmath}
\usepackage{amscd}
\usepackage[latin2]{inputenc}
\usepackage{t1enc}
\usepackage[mathscr]{eucal}
\usepackage{indentfirst}
\usepackage{graphicx}
\usepackage{graphics}
\usepackage{pict2e}
\usepackage{epic}
\usepackage{wrapfig}
\usepackage{subeqnarray}
\usepackage{yfonts}
\usepackage[makeroom]{cancel}
\usepackage{trimclip}

\newif\iflclip
\newif\ifbclip
\newif\ifrclip
\newif\iftclip
\def\CLIP{\dimexpr\fboxrule+.2pt\relax}
\def\nulclip{0pt}
\newcommand\partbox[2]{%
  \lclipfalse\bclipfalse\rclipfalse\tclipfalse%
  \let\lkern\relax\let\rkern\relax%
  \let\lclip\nulclip\let\bclip\nulclip\let\rclip\nulclip\let\tclip\nulclip%
  \parseclip#1\relax\relax%
  \iflclip\def\lkern{\kern\CLIP}\def\lclip{\CLIP}\fi
  \ifbclip\def\bclip{\CLIP}\fi
  \ifrclip\def\rkern{\kern\CLIP}\def\rclip{\CLIP}\fi
  \iftclip\def\tclip{\CLIP}\fi
  \lkern\clipbox{\lclip{} \bclip{} \rclip{} \tclip}{\fbox{#2}}\rkern%
}
\def\parseclip#1#2\relax{%
  \ifx l#1\lcliptrue\else
  \ifx b#1\bcliptrue\else
  \ifx r#1\rcliptrue\else
  \ifx t#1\tcliptrue\else
  \fi\fi\fi\fi
  \ifx\relax#2\relax\else\parseclip#2\relax\fi
}
\usepackage{slashed} 

\usepackage{mathtools}
\usepackage{colonequals}
\usepackage{booktabs}
\usepackage[numbers]{natbib}
\usepackage{fancyhdr}
\usepackage{multirow}
\usepackage{braket}
\usepackage[margin=2.9cm]{geometry}
\usepackage{mathabx}
\usepackage{esint}
\usepackage{mathrsfs}
\usepackage{blkarray}
\usepackage{pinlabel}
\usepackage{scalerel,stackengine}
\stackMath
\newcommand\reallywidehat[1]{%
\savestack{\tmpbox}{\stretchto{%
  \scaleto{%
    \scalerel*[\widthof{\ensuremath{#1}}]{\kern-.6pt\bigwedge\kern-.6pt}%
    {\rule[-\textheight/2]{1ex}{\textheight}}%
  }{\textheight}% 
}{0.5ex}}%
\stackon[1pt]{#1}{\tmpbox}%
}
\newcommand{\R}{\mathbb{R}}

\newcommand{\Z}{\mathbb{Z}}

\newcommand{\N}{\mathbb{N}}

\newcommand{\C}{\mathbb{C}}

\newcommand{\br}{\langle}
\newcommand{\kt}{\rangle}

\theoremstyle{plain}
\newtheorem{thm}{Theorem}[section]
\newtheorem{prop}[thm]{Proposition}
\newtheorem{lm}[thm]{Lemma}
\newtheorem{cor}[thm]{Corollary}
\newtheorem{conj}[thm]{Conjecture}

\theoremstyle{definition}

\newtheorem{defn}[thm]{Definition}

\theoremstyle{remark}

\newtheorem{rem}[thm]{Remark}

\newtheorem{notation}[thm]{Notation}

\newtheorem*{hyp1*}{Hypothesis (I)}
\newtheorem*{hyp2*}{Hypothesis (II)}
\newtheorem*{hyp3*}{Hypothesis (III)}

\newtheoremstyle{exercise}{}{}{\itshape}{}{\bfseries}{:}{.5em}{\thmname{#1} \thmnumber{#2}\thmnote{(#3)}}
\theoremstyle{exercise}

\usepackage{amsfonts}
\DeclareFontFamily{U}{wncy}{}
\DeclareFontShape{U}{wncy}{m}{n}{<->wncyr10}{}
\DeclareSymbolFont{mcy}{U}{wncy}{m}{n}
\DeclareMathSymbol{\sha}{\mathord}{mcy}{"58} 

\numberwithin{equation}{section}

\renewcommand{\d}[2]{\frac{d#1}{d#2}}

\newcommand{\p}[2]{\frac{\partial#1}{\partial#2}}

\newcommand{\nc}{\newcommand}
\nc {\Prod}{(\underset{I}\phi\, r_I^{n_I})}
\nc {\wlw}{\wedge\ldots\wedge}
\nc {\olo}{\otimes\ldots\otimes}
\nc{\bea}{\begin{eqnarray*}}
\nc{\eea}{\end{eqnarray*}}
\nc {\e}{\varepsilon}
\nc{\de}{\delta}
\nc{\A}{\hat a}
\nc{\Ad}{\hat a^\dag}
\nc{\grad}{\nabla}
\nc{\nlim}{\underset{n\to\infty}\lim}
\nc{\ilim}{\underset{i\to\infty}\lim}
\nc{\isum}{\sum\limits_{i=1}^N}
\nc{\xx}{\underset{x\to x_0}\lim}
\nc{\Bnrm}{\Big |\Big|}
\nc{\sym}{\text{Sym}}
\nc{\inte}{\overset{\circ}}
\nc{\del}{\partial}
\nc{\plp}{+\ldots+}
\nc{\lre}{\longrightarrow}
\nc{\be}{\begin{equation}}
\nc{\ee}{\end{equation}}
\nc{\CP}{\mathbb{CP}}
\nc{\End}{\text{End}}
\nc{\mf}{\mathfrak}
\nc{\Hom}{\text{Hom}}
\nc{\spec}{\text{Spec}}
\nc{\sub}{\subseteq}
\nc{\weakto}{\rightharpoonup}
\nc{\ph}{\varphi}
\nc{\leqc}{\lesssim}
\nc{\delbar}{\overline \del}
\nc{\Tr}{\text{Tr}}
\nc{\Lhe}{\mathcal L_{(\phii^{h_\e}, A^{h_\e})}}

\newcommand{\CC}{\mathbb C}

\newcommand{\RR}{\mathbb R}

\newcommand{\calB}{{\mathcal B}} 
\newcommand{\calC}{{\mathcal C}}

\newcommand{\calU}{{\mathcal U}}
\newcommand{\calV}{{\mathcal V}}

\newcommand{\wtX}{{X}^\circ}

\newcommand{\high}{\mathrm{high}}

\usepackage{calligra}
\DeclareMathAlphabet{\mathcurs}{OT1}{pzc}{m}{it}
\newcommand{\curs}[1]{\mathcurs{#1}}

\newcommand{\FrDmmu}{\mathbb X^{\mu,\infty}}
\newcommand{\FrRgmu}{\mathbb Y^{\mu,\infty}}
\newcommand{\FrDms}{\mathbb X^{\mu,s}}
\newcommand{\FrRgs}{\mathbb Y^{\mu,s}}
\newcommand{\Fdiffeos}{\mathbb F}
\newcommand{\Gdiffeos}{\mathbb G}

\newcommand{\Dir}{D}

%% file: sections/sec_introduction.tex
%%
%% sec_introduction.tex
%%

\section{Introduction}

%{\color{blue} PREVIOUS DRAFT
%\cite{CHS} Caffarelli-Hardt-Simon
%\cite{smale1991equivariant} N. Smale Equivariant 
%\cite{smale1989minimal} N. Smale many isolated singularities
%\cite{szekelyhidi2021minimal} Szekelyhidi Existence point singularity w/ cylindrical
%\cite{szekelyhidi2026uniqueness,collinsli2023uniqueness,firester2026uniqueness} Some uniqueness results
%\cite{szekelyhidi2023recent} Szekelyhidi Survey
%\cite{simon2023stable} Simon Fractal like singular sets
%\cite{Allard1972, Federer1970,Almgren2000,DeLellisMinterSkorobogatova2023III,KrummelWickramasekera} Various Regularity Results
%\cite{simon1983asymptotics} Simon 1983}

The extensive development of geometric measure theory (GMT) in the 1960s and 1970s led to a remarkable general existence theory for many 
variational problems, resulting in particular in a robust existence theory for volume minimizing submanifolds in Euclidean space and other 
Riemannian manifolds \cite{simon1984lectures,DeLellisSurvey}. The solutions obtained in this way are {\it a priori} unconstrained in their topology and regularity. These methods 
therefore decouple the existence problem from questions about regularity of solutions.  It was already understood at that time 
that in certain settings, one should not expect volume minimizers to be smooth at all points. For example, complex analytic subvarieties 
of K\"ahler manifolds are always volume minimizing, yet may have very intricate singular sets. The task of understanding the precise
nature of singular sets has been a key focus in GMT and geometric analysis in the decades since.

 Even under the strongest reasonable hypotheses (that the submanifold has codimension 1 and is an absolute minimizer of the volume) 
singular sets are unavoidable. There was originally a hope that in this more constrained situation, absolute minimizers would necessarily be smooth, and indeed this is the case in low-dimensional settings. The techniques developed by Almgren, Federer \cite{federer1960normal,federer2014geometric,almgren2000almgren} and others linked the regularity theory of general minimizers to that of their tangent 
cones, which reduces the question to whether there exist nontrivial volume-minimizing cones. The fact that these do not exist
in dimensions less than $7$ thus leads to thus leads to strong regularity results of more general minimizing solutions in these
low dimensions. However, J. Simons famously discovered that the cone over $S^3\times S^3$ in $\RR^8$ is minimizing \cite{SimonsCone}. This  means that singular sets of codimension at least $7$ are in general unavoidable for absolutely minimizing hypersurfaces. The regularity for (generalized) submanifolds of arbitrary codimension which are only stationary points of the volume functional should be even worse. There are now many other known examples 
of singular minimizers, even in the presence of extra structure such as a calibration \cite{liuconjecture}.

\begin{comment}
Nonetheless, there was reasonable hope that at least in certain situations, minimizers should be 
smooth, and indeed this is true in a number of low-dimensional settings.
% as far back as the solution of the classical Plateau problem by Douglas and Rado in the 1930's.  
The techniques developed by Almgren, Federer and others linked the regularity theory of general minimizers to that of their tangent 
cones, so the key question is whether there exist nontrivial volume-minimizing cones. The fact that these do not exist
in dimensions less than $7$ thus leads to thus leads to strong regularity results of more general minimizing solutions in these
low dimensions. However, Simons discovered famously that the cone over $S^3\times S^3$ in $\RR^8$ is minimizing. 
This shows that singularities in the minimizers of the area functional are unavoidable; regularity for more general (generalized)
submanifolds which are only stationary points of the volume functional should be even worse. There are now many other known examples 
of singular minimizers, even in the presence of extra structure such as a calibration.
\end{comment}

In all these settings, the study of the singular structure of minimal submanifold proceeds from multiple perspectives.  On the one hand, one can take the
``bottom up'' approach, which starts from weak GMT solutions (which are minimizing or stable) and then attempts to constrain the
size and structure of their singular sets, for example limiting their Hausdorff dimension, showing that they are (mostly) rectifiable, etc.
We mention
%This approach is primarily the domain of geometric measure theory:  singularity models are classified by their tangent cones, and much recent work prove strong structural constraints, including various strong rectifiability result for the singular setsD
\cite{Allard1972, Federer1970,Almgren2000,DeLellisMinterSkorobogatova2023III,KrummelWickramasekera} 
amongst many other contributions from this perspective.
On the other hand, one may also approach this problem in a ``top down'' way by explicit construction of examples,
typically by PDE methods, with the  goal of showing that these regularity thresholds are sharp and studying the 
stability of singular sets under perturbations of geometric data.
%well-behaved examples and studying their stability and deformations. This latter approach is naturally the domain of singular elliptic PDE methods.

As already mentioned, there are ``easy'' examples
produced via complex analysis where results from that area show that the singular sets of these complex varieties
are stratified, i.e., decompose into finite unions of smooth (in fact, analytic) submanifolds of varying dimension.   The real difficulty thus
resides in cases where one does not have such a rigid external structure.  The first systematic study of singularities of minimal submanifolds 
from a PDE perspective was in the work of Caffarelli-Hardt-Simon \cite{CHS} in the early 1980's.  They developed a perturbation theory
for the mean curvature operator on a minimal cone to construct large families of minimal (and minimizing) submanifolds with boundary 
 admitting isolated conical singularities in the interior. Subsequent work of Hardt-Simon \cite{HS} and Simon \cite{simon1983asymptotics} 
 culminated in Simon's celebrated theorem about uniqueness of tangent cones at isolated singularities, under certain conditions. 

There are obvious examples of minimal or minimizing submanifolds with non-isolated singularities produced by taking the product 
of any one of these solutions with isolated singularities with  a flat Euclidean space or torus.  Somewhat more general
are the equivariant constructions by N. Smale \cite{smale1991equivariant}.  However, it has been a long-standing problem, already raised  
by Leon Simon in the 1980's, to carry out a similar perturbation theory as in \cite{CHS} to construct families of non-product 
minimal submanifolds with boundary, with non-isolated interior singularities, and more specifically where the tangent 
cone is `cylindrical', i.e., the product of a Euclidean space with a minimal cone with isolated singularities. This seemingly 
modest task has thus far resisted efforts. We settle this problem here by constructing a broad class of non-product minimal submanifolds with this property.

\smallskip 

In addition to the existence question, there is the question of the regularity of these singular sets themselves as subsets of the ambient space. 
There has been progress showing that singularities of minimizers can be quite bad in certain cases. In  \cite{szekelyhidi2021minimal},
Sz\'ekelyhidi constructed a minimizing submanifold with isolated singularity but a cylindrical tangent cone at that point. More recently, 
\cite{simon2023stable} showed that, provided the ambient smooth metric is allowed to vary, there exist minimal submanifolds of $\RR^m \times \RR^n$ for which the singular 
set is an arbitrary closed set of $\{0\} \times \RR^n$. Finally, Halavati and Spolaor \cite{HaSpo} recently constructed minimal hypersurfaces with a singularity 
at the origin with cylindrical tangent cone that is an accumulation point of a sequence of isolated singularities. Here too, the background metric 
must be altered. These examples, and a number of other related ones in recent years, leave open the basic question about 
the regularity of the singular set when the ambient metric is Euclidean, or at least real analytic.

In a forthcoming companion to this paper \cite{AnalyticBootstrap} we show a strong ``analytic'' regularity result for the singular sets 
of the submanifolds constructed here. The examples we construct in this paper lie in an ambient Euclidean space and have singular sets 
which are themselves smooth nonproduct submanifolds. The proof is similar in spirit to the one here, but uses a uniqueness statement (for 
given boundary data) to bootstrap the construction into more constrained function spaces. 

%In a forthcoming companion to this paper \cite{MP2} we show that for these examples, the singular sets are actually real analytic submanifolds.   The proof of that result is very similar in spirit to the one here, but involves working in more delicate function spaces.   

This and the results above suggest that there may be a regularity ``threshold'' phenomenon for the regularity of the singular set. That is to say, 
while there exist examples whose singular sets are more or less as rough as the GMT allows, there is a finite regularity such that the following holds: 
if the singular set of a minimizer is a submanifold with regularity greater 
than that threshold, and if the ambient metric is analytic, then that singular set is automatically analytic as well. This phenomenon would mirror known regularity results in free-boundary problems, and by way of analogy, these singular sets should most fruitfully be regarded as higher codimension analogues of free boundaries.

\subsection{Main Results}  Fixing integers $3\leq n-1 \leq N-2,$ let $(\Sigma^{n-1}, g_\Sigma)\subseteq (S^{N-1}, g_{0})$ be a smooth, 
closed minimal submanifold in the round unit sphere of codimension $N-n$. The truncated cone 
\[
C_1(\Sigma):=\Big \{rx \ | \ x\in \Sigma  \ , \ 0\leq r \leq 1\Big\}\subseteq B^{N}
\]
is also minimal; it has boundary $\Sigma$ at $r=1$ in the unit sphere and an isolated conical singularity at the origin. 

The specific deformation problem studied by \cite{CHS} may be stated as follows. We wish to solve the minimal surface equation 
for sections of the normal bundle over this entire truncated cone, where the boundary value of this section is an arbitrary section 
of the normal bundle of $\Sigma$ in the sphere.  We seek minimal deformations, if they can be found, which retain an isolated conical singularity at
the origin. They proved that this problem has a finite-dimensional obstruction defined in terms of the spectral data 
of the Jacobi operator of $\Sigma$ in the sphere.  This obstruction also depends on the choice of a positive real parameter 
$\mu' > 1 + \epsilon$ which dictates the rate of decay of the perturbation to the exact cone. (The lower bound being greater than
$1$ ensures that the tangent cone of the perturbation at the origin is the original cone.)  
% an order of vanishing $\mu \in \R^+$ of the perturbation
%to the exact cone). 
%This weight is chosen to dictate the asymptotics of a perturbation to the cone as $r\to 0,$ 
%and the lower bound $\mu+1>\tfrac{n}{2}$ ensures the perturbation is $O(r^{1+\epsilon}),$ in which case it preserves both the singularity and the tangent cone model at the origin. 
%This vanishing condition, however, is too stringent to allow an arbitrary perturbation of the boundary data to be corrected back to minimal with an interior perturbation, and this is the source of the obstruction. It is defined by the spectral data on $\Sigma$ as follows: 
To state this more carefully, denote by $L_\Sigma$ this Jacobi operator. This is self-adjoint, with eigendata $\{(\phi_j, \lambda_j)\}_{j=0}^\infty$.
Given any $\mu \in \R$, define
\be  
J(\mu) = \left \{ \ \lambda_j \in \R  \ \ \Big | \ \  \sqrt{\frac{(n-2)^2}{4}+\lambda_j}  \ <  \ \mu \ + \ 1 \ \  \right\} 
\qquad \mathcal B_0({\mu}):= \bigoplus_{\lambda_j \in J(\mu)} E_{\lambda_j},
\label{weightconstraint}
\ee
where $E_{\lambda_j}$ is the eigenspace corresponding to the eigenvalue $\lambda_j$. (The value $\mu$ appearing here satisfies 
$\mu' + n/2 = \mu$, where $\mu'$ is the vanishing rate above. This shift is simply because $\mu$ is the $L^2$ rather than
the $L^\infty$ vanishing order.)
We call $J(\mu)$ the 
set of {\it low lying eigenvalues}; these are counted with multiplicity. It is shown in \cite{CHS} that that when $\mu+1 > \frac{n}{2}$, 
then the obstruction to this deformation problem with arbitrary boundary values corresponds precisely to the space $\mathcal B_0(\mu)$,
and thus in particular has finite dimension $J:=\dim J(\mu)$. We explain this carefully in Section \ref{CHSSection} below.

{ The space $\mathcal B_0(\mu)$ contains two distinguished eigenspaces with particularly transparent geometric meaning. These correspond to `trivial' deformations of the cone obtained by ambient Euclidean isometries,}
namely translations (which move the singular set) and rotations (for which the corresponding normal section vanishes
exactly linearly, not like $r^{1+\epsilon}$). These correspond to the eigenvalues $0$ and $(n+1)$ respectively. In other words,  there is a map
\be
\R^N \oplus \mathfrak{so}(N) \to \mathcal B_0(\mu) 
\label{isometryrestriction}
\ee 
\noindent which in fact is injective on the first factor.

The deformation result in the cylindrical setting below is significantly more rigid in many respects. In particular, we need to 
impose the restriction that the Jacobi fields for $\Sigma$ corresponding to Euclidean isometries in the conic (i.e. $\R^N$) factor
constitute the full set of eigenvectors with low-lying eigenvalues.
\begin{comment}There are two types of `trivial' deformations which do not quite fit into the scheme above, namely those obtained by ambient Euclidean 
isometries, namely translations (which move the singular set) and rotations (for which the corresponding normal section vanishes
exactly linearly, not like $r^{1+\epsilon}$). The infinitesimal variations of boundary values associated to translations 
correspond to the eigenvalues $\lambda_j$, $j = 0, \ldots, N-1$.  Assuming that $\Sigma$ is in general position (i.e., does
not lie in any equatorial subsphere), the eigenvalues associated to infinitesimal rotations are indexed by
$j = N, \ldots N + K-1$, where $K = N(N-1)/2$.

In other words, there is an injective map
\be
\R^N \oplus \frak{so}(N) \to \mathcal B_0(\mu) 
\label{isometryrestriction}
\ee
\end{comment}
\begin{defn} 
We say that a smooth minimal submanifold $\Sigma^{n-1} \subseteq S^{N-1}$ is {\bf spectrally rigid} if there exists a 
$\mu+1>\frac{n}{2}$ such that the restriction map \eqref{isometryrestriction} is surjective. In this case, 
\[
J(\mu)=\Big\{ 0 \ , \  n-1 \Big\},
\]
{with multiplicities $\dim E_0=N,$ and $\dim E_{n-1}:= K \leq \frac{N(N-1)}{2}$ respectively. Thus in this case, $J=N+ K.$}
\label{spectralassumptions}
\end{defn}

\noindent There are examples of spectral rigid minimal submanifolds. Recent work of Rauchenberger-Semmelmann 
\cite{rauchenberger2026index} shows that the Veronese embeddings 
\be 
\C \mathbb P^2 \subseteq S^7 \hspace{1cm}\mathbb H \mathbb P^2 \subseteq S^{13}\hspace{1cm}\mathbb O 
\mathbb P^2 \subseteq S^{25} \label{embeddingsKPN}
\ee
are all spectrally rigid. These have $K=20, 70, 273$ in the three cases respectively.  On the other hand, 
if $\Sigma$ is a hypersurface in the sphere, then the lowest eigenvalue corresponds to a strictly positive eigenfunction,
and by the work of Hardt-Simon, under suitable conditions this corresponds to a family of {\it smooth} minimal submanifolds
which lie in one component of $\R^N \setminus C_1(\Sigma)$.  Thus hypersurfaces are never spectrally rigid, and
our construction below does not apply in that case (cf. Remark \ref{deformationremark} below).    

\subsubsection{Submanifolds with Cylindrical Tangent Cones} To generalize \cite{CHS} to the setting of submanifolds with 
cylindrical or {\bf edge} singularities, consider the product 
\be 
X_0 := S^1 \times C_1(\Sigma) \ \subseteq S^1\times B^N.
\ee
In fact, we might equally consider the product of $C_1(\Sigma)$ with $\R$, but taking the product with the circle is the
same as considering solutions periodic in the $\R$ direction; this avoids a number of mostly superfluous  technical complications.
In any case, this product is again minimal, now with an `edge', which we think of as a family of conical singularities along the 
smooth submanifold $S^1\times \{0\}$.  %In directions normal to this singular set, these singularities n the cylindrical tangent cone
%$\R \times C(\Sigma)$; n fact $X_0$ is equal to the truncation of this cylindrical tangent cone modulo the action of translations $t\mapsto t+2k\pi$ on $\R$. 
We write $\text{reg}(X_0)$ and $\text{sing}(X_0)$ for the smooth and singular loci, respectively. 

The perturbations of this product have an additional degree of freedom, as compared to the conic case. 
%Unlike in the conical case, the singular sets of minimal submanifolds with edge singularities are no longer necessarily 
%equivalent by rigid isometries. 
Indeed, one expects (or at least hopes to construct) minimal submanifolds where the singular set is a graph
\be 
\Gamma_\xi:= \Big \{ (t,\xi(t)) \in S^1 \times \R^N \ \  \Big | \ \ \xi: S^1 \to \R^N \Big \}\label{Gammaxi}
\ee
for $\xi: S^1 \to \R^N$ some sufficiently small function. 
%\noindent of a sufficiently small function $\xi: S^1\to \R^N$. These graphs are distinguished by the extrinsic curvature of the singular set, provided they are sufficiently smooth. 
Our main result 
%is the following generalization of the main theorem of Caffarrelli-Hardt-Simon \cite{CHS} to the cylindrical setting. It 
shows that if $\Sigma$ is spectrally rigid, then $X_0$ admits minimal perturbations where the singular set is deformed to a 
nearby smooth curve (which depends on the perturbation of boundary data).

\begin{thm} \label{maina}
 Suppose that $\Sigma\subseteq S^{N-1}$ is a spectrally rigid minimal submanifold of $\text{dim}(\Sigma)\geq 2,$ and let $\mu+1 > \frac{n}{2}$ be the corresponding weight.  There exists an open neighborhood of the origin, $\mathcal V_1 \subseteq  \mathcal B_0(\mu)^\perp\subseteq C^\infty(S^1\times \Sigma; N\Sigma)$ such that for each choice of boundary values 
\begin{align} \varrho \in \mathcal V_1
\end{align}
\noindent there exists a submanifold with edge singularities $X_{\xi,\varrho}\subseteq S^1 \times \R^N$ 
with the following properties: 
\begin{enumerate}
\item[(I)] $X_{\xi,\varrho}$ is minimal with respect to the Euclidean product metric.
\medskip 
\item[(II)] The regular locus $\text{reg}\, (X_{\xi,\varrho})$ is diffeomorphic to $X_0\setminus \text{sing}(X_0).$
\medskip 
\item[(III)] The singular set $\text{sing}\, (X_{\xi,\varrho})=\Gamma_\xi$ is the graph of a smooth section $\xi \in C^\infty(S^1;\R^N)$, 
and there is a smooth map $\Xi: \mathcal V_1\to C^\infty(S^1;\R^N)$ that defines $\xi=\Xi(\varrho)$ implicitly as a function of $\varrho.$
\medskip 
\item[(IV)] After an appropriate identification of the boundaries $\del X_{\xi,\varrho}\simeq \del X_0,$ the `component' of $\del X_{\varrho, \xi}$ determined by $\mathcal B_0(\mu)^\perp$ equals the graph of $\varrho$ over 
$S^1\times \Sigma$, i.e.,
\smallskip
\[
 \text{exp}^{-1}_{S^1\times N\Sigma}\left(\del X_{\xi,\varrho}\right)= \varrho\  \mod \mathcal B_0(\mu) 
\]
\noindent The component of $\del X_{\xi,\varrho}$ in the span of the constant (over $S^1$) eigensections in $\mathcal B_0(\mu)$ is determined implicitly as a smooth function of $\varrho \in \mathcal V_1.$ 
\smallskip 
\end{enumerate}
\end{thm}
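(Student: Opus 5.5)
The plan is to set up the problem as a nonlinear map between tame Fréchet spaces and apply the Nash--Moser Implicit Function Theorem at the product $X_0$.

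\textbf{Parametrization.} For $\xi\in C^\infty(S^1;\R^N)$ small, I would build a family of diffeomorphisms $\Phi_\xi$ of $S^1\times B^N$ with three properties: each is the identity near the boundary, each maps $S^1\times\{0\}$ onto $\Gamma_\xi$, and near the edge each is given by the translation $(t,x)\mapsto(t,x+\xi(t))$, cut off in $r$. Candidate submanifolds are then
\[
X_{\xi,u}=\Phi_\xi\big(\exp_{X_0}(u)\big),
\]
where $u$ is a section of the normal bundle of $\text{reg}(X_0)$ lying in a weighted edge space of vanishing order $\mu$ at $r=0$. These spaces are graded by edge H\"older or Sobolev regularity $s$, giving the scales $\FrDms$, $\FrRgs$. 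The mean curvature then defines a map
\[
\mathcal H(\xi,u,\varrho,b)=\big(H(X_{\xi,u}),\ u|_{r=1}-\varrho-b\big),
\]
where $b$ ranges over the finite-dimensional space of $t$-constant eigensections in $\mathcal B_0(\mu)$. Since $X_{\xi,u}$ is a graph over a perturbed cylinder, properties (I)--(IV) follow as soon as $\mathcal H=0$, so the theorem reduces to solving this equation.

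\textbf{Linearization at $X_0$.} The $u$-derivative is the edge Jacobi operator $L=\partial_t^2+\partial_r^2+\frac{n-1}{r}\partial_r+r^{-2}L_\Sigma$ with Dirichlet condition at $r=1$. Using the edge calculus of Section~\ref{CHSSection} and the following sections, I would Fourier decompose in $t$. For frequency $k$, the $r$-ODE on each eigenspace $E_{\lambda_j}$ has indicial roots $\gamma^\pm_j$. When $\lambda_j\notin J(\mu)$, the decaying solution is admissible at rate $\mu$. For $\lambda_j\in J(\mu)$, however, one solution blows up, so the operator is Fredholm with an infinite-dimensional cokernel. That cokernel is spanned, for each $k$, by the boundary values of $\mathcal B_0(\mu)$, i.e.\ by $C^\infty(S^1;E_0\oplus E_{n-1})$. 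This is where spectral rigidity enters: the obstruction consists exactly of the isometry Jacobi fields $E_0\cong\R^N$ (translations) and $E_{n-1}$ (rotations), tensored with functions of $t$.

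\textbf{Compensating the obstruction.} The $\xi$-derivative sends $\dot\xi(t)$ to a translation Jacobi field $\langle\dot\xi(t),\cdot\rangle^\perp$ cut off in $r$. Its action on the $E_0$-part of the cokernel is $\dot\xi\mapsto\dot\xi$ up to terms involving $\partial_t^2\dot\xi$, arising because translation by a $t$-dependent amount is no longer an exact Jacobi field. The $E_{n-1}$-part must be absorbed in the same way. I expect the rotation components to pair with $\partial_t\xi$ through the tilting of the cylinder, and with $b$ at frequency $k=0$.

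The key computation is the Fourier symbol of the combined map
\[
(\dot\xi,b)\mapsto \text{projection onto coker},
\]
which I expect to be $k$-dependent and invertible for every $k$, but growing like $|k|^2$ (or $|k|$) relative to the gain in $u$. This is the loss of derivatives, and it is precisely what forces Nash--Moser. From this I would construct a right inverse of $D\mathcal H$ at points near $X_0$, with tame estimates of the form
\[
\|D\mathcal H^{-1}f\|_s\le C\big(\|f\|_{s+r}+\|f\|_{r}\|(\xi,u)\|_{s+r}\big).
\]
Beyond the previous step, this also requires the cokernel structure and the invertibility of the obstruction symbol to persist at perturbed $(\xi,u)$. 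I would argue this by rerunning the edge parametrix with smooth dependence on the edge and estimating the perturbed obstruction map as a small tame perturbation of the model.

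\textbf{Closing the argument.} With tameness of $\mathcal H$ itself and of its second derivative, which are standard composition estimates, the Nash--Moser theorem gives a smooth tame solution map $\varrho\mapsto(\xi,u,b)$ on a neighborhood $\mathcal V_1$ of zero. Setting $\Xi(\varrho)=\xi$ gives (III), and $b$ gives the implicit boundary component in (IV). The main obstacle is the previous step: uniformly invertible, tame control of the perturbed obstruction map with an honest, finite loss of derivatives.
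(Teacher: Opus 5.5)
\textbf{There is a genuine gap: your parameters cannot cancel the obstruction.} You correctly identify $\mathbf{Ob}_0\cong L^2(S^1;E_0\oplus E_{n-1})$, i.e.\ $N+K$ functions of $t$. Here $K>0$ always, since a submanifold of codimension at least one in $S^{N-1}$ is not invariant under all of $\mathfrak{so}(N)$. Your only infinite-dimensional parameter, however, is $\xi\in C^\infty(S^1;\R^N)$, and $b$ only reaches the constant Fourier mode.

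Your hope that the rotational part pairs with $\partial_t\xi$ through tilting fails at the product. The first variation of $\Phi_\xi$ in direction $\eta$ has normal part $\chi(r)P_0\eta(t)=\chi(r)\sum_a\eta_a(t)\phi_a$ with $\phi_a\in E_0$. Any tilting only adds a $\partial_t$-component, which is tangent to $X_0$. In Lemma~\ref{productlinearization} this tilting enters through $\dot g_{tt}=-2\langle\eta'',y\rangle$ and produces the $y$-independent vector $\eta''$, again in $E_0$. Since $L_0$ preserves $L_\Sigma$-eigenmodes, the image of the $\xi$-directions is $r^\mu L^2$-orthogonal to every $\Psi_{k\ell}$ with $\phi_k\in E_{n-1}$. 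The $u$-directions land in $\mathbf{Rg}_0$ by definition.

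So the cokernel of $D\mathcal H$ at $X_0$ contains $\mathrm{ob}_0(\{\zeta\in L^2(S^1;E_{n-1}):\zeta_0=0\})$, which is infinite-dimensional. There is no right inverse, and Hypothesis (II) of the Nash--Moser theorem already fails at the base point. In line with Remark~\ref{deformationremark}, with these parameters alone you should expect solvability only for an infinite-codimension set of $\varrho$.

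\textbf{The missing idea is a second family of $S^1$-parametrized isometries.} Using spectral rigidity, choose $\R^K\subseteq\mathfrak{so}(N)$ so that $\R^N\oplus\R^K\to\mathcal B_0$ is an isomorphism. Then add $\alpha\in C^\infty(S^1;\R^K)$ acting by $t$-dependent rotations $A(t)=\mathrm{Exp}(\alpha(t))$, as in $F_{\xi,\alpha}(t,y)=\Gamma_\xi(t)+\sum_{a,b}A_{ab}(t)y_be_a(t)$.

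These rotations must be built into the diffeomorphism rather than added to $u$, for two reasons:
\begin{itemize}
\item Sections $r\beta(t)\phi_k$ are excluded by the weight, which is exactly why they lie in the cokernel.
\item A normal graph of size $O(r)$ generically has a quadratic mean-curvature error of size $|\beta|^2r^{-1}$. This is not in $r^\mu H^s_b$, because $\mu-\tfrac n2=\epsilon-1>-1$.
\end{itemize}
Exact rotations avoid this: a constantly rotated cone is isometric to the original, and every error term carries a $t$-derivative of $\alpha$ together with a factor of $y$.

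With both families, the linearization at $X_0$ is $P_0[\eta''+\beta''y]$. The deformation operator $T_0$ is then diagonal, of order $1+\epsilon$ on $\eta$ and $\epsilon$ on $\beta$; this is the actual loss of derivatives. The constant modes $(\xi_0,\alpha_0)$ play the role of your $b$. After that, your remaining steps follow the paper's route: persistence of the obstruction bundle and tame estimates under perturbation, then Nash--Moser.
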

\noindent We emphasize that all the singular sets arising in Theorem \ref{maina} are {\it smooth}. 

As explained earlier, for isolated conical singularities we may as well keep the singular point fixed since a minimal submanifold with an 
isolated singularity can be translated to keep the singularity at the origin. Likewise, if the tangent cone is a rotated copy of the same
cone, then we can simply rotate back to ensure that the tangent cones remain the same. In the cylindrical setting, however, the space 
of possible singular sets is infinite-dimensional, and the submanifolds corresponding to different functions $\xi$ are not isometrically 
equivalent except in very rare circumstances.

There is a much more significant difference, which is of an analytic nature.   In the isolated singularity case, \cite{CHS} apply the 
standard Implicit Function theorem in Banach spaces, which reduces the problem to the linear analysis of the Jacobi operator.
%the new features there were primarily linear. 
We review (and provide
a much simpler recasting of) their result in Section \ref{CHSSection}.  In that setting, the obstruction to the linearized 
problem is finite-dimensional and the point is that by allowing some freedom in the boundary values, it can be canceled.
In this setting, however, both the obstruction and the degrees of freedom in the singular set are infinite-dimensional, and 
variations in the singular set are the new feature used to cancel the obstruction.   Unfortunately the ``cancellation operator'' 
here fails to have closed range between any fixed pair of Banach spaces, and this forces us to work in the category of tame 
Fr\'echet spaces and to use the Nash-Moser Implicit Function Theorem. 
%This need for Nash-Moser theory and more careful analysis of the cancellation operator 
This is what accounts for the relative complexity of the proof of Theorem \ref{maina} as compared to \cite{CHS}. 

To be a bit more specific, the main content of the proof involves showing that the deformations of order zero (translations) and 
order one (rotations) along the singular set provide sufficiently many extra parameters to cancel the now infinite-dimensional 
obstruction. These deformations of the tangent cone along the singular set yield degrees of freedom parameterized by 
$L^2(S^1; \R^{N}\oplus \mathfrak{so}(N)).$ Meanwhile, the obstruction to deforming the cone with fixed singular set is parameterized by 
$L^2(S^1; \R^{J(\mu)})$. The spectral rigidity assumption ensures \eqref{isometryrestriction} is surjective, and this at least suggests
that these parametrized normal isometries are sufficient to cancel the full obstruction. Indeed, we show that when the spectral 
rigidity condition holds, the cancellation is an elliptic pseudodifferential operator with symbol given by multiples of 
\eqref{isometryrestriction} modulo its kernel. This suggests that the spectral rigidity condition is not only sufficient but also 
necessary, for a result like the one here to be valid. In particular, in the absence of spectral rigidity, it seems likely that perturbations preserving the edge singularity should be extremely `rare' (i.e. infinite-codimension), with most perturbations destroying the manifold structure of the singular set (see Remark \ref{deformationremark} below).
% as sufficient, for there to be a finite-codimension space of boundary values for which the corresponding perturbations preserve a singular set diffeomorphic to $S^1.$

\smallskip 

\begin{rem} The submanifolds $X_{\xi,\varrho}$ in Theorem \ref{maina} have boundary along the set $\{\text{exp}_{N\Gamma_\xi}(y) \ | \ |y|=1\},$ i.e. at unit length in geodesic normal coordinates around the singular sets $\Gamma_{\xi}.$ Once could instead ask for submanifolds whose boundary lies instead along $r=1$ in global polar coordinates on $\R^N.$ A small variation of the proof allows the construction of minimal submanifolds in this case as well, at the cost of augmenting $\mathcal B_0(\mu)$ in (IV) with an additional finite-dimensional subspace. 
\end{rem}

\smallskip 

\begin{rem} \label{S1used} The fact that we consider the product with $S^1$,  rather than a higher dimensional torus, 
in Theorem \ref{maina} is largely for convenience. 
%in particular we use the stability of a well-enumerated spectrum under perturbations of the metric. 
%This assumption can likely be removed with fairly standard techniques from spectral theory and harmonic analysis, 
We expect that this extension should be possible in a fairly straightforward way, but has been omitted here to
reduce the technicalities. %which would yield the analogue of Theorem \ref{maina} where $\text{sing}(X_{\xi,\varrho})\simeq T^k.$
\end{rem}

\smallskip 

\begin{rem}
As explained above, the proof of Theorem \ref{maina} suggests that the spectral rigidity condition is necessary in order to prescribe 
the boundary values within a finite codimensional subspace.  For more general minimal submanifolds $\Sigma,$ it seems reasonable 
to expect that the space of perturbations of $S^1\times \Sigma$ which bound a minimal submanifold with smooth singular set has
{\it infinite codimension}, and is naturally complemented by $L^2(S^1; \R^M)$ where $\R^{M}$ is spanned by those eigenspaces 
in $\mathcal B_0(\mu) / \text{Im}(\R^{N+K})$ via \eqref{isometryrestriction}. It would be interesting to extend our results to this 
more general setting (e.g. by including additional parameters corresponding to the Hardt-Simon de-singularizations discussed after \eqref{embeddingsKPN}).
% by incorporating ($S^1$ families of) an additional $M$-dimensional desingularization parameter for the cone $C(\Sigma).$ 
We remark that analogous spectral constraints were found by Y. Wang \cite{wang2022atiyah} in the case of deforming singular 
Yang-Mills connections. 
\label{deformationremark}
\end{rem}

\medskip 

It is of course highly interesting to know if there are any global constraints or characterizations of those functions $\xi$ whose graphs actually arise as the perturbed singular sets of near-product minimal submanifolds. 
%It is an interesting question to ask for a more precise characterization of {\it which} functions $\xi$ arise in the collection of singular sets of the submanifolds $X_{\xi,\varrho}$ constructed 
Theorem \ref{maina} shows that these functions (and singular sets) are necessarily smooth. In fact, there will be 
a companion to this paper \cite{AnalyticBootstrap} to appear shortly which will establish analyticity of 
%a much stronger result on the regularity of 
$\text{sing}(X_{\xi,\varrho})$.
% is true, whose proof will appear in a follow-up to this article \cite{AnalyticBootstrap}.  
To say a few more words about this, recall that by the classical Paley-Weiner Theorem, a function $\xi: S^1\to \R^N$ 
admits a holomorphic extension to the strip $S^1\times [-\sigma, \sigma]\subseteq T^*S^1$ if and only if its Fourier 
modes $\xi_\ell$ decay exponentially like $e^{-\sigma |\ell|}.$ Let 
\[ 
e^{-\sigma\sqrt{\Delta_t}} L^2(S^1;\R^N)\subsetneq C^\omega(S^1;\R^N)
\]
denote the Banach space of analytic curves admitting such a holormophic extension for some given $\sigma>0.$ The main 
result of \cite{AnalyticBootstrap} is the following ``analytic bootstrap'' for the singular set. 
\begin{thm}[\cite{AnalyticBootstrap}] \label{mainb} For the submanifolds $X_{\xi,\varrho}$ constructed in Theorem \ref{maina}, the singular sets $\Gamma_\xi \subseteq S^1\times \R^N$ are analytic submanifolds. More specifically, for every $\e>0$ and $\sigma=1-\e,$ there is an open ball $\mathcal V_1(\e)\subseteq \mathcal V_1$ such that for $\varrho \in \mathcal V_1(\epsilon),$
\[\xi(\varrho) \in e^{-\sigma \sqrt{\Delta_t}}L^2(S^1;\R^N).\]
\noindent The radius of $\mathcal V_1(\epsilon)$ measured in the $H^{s_0}$-norm is $O(\epsilon^2).$
\end{thm}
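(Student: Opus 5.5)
The plan is to rerun the construction of Theorem \ref{maina} in a scale of \emph{analytic} Banach spaces rather than the Sobolev/Fr\'echet scale, and to use uniqueness to identify the resulting solution with the one already produced. For $\sigma\ge 0$, introduce weighted edge spaces on $X_0$ whose elements, after Fourier decomposition in $t\in S^1$, satisfy
\[
\|u\|_{\sigma,s}^2=\sum_{\ell} e^{2\sigma|\ell|}\,\|u_\ell\|_{s}^2<\infty .
\]
Here $\|\cdot\|_s$ is the $\mu$-weighted edge Sobolev norm used in Part II. Equivalently, $u$ lies in $e^{-\sigma\sqrt{\Delta_t}}$ applied to the earlier spaces. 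Define the corresponding spaces for $\xi$ and for the boundary data in the same way. For these spaces to be usable, the ambient metric must be Euclidean, hence real-analytic. The mean curvature operator, the deformation map $\xi\mapsto X_\xi$ (normal translations and rotations along $\Gamma_\xi$), and the identifications of boundaries must then all be real-analytic in $t$ with respect to this structure.

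The first key step is to show that these nonlinear maps extend holomorphically to the complex strip $|\mathrm{Im}\,t|<\sigma$. Once they do, they act tamely between the $\sigma$-analytic spaces, with estimates uniform for $\sigma$ in compact subsets of $[0,1)$. The restriction $\sigma<1$ should come from the analyticity radius of the coordinates built from $\xi$: the boundary sits at normal distance $1$ from $\Gamma_\xi$, so the width of the strip on which the normal exponential map and the cutoffs remain holomorphic is limited by $1$.

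The second step is the linear theory. The Jacobi operator on $S^1\times C(\Sigma)$ commutes with $\partial_t$, so it decomposes into Fourier modes $\ell$. The same holds for the cancellation operator, which by spectral rigidity is an elliptic pseudodifferential operator on $S^1$ with symbol given by multiples of \eqref{isometryrestriction} modulo its kernel. Both therefore act diagonally in $\ell$, and their inverses (the parametrix and the cancellation inverse) commute with $e^{\sigma\sqrt{\Delta_t}}$. Consequently every linear estimate of Part II transfers verbatim to the $\sigma$-spaces. The loss of derivatives also transfers, so Nash--Moser is still needed, run in the scale $\|\cdot\|_{\sigma,s}$ with $\sigma$ fixed.

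The main obstacle is the nonlinear part: after conjugation by $e^{\sigma\sqrt{\Delta_t}}$, the tame estimates for the quadratic remainders and for the linearization at a nonzero point must hold, in particular for the variable-coefficient perturbed obstruction operators. I would use the algebra property of $\sigma$-analytic spaces, namely $\|fg\|_{\sigma,s}\lesssim\|f\|_{\sigma,s}\|g\|_{\sigma,s_0}+\|f\|_{\sigma,s_0}\|g\|_{\sigma,s}$, which follows from the triangle inequality $|\ell|\le|\ell-k|+|k|$ in the convolution. Composition with analytic functions is handled by power series, which converge with radius shrinking as $\sigma\to1$. The constants blow up as $\sigma\to 1$, roughly like $\epsilon^{-1}$ with $\sigma=1-\epsilon$. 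Propagating this through the smallness threshold of Nash--Moser, which is quadratic in the tame constants, gives a ball $\mathcal V_1(\epsilon)$ of $H^{s_0}$-radius $O(\epsilon^2)$.

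The last step is to apply Nash--Moser in the analytic scale. This produces, for each $\varrho\in\mathcal V_1(\epsilon)$ (boundary data lying in the analytic class), a solution $(\xi',u')$ with $\xi'\in e^{-\sigma\sqrt{\Delta_t}}L^2$. This solution also lies in the smooth spaces of Theorem \ref{maina} and is small there. By the local uniqueness of the Nash--Moser solution for given boundary data, it coincides with $(\Xi(\varrho),u)$, which proves $\xi(\varrho)\in e^{-\sigma\sqrt{\Delta_t}}L^2(S^1;\R^N)$.
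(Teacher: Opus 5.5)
\textbf{There is a genuine gap.} Your spaces put the same analyticity width $\sigma$ on every slice $\{r=\mathrm{const}\}$, all the way out to $r=1$. Elements of such spaces have traces at $r=1$ that are analytic of width $\sigma$. The boundary condition $\Phi|_{r=1}=(\xi_0,\alpha_0)+\varrho$ therefore forces $\varrho$ to be analytic, and you end up restricting to ``boundary data lying in the analytic class.''

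That is not the statement. $\mathcal V_1(\epsilon)$ is a ball in the $H^{s_0}$-norm inside $\mathcal V_1\subseteq C^\infty$, so $\varrho$ is merely smooth. The whole point of the analytic bootstrap is that $\xi(\varrho)$ is analytic anyway. For non-analytic $\varrho$ there is no solution in your spaces at all, so your final uniqueness step has nothing to compare against. That step is the right one, and it matches the strategy the paper indicates. What your argument actually yields is the much weaker claim that analytic data give an analytic singular set.

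Your explanation of the threshold does not hold either. Holomorphy in $t$ of $F_{\xi,\alpha}$, of the cutoffs in $G_\Phi$ (which act in $w$), and of $\mathfrak E$ (which cuts off in $r$) has nothing to do with the normal distance to the boundary. Your algebra estimate comes from $e^{\sigma|\ell|}\le e^{\sigma|\ell-k|}e^{\sigma|k|}$, so it holds with constants independent of $\sigma$. In your scale nothing degenerates as $\sigma\to 1$, so neither the restriction $\sigma<1$ nor the $O(\epsilon^2)$ radius comes out of the argument.

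\textbf{The missing idea.} The width $1$ is the distance from the edge to the boundary, and it enters through the exponential localization of the obstruction near the edge.
\begin{itemize}
\item By Lemma \ref{obnormalization}, $\Psi_{j\ell}$ has size roughly $e^{-|\ell| r}$ at radius $r$; this includes the $I$-part, since $A_{j\ell}\sim|\ell|^{\mu+1}e^{-2|\ell|}$.
\item By the Green's formula \eqref{Greens2ndbd}, the boundary data therefore enter the obstruction equation only through $\varrho_{j\ell}\,\partial_r\Psi^\dag_{j\ell}(1)=O(|\ell|^{\mu+3/2}e^{-|\ell|})\,\varrho_{j\ell}$.
\item Since $T_0$ has finite order (Theorem \ref{boundarydeformation}), $\xi_\ell$ inherits the factor $e^{-|\ell|}$ up to polynomial losses, for any $\varrho\in H^{s_0}$.
\end{itemize}

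To carry this through the nonlinear problem, you need spaces whose $t$-analyticity width depends on $r$. For example, use weights $e^{\sigma(r)|\ell|}$ with $\sigma(1)=0$ and $\sigma(0)=1-\epsilon$, mirroring interior analyticity at distance $1-r$ from a non-analytic boundary. This is consistent with the paper's remark that truncating at radius $R_0$ gives width $R_0-\epsilon$.

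Such weights do not commute with $\partial_r$. Conjugating the model operator $-\partial_r^2+\ell^2$ by $e^{\sigma(r)|\ell|}$ gives the symbol $(\rho+i\sigma'(r)|\ell|)^2+\ell^2$. This keeps one root in each half-plane only when $|\sigma'|<1$, and its lower bounds degenerate like $1-|\sigma'|$. That constraint is what caps the width at the edge below $1$ and produces $\epsilon$-dependent constants.

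So the edge parametrix, the obstruction projections and the tame estimates cannot be ``transferred verbatim.'' They must be redone on these $r$-dependent analytic spaces. Only then does your uniqueness argument identify the solution with $\Xi(\varrho)$.
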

\noindent More generally, if the cone $C(\Sigma)$ is truncated at outer radius $R_0,$ then the singular sets must lie in the 
space of functions admitting holomorphic extensions to the strip of width $\sigma=R_0-\epsilon.$

The relationship between the singular set $\Gamma_{\Xi(\varrho)}$ and the boundary value $\varrho$ remains unclear. We had at first 
believed that these could be prescribed in some sense independently of one another, but our attempts to prove this led to 
very delicate analytic problem with borderline behavior.  We expect that it should be possible to say much more about the 
relationship between $\varrho$ and $\Xi(\varrho)$:

%There is of course, the possibility that $\Gamma_{\xi(\varrho)}=S^1 \times \{0\}$ is simply the product singular set for {\it every} choice of $\varrho,$ in which case Theorem \ref{mainb} would be trivial. This reveals a fundamental question about the behavior of minimal submanifolds with such singularities. The two pieces of data $(\xi,\varrho)$ determine the behavior of the submanifold as $r\to 0$ and $r\to 1$ respectively, but it remains unclear if these are independent data, or whether prescribing one of them necessarily implicitly prescribes the other. The authors at first believed these two pieces of data could effectively be prescribed independently, but the analogous setup to Theorem \ref{maina} in that case leads to a fundamentally borderline analytic problem. Instead, we now believe that there should be a precise characterization of the sense in which $\varrho$ determines $\xi,$ and vice-versa, with simplest possible characterization being that the decoupled situation (i.e. $\Gamma_\xi$ is always a product) does not occur. 

%Let $X_{\xi(\varrho),\varrho}$ denote the submanifolds in the conclusion of Theorem \ref{maina}, and $\Gamma_{\xi(\varrho)}$ their singular sets.  
\begin{conj} The implicitly defined map 
\be
\varrho \mapsto \xi(\varrho) \in e^{-\sigma \sqrt{\Delta_t}}L^2(S^1;\R)
\label{locallysurjective}
\ee
is locally surjective at $\xi \equiv 0$ for every $\sigma>1.$ 
\end{conj}
This would preclude the potential `trivial' case, where $\Xi(\varrho) \equiv 0$ for every $\varrho$ (corresponding to the
singular set remaining as the round $S^1$). In fact, it may even be the case that the set of boundary perturbations $\Xi^{-1}(0)$ 
which keeps the singular curve fixed has the following more precise characterization. With $\mu$ as above, let 
$L^2(\Sigma; N\Sigma)=\mathcal B_0 \oplus \mathcal B_0^\perp$ denote the $L^2-$orthogonal splitting into the $J(\mu)$ 
low-lying eigenmodes and their complement. We may write 
\[
\varrho=(\varrho^\text{low}, \varrho^\text{high}) \in C^\infty_0(S^1;\mathcal B_0) \oplus C^\infty(S^1;\mathcal B_0^\perp),
\]
where in the first factor, $C^\infty_0$ denotes the $L^2$-orthogonal complement of the constant Fourier mode in the $S^1$-direction. 
We expect that, in addition to \eqref{locallysurjective} being locally surjective, 
\[
\xi^{-1}(0)=\Big \{  \ \big (\zeta(\varrho^\text{high})  \ , \ \varrho^{\text{high}}\big) \ \big | \ \varrho^\text{high}\in 
C^\infty(S^1;\mathcal B_0) \Big \},
\]
i.e., the condition corresponding to the product singular set remaining fixed defines the low eigenmode content of the 
boundary values implicitly as a function of the high modes.

Observe here that $\varrho^\text{low}\in C^\infty_0(S^1;\mathcal B_0)$ is, up to a regularity shift, naturally identified with 
the space of possible singular sets up to ambient isometries. If the derivative of \eqref{locallysurjective} were given by this 
identification, up to appropriate isomorphisms, this characterization of the zero set $\Xi^{-1}(0)$ is what would arise 
naturally from the Implicit Function Theorem. 
%It remains far from clear though, whether such an approach is feasible. 
Notice also that the conjecture asserts surjectivity for regularities $\sigma>1$ above the borderline. The question of what 
occurs {\it at} the borderline regularity $\sigma=1$ may be far more difficult. 

\subsection{Outline} The proof of Theorem \ref{maina} occupies the remaining 8 sections of the article. These proceed 
roughly as follows. In Section \ref{CHSSection}, we review the result of Caffarelli-Hardt-Simon, offering a succinct proof 
to presage the later proof of Theorem \ref{maina}. In Section \ref{PreliminariesSection} we begin the analysis of the cylindrical 
setting in earnest, starting with the study of the linearized Jacobi operator. Here, the Jacobi operator is a degenerate 
{\it elliptic edge operator}, and we first review some small part of the theory of this class of operators that suffices 
for our purposes. One important conclusion of this section is that for the relevant weights $\mu,$ the Jacobi operator is 
injective with an infinite-dimensional cokernel, i.e. an obstruction. Section \ref{section4} gives a precise characterization 
of this infinite-dimensional obstruction, which yields a natural identification with the space of $L^2$-sections of a 
vector bundle on the edge $S^1.$ These first four sections complete the linear analysis of Jacobi operator. 

The remaining sections set up the Nash-Moser Implicit Function Theorem used to prove Theorem \ref{maina}. Section \ref{FrechetSection} gives a brief review of the tame Fr\'echet category and the relevant version of the Implicit Function Theorem. Section \ref{deformationssection} then carefully sets up a space of deformations of the product $X_0$ consisting of submanifolds with edge singularities along nearby curves, and Section \ref{deformationoperatorsection} shows the freedom to deform the singular set results in a perfect pairing with the infinite-dimensional obstruction. Section \ref{obstructionperturbations} extends the discussion of the obstruction to define an obstruction bundle for the family of linearized operators over the space of submanifolds with edge singularities, which is necessary due to the stronger hypotheses of Nash-Moser compared to the standard Implicit Function Theorem. Finally, Section \ref{tameestimate} verifies the hypotheses of Nash-Moser to conclude the proof of Theorem \ref{maina}. 

\subsection*{Acknowledgements}
The first author had worked on this problem many years ago with Nathan Smale, and wishes to acknowledge their many fruitful conversations at that time.
The authors also wish to thank Otis Chodosh, Davide Parise, and Yichen Zhang for helpful conversations, and Zhihan Wang for pointing out the work of Rauchenberger-Semmelmann \cite{rauchenberger2026index}. G.P. is partially supported by an NSF Mathematical Sciences Postdoctoral Research 
Fellowship (Award No. 2303102).

\subsection*{Tool Usage} Large Language Models (LLMs) were used in the development of this article to aid in surveying existing results in the literature and to suggest streamlined approaches to a few technical lemmas during the editing process. No portion of this article was written directly by LLMs, nor is any portion of its core mathematical content attributed thereto.

%% file: sections/sec_chs_deformations.tex
%%
%% sec_chs_deformations.tex
%%

\section{CHS Deformations of Isolated Singularities}
\label{CHSSection}
Before embarking on our construction of minimal submanifolds with edge singularities and cylindrical tangent cones, we review the analogous
problem studied originally by Caffarelli-Hardt-Simon \cite{CHS}, which finds minimal deformations of a minimal cone with isolated singularity. 
A loose statement of their main result appears already in the introduction, but we briefly present in this section a particular formulation of 
the proof as a model and guide for the much more difficult arguments in the rest of this paper.  We note that yet another formulation of
the proof using the modern theory of conic operators was given in the recent paper \cite{FMM26}, but the one described here charts
a strategy adapted for handling the more challengng result in the edge setting.

Let $\Sigma^{n-1}$ be a minimal submanifold in the round unit sphere $S^{N-1}$, with Jacobi operator
\[
L_\Sigma = \Delta_{N\Sigma} - h h^\star;
\]
the first term on the right is the connection Laplacian on the normal bundle of $\Sigma$ (normalized to have nonnegative spectrum) and
$h$ is the second fundamental form of $\Sigma$ in the sphere, with $hh^\star$ acting by $V \mapsto \sum h^{ij} \langle h_{ij}, V \rangle$. 
The Jacobi operator on $C_1(\Sigma)$ then takes the form
\be 
L_{C(\Sigma)} = - \left(\frac{\del^2\,}{\del r^2} + \frac{(n-1)}{r} \frac{\del\,}{\del r}\right) + \frac{L_\Sigma}{r^2} \label{Lconical}
\ee
where $r$ is the radial variable on the cone.  This is an example of an elliptic conic operator; in slightly different language, $r^2 L_{C(\Sigma)}$
is called an elliptic $b$-operator because it is an elliptic combination of compositions of the basic vector fields $r\del_r$ and
$\del_{x_j}$, where $(x_1,..,x_{n-1})$ constitute a local coordinate system on $\Sigma$. There is a robust theory of differential and pseudodifferential
$b$-operators which includes, in particular, a framework for the construction of parametrices for elliptic $b$-operators.
These parametrices then determine sharp mapping and regularity properties \cite{Melrosebcalculus, grieser2001basics,Mazzeo1991EdgeOperators}. %,melrose1981transformation}.

Despite this well-developed theory, many of the results have elementary proofs in this setting. Below we opt towards these elementary proofs, and only borrow the two following standard definitions from the $b$-operator framework. Because the operator applies combinations of the vector fields $r\del_r, \del_{x_i},$ it is naturally bounded on Sobolev spaces formed with these derivatives. Given a vector bundle $E\to C_1(\Sigma)$ with smooth connection $\nabla$ (taken to be the Levi-Civita connection for bundles of tensors) we denote the corresponding covariant derivative by 

\[ \nabla^b = dr\otimes r\nabla_r \ + \ dx_i \otimes \nabla_{x_i}. \]

\begin{defn} \label{bspaces}

For $\mu \in \R$, $s\in \N$, and a vector bundle $E\to C_1(\Sigma)$ define the ${b}$-{\bf Sobolev spaces}
\smallskip
\begin{align*}
\hspace{.5cm}r^\mu H^s_b(C_1(\Sigma);E) &:= \Big\{u \in L^2 \ \Big | \ \|u\|_{r^\mu H^s_b}<\infty \Big\},
\intertext{where}
\|u\|_{r^\mu H^s_b}&:= \Bigg(\sum_{|\alpha|\leq s}\int_{C_1(\Sigma)} |(\nabla^b)^\alpha u|^2 \ r^{-2\mu} \ r^{n-1}dr dV_\Sigma \Bigg)^{1/2}
\end{align*}
\noindent and $\alpha$ denotes a multi-index ranging over $r,i=1,...,n-1$. For $s=0$, these spaces are denoted by $r^\mu L^2.$ All are Hilbert spaces under the inner-product arising from the norm's polarization. 

\smallskip 
 \label{bSobolev}
\end{defn}

Notice the weights imply that $f \in L^2$ iff $r^\mu f\in r^\mu L^2,$ and likewise for higher regularity. It is immediate that \be L_\Sigma: r^{\mu+2}H^{s+2}_b(C_1(\Sigma), NC_1(\Sigma))\lre r^{\mu}H^s_b(C_1(\Sigma), NC_1(\Sigma))\label{conicalJacobimap}\ee

\noindent is bounded. At $r=1,$ the standard boundary trace map $\text{Tr}(u)=u|_{r=1}$ is bounded 
\[\text{Tr}\, :r^{\mu+2}H^{s}_b(C_1(\Sigma; E)\to H^{s-1/2}(\Sigma;E).\] 
We write 
 \[r^\mu H^s_{b,\Dir}(C_1(\Sigma); E):=\ker(\text{Tr})\]

 \noindent to denote the space of sections with vanishing Dirichlet boundary values.

\medskip

Given the eigendata $(\phi_j,\lambda_j)$ 
for $L_\Sigma$, then to each eigensection $\phi_j$ one associates two solutions to $L_{C(\Sigma)}u=0$ in the corresponding eigenspace, namely
\be
u_j^\pm(r,x) = r^{\gamma_j^\pm} \phi_j(x),\label{conicalsolutions}
\ee
where $\gamma_j^\pm \in \C$. Any suitably convergent linear combination of the $u_j^\pm$ is likewise a solution to $L_{C(\Sigma)} u = 0$, and conversely, any homogeneous solution
of this equation must be a linear combination of these basic functions. Direct application of \eqref{Lconical} shows $\gamma_j^\pm$ obey a quadratic equation that is easily solved. 

\begin{defn} The set of {\bf indicial roots} $\gamma_j^\pm$ is determined by the eigenvalues $\lambda_j$ is
\be 
\text{Ind}(L_{C(\Sigma)})=\left\{ \gamma_{j}^\pm = \frac{2-n}{2} \pm \sqrt{ \frac{(n-2)^2}{4} + \lambda_j}\right\}.
\label{indicialroots}\ee

\noindent We tacitly assume that the indicial roots $\gamma_j^\pm\in \R$ are {\it real}, though the general case requires only cosmetic modifications. 
\end{defn}

\noindent The mapping properties of $L_{C(\Sigma)}$ now reduce essentially to arithmetic with the weights and indicial roots. In particular, one of the basic solutions \eqref{conicalsolutions} lies in a particular weighted space if and only if the function $\smash{r^{\gamma_j^\pm}}$ is integrable there. 

We are primarily interested graphical submanifolds over $C_1(\Sigma)$ defined by normal vector fields which decay like $r^{1+\delta}$ for some $\delta > 0.$ The vanishing condition ensures that these submanifolds all share both the vertex at $\{0\}$ and the accompanying tangent cone model $C(\Sigma)$ there with $C_1(\Sigma)$ itself. Because we work with Sobolev (rather than H\"older) spaces, the volume form shifts the decay by $n/2.$ Thus we assume that the domain weight in \eqref{conicalJacobimap} obeys $\mu+2>n/2 +1+\delta,$ i.e. 

\be \mu+1 > \frac{n}{2} \hspace{2cm} \mu+2- \frac{n}{2}\notin \text{Ind}(L_{C(\Sigma)}). \label{weightconstraintsbcase}\ee

\noindent The second of these conditions excludes a discrete collection of problematic weights. Indeed, a standard argument shows that if the solutions \eqref{conicalsolutions} are exactly borderline for integrability, multiplying by cut-off functions produces a Reisz sequence which prevents \eqref{conicalJacobimap} from having closed range. We call the weights $\mu_j^\pm=\gamma_j^\pm +n/2-2$ violating the second condition in \eqref{weightconstraintsbcase} the {\it indicial weights}. 

For a suitable choice of $\mu$ obeying \eqref{weightconstraintsbcase}, there are a finite number of `$+$' indicial roots $\gamma_j^+$ such that the corresponding solution \eqref{conicalsolutions} fails to be integrable. These correspond to those eigenvalues lying in the set
\be  J(\mu) = \left \{ \ \lambda_j \in \R  \ \ \Big | \ \  \sqrt{\frac{(n-2)^2}{4}+\lambda_j}  \ <  \ \mu \ + \ 1 \ \  \right\} \label{Jmuagain}  \ee

\noindent that appeared in \eqref{weightconstraint}. We denote $J=\#J(\mu)$ counted with multiplicity, once a choice of $\mu$ is specified. Finally, as in the introduction, we let 
\[
L^2(\Sigma; N\Sigma)=\mathcal B_0 \oplus \mathcal B_0^\perp,
\] 

\noindent denote the span of the corresponding low-lying eigenspaces and their complement, and $\pi^\text{low}, \pi^\text{high}$ the corresponding orthogonal projections.

With this preparation, we can now read off the mapping properties of $L_{C(\Sigma)}$ directly from the solvability of ODEs. {The arithmetic in (\refeq{weightconstraint}) is arranged precisely so that integrability constraints on the solutions (\refeq{conicalsolutions}) yield the following: 

\begin{prop} The map 
\[
L_{C(\Sigma)}: r^{\mu+2} H^{s+2}_{b,\Dir}(C_1(\Sigma), N C_1(\Sigma)) \longrightarrow r^{\mu} H^s_{b}(C_1(\Sigma), N C_1(\Sigma)) 
\]
is Fredholm if and only if $\mu$ is non-indicial. 

Moreover, assuming $\mu$ is non-indicial, it is 
\begin{enumerate}
\item[(A)] injective if and only if $\mu>\mu_0^-,$ in which case $\text{Ind}(L_{C(\Sigma)})=-J(\mu)$, and
\item[(B)] surjective if and only if $\mu<\mu_0^+,$ in which case $\text{Ind}(L_{C(\Sigma)})=J(-\mu-2)$.
\end{enumerate}
\noindent Hence in particular, it is an isomorphism for $\mu\in (\mu_0^-, \mu_0^+).$, i.e. $\mu+2-n/2 \in (\gamma_0^-,\gamma_0^+)$.
\label{mapb}
\end{prop}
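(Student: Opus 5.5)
Since $L_\Sigma$ is self-adjoint on the closed manifold $\Sigma$ with discrete spectrum, the plan is to expand everything in its eigenbasis. Writing $u(r,x)=\sum_j u_j(r)\phi_j(x)$ and $f=\sum_j f_j(r)\phi_j(x)$, the equation $L_{C(\Sigma)}u=f$ decouples into the Euler-type ODEs
\[
-u_j''-\tfrac{n-1}{r}u_j'+\tfrac{\lambda_j}{r^2}u_j=f_j ,\qquad u_j(1)=0 .
\]
Passing to $t=-\log r\in[0,\infty)$ and conjugating by the weight $r^{\mu+2-n/2}$ turns $r^{\mu+2}H^{s+2}_b$ and $r^\mu H^s_b$ into unweighted Sobolev spaces on the half-line $[0,\infty)_t$ (valued in $H^\bullet(\Sigma)$). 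The $j$-th mode then becomes the constant-coefficient operator
\[
P_j=-(\del_t-a)^2+\nu_j^2,\qquad \nu_j=\sqrt{\tfrac{(n-2)^2}{4}+\lambda_j},\quad a=\mu+1-\tfrac n2,
\]
up to sign conventions. Its characteristic roots are $a\pm\nu_j$, which are precisely the indicial roots shifted by the weight. This reduces the claim to uniform ODE analysis on the half-line with Dirichlet condition at $t=0$.

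The Fredholm criterion comes first. If $\mu$ is non-indicial, then $a\pm\nu_j\neq0$ for all $j$, and because $\nu_j\to\infty$ only finitely many $j$ have $|a|\ge\nu_j-1$, say. For the high modes ($\nu_j>|a|$) the roots $a+\nu_j>0>a-\nu_j$ have opposite signs. Solving by variation of parameters (a decaying Green's function, with Dirichlet correction at $t=0$) gives a bounded inverse with norm $O(\nu_j^{-2})$ on $L^2$, uniformly in $j$. One derivative estimate at a time then shows that summing over $j$ gives a bounded inverse on the high part, and $b$-regularity in $x$ is automatic from the $\nu_j$ weights.

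Each of the finitely many low modes is a 2-dimensional ODE problem. Conversely, if $\mu$ is indicial, some root $a\pm\nu_j$ vanishes. Cutting off $e^{0\cdot t}\phi_j$ on $[T,2T]$ and normalizing gives a sequence with $\|P u_k\|\to0$ and no convergent subsequence, so the range is not closed; this is the Riesz-sequence argument alluded to before the statement. The main obstacle is making the uniformity in $j$ careful, in particular tracking the regularity index $s$ through the $\nu_j$ dependence, but the symbol $\tau^2+\nu_j^2$ with $\nu_j$ bounded away from the shift makes this routine.

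Next comes the kernel/cokernel count for a low mode. On $t\in[0,\infty)$ the homogeneous solutions are $e^{(a+\nu_j)t}$ and $e^{(a-\nu_j)t}$, and the Dirichlet condition picks out their difference. An $L^2$ kernel element exists iff both exponents are negative, i.e. $a+\nu_j<0$. This can happen only if $a<-\nu_0$, i.e. $\mu<\mu_0^-$, which gives (A)'s injectivity criterion. For surjectivity, the cokernel is the kernel of the formal adjoint, which has exponents $-(a\pm\nu_j)$. A cokernel element in mode $j$ exists iff $a-\nu_j>0$ (the case both positive, so the adjoint solution decays), i.e. $\nu_j<a=\mu+1-n/2$.

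Translated back, this is the condition $\lambda_j\in J(\mu)$, up to the paper's normalisation in \eqref{Jmuagain}, which I would check against the exact shift. So when $\mu>\mu_0^-$ the operator is injective with cokernel of dimension $\#J(\mu)$, giving index $-J(\mu)$. Similarly, surjectivity holds iff no $\nu_j<a$, i.e. $\mu<\mu_0^+$. In that case the kernel dimension is the count of $j$ with $\nu_j<-a$, which matches $J(-\mu-2)$ under $a\mapsto-a$. The two regimes overlap exactly on $(\mu_0^-,\mu_0^+)$, which gives the isomorphism.
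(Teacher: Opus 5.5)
Your argument is essentially the paper's proof, written out in the coordinate $t=-\log r$. Like the paper, you separate variables in the eigenbasis of $L_\Sigma$, decide kernel membership by integrability of the Dirichlet combination of $r^{\gamma_j^\pm}$, and get the cokernel from the adjoint with the dual weight. Closed range comes from a uniform high-mode estimate (your Green's function bound, the paper's integration by parts), and indicial weights are ruled out by cut-off (Riesz/Weyl) sequences.

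The one thing to fix is the shift. Since $r^2L_{C(\Sigma)}=-(r\partial_r)^2-(n-2)r\partial_r+L_\Sigma$, conjugating by $r^{\mu+2-n/2}$ gives $-(\partial_t-(\mu+1))^2+\nu_j^2$, so $a=\mu+1$, not $\mu+1-n/2$. With your stated value, the equivalences $a<-\nu_0\Leftrightarrow\mu<\mu_0^-=-1-\nu_0$, $\nu_j<a\Leftrightarrow\lambda_j\in J(\mu)$ and $\nu_j<-a\Leftrightarrow\lambda_j\in J(-\mu-2)$ would all be off by $n/2$. With $a=\mu+1$ they hold exactly, and your mode counts give precisely (A) and (B).
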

\begin{proof} To show the index is finite, it suffices to calculate in individual eigenmodes. Since $\smash{u_j^\pm(1)\neq 0},$ any linear combination obeying Dirichlet boundary conditions must be a linear combination with non-zero coefficients for both solutions. Since $\gamma_j^-<\gamma_j^+$ is the more singular, injectivity occurs if and only if none of these are integrable, which is precisely where 
\[\mu+2 - n/2>\gamma_0^-,\]

\noindent i.e. $\mu >\mu_0^-.$
 Because $\smash{L_{C(\Sigma)}}$ is symmetric with respect to the unweighted volume pairing, the adjoint operator is again $\smash{L_{C(\Sigma)}},$ now viewed with the dual weight $-\mu.$ The linear combination of $\smash{u_j^\pm}$ obeying Dirichlet boundary conditions is integrable precisely for those eigenvalues lying in  \eqref{Jmuagain}. The surjectivity is simply injectivity for this dual weight. Closed range (hence Fredholmness) follows from a simple integration by parts argument.  
\noindent 

\end{proof}

For $\mu$ obeying \eqref{weightconstraintsbcase}, we are necessarily in the injective case (A) in Proposition \ref{mapb}. For such a $\mu$, the arithmetic shows the following for the solvability of the homogeneous problem.

\begin{lm}
If $\mu$ satisfies \eqref{weightconstraintsbcase}, then for any $\varrho \in \mathcal B_0^\perp$, there exists a unique solution $u$ to the boundary value problem 
\[
L_{C(\Sigma)} u=0, \qquad \text{Tr}(u)=\varrho
\] 
with $u \in r^{\mu+2}L^2$. However, if $\varrho$ contains any component in the $\calB_0$ summand, then there are no $r^{\mu+2}L^2$-bounded solutions to 
this boundary value problem.
\label{linmapconic}\qed
\end{lm}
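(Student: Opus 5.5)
I would prove Lemma \ref{linmapconic} by separation of variables in the eigenbasis $\{\phi_j\}$ of $L_\Sigma$, reducing everything to the explicit radial solutions \eqref{conicalsolutions}. Write $\varrho=\sum_j \varrho_j\phi_j$ and seek $u=\sum_j u_j(r)\phi_j$. Since $L_{C(\Sigma)}$ preserves eigenmodes, $L_{C(\Sigma)}u=0$ becomes the Euler ODE $-(u_j''+\tfrac{n-1}{r}u_j')+\tfrac{\lambda_j}{r^2}u_j=0$. Its solutions are exactly $a_j r^{\gamma_j^+}+b_j r^{\gamma_j^-}$, using the indicial roots \eqref{indicialroots}; we are tacitly assuming these are real and distinct, and a logarithmic term would appear only in the excluded degenerate case. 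The boundary condition is $a_j+b_j=\varrho_j$.

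The first step is the integrability arithmetic. A term $r^\gamma\phi_j$ lies in $r^{\mu+2}L^2$ near $r=0$ iff $\int_0 r^{2\gamma-2\mu-4}r^{n-1}\,dr<\infty$, i.e. iff $\gamma>\mu+2-n/2$. Equality is excluded by the non-indicial condition in \eqref{weightconstraintsbcase}.

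For the minus roots we have $\gamma_j^-\le \frac{2-n}{2}<\mu+2-\frac n2$, since $\mu+1>\frac n2>0$ gives $\mu+2>\frac n2\geq \frac{2-n}{2}+\frac{n}{2}$. So $b_j$ must vanish for every $j$; this is the injectivity part (A) of Proposition \ref{mapb}.

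For the plus roots, $\gamma_j^+>\mu+2-n/2$ is equivalent to $\sqrt{(n-2)^2/4+\lambda_j}>\mu+1$, which says exactly that $\lambda_j\notin J(\mu)$ by \eqref{Jmuagain}. Hence the unique candidate is $u_j=\varrho_j r^{\gamma_j^+}$. It is admissible iff $\varrho_j=0$ for every $\lambda_j\in J(\mu)$, i.e. iff $\varrho\in\mathcal B_0^\perp$. This yields both uniqueness and the nonexistence claim: if $\pi^{\rm low}\varrho\neq0$, some mode forces a nonintegrable term $r^{\gamma_j^\pm}\phi_j$. Orthogonality of the modes means no cancellation between modes can occur, since the $r^{\mu+2}L^2$ norm splits as a sum over $j$.

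The remaining step, and the only real obstacle, is convergence: showing that the series $u=\sum_{\lambda_j\notin J(\mu)}\varrho_j r^{\gamma_j^+}\phi_j$ actually defines an element of $r^{\mu+2}L^2$, with trace $\varrho$ in the appropriate sense. I would compute
\[
\|u\|^2_{r^{\mu+2}L^2}=\sum_j|\varrho_j|^2\int_0^1 r^{2\gamma_j^+-2\mu-4+n-1}\,dr=\sum_j\frac{|\varrho_j|^2}{2(\gamma_j^+-\mu-2+n/2)}.
\]
The denominators are bounded below by a positive constant, because $\mu$ is non-indicial and only finitely many $\gamma_j^+$ lie near any value. They also grow like $\sqrt{\lambda_j}$. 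So the norm is bounded by $C\|\varrho\|_{L^2(\Sigma)}^2$.

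Higher $b$-regularity follows the same way. The operators $r\partial_r$ and $\nabla_\Sigma$ bring factors $\gamma_j^+\sim\sqrt{\lambda_j}$, which is controlled by $H^{s-1/2}$-type norms of $\varrho$. Alternatively, one can invoke interior elliptic and boundary regularity. Note that the uniqueness argument needs no regularity: any $r^{\mu+2}L^2$ solution has modes satisfying the ODE, so it must equal this series.
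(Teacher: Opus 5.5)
Your proposal is correct and follows the argument the paper has in mind. The paper states this lemma with \qed, as a direct consequence of the same mode-by-mode integrability arithmetic for the model solutions $u_j^\pm=r^{\gamma_j^\pm}\phi_j$ used in the proof of Proposition~\ref{mapb}: $r^{\gamma_j^-}$ never lies in $r^{\mu+2}L^2$, and $r^{\gamma_j^+}$ lies there exactly when $\lambda_j\notin J(\mu)$. Your explicit computation $\sum_j |\varrho_j|^2/\bigl(2(\gamma_j^+-\mu-2+\tfrac n2)\bigr)\le C\|\varrho\|_{L^2}^2$, which shows the series actually converges, fills in a detail the paper leaves implicit.
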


\medskip

We now state and (re)prove the theorem of Caffarelli-Hardt-Simon. Their result is ultimately an application of the Implicit Function Theorem using the properties of Proposition \ref{mapb} for the linearization. Write $\mathcal B_0^{\perp,s} = \calB_0^\perp \cap H^{s}(\Sigma; N\Sigma)$ for $s>0$; no 
regularity filtration is needed for the finite-dimensional complement $\calB_0.$  Finally, any $\Phi \in r^{\mu+2}  H^{s+2}_b(X_0, N X_0)$ with sufficiently 
small norm parametrizes a normal graph over $C_1(\Sigma)$, which we denote by 
\[
X_{\Phi}:=\text{exp}_{NC_1(\Sigma)}(\Phi).
\]
\noindent This is an embedded  submanifold away from the origin, where it has an isolated conical singularity. 

\begin{thm}[Caffarelli-Hardt-Simon, \cite{CHS}]\label{CHS}
Fix $s > n/2$ and $\mu$ as in \eqref{weightconstraintsbcase}. Then there exists a neighborhood of the origin $\mathcal V\subseteq 
\mathcal B_0^{\perp,s+3/2}$ and a smooth map $F: \calV \to r^{\mu+2} H^{s+2}_b(X_0;NX_0) $ such that if $F(\varrho) = \Phi$ 
then the corresponding conical submanifold $X_\Phi$ obeys
\begin{enumerate}
\item[(I)] $X_{\Phi}$ is minimal in the Euclidean metric, 
\item[(II)] $X_{\Phi}$ has an isolated conical singularity at $\{0\}\in \R^N$, and
\item[(III)] The boundary values are given by 
\[\Phi|_{r=1}=(\zeta(\varrho),\varrho) \in \mathcal B_0\oplus \mathcal B_0^\perp,\]
\noindent where $\zeta: \mathcal B_0^\perp \to \mathcal B_0$ is smooth. 
\end{enumerate}
\noindent That is to say, $X_{\Phi}$ has prescribed boundary data $\varrho$ in $\mathcal B_0^\perp$, and boundary values in $\mathcal B_0$ which are
determined implicitly as a function of $\varrho$. 
\end{thm}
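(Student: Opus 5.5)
The proof is an application of the Banach space Implicit Function Theorem to a map that packages the mean curvature together with the boundary trace. Let $\mathcal M(\Phi)$ denote the mean curvature vector of $X_\Phi$, pulled back to $NC_1(\Sigma)$ via the exponential map and normal projection. Consider
\[
\mathcal F: \calB_0^{\perp,s+3/2}\times \calB_0 \times r^{\mu+2}H^{s+2}_{b,\Dir}(X_0;NX_0) \lre r^{\mu}H^s_b(X_0;NX_0),
\]
defined by $\mathcal F(\varrho,\zeta,v) = \mathcal M\big(\chi\,\mathcal P(\varrho+\zeta) + v\big)$. Here $\mathcal P$ is a fixed extension operator from boundary data on $\Sigma$ to sections over $C_1(\Sigma)$. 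For high modes we take the Poisson operator of Lemma \ref{linmapconic}, which gives the unique $r^{\mu+2}$-bounded Jacobi field with trace $\varrho$. For the finite-dimensional $\calB_0$ part we take $\zeta \mapsto \chi(r)\zeta$, with $\chi$ a cutoff vanishing near $r=0$ and equal to $1$ near $r=1$. Then $\mathcal F(0,0,0)=0$, since the cone is minimal.

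\textbf{Step 1: well-definedness and smoothness of $\mathcal F$.} The mean curvature of a normal graph has the schematic form $L_{C(\Sigma)}\Phi + Q(\Phi)$. Here $Q$ is a smooth function of $(r^{-1}\Phi, \nabla^b \Phi\cdot r^{-1}, r^{-1}(\nabla^b)^2\Phi)$, quadratic or higher, multiplied by $r^{-1}$; this is the usual scaling of a $b$-operator with coefficients homogeneous of degree $-2$. Since $\mu+2-n/2 > 1+\delta$ and $s>n/2$, the weighted Sobolev embedding gives $|(\nabla^b)^k\Phi| \lesssim r^{1+\delta}$ for $k\le 2$. Hence the arguments $r^{-1}(\nabla^b)^k\Phi$ are small and $O(r^\delta)$. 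The quadratic terms then land in $r^{\mu}H^s_b$, with an extra factor $r^{\delta}$ to spare. The algebra property of $H^s_b$ for $s>n/2$, applied after dilating to dyadic annuli, yields smoothness of $\mathcal F$.

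\textbf{Step 2: the linearization.} At the origin we have $D_v\mathcal F = L_{C(\Sigma)}$ on $r^{\mu+2}H^{s+2}_{b,\Dir}$. We also have $D_\varrho \mathcal F = 0$, because $\mathcal P\varrho$ is a Jacobi field, and $D_\zeta \mathcal F(\zeta) = L_{C(\Sigma)}(\chi \zeta)$.

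By Proposition \ref{mapb}(A), $L_{C(\Sigma)}$ is injective with closed range of codimension $J$. The key claim is that $\zeta\mapsto L_{C(\Sigma)}(\chi\zeta)$ maps $\calB_0$ isomorphically onto a complement of this range. To prove it, pair against the cokernel. That cokernel is spanned by the solutions $w_j$ of $L_{C(\Sigma)}w=0$ with Dirichlet data that are integrable in the dual weight, one for each $\lambda_j\in J(\mu)$. Integrating by parts, Green's formula gives
\[
\langle L(\chi\phi_k), w_j\rangle = \pm\int_\Sigma \langle \phi_k, \partial_r w_j|_{r=1}\rangle .
\]
The contribution near $r=0$ vanishes because $\chi$ is zero there. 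Since $w_j = c_j(r^{\gamma_j^+}-r^{\gamma_j^-})\phi_j$ has nonzero normal derivative at $r=1$ (as $\gamma_j^+\ne\gamma_j^-$), this pairing matrix is diagonal and invertible. So $D_{(\zeta,v)}\mathcal F$ is an isomorphism $\calB_0\oplus r^{\mu+2}H^{s+2}_{b,\Dir}\to r^\mu H^s_b$.

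\textbf{Step 3: conclusion.} The Implicit Function Theorem gives smooth maps $\varrho\mapsto(\zeta(\varrho),v(\varrho))$ on a neighborhood $\calV$ with $\mathcal F=0$. Set $F(\varrho)=\chi\mathcal P(\varrho+\zeta(\varrho))+v(\varrho)$; since $\chi=1$ at $r=1$, its trace is $(\zeta(\varrho),\varrho)$. This gives (I) and (III). For (II), $F(\varrho)\in r^{\mu+2}H^{s+2}_b$ decays like $r^{1+\delta}$ with $b$-derivatives, so $X_\Phi$ is embedded away from $0$ and has tangent cone $C(\Sigma)$ at the origin.

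I expect the main obstacle to be Step 1. One has to check that the nonlinear remainder genuinely maps into the weighted target smoothly, which requires the correct homogeneity of $Q$ near the vertex. A secondary point is that the Poisson extension $\mathcal P\varrho$ must lie in $r^{\mu+2}H^{s+2}_b$ with trace loss exactly $3/2$. This follows from the modewise formula $r^{\gamma_j^+}\phi_j$: high modes have $\gamma_j^+$ above the weight, and standard boundary elliptic regularity at $r=1$ handles the rest.
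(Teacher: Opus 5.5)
Your proposal follows the paper's proof essentially step for step. The paper also applies the Banach Implicit Function Theorem to $(\zeta,\varphi)\mapsto\mathcal Q(\varphi+\chi P_0(\zeta)+P_0^\perp(\varrho))$ and uses the block lower-triangular structure with $L_{C(\Sigma)}$ injective. It then shows the $\calB_0$ block is invertible by the same Green's formula pairing against the Dirichlet-normalized cokernel elements $\Psi_j=r^{2\mu}(\alpha_j r^{\gamma_j^-}+\beta_j r^{\gamma_j^+})\phi_j$, whose Neumann data are nonzero; your unweighted pairing with $w_j$ is the same computation.

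The remaining differences are harmless. You extend $\zeta$ by $\chi\zeta$ instead of $\chi\sum_j\zeta_j r^{\gamma_j^+}\phi_j$, which does not matter because the boundary term only sees the trace. If $\chi$ also multiplies the high-mode Poisson extension, then $D_\varrho\mathcal F$ is not literally zero, but the Implicit Function Theorem only needs $D_{(\zeta,v)}\mathcal F$ to be invertible.
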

\begin{proof}
If $\Phi \in r^{\mu+2}H^{s+2}_b(X_0; NX_0)$ is sufficiently small, the normal graph $X_\Phi$ is an embedded conic submanifold. This uses 
$\mu+2>n/2+1$ to ensures that $\Phi$ vanishes faster than linearly as $r \to 0$, while  $s>n/2$ ensures that elements of $H^s_b$
may be multiplied, cf.\ Section \ref{SobolevSpaces} . Using these, the mean curvature defines a smooth non-linear map
\[
\mathcal Q: r^{\mu+2}H^{s+2}_b(X_0;NX_0)\to r^{\mu} H^s_b(X_0;NX_0).
\]
We are thus searching for a solution to the boundary problem 
\be 
\mathcal Q(\Phi)=0 \qquad \qquad \pi^\high (\Phi|_{r=1})=\varrho.
\label{CHSbvp}
\ee

As explained in the introduction, we now recast this boundary value problem by explicitly controlling boundary values via the Poisson extension. Define the Poisson operator for $L_{C(\Sigma)}$, separated into its action on the low and high eigenspaces of $L_\Sigma$, as follows. The two components are 
\[
P_0:   \mathcal B_0 \to r^{\mu+2}H^{s+2}_b(X_0; NX_0),\qquad P_0^\perp: \mathcal B_0^\perp \to r^{\mu+2}H^{s+2}_b(X_0; NX_0),
\]
where,  writing $\calB_0 \ni \zeta=\sum_{j=1}^J \zeta_j \phi_j$ and $\calB_0^\perp \ni \varrho=\sum_{j>J}\varrho_j \phi_j$ for $J=\dim \mathcal B_0$ as before, then
\[
P_0(\zeta)=\sum_{j=1}^J \zeta_j u_j^+(r,x) \hspace{1cm}P_0^\perp(\varrho)=\sum_{j>J} \varrho_j u_j^+(r,x)
\]
\noindent with $u_j^+$ as in \eqref{conicalsolutions}. By definition, the left composition of either of these with restriction to $r=1$ is the identity. 

Given $\varrho \in \mathcal B_0^{\perp, s+3/2},$ solving \eqref{CHSbvp} is equivalent to finding a zero of the map 
\begin{align}
\calB_0 \oplus r^{\mu+2}H^{s+2}_{b,D}(X_0, NX_0) &\lre  r^{\mu} H^s_b(X_0; NX_0) \nonumber\\
(\zeta,\ph)& \longmapsto \mathcal Q(\ph  \  + \  \chi P_0^{}(\zeta) \ + \   P_0^{\perp}  (\varrho) ), 
\label{parameterizedIFT}
\end{align}
\noindent where $\chi$ is a smooth cut-off function equal to $1$ for $r\geq 1/2$ and vanishing for $r\leq 1/4$. The cut-off is included only
on the first extension because the Poisson extension fails to be integrable in precisely these eigenmodes by Lemma \ref{linmapconic}.  The existence of a zero of this map is immediate from the Implicit Function Theorem when $\ph$ is small provided the linearization 
\begin{align*} 
 \mathbb L_{}:  \calB_0\oplus r^{\mu+2}H^{s+2}_{b,\Dir}(X_0, NX_0) &\to r^{\mu}H^s_b(X_0; NX_0) \\
(\zeta,\ph) &\mapsto L_{C(\Sigma)}(\ph + \chi P_0(\zeta)) 
%\label{conicalJacobiextension}
\end{align*}
is surjective. We show, in fact, that $\mathbb L_{}$ is an isomorphism. By Proposition~\ref{linmapconic}, $L_{C(\Sigma)}:  r^{\mu+2}H^{s+2}_{b,\Dir} \to r^{\mu}H^s_b$ is Fredholm and injective, with $J$-dimensional cokernel. 
{Decompose $r^\mu H^s_b$ into the direct sum of $\text{range}(L_{C(\Sigma)})$ and its $r^\mu L^2$-orthogonal complement, with corresponding
$r^\mu L^2$-orthogonal projections $\Pi_0$ and $\Pi_0^\perp$. In terms of this splitting, the linearization is a block lower-triangular matrix 
\be 
\mathbb L (\zeta, \ph)=\begin{pmatrix} T_0 & 0 \\ S_0   & L_{C(\Sigma)} \end{pmatrix}\begin{pmatrix} \zeta \\ \ph \end{pmatrix},\label{conicalblockdiag}
\ee
\noindent where $T_0=\Pi_0 L_{C(\Sigma)} \chi P_0$ and $S_0 =\Pi_0^\perp L_{C(\Sigma)}\chi  P_0$.} The lower right block is an isomorphism since $L_{C(\Sigma)}$ is injective, and 
obviously surjective onto its range, while the only relevant feature of $S_0$ is simply that it is a bounded map. It therefore suffices to show that the top 
left block $T_0: \calB_0 \to \text{range}(\Pi_0^\perp) \simeq \R^J$ is an isomorphism.  

The orthogonal complement of the range is spanned by the functions
\be
\Psi_j=r^{2\mu}( \alpha_j r^{\gamma_j^-} + \beta_j r^{\gamma^+_j})\phi_j,  
\label{prelimcoker}
\ee 
where $\alpha_j,\beta_j$ are chosen so that $\Psi_j$ vanishes at $r=1$, and normalized by $\|\Psi_j\|_{r^\mu L^2}=1.$  The upper left block $T_0$ 
is then given by the pairing 
\be 
T_0(\zeta)= \sum_{j=1}^J  \br L_{C(\Sigma)} (\chi P_0 (\zeta)) , \Psi_j \kt_{r^{\mu}L^2} \ \phi_j.
\label{T0conical}
\ee
{Using the pairwise orthogonality of the $\phi_j$, then integrating first in $\Sigma$ factor shows that this pairing is diagonal.}
To show that it is nondegenerate, we integrate by parts to obtain
\begin{equation}
%\begin{align}
\label{Greens2ndbd}
\begin{split}
& \br  L_{C(\Sigma)} (\chi P_0(\zeta)) \  , \  \Psi_j\kt_{r^{\mu}L^2}  = \int_{X_0}\br  L_{C(\Sigma)} (\chi P_0(\zeta)) \  , \  \Psi_j\kt \,  r^{-2\mu}r^{n-1} \, dr dV_\Sigma  \\
& = \int_{X_0} \br  \chi P_0(\zeta)  \ , \ L_{C(\Sigma)}  (r^{-2\mu}\Psi_j)\kt_{L^2} \, r^{n-1} dr dV_\Sigma + \int_{\Sigma} \br  P_0(\zeta), \del_r \Psi_j \kt_{ } - 
\br \del_r  P_0(\zeta) , \Psi_j \kt_{} \, dV_\Sigma\\ & = \int_{\Sigma} \br \zeta \ , \ \del_r \Psi_j  \kt  \, d\Omega,
\end{split}
\end{equation}
since $\chi\equiv 1$ at the boundary. The first term in the middle expression vanishes because the $r^{-2\mu}$ weight cancels with the opposite factor in $\Psi_j$, while the last term vanishes since 
$\Psi_j$ vanishes at $r=1$.  However, because $\Psi_j$ has vanishing Dirichlet boundary values, the corresponding Neumann boundary value $\del_r \Psi_j$ is nonzero 
at $r=1$ for each $j$, which implies that the pairing is non-degenerate. It follows that $T_0$ is an isomorphism, which completes the proof.
% with $\Phi=\ph  \  + \  \chi P_0^{}(\zeta) \ + \   P_0^{\perp}  (\varrho)$ for $(\zeta,\ph)$ giving the zero of (\refeq{parameterizedIFT}).
\end{proof}

\begin{rem} As explained in the introduction to this section, the preceding proof is a simplified model fro the cylindrical/edge case. In that setting, we include two additional infinite-dimensional degrees of freedom $(\xi,\alpha)$ which are $S^1$-parameterized translations and rotations of the $\R^N$ factor in the ambient $S^1\times \R^N$ respectively. As in this conic setting, the linearization there 
maintains the lower triangular block structure, but in that setting the low eigenmode components are infinite dimensional,
hence the pairing analogous to the one in \eqref{T0conical} involves infinite rank factors.   In fact, the now infinite sequence of inner products decay 
rapidly, thus the pairing, although it remains non-degenerate, fails to have closed range. This is the source of the loss of regularity, which necessitates Nash-Moser theory in the edge case. 
\label{lossrem}
\end{rem}
\medskip

%% file: sections/sec_edge_calculus.tex
%%
%% sec_edge_calculus.tex
%%

\section{Basics of the Edge Calculus}
\label{PreliminariesSection}
We begin our analysis of minimal submanifolds with edge singularities by extending the discussion of the Jacobi operator $L_0$ to $S^1\times C_1(\Sigma).$ 
This differs from the Jacobi operator on $C_1(\Sigma)$ only by the addition of the Laplacian in the product factor. Indeed,
\be
L_0\ph = \left(L_{C(\Sigma)}  +\Delta_t \right) \ph 
=\left(-\left(\p{^2}{r^2} \ + \ \frac{(n-1)}{r}\p{}{r} \right) \ + \ \frac{\Delta_\Sigma -hh^\star}{r^2} \ + \ \Delta_t \right)\ph\label{L0expression},
\ee
\noindent where $\Delta_t=-\del_t^2$ is the positive definite Laplacian on $S^1$. In the previous setting, we used the $b$-calculus to study the Jacobi operator, but the degeneracy structure here is a bit different: indeed, $r^2 L_0$ is an `elliptic 
combination' of the basic vector fields $r\del_r, r\del_t, \del_{x_j}$ for $(x_1,...,x_{n-1})$ again local coordinates on $\Sigma.$ Operators of this type are called {\bf elliptic differential edge operators}. 

Since we shall be drawing heavily on the existing theory of such operators, as developed in \cite{Mazzeo1991EdgeOperators, MazzeoEdgeOperatorsII},
we begin by reviewing some basic facts about this theory, including definitions of natural function spaces on which such operators act and
the main `classical' mapping properties of elliptic edge operators on these spaces. 

\begin{notation} \label{calDdef}

Throughout the article, we tacitly use complex Fourier series with the complex-valued eigenfunctions $e^{i\ell t}$ for vector fields along the edge $S^1.$ It is understood in this that for real vector bundles (e.g. $NX_0, \mathcal B_0$) we restrict to the real part, i.e. if $\eta:S^1\to \R$ then
\[
\eta(t) = \sum_{\ell \in \N} \Big (a_\ell \cos(\ell t) -b_\ell \sin(\ell t)\Big)= \text{Re}\left(\sum_{\ell \in \Z} \eta_\ell e^{i\ell t}\right),
\]
where $a_\ell, b_\ell \in \R$ and $\eta_{\pm \ell} = \frac{1}{2}(a_\ell \pm ib_\ell) \in \C$, but we will simply write $\sum_{\ell \in \Z} \eta_\ell e^{i\ell t}.$ 
\end{notation}

\subsection{Edge operators}  \label{edgeoperators}   
Let  $X_0=S^1\times C_1(\Sigma)$ denote the product minimal submanifold, and $\wtX_0$ its blow-up, i.e. the space obtained by
replacing $\Gamma_0 =  S^1\times \{0\}$ by $ S^1\times \Sigma.$ Alternately, if $r$ is the polar distance in $\RR^N$, $t$ the 
standard arclength coordinate on $S^1$ and $x = (x_1, \ldots, x_{n-1})$ local coordinates on $\Sigma$, then $(t,r,x)$ constitute a smooth local coordinate 
system on $\wtX_0$ near $r=0$; we have added $S^1\times \{0\} \times \Sigma$ as a new boundary face.
\begin{defn} We define \label{edgedefs}
\begin{enumerate}[leftmargin=30pt]
\item[(A)] The algebra of {\bf edge vector fields} $\calV_e(\wtX_0)$ is the space of smooth vector fields on $\wtX_0$ which, at $r=0$,
lie tangent to the fibers $\{r = 0, t = \mathrm{const} \}$. In local coordinates, any $V \in \calV_e(\wtX_0)$ is a finite
linear combination of smooth (on $\wtX_0$) multiples of the generating vector fields $\{r\del_r, r\del_t, \del_{x_j}\}.$ Note that $\calV_e$ is closed under the usual Lie bracket of vector fields. 
\item[(B)] The ring of {\bf edge operators} $A$,
which are those differential operators which can be expressed as locally finite sums of products of elements of $\calV_0(\wtX_0)$.
These are filtered by their order $m$. Thus, again in local coordinates, we can write any $A \in \mathrm{Diff}_e^m(X_0)$ as
\begin{equation}
A = \sum_{j + \ell + |\alpha| \leq m}  a_{j \ell \alpha}(t,r,x) (r\del_r)^j (r\del_t)^\ell \del_x^\alpha,
\label{edgeop}
\end{equation}
where all $a_{j\ell \alpha} \in C^\infty(\wtX_0).$
\item[(C)] Given such an $A,$ the {\bf edge symbol} is
\[
{}^e\sigma_m(A)(t,r,x; \tau, \rho, \xi) = \sum_{j+\ell + |\alpha| = m} a_{j\ell \alpha}(r,t,x) \rho^j \tau^\ell \xi^\alpha.
\]
(This has an invariant interpretation, omitted here, as a smooth function, polynomial on the fibers of a certain vector bundle over $\wtX_0$.)
The edge operator $A$ is called {\bf elliptic} if this symbol is invertible for $(\rho,\tau,\xi) \neq 0.$ 
\end{enumerate}
\end{defn}

\noindent The above definition supposes that $A$ acts on scalar functions. There is a straightforward extension to such operators acting on sections of some vector
bundle $E,$ in which setting, using local trivializations, each coefficient $a_{j\ell \alpha}\in \text{End}(E)$ is a smooth bundle map. The derivatives 
are likewise replaced by covariant derivatives $\nabla_V$ for $V\in \calV_e$ with respect to a chosen connection on $E$. 
As already noted, the Jacobi operator $L_0$ is, up to the multiplicative factor $r^2$, an elliptic edge operator acting on sections of the normal bundle $NX_0$. 

Just as for $b$-operators, we define the set of {\it indicial roots} of any (elliptic) edge operator. This set consists of those values $\gamma \in \CC$ 
for which there exists a smooth function $\varphi(x)$ such that
\be
A (r^\gamma \varphi(x)) = O( r^{\gamma+1}). \label{indicialcancellation}
\ee
The expected order of this expression on the left is $r^{\gamma}$, so in order for $\gamma$ to be an indicial root, there must
be a leading order cancellation.  Carrying this out for the Jacobi operator, we see that the indicial roots of $L_0$ are precisely
the same as those for $L_{C_1(\Sigma)}$, namely %{\color{blue} the expression (\refeq{indicialroots})
\be 
\text{Ind}(L_0)=\left\{ \gamma_{j}^\pm \ \Big | \ \gamma_j^\pm = \frac{2-n}{2} \pm \sqrt{ \frac{(n-2)^2}{4} + \lambda_j}\right\},
\label{indroots}
\ee
where $L_0$ is now the edge operator \eqref{L0expression}, and the $\lambda_j$ are the eigenvalues of the Jacobi operator $L_\Sigma$. The coefficient function
$\varphi(x)$ associated to each $\gamma_j^\pm$ for which \eqref{indicialcancellation} holds is precisely the corresponding eigenfunction of $L_\Sigma$ with eigenvalue $\lambda_j$.
 Although there most likely exist examples of minimal submanifolds $\Sigma$ where some $\lambda_j \leq - (n-2)^2/4$ for some
finite number of $j$, we tacitly assume this is not the case (and later impose  spectral rigidity in the sense of Definition \ref{spectralassumptions} anyway, from which this follows {\it a fortiori}) which is equivalent to the assumption that $\gamma_j^\pm \in \R$ are real for all $j$.

\begin{rem}\label{constantindicialrem}
For general edge operators, the indicial roots may depend on the point $(t_0,0, x_0) \in X_0^\circ$, but we shall restrict attention here only to
those operators where indicial roots are independent of boundary variables.  This property is obvious for $L_0$, but is not true for 
the Jacobi operator $L$ on arbitrary perturbations of $S^1\times C(\Sigma)$. Nevertheless, we ultimately work with a class of perturbations for which this is true, constructed in detail in Section \ref{deformationssection}. We therefore tacitly assume, henceforth, that we work with a class of edge operator for which the indicial roots coincide with those of $L_0.$

\end{rem}

\subsection{Sobolev spaces}
\label{SobolevSpaces} 
We now define an appropriate family of Banach spaces on which elliptic edge operators are well-behaved. As in Remark \ref{constantindicialrem}, we restrict to the class of operators $L$ with $\text{Ind}(L)=\text{Ind}(L_0).$

 The simplest mapping properties of any such $L$ 
are on function spaces associated to the structure algebra $\calV_e$, so accordingly we define the (weighted) edge Sobolev spaces.  

\begin{defn}  \label{edgesobolevdef} For any integer $k \geq 0,$ and $\mu\in \R$ the {\bf Edge Sobolev Spaces} of sections of a bundle $E\to X_0$ of regularity $k$ and weight $\mu$ are 
\[
r^\mu H^k_e(X_0;E) = \left\{u \in L^2 \ \Big | \  \int_{X_0} | \nabla_e^\alpha u|^2 \ r^{-2\mu} \ r^{n-1}dr dt dV_\Sigma < \infty, \ \ \forall\, \alpha\ \mbox{s.t.}
\  |\alpha| \leq k\right\},
\] where $\nabla_e$ denotes the covariant derivative with respect to an edge vector field $r\del_r, r\del_t, \del_x$. These spaces are equivalent for different choices of connection on $E$. When $k=0$ these spaces coincide with $r^\mu L^2(X_0; E)$.  
\end{defn}
We remark that these spaces are most properly defined over the blow-up $X_0^\circ,$ but since we are ultimately interested in weights $\mu$ that ensure vanishing at $r=0,$ the conflation of these domains is innocuous here. It is immediate from this definition that
\be
L: r^{\mu+2} H^{k+2}_e(X_0, N X_0) \longrightarrow r^{\mu} H^k_e(X_0, NX_0)
\label{bmL0e}
\ee
is bounded for every $\mu \in \RR$ and $k \geq 0$. This follows immediately from the fact that $r^2 L$ is (locally) a sum of products
of at most two elements of $\calV_e$. As we discuss below, using the edge ellipticity, it is possible to show that this mapping 
is Fredholm for a limited range of values of $\mu$,
and semi-Fredholm for most other values. Unfortunately, the geometric requirements of our problem --- namely that the perturbing vector fields vanish at least linearly along the singularity --- demand that we consider this operator
acting on space where the weight parameter is outside the Fredholm range, with an infinite dimensional cokernel.

One serious defect of the edge Sobolev spaces (Definition \ref{edgesobolevdef}) is that elements in them have very little regularity along the edge $\Gamma_0=S^1\times \{0\}.$  One
partial remedy for this is to consider a slightly less singular set of structure vector fields, namely, the algebra of $b$-vector fields
$\calV_b(\wtX_0)$ introduced in the last section. These are generated over $\calC^\infty(\wtX_0)$ by $r\del_r$, $\del_t$, and the 
$\del_{x_j}$. Stated invariantly, these are the vector fields which are smooth on $\wtX_0$ and merely required to 
lie tangent to the $r=0$ component of $\del \wtX_0$.    We defined the $b$-Sobolev spaces in terms of 
these, and while it is tempting to simply use these instead of the edge Sobolev spaces, this is not possible since it is not necessarily
true that $L$ has good mapping properties on $b$-Sobolev spaces.  We thus compromise by considering a class of hybrid spaces. These hybrid spaces are defined to consist of those sections which remain in $L^2$ even when differentiated by at most $k+s$ 
vector fields, where $s$ of these are allowed to lie in $\calV_b$ and the rest in $\calV_e.$ If $\nabla$ is a connection on a vector bundle $E\to X_0,$ then the covariant derivatives with respect to the two types of vector fields are \begin{equation}
\begin{split}
\nabla^b&=dr\otimes r\nabla_r \ + \ dx_i \otimes \nabla_{x_i} \ + \ dt \otimes \nabla_t,  \\
\nabla^e&=dr\otimes r\nabla_r \ + \ dx_i \otimes \nabla_{x_i} \ + \ dt \otimes r\nabla_t. 
\end{split}
\label{bederivs} 
\end{equation}

\noindent Observe that $|\nabla_e u|\leq |\nabla_b u|$ pointwise. 

\begin{defn} \label{hybridspacesdef}
For integers $k,s\geq 0$ and $\mu\in \R$, the {\bf hybrid $b$-e Sobolev spaces} are 
\begin{align}
r^\mu H^{s,k}_{b,e} (X_0; E)&: =\Big\{u \in L^2 \  \Big | \  \|u\|_{\mu,(s,k)} < \infty\Big\}. \nonumber\\
\intertext{where}
\|u\|^2_{\mu,(s,k)}&:= \sum_{\alpha,\beta}\int_{X_0} | \nabla_e^\alpha \nabla_b^\beta u|^2 \ r^{-2\mu} \ r^{n-1}dr dt dV_\Sigma \label{hybridnorms}
\end{align}
\noindent where the sum is over multi-indices $\alpha,\beta$ such that $
 |\beta| \leq s  \ , \ |\alpha|+|\beta|\leq s+ k.$
 When either $s,k$ or both are zero, these coincide with the previous spaces 
$r^\mu H^s_b$ and $r^\mu H^k_e, r^\mu L^2,$ and when $k=0$ the norms simply by $\smash{\|-\|_{\mu,s}}$ These spaces are once again independent of the choice of a smooth background connection on $E$. As before, denote subspaces with vanishing Dirichlet boundary values as $\smash{r^\mu H^{s,k}_{b,e,D}(X_0;NX_0)}.$
\end{defn}

If $V \in \calV_e$ and $W \in \calV_b$, then $[V,W] \in \calV_e$, which yields that 
\be
L: r^{\mu+2} H^{s,k+2}_{b,e}(X_0, NX_0) \longrightarrow r^{\mu} H^{s,k}_{b,e}(X_0, NX_0)
\label{bmL0eb}
\ee
is bounded for every $\mu$.

Due to the quasi-linearity of the mean-curvature equation, we must work with a range of weights $\mu$ and regularities $(s,k)$ that afford sufficient control on vector fields under (i) multiplication and (ii) composition with smooth functions. In fact, because our later use of Nash-Moser theory allows an arbitrary loss of $b$-derivatives, it suffices to show these for the pure $b$-spaces, i.e. those with $k=0$ edge derivatives.

In this case, we also consider the $b$-analogue of the $C^k$-spaces
\be 
r^\mu C^{k}_b (X_0; NX_0):=  \Bigg \{ \ph \in L^2(X_0; NX_0) \  \ \Big |  \ \ \|\ph\|_{r^{\mu} C^{k}_b}<\infty\Bigg \}. 
\label{stdCk}
\ee
 using the norms
\be 
\|\ph\|_{r^{\mu}C^{k}_b} :=  \sum_{|\alpha| \leq k} \sup_{y \in X_0}  \big | \left(r^{-\mu} 
\nabla_b^\alpha \ph\right)(y) \big | 
\label{holderCbnorm} 
\ee
There are similar generalizations involving H\"older seminorms, but we will not need these. The analogue of the standard H\"older embedding is: 

\begin{lm} \label{standardHolder}Suppose that $s > \tfrac{\dim(X_0)}{2}.$ Then 

\[ r^\mu H^s_b(X_0;NX_0)\hookrightarrow r^{\mu-\tfrac{n}{2}}C^k_b(X_0; NX_0)\]

\noindent is a continuous embedding. 
\end{lm}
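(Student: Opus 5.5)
The plan is to reduce to the standard Sobolev embedding on a fixed compact piece by a dyadic rescaling argument. The $b$-vector fields $r\del_r,\del_t,\del_{x_j}$ are invariant under the dilation $r\mapsto \lambda r$. Concretely, decompose $X_0$ into dyadic shells
\[
A_m=\{2^{-m-1}\le r\le 2^{-m}\}\times S^1\times\Sigma, \qquad m\ge 0,
\]
each slightly enlarged to $\widetilde A_m=\{2^{-m-2}\le r\le 2^{-m+1}\}$ (intersected with $r\le 1$). In the logarithmic coordinate $\tau=-\log r$, the shell $\widetilde A_m$ becomes the fixed compact manifold with boundary $\widetilde A=[ (m-1)\log 2,(m+2)\log 2]\times S^1\times \Sigma$, up to translation in $\tau$. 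Under this translation the $b$-derivatives $\nabla_b$ become ordinary covariant derivatives $\del_\tau,\del_t,\del_x$ for a fixed product metric $d\tau^2+dt^2+g_\Sigma$. The bundle $NX_0$ is pulled back from $S^1\times\Sigma$, so it is also translation invariant. The dimension of $\widetilde A$ is $\dim X_0=n+1$, so the hypothesis $s>\dim(X_0)/2$ is exactly what the classical embedding requires.

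The key steps, in order, are as follows.

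\textbf{Step 1: uniform estimate on the model piece.} Apply the classical Sobolev embedding on the fixed compact model $\widetilde A$ (with a uniform extension operator across the boundary, or interior estimates from $\widetilde A$ to the core $A$). This gives, with a constant $C$ independent of $m$,
\[
\sup_{A_m}|\nabla_b^\alpha \ph|\le C\Big(\sum_{|\beta|\le s}\int_{\widetilde A_m}|\nabla_b^\beta\ph|^2\, d\tau\, dt\, dV_\Sigma\Big)^{1/2}, \qquad |\alpha|\le k,
\]
valid for $k< s-\dim(X_0)/2$. I interpret the statement with this implicit relation between $k$ and $s$.

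\textbf{Step 2: insert the weights.} On $\widetilde A_m$ we have $r\sim 2^{-m}$ up to a factor of $4$, and the measures compare as $d\tau = dr/r$. Hence
\[
r^{-2\mu}r^{n-1}\,dr \sim 2^{-m(2\mu-n)}\,d\tau.
\]
The integral over $\widetilde A_m$ with measure $d\tau$ is therefore bounded by $C\,2^{m(2\mu-n)}\|\ph\|^2_{r^\mu H^s_b(\widetilde A_m)}$. Taking square roots and multiplying by $r^{-(\mu-n/2)}\sim 2^{m(\mu-n/2)}$ shows that
\[
\sup_{A_m} r^{-(\mu-n/2)}|\nabla_b^\alpha\ph|\le C\,\|\ph\|_{r^\mu H^s_b(\widetilde A_m)} \le C\,\|\ph\|_{r^\mu H^s_b}.
\]
The powers of $2^m$ cancel exactly, and this cancellation is what produces the shift by $n/2$ in the weight.

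\textbf{Step 3: combine the shells.} Take the supremum over $m$. The shells with $m=0,1$ touching $r=1$ are handled by the ordinary embedding up to the boundary, using a standard extension across $r=1$. Note that $\nabla_b^\alpha$ commutes with multiplication by $r^{-\mu}$ only up to lower-order terms, because $r\del_r(r^{-\mu})=-\mu r^{-\mu}$. These lower-order terms are bounded by the same quantities, so they do not affect the estimate.

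I expect the only real obstacle to be bookkeeping. One part is matching the weight convention: the volume factor $r^{n-1}$ counts only the cone dimension $n$, so the shift is $n/2$ rather than $(n+1)/2$. The other part is ensuring that the constants are uniform in $m$, which the translation invariance of the $b$-structure on $\widetilde A$ guarantees. Continuity of the embedding is then immediate from linearity.
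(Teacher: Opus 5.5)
Your argument is correct and is essentially the paper's. Both pass to logarithmic coordinates, where the $b$-derivatives become translation invariant and $r^{-2\mu}r^{n-1}dr=r^{n-2\mu}\,d\tau$ produces the shift by $n/2$. The paper conjugates globally by $e^{(\mu-n/2)\tau}$ and quotes the Sobolev embedding on the half-cylinder $S^1\times(-\infty,0]\times\Sigma$; your dyadic shells, on which the weight is constant up to a bounded factor, amount to proving that half-cylinder embedding by uniform local estimates.

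There is a sign slip in Step 2. On $\widetilde A_m$ you have $r^{n-2\mu}\sim 2^{m(2\mu-n)}$, so the $d\tau$-integral is bounded by $C\,2^{-m(2\mu-n)}\|\varphi\|^2_{r^\mu H^s_b(\widetilde A_m)}$, not $C\,2^{m(2\mu-n)}\|\varphi\|^2_{r^\mu H^s_b(\widetilde A_m)}$. With your exponents the factors would compound to $2^{m(2\mu-n)}$ rather than cancel. With the sign corrected, the exact cancellation you claim does hold and the rest of the proof goes through.
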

\begin{proof} Let $r=e^{\tau},$ so that $X_0=S^1 \times (-\infty,0]_\tau \times \Sigma.$ The chain rule shows that $r\del_r=\del_\tau.$ Thus this change of variables induces an isomorphism 

\[r^{\mu}H^s_b(X_0;NX_0) \simeq e^{(\mu-\tfrac{n}{2})\tau}H^s(S^1 \times (-\infty,0]_\tau \times \Sigma; r^*NX_0)\]

\noindent where the shift in the weight comes from the volume form $dV_{C(\Sigma)}=r^{n-1}dr dV_\Sigma.$ Multiplicaiton by the weight 
$e^{(\mu-n/2)\tau}: H^s \to e^{(\mu-n/2)\tau}H^s$ is an isomorphism, and likewise for $C^k.$ The conclusion then follows from the standard embedding on $H^s(S^1\times (-\infty,0] \times \Sigma)$ after conjugating by the weight. 

\noindent 
\end{proof}

Similarly, we have the following multiplication and interpolation theorems: 

\begin{lm} \label{multiplications} Suppose that $s> \frac{\dim(X_0)}{2}.$
\begin{enumerate}

\item[(A)] Given $0\leq s_1 < s < s_2$, there is a constant $C=C({s,s_1,s_2})$ such that the interpolation inequalities 
\[ 
\|\ph\|_{\mu,s} \leq C \|\ph\|^\alpha_{\mu, s_1} \|\ph\|_{\mu, s_2}^{1-\alpha}
\]
hold, where $\alpha=\tfrac{s_2-s}{s_2-s_1}$.
\bigskip 

\item[(B)] If $\mu+1>\tfrac{n}{2}$, there are constants $C=C(\mu,\sigma)$ such that a smooth fiberwise bilinear bundle 
map extends to a continuous bilinear operator 
\[ 
r^{\mu+1}H^s_b(X_0;NX_0)   \times r^{\mu+1}H^s_b(X_0;NX_0)  \lre r^{\mu+1}H^s_b(X_0;NX_0).  
\]

\bigskip

\item[(C)] When $\mu+1>\tfrac{n}{2}$ but $\mu<\tfrac{n}{2},$ there is a dimensional constant $a=a(n)$ such that a smooth fiberwise bilinear bundle map  extends to a bounded operator
\[ 
 r^{\mu+1}H^{s+a(n)}_b(X_0;NX_0)   \times r^{\mu}H^s_b(X_0;NX_0)  \lre r^{\mu}H^s_b(X_0;NX_0).  
\]
\end{enumerate}
\medskip 

\noindent The same results hold for smooth bilinear bundle maps between any bundles of tensors. 
\end{lm}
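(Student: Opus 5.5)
The plan is to reduce all three statements to their classical counterparts on a product cylinder, using the logarithmic change of variables from the proof of Lemma \ref{standardHolder}. Setting $r=e^{\tau}$ identifies $X_0$ with $Y:=S^1\times(-\infty,0]_\tau\times\Sigma$, and $\nabla_b$ becomes the ordinary covariant derivative in $(t,\tau,x)$. This gives the isometric identification
\[
r^{\mu}H^s_b(X_0;NX_0)\simeq e^{(\mu-\frac n2)\tau}H^s(Y).
\]
The manifold $Y$ has bounded geometry: it is a half-infinite cylinder over the compact manifold $S^1\times\Sigma$.

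For (A), I would write $u=e^{-(\mu-n/2)\tau}\ph\in H^s(Y)$. Conjugation by the weight changes derivatives only by lower-order terms with constant coefficients, so the norms $\|\ph\|_{\mu,s}$ and $\|u\|_{H^s(Y)}$ are uniformly equivalent. It then suffices to prove interpolation on $H^s(Y)$. I would cover $Y$ by unit slabs $\{k-1\le\tau\le k+1\}$, which are uniformly bi-Lipschitz to a fixed compact manifold, and choose a partition of unity with uniformly bounded derivatives. On each slab the standard Gagliardo--Nirenberg/Sobolev interpolation holds with a uniform constant. Summing gives the square-summed norms, and Hölder's inequality on the sum with exponents $1/\alpha$ and $1/(1-\alpha)$ closes the estimate.

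For (B), let $B$ be the bilinear map. I would write $\ph_i=e^{(\mu+1-\frac n2)\tau}u_i$, so that
\[
B(\ph_1,\ph_2)=e^{(\mu+1-\frac n2)\tau}\,\big[e^{(\mu+1-\frac n2)\tau}B(u_1,u_2)\big].
\]
Since $\mu+1-n/2>0$ and $\tau\le0$, the extra factor $e^{(\mu+1-n/2)\tau}$ is bounded with all derivatives bounded. So it is enough to know that $H^s(Y)$ is an algebra for $s>\dim Y/2$. This again follows locally on slabs: the estimate $\|fg\|_{H^s}\lesssim\|f\|_{H^s}\|g\|_{C^0}+\|f\|_{C^0}\|g\|_{H^s}$ holds uniformly, and Lemma \ref{standardHolder} bounds the sup norms. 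The coefficients of $B$ are smooth on $\wtX_0$, hence bounded in $C^k_b$, so they are harmless multipliers.

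For (C), the weights give
\[
B(\ph_1,\ph_2)=e^{(\mu-\frac n2)\tau}\,\big[e^{(\mu+1-\frac n2)\tau}B(u_1,u_2)\big],
\]
with $u_1\in H^{s+a}$ and $u_2\in H^s$. The exponent of the extra factor is the same as in (B), so it is again bounded. The remaining task is the asymmetric product estimate $\|fg\|_{H^s}\lesssim\|f\|_{C^s}\|g\|_{H^s}$. With $a(n)>\dim Y/2$, Lemma \ref{standardHolder} (applied with regularity $s+a$) places $u_1$ in $C^s_b$, and the Leibniz rule then gives the bound directly.

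I expect the only genuine subtlety to be bookkeeping. One point is the uniformity of the local constants along the infinite cylinder. The other is tracking the lower-order terms produced when derivatives fall on the weights or on the coefficients $a_{j\ell\alpha}$. Both are controlled because $r\partial_r$ applied to $r^\gamma$ simply returns $\gamma r^\gamma$.
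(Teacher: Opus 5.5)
Your proposal is correct and follows essentially the same route as the paper. Both pass to the logarithmic coordinate $\tau=\log r$ and conjugate by the exponential weight. Both then observe that the leftover factor $e^{(\mu+1-\frac n2)\tau}$ is bounded on $\tau\le 0$, which is the paper's ``sub-multiplicativity of the weights.'' This reduces (A)--(C) to standard interpolation and to the $H^s\times H^s\to H^s$ and $C^s\times H^s\to H^s$ products on $S^1\times(-\infty,0]\times\Sigma$. Your slab decomposition simply makes explicit the uniformity along the infinite cylinder that the paper leaves implicit.
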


\begin{proof}Under the same coordinate change as in the proof of Lemma \ref{standardHolder}, the sub-multiplicativity of the exponential weights 
\[e^{(\mu+1-n/2)\tau }f\cdot e^{(\mu+1-n/2)\tau}g\leq e^{(\mu+1-n/2)\tau}(fg) \hspace{2cm} e^{(\mu+1-n/2)\tau}f\cdot e^{(\mu-n/2)\tau}g\leq e^{(\mu-n/2)\tau}(fg) \]
\noindent show that, after conjugation by the weight, these reduce to the standard interpolation for (A), and multiplications $H^s\times H^s\to H^s$ and $C^s\times H^s\to H^s$ on $S^1\times (-\infty,0] \times \Sigma$ for (B) and (C) respectively.
\end{proof}

Finally, we have the following composition theorem. For this, we let $n(\Sigma)=\{(x,\nu) \in NX_0 \ | \ |\nu|<\rho_0\}$

\noindent denote the disk bundle of radius $\rho_0.$ For $\rho_0$ sufficiently, exponential normal coordinates $(x,\nu)$ identify this with a tubular neighborhood $n(\Sigma)\subseteq S^{N-1}$ denoted by the same symbol. In these exponential normal coordinates the metric may be written 

\[g_{S^{N-1}}= g_{\Sigma} + d\nu^2 + h(x,\nu)\]

\noindent where $h(x,\nu)=O(\nu).$ Perturbations by the graphs of normal vector fields $\Phi$ contribute terms of the form $h(x,\Phi(t,r,x))$ and its derivatives to the mean curvature. These are bounded by the following lemma:

\begin{lm} \label{composition} Suppose that $h: n(\Sigma)\to \text{Sym}^2(T^*S^{N-1})$ is a smooth symmetric tensor vanishing along the zero section. Then,  there is a $\delta_0>0$ if $\Phi \in r^{\mu+1}H^s_b(X_0;NX_0)$ with $s> \text{dim}(X_0)/2$ and $\|\Phi\|_{\mu+1,{ \dim(X_0)/2}} <\delta_0,$ then 

\[\|h(x,\Phi(t,r,x))\|_{\mu+1, s} \leq C_s \|\Phi\|_{\mu+1, s} \]
\end{lm}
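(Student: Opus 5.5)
The plan is to reduce to a Moser-type composition estimate on a half-cylinder, as in the proofs of Lemma \ref{standardHolder} and Lemma \ref{multiplications}. Under the change of variables $r=e^{\tau}$ we have $r\del_r=\del_\tau$, so $b$-derivatives become ordinary derivatives on $S^1\times(-\infty,0]_\tau\times\Sigma$. Because $h$ vanishes along the zero section, Taylor's theorem with integral remainder in the fiber variable gives
\[
h(x,\nu)=\widetilde h(x,\nu)\cdot \nu, \qquad \widetilde h(x,\nu)=\int_0^1 (\del_\nu h)(x,\sigma\nu)\,d\sigma,
\]
with $\widetilde h$ smooth on the disk bundle $n(\Sigma)$, taking values in $\mathrm{Hom}(N\Sigma,\mathrm{Sym}^2T^*S^{N-1})$. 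Thus $h(x,\Phi)=\widetilde h(x,\Phi)\cdot\Phi$. The weight $r^{\mu+1}$ is then carried entirely by the linear factor $\Phi$, while $\widetilde h(x,\Phi)$ is a bounded, unweighted coefficient.

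First I ensure that the composition makes sense. By Lemma \ref{standardHolder}, applied with $s_0$ slightly above $\dim(X_0)/2$ (or with the threshold norm in the hypothesis, after adjusting indices), we get $\sup|r^{-(\mu+1-n/2)}\Phi|\lesssim\|\Phi\|$. Since $\mu+1>n/2$, this gives $|\Phi|\le C\delta_0<\rho_0$ on all of $X_0$, so $\Phi$ takes values in $n(\Sigma)$ and $\widetilde h(x,\Phi)$ together with its fiber derivatives is uniformly bounded.

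Next I estimate $\nabla_b^\alpha\big(\widetilde h(x,\Phi)\Phi\big)$ for $|\alpha|\le s$. The Leibniz rule and the chain rule (Fa\`a di Bruno) express it as a sum of terms
\[
(\del^{j}_{x,\nu}\widetilde h)(x,\Phi)\cdot \nabla_b^{\beta_1}\Phi\cdots\nabla_b^{\beta_m}\Phi,\qquad \textstyle\sum|\beta_i|\le |\alpha|,
\]
with at least one factor $\Phi$ undifferentiated or differentiated. The $b$-derivatives falling on the $x$-slot of $\widetilde h$ are harmless because $\del_{x_j},\del_t,r\del_r$ have bounded coefficients. In each such term I place the factor carrying the most derivatives in weighted $L^2$ with weight $r^{\mu+1}$, and all remaining factors in unweighted $L^\infty$. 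This is the usual Gagliardo--Nirenberg/Moser argument. Lemma \ref{multiplications}(A) together with the $L^\infty$ bounds gives
\[
\|\nabla_b^{\beta_1}\Phi\cdots\nabla_b^{\beta_m}\Phi\|_{\mu+1,0}\le C\,\|\Phi\|_{L^\infty_b\text{-low}}^{m-1}\|\Phi\|_{\mu+1,s}.
\]
Here the low norms are controlled by $\|\Phi\|_{\mu+1,\dim(X_0)/2}$ via the embedding, and hence by $\delta_0$. Note also that the unweighted $L^\infty$ bounds on the extra factors $\nabla_b^{\beta_i}\Phi$ are fine, since $r^{\mu+1-n/2}\le1$ only helps. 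Summing over the terms yields $\|h(x,\Phi)\|_{\mu+1,s}\le C_s(1+\delta_0^{s})\|\Phi\|_{\mu+1,s}\le C_s\|\Phi\|_{\mu+1,s}$.

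The main obstacle is the tame interpolation step. To keep the estimate linear in the top norm, with constants depending only on $\delta_0$, I must check that intermediate derivatives $\nabla_b^{\beta_i}\Phi$ in $L^{2s/|\beta_i|}$-type norms interpolate between the $L^\infty$ bound and the $H^s_b$ bound in the weighted setting. I would handle this by conjugating by the weight $e^{(\mu+1-n/2)\tau}$ on the single top factor only, and then applying the standard Euclidean Moser inequality on the cylinder $S^1\times(-\infty,0]\times\Sigma$. The bounded geometry of that cylinder gives uniform constants, so the estimate passes through unchanged.
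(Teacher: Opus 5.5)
Your proposal is correct and follows essentially the paper's route. The paper's proof simply says that, after the substitution $r=e^{\tau}$ and conjugation by the weight $e^{(\mu+1-n/2)\tau}$ (as in Lemmas \ref{standardHolder} and \ref{multiplications}), the claim reduces to the standard Moser composition estimate \cite[Prop 13.3.9]{TaylorPDEsIII} for a function vanishing at the origin; your factorization $h=\widetilde h\cdot\nu$ together with the Gagliardo--Nirenberg bookkeeping is an explicit version of that argument. Your adjustment of the hypothesis is also right and necessary: the smallness must be imposed in a norm strictly above the borderline index $\dim(X_0)/2$. Only then do you get the $L^\infty$ bound that keeps $\Phi$ inside $n(\Sigma)$ and controls the Moser constants.
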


\begin{proof} This is a straightforward generalization of \cite[Prop 13.3.9]{TaylorPDEsIII}\footnote{There are several typos in \cite[Prop 13.3.9]{TaylorPDEsIII}, including the statement, where the $+1$ on the right-hand-side is not needed when $F(0)=0$ using the notation therein.} using similar considerations as the proofs of Lemmas  \ref{holderCbnorm} and \ref{multiplications}. 
%See also the proof Lemma \ref{compositionslice}, which establishes the same estimate in the analytic setting.   
    \end{proof}

\subsection{Fredholmness and closed range}
Unlike in standard (uniform) elliptic theory on compact manifolds, it is not true in this edge theory that edge ellipticity guarantees Fredholmness; another `symbolic' condition is needed.

\begin{defn} We define the following. 
\begin{enumerate}
\item[(A)]Given an edge operator $A$ as in \eqref{edgeop}, we define its {\bf normal operator} at a point $q = (0, x_0, t_0) \in X_0$ by
\begin{equation}
N_q(A) = \sum_{j + \ell + |\alpha|\leq m}  a_{j \ell \alpha}(0, x_0, t_0) (\curs{r}\del_{\curs{r}})^j (\curs{r} \del_{\curs{t}} )^\ell 
\del_{\curs{x}}^\alpha,
\label{normalop}
\end{equation}

\noindent Note that, while this
is still an edge operator, it is much simpler than $A$ itself inasmuch as it is translation invariant in $\mathtt t$ and
invariant with respect to dilations $(\curs{r}, \curs{t}, \curs{x})\mapsto (\lambda \curs{r}, \lambda \curs{t}, \curs{x})$
for any $\lambda > 0$.  
\medskip 
\item[(B)] An elliptic edge operator $A$ is called {\bf fully elliptic} with respect to some weight $\mu$ if its normal operator is globally invertible as a map
\begin{equation}
N_q(A): \curs{r}^{\mu} H^{m}_{e}( \RR^+ \times \Sigma \times \RR, dV_0) \longrightarrow \curs{r}^\mu L^2( \RR^+ \times \Sigma \times \RR, dV_0),
\label{norope}
\end{equation}
\end{enumerate}
\end{defn}
\medskip

The normal operator be reduced to a much simpler operator by passing to the Fourier transform in $\curs{t}$
and rescaling, cf.\ \cite{Mazzeo1991EdgeOperators}.  For the operator $r^2 L_0$, the normal operator perhaps confusingly is exactly the same as $r^2 L_0$ itself, save for
the fact that the variable $\curs{t}$ now lies in $\RR$ rather than $S^1$. This is because there are no `higher order terms'
in this simple product situation.  However, for the perturbations of $S^1\times C(\Sigma)$ considered below, the associated
Jacobi operator $L$ (or rather $r^2 L$) does have higher order contributions, so its normal operator is obtained by dropping
all of these higher order corrections, hence for these perturbations $N_q (r^2 L) = N_q(r^2 L_0).$

We now state the main mapping properties for $L$. For simplicity, we state the result only for perturbed Jacobi operators $L$ with $\text{Ind}(L)=\text{Ind}(L_0)$ rather than for an arbitrary elliptic edge operator. In the following, we impose vanishing Dirichlet boundary conditions at $r=1$ on the domain space in all instances, and set 

\[\mu_0^\pm:=\gamma_0^\pm + n/2-2,\]

\smallskip 
\noindent which are the zeroth indicial weights defined following \eqref{weightconstraintsbcase}. The basic result is then that \eqref{bmL0e} is Fredholm for a given value of $\mu$ if and only if \eqref{norope} is 
invertible.  Similarly, $L$ (with Dirichlet boundary conditions) is semi-Fredholm, i.e., has closed range with infinite dimensional nullspace,
respectively cokernel, but not both, if and only if \eqref{norope} is injective, respectively surjective. More precisely, 
\begin{prop}
Let $L$ be a second-order elliptic edge operator with $\text{Ind}(L)=\text{Ind}(L_0)$. Then $L$ is fully elliptic for all weights $\mu \in (\mu_0^-, \mu_0^+)$, and hence 
\begin{equation}
L: r^{\mu+2} H^{k+2}_{e,D}(X, NX) \longrightarrow r^{\mu} H^k_e(X, NX)
\label{bmLe}
\end{equation}
is Fredholm. Moreover, 
\begin{enumerate}
\item[(A)] If $\mu > \mu_0^+$ and $\mu$ is non-indicial, then 
\eqref{bmLe}  has closed range and a finite dimensional nullspace, but the cokernel is infinite dimensional.
\item[(B)] Meanwhile if $\mu < \mu_0^-$ and $\mu$ is non-indicial, then \eqref{bmLe} has closed range of finite codimension, but an infinite dimensional nullspace.  

\end{enumerate}
Moreover, {\it unperturbed} Jacobi operator $L_0$ is injective for weights $\mu > \mu_0^-$ and surjective if $\mu < \mu_0^+$ is non-indicial. 
\label{1mapprop}
\end{prop}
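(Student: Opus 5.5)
The plan is to reduce everything to the normal operator and the model problem on the cone, following the standard edge theory of \cite{Mazzeo1991EdgeOperators}.

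\emph{Normal operator reduction.} Since $\text{Ind}(L)=\text{Ind}(L_0)$ and, for the perturbations considered, $N_q(r^2L)=N_q(r^2L_0)$, it suffices to analyze $N_q(r^2L_0)$ on $\R^+\times\Sigma\times\R$. Taking the Fourier transform in $\curs t$ (dual variable $\eta$) and rescaling by $s=\curs r|\eta|$ reduces this to the family of Bessel-type operators
\[
-\left(\partial_s^2+\tfrac{n-1}{s}\partial_s\right)+\tfrac{L_\Sigma}{s^2}+1 .
\]
Expanding in the eigensections $\phi_j$ gives modified Bessel equations of order $\nu_j=\sqrt{(n-2)^2/4+\lambda_j}$. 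Each mode has one solution decaying exponentially at $s=\infty$, which behaves like a combination of $s^{\gamma_j^-}$ and $s^{\gamma_j^+}$ at $s=0$, and one solution growing exponentially, which behaves like $s^{\gamma_j^+}$ at $s=0$.

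\emph{Fully elliptic range.} For $\mu+2-n/2\in(\gamma_0^-,\gamma_0^+)$, the decaying solution in each mode fails to lie in the weighted space at $s=0$ because of its $s^{\gamma_j^-}$ component. Its adjoint counterpart, at weight $-\mu$, fails for the symmetric reason. The model operator is therefore injective with trivial cokernel, so it is invertible. Uniformity in $j$ comes from the spectral gap of the weight to the indicial roots; equivalently, one uses the quadratic form with a Hardy inequality, since $\lambda_j$ large makes the potential dominant. Invariance under the dilations $(\curs r,\curs t)\mapsto(\lambda\curs r,\lambda\curs t)$ then transfers the $|\eta|=1$ invertibility to all $\eta\neq 0$, which gives invertibility of \eqref{norope}. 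The general edge parametrix construction (interior ellipticity plus the invertible normal operator) then yields a parametrix with compact remainders on $r^{\mu}H^k_e$, and hence Fredholmness. The Dirichlet condition at $r=1$ is a standard elliptic boundary problem and contributes nothing new.

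\emph{Semi-Fredholm cases (A) and (B).} For $\mu>\mu_0^+$ non-indicial, in each mode only the $s^{\gamma_j^+}$ behavior is allowed, or none for modes in $J(\mu)$. The normal operator is then injective with closed range, but its cokernel contains, for every $\eta$, the finitely many modes $\lambda_j\in J(\mu)$. After inverse Fourier transform this produces an infinite-dimensional cokernel, parametrized by functions of $t$. A left parametrix gives closed range with finite-dimensional nullspace. Case (B) follows by duality, since $L$ is formally self-adjoint with respect to the unweighted measure, which sends $\mu$ to $-\mu-2$ and interchanges the roles.

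\emph{Unperturbed statements.} For $L_0$ itself, use the exact Fourier decomposition in $t\in S^1$ (modes $\ell$) and in $\phi_j$. Each mode satisfies an ODE on $(0,1]$ with Dirichlet condition at $r=1$. For $\ell\neq 0$ the regular solution is the Bessel function $I_{\nu_j}(|\ell|r)$, which is nonvanishing at $r=1$, so a solution satisfying the Dirichlet condition must contain the $K_{\nu_j}$ component, which is nonintegrable near $0$ when $\mu>\mu_0^-$. This gives injectivity, exactly as in Proposition \ref{mapb}. Surjectivity for $\mu<\mu_0^+$ follows by duality.

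The main obstacle is establishing the uniformity of the model inverse across all modes $(j,\eta)$ so that the parametrix bounds close. I would handle this with weighted Hardy and energy estimates rather than explicit Bessel asymptotics.
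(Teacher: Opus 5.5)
Your proposal is correct and follows essentially the same route as the paper, whose proof simply invokes the edge parametrix of \cite{Mazzeo1991EdgeOperators}: two-sided when $N_q(r^2L)$ is invertible, one-sided in the semi-Fredholm ranges, with compact remainders in both cases. Your Bessel reduction of the normal operator, the mode count identifying its cokernel for $\mu>\mu_0^+$ with the modes in $J(\mu)$, and the explicit $I_{\nu_j}$/$K_{\nu_j}$ argument for injectivity of $L_0$ (as in Proposition \ref{mapb}) make explicit what the paper leaves to that reference. The one imprecision is in (B): a general $L$ need not be formally self-adjoint with respect to $dV_0$, but the duality argument only needs its formal adjoint to be an elliptic edge operator with the same normal operator, and this holds for the perturbed Jacobi operators considered here.
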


\begin{proof}[Proof sketch] 
This is proved using a parametrix for $L$.  This is an approximate inverse which lies in the space of pseudodifferential edge
operator $\Psi_e^*(X_0)$.  The precise definitions are omitted here, but we recall that this is an enlargment of the space of differential
edge operators which is sufficiently robust so as to contain approximate inverses for fully elliptic edge operators (as well as 
just left and right approximate inverses in the semi-Fredholm cases), up to compact errors. The conclusions of this proposition then follow 
directly from the mapping properties of these parametrices (see \cite[Sec. 6]{Mazzeo1991EdgeOperators} for further details).
\end{proof}

One useful feature of this approach is that these parametrices are constructed as Schwartz kernels (on an appropriate further blow-up of $X_0^\circ \times X^\circ_0$ de-singularizing corners), rather than as abstract operators, which
means that if we can prove these operators are bounded on other function spaces, then we immediately obtain mapping properties for the
original operator on those spaces instead.  In particular, we can apply this to the hybrid edge-b spaces from Definition \ref{hybridspacesdef} to obtain the following. 
\begin{cor}Let $L$ be a second-order elliptic edge operator with $\text{Ind}(L)=\text{Ind}(L_0)$. 
\begin{enumerate}
\item[(A)] For any $\mu\in \R,$ $L$ has the same mapping property as an operator 
\[r^{\mu+2}H^{s,k+2}_{b,e,D}(X_0; NX_0) \to r^{\mu} H^{s,k}_{b,e}(X_0;NX_0)\]
as it does acting between the edge spaces in \eqref{bmLe}.

\item[(B)] In particular, $\mu> \mu_0^-$ is non-indicial, then the {\it unperturbed} Jacobi operator obeys the elliptic estimates 
\[\|\ph\|_{\mu+2,(s,2)} \leq C \|L_0 \ph\|_{\mu,(s,0)}\]

\noindent holds for $\ph \in r^{\mu}H^{s,2}_{b,e}(X_0;NX_0).$
\end{enumerate}
\label{higherorderelliptic}
\end{cor}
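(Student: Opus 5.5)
The plan for (A) is to exploit the fact, noted just before the statement, that the parametrices from the proof of Proposition \ref{1mapprop} are given by explicit Schwartz kernels on the edge double space. I would show that these parametrix kernels, and the remainder terms, are bounded on the hybrid spaces $r^\mu H^{s,k}_{b,e}$. Since $[V,W]\in\calV_e$ for $V\in\calV_e$ and $W\in\calV_b$, the first step is a commutator argument. For $W\in\{\del_t\}$, which together with $r\del_r$ and $\del_x$ (already in $\calV_e$) generates $\calV_b$, we have $[L,\nabla_W]$ equal to $r^{-2}$ times an edge operator of order at most two, with coefficients involving derivatives of the coefficients of $r^2L$. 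In the product case $L_0$ commutes with $\del_t$ exactly, so the commutator vanishes.

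The argument is then an induction on $s$. Suppose $u\in r^{\mu+2}H^{s-1,k+2}_{b,e,D}$ and $Lu\in r^\mu H^{s,k}_{b,e}$. We apply a difference quotient in $t$, which commutes with the Dirichlet condition at $r=1$ and is tangent to it, to get
\[
L(\nabla_W u)=\nabla_W(Lu)-[L,\nabla_W]u .
\]
By the inductive hypothesis the right side lies in $r^\mu H^{s-1,k}_{b,e}$. The edge estimate (the $s=0$ case, Proposition \ref{1mapprop}) then places $\nabla_W u$ in $r^{\mu+2}H^{s-1,k+2}_{b,e}$, modulo the nullspace. This gives the hybrid elliptic estimate
\[
\|u\|_{\mu+2,(s,k+2)}\le C\big(\|Lu\|_{\mu,(s,k)}+\|u\|_{\mu+2,(0,0)}\big).
\]
With this estimate, closed range, finiteness of the kernel, and semi-Fredholmness transfer directly. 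The kernel is unchanged, since it consists of the same smooth sections by edge regularity. To show the cokernel has the same size, I would note that the hybrid range is the intersection of the edge range with $r^\mu H^{s,k}_{b,e}$. This follows from the regularity statement: if $Lu=f$ with $f$ hybrid and $u$ in the edge space, then $u$ is hybrid. Consequently the annihilator, namely the pairing with the same dual solutions, is identical.

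For (B), injectivity of $L_0$ for non-indicial $\mu>\mu_0^-$ is part of Proposition \ref{1mapprop}. Combining it with the closed-range estimate above lets me drop the lower-order term $\|u\|_{\mu+2,(0,0)}$ by the standard compactness/contradiction argument. The contradiction argument needs compactness of the inclusion on the region $r\ge\epsilon$, together with the weight gap near $r=0$ supplied by the parametrix remainder. Alternatively, I could argue directly: because $L_0$ commutes with $\del_t$, the estimate $\|u\|_{\mu+2,2}\le C\|L_0u\|_{\mu,0}$ applied to $\del_t^j u$ immediately gives the $b$-derivatives in $t$, and this is the route I would take for (B).

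The main obstacle is the commutator step for a general perturbed $L$, where $[L,\del_t]$ is nonzero. I need $r^2[L,\del_t]$ to be an edge operator of order two whose action costs no $b$-derivatives and carries the correct weight. This requires the coefficients of $r^2L$ to be smooth up to $r=0$ in $(t,r,x)$, which holds for the class of perturbations promised in Remark \ref{constantindicialrem}.
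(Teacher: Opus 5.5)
Your route differs from the paper's. The paper bounds the commutators $[V,G]$ for $V\in\calV_b$ and $G$ the parametrix, so that left \emph{and} right approximate inverses act boundedly on the hybrid spaces. You instead commute $\del_t$ with $L$ and run an a priori regularity induction.

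For (B) your route works and is more elementary. $L_0$ commutes with $\del_t$ and is bounded below for non-indicial $\mu>\mu_0^-$, being injective with closed range. Applying the edge estimates at levels $k=s-j$ to $t$-difference quotients therefore gives the hybrid estimate.

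For (A), however, a left-sided a priori argument cannot cover every $\mu$. When the nullspace is infinite-dimensional (e.g.\ $\mu<\mu_0^-$, Proposition \ref{1mapprop}(B)), your estimate $\|u\|_{\mu+2,(s,k+2)}\le C(\|Lu\|_{\mu,(s,k)}+\|u\|_{\mu+2,(0,0)})$ is false. This happens even for $L_0$, where $[L_0,\del_t]=0$. Consider the modes
\[
w_\ell=r^{\frac{2-n}{2}}\Big(K_{\nu_0}(|\ell|r)-\tfrac{K_{\nu_0}(|\ell|)}{I_{\nu_0}(|\ell|)}I_{\nu_0}(|\ell|r)\Big)\phi_0\,e^{i\ell t}.
\]
These vanish at $r=1$ and lie in $\ker L_0\cap r^{\mu+2}L^2$ when $\mu<\mu_0^-$. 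After normalization in $r^{\mu+2}L^2$, their edge norms are bounded uniformly in $\ell$ (rescale $\rho=|\ell|r$), while $\|\del_t w_\ell\|=|\ell|\,\|w_\ell\|$. Since different $\ell$ are orthogonal, $u=\sum_{\ell\neq0}|\ell|^{-1}w_\ell/\|w_\ell\|$ lies in every $r^{\mu+2}H^k_{e,D}$ and satisfies $L_0u=0$, yet $u\notin r^{\mu+2}H^{1,k}_{b,e}$. So in this regime the kernel is \emph{not} unchanged. The statement that every edge solution with hybrid data is hybrid fails. Consequently your justification that the hybrid range equals the edge range intersected with $r^\mu H^{s,k}_{b,e}$ collapses.

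What must be transferred in this regime is closed range of finite codimension. That requires producing $b$-regular preimages, i.e.\ a right inverse or right parametrix bounded $r^\mu H^{s,k}_{b,e}\to r^{\mu+2}H^{s,k+2}_{b,e}$. This is exactly what the bound on $[V,G]$ supplies; alternatively, you would need a dual estimate for the adjoint on spaces of negative $b$-regularity.

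In the finite-nullspace cases your argument can be made to work, but the difference-quotient step needs repair. Write $D_hu=(u(\cdot+h,\cdot)-u)/h$ in $t$. At the first step, $u\in r^{\mu+2}H^{k+2}_e$ only gives $\del_tu\in r^{\mu+1}L^2$, so $\|D_hu\|_{\mu+2,(0,0)}$ is not a priori bounded uniformly in $h$. Replace that lower-order term by finitely many pairings $\langle v,\phi_i\rangle$, with $\phi_i$ smooth and compactly supported in the interior and dual to a basis of $\ker L$. This is legitimate by closed range and finite-dimensional kernel, and these pairings are bounded on $D_hu$ by $C\|u\|\,\|\del_t\phi_i\|$. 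Also, your inductive step should invoke the level-$(s-1)$ hybrid estimate, not the $s=0$ one.
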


\begin{proof}[Proof sketch]The proof proceeds by bounding commutators $[V, G]$ where $V \in \calV_b$ and $G$ is the parametrix, see \cite[Sec. 6]{Mazzeo1991EdgeOperators} for details once more. 
\end{proof}

In the remainder of this paper, our main concern will be with Jacobi operators acting on spaces of vector fields with $\mu+2-\tfrac{n}{2}>1,$ so as to ensures the resulting graphical submanifolds preserve the tangent cone along the singularity. 
With this choice of weights, however, as per Proposition \ref{1mapprop}, we are confronted with an operator
with closed range and infinite dimensional cokernel.  The next subsection takes up the task of characterizing this cokernel more precisely. 

%% file: sections/sec_infinite_obstruction.tex
%%
%% sec_infinite_obstruction.tex
%%

\section{The Infinite-Dimensional Obstruction}
\label{section4}
By Proposition \ref{1mapprop}, the Jacobi operator $L_0$ has closed range of infinite codimension when $\mu> \mu_0^+$ is nonindicial.
The orthogonal complement of this range, i.e., the cokernel, is naturally identified with the space of homogeneous solutions to the dual problem: 
\be 
\Psi \in \text{range}(L_0)^\perp \hspace{.3cm} \Longleftrightarrow \hspace{.3cm} L_0^\star \Psi =0, \label{dualproblem}
\ee
\noindent where $L_0^\star$ is the adjoint of the operator in \eqref{bmLe}.   This cokernel is the main obstruction to solving
the deformation problem via the implicit function theorem, which justifies
\begin{defn}\label{Obdef}
The  {\bf Obstruction Space} is the $r^{\mu}L^2$ orthogonal complement of the range of $L_0$: 
\[
\text{\bf{Ob}}_0= \text{range}(L_0)^\perp\subseteq r^{\mu}L^2(X_0; NX_0)
\]
As in Section 2, we denote the $r^{\mu}L^2$-orthogonal projections to $\text{\bf Ob}_0$ and $\text{range}(L_0)$ by $\Pi_0$ and 
$\Pi_0^\perp$, respectively. 
\end{defn}

\medskip 

Our goal in this section is to identify $\text{\bf Ob}_0$ with the space of sections of fixed regularity of a certain vector bundle 
$\mathcal C_0 \to \mathcal Z_0$ over the singular set. This identification is a particular instance of the boundary-value problem framework for elliptic edge operators developed in \cite{MazzeoEdgeOperatorsII}. See also \cite{krainer2013kernel,krainer2015elliptic} 

\subsection{Dual Spaces and Weights} \label{dualspacessection}  
There are several natural, but inequivalent ways of defining duality on a weighted $L^2$ space. For clarity we review these 
definitions and the relationship between them here. 
%The distinction between these two duals is elementary in this setting, 
%but the same framework applies in the more intricate setting of the analytic function spaces see Section \ref{LinearanalyticSection}. 
Namely, there is a dual of a given Hilbert space associated to any nondegenerate pairing, and in the present setting,
there are two obvious pairings: the inner product $r^{\mu}L^2 \times r^\mu L^2 \to \CC$, and the pairing 
$r^\mu L^2 \times r^{-\mu} L^2 \to \CC$ using the unweighted inner product. (We work with complex Hilbert spaces.)  
Denoting these duals by a ${}^\star$ and ${}^\dag$, respectively, then 
\begin{align*}
(r^\mu L^2(X_0; NX_0) )^\star &\cong r^\mu L^2(X_0; NX_0), 
\\ (r^\mu L^2(X_0; NX_0) )^\dag &\cong r^{-\mu} L^2(X_0; NX_0),
\end{align*}
The first is, of course, the standard identification given by Riesz representation. Slightly more generally, these same pairings 
also give the identifications $(r^{\mu + \alpha} L^2)^\star  \cong r^{\mu-\alpha} L^2$, and  $(r^{\mu+\alpha} L^2)^\dag \cong r^{-\mu-\alpha } L^2.$

Similarly, the duals of the higher-regularity edge spaces (with Dirichlet boundary values) with respect to these pairings are defined by
\begin{align}
(r^{\mu+2}  H^{2}_{e,\Dir})^\star  &:= \Big \{\, v \, \big | \ \ \exists C_v \ \text{s.t.} \ \br u, v\kt_{r^\mu  L^{2}} \leq C_v \|u\|_{\mu, (0,2)}  \ \ \forall u \in r^{\mu+2} H^{2}_{e,\Dir} \Big \}  \label{stardualH2} \\
(r^{\mu+2} H^{2}_{e,\Dir})^\dag & := \Big \{\,  \psi  \, \big | \ \ \exists C_\psi \ \text{s.t.} \ \br u, \psi \kt_{L^2} \leq C_\psi \|u\|_{\mu,(0,2)}  \ \ \forall u \in r^{\mu+2} H^{2}_{e,\Dir} \Big \},
\end{align}
where, as before, $\| \cdot \|_{\mu,(0,2)}$ is the norm on $r^{\mu+2} H^{2}_e$. We write these duals as
\begin{align}
(r^{\mu+2}  H^{2}_{e,\Dir})^\star & = r^{\mu-2} H^{-2}_e \label{dualstardef1}\\ 
(r^{\mu+2} H^{2}_{e,\Dir})^\dag& = r^{-\mu-2}  H^{-2}_e. \label{dualstardef2}
\end{align}
There are versions, defined analogously, associated to higher regularity, $r^{\mu-2}H^{s,-2}_{b,e}$, which require $b$-derivatives $\nabla_b^\alpha$ for $|\alpha|\leq s$ 
to lie in the spaces above. 

These dualities each determine an adjoint of $L_0.$. 
\begin{defn} \label{twoadjoints}  The adjoints of $L_0: r^{\mu+2}H^2_{e,\Dir}\to r^\mu L^2$ corresponding to these two dualities are:
\begin{alignat}{2}
& L_0^\star : r^{\mu}  L^{2} \to r^{\mu-2}  H^{-2}_e  & \qquad &L_0^\dag : r^{-\mu}    L^2 \to r^{-\mu-2}  H^{-2}_e \\
& L_0^\star \psi:= \br L_0( \cdot), \psi \kt_{r^\mu   L^{2}} & \qquad &L_0^\dag := \br L_0( \cdot ) , \psi \kt_{L^2},
\end{alignat}
\noindent where $\br \cdot ,v\kt_{L^2}$ and $\br \cdot,v\kt_{r^\mu L^2}$ are the unweighted and weighted $L^2$-pairings on $X_0.$  
\end{defn}
These two adjoints are related by  
\begin{align}\label{twoadjiontsexp}
L_0^\dag \  &= \  L_0: \,  r^{-\mu}L^2 \longrightarrow r^{-\mu-2} H^{-2}_e\\ \ 
L_0^\star  \ &= \   r^{2\mu} L_0 r^{-2\mu} : \,  r^{\mu}L^2 \longrightarrow r^{\mu-2}H^{-2}_e,
\end{align}
which follows from the formulas 
\begin{align*}
\langle L_0 u, v \rangle_{L^2}  &= \int (L_0 u) \overline{v}\, dV_0 = \int u \, (\overline{L_0 v})\, dV_0 = \langle u, L_0^\dag v \rangle_{L^2}, \\ 
\langle L_0 u, v \rangle_{r^{\mu} L^2} &= \int (L_0 u) \overline{v} \, r^{-2\mu}\, dV_0 = \int u ( \overline{r^{2\mu} L_0 r^{-2\mu}v})\, r^{-2\mu} \, dV_0 =
\langle  u,  L_0^\star v \rangle_{r^{\mu} L^2}
\end{align*}
using that $L_0$ is symmetric with respect to the volume form $dV_0$.

%The following formalizes the duality property of $L_0^\star.$ 
In the following, write $\text{range}(L_0)={\bf Rg}_0$; then 
\[
r^\mu L^2=\text{\bf Ob}_0 \oplus {\text{\bf Rg}}_0
\]
is an $r^\mu L^2$-orthogonal splitting. % by Definition \ref{Obdef}.
\begin{lm} \label{secondorderinvertibility}
Let $\mu>\mu_0^+$ be non-indicial, which ensures the conclusions of Proposition \ref{1mapprop}(A). Then 
\begin{enumerate}
\item[(A)] The operator
$$
L_{0}^\star L_{0}: r^{\mu+2}  H_{e,D}^{2} \lre r^{\mu-2}   H_{e}^{-2}
$$
is a bounded linear isomorphism with inverse denote $\mathcal P_0.$ 
\item[(B)]The composition   
\begin{center}
\tikzset{node distance=4.2cm, auto}
\begin{tikzpicture}
\node(A){$r^{\mu+2}  H^{2}_{e,D}$};
\node(C)[right of=A]{$ \begin{matrix}\text{\bf Ob}_0 \\  \oplus \\ \text{\bf {Rg}}_0  \end{matrix}$};
\node(C')[right of=C, yshift=0cm]{$r^{\mu-2}   H^{-2}_{e} $};
\draw[->] (A) to node {$L_{0} $} (C);
\draw[->] (C) to node {$L^\star_{0} $} (C');
\draw[<-,>=stealth] (A) edge[bend right=20]node[below]{$\mathcal P_{0} $} (C');
\end{tikzpicture}\end{center}
factors through the $\text{\bf Rg}_0$ summand. In particular, $L_0 \mathcal P_0$ is a right-inverse of $L_0^\star$ with image in $\text{\bf Rg}_0.$
\item[(C)] The $r^\mu   L^{2}$-orthogonal projections to $\text{\bf Ob}_0$ and $ {\text{\bf Rg}}_0$ can be written as
\[
\Pi_0=\text{Id}-L_{0}\mathcal P_{0} L_{0}^\star \qquad \Pi^\perp_0=L_{0}\mathcal P_{0} L_{0}^\star ,
\]
and these respect regularity in the sense that $\mathrm{Im}(\Pi_0 |_{r^{\mu} H^{s}_b})= \text{\bf Ob}_0^s :=\text{\bf Ob}_0 \cap r^{\mu} H^{s}_b$ and 
similarly for $\smash{\text{\bf Rg}^s_0}.$  
\end{enumerate}
Moreover, the same conclusions hold uniformly for any family of operators $L_p$, $p\in \mathcal V$, provided Proposition \ref{1mapprop}(A) holds uniformly.
\end{lm}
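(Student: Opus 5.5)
The plan is to treat (A) by an abstract Hilbert space argument and then read (B) and (C) off from it, with regularity as the only real analytic input. By Proposition \ref{1mapprop}(A) and the injectivity statement for $L_0$ when $\mu>\mu_0^-$, the map $L_0: r^{\mu+2}H^2_{e,D}\to r^\mu L^2$ is injective with closed range. The open mapping theorem then gives the a priori estimate $\|u\|_{\mu,(0,2)}\le C\|L_0u\|_{r^\mu L^2}$.

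For (A), consider the bounded sesquilinear form $B(u,v)=\langle L_0u,L_0v\rangle_{r^\mu L^2}$ on the Hilbert space $r^{\mu+2}H^2_{e,D}$. The a priori estimate makes $B$ coercive: $B(u,u)\ge C^{-2}\|u\|^2_{\mu,(0,2)}$. By Lax--Milgram, for every bounded functional $f\in (r^{\mu+2}H^2_{e,D})^\star = r^{\mu-2}H^{-2}_e$ (identified via \eqref{stardualH2}) there is a unique $u$ with $B(\cdot,u)=f$. Unwinding Definition \ref{twoadjoints}, this says exactly $L_0^\star L_0 u=f$. Hence $L_0^\star L_0$ is a bijection, bounded since both factors are, and $\mathcal P_0$ is bounded by the coercivity constant.

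For (B), first observe that $\ker L_0^\star=\text{range}(L_0)^\perp=\text{\bf Ob}_0$ directly from the definition of $L_0^\star$ via the $r^\mu L^2$ pairing. So $L_0^\star$ kills the $\text{\bf Ob}_0$ summand, and the composition factors through $\text{\bf Rg}_0$. Moreover $L_0^\star(L_0\mathcal P_0 f)=f$, and $L_0\mathcal P_0 f$ lies in the range of $L_0$, i.e.\ in $\text{\bf Rg}_0$. For (C), set $\Pi^\perp_0=L_0\mathcal P_0L_0^\star$ and check three things:
\begin{itemize}
\item it is idempotent, since $\mathcal P_0 L_0^\star L_0=\mathrm{Id}$;
\item it is the identity on $\text{\bf Rg}_0$: for $w=L_0u$ we get $L_0\mathcal P_0L_0^\star L_0u=L_0u$;
\item it vanishes on $\text{\bf Ob}_0=\ker L_0^\star$.
\end{itemize}
Since $\text{\bf Ob}_0\oplus\text{\bf Rg}_0$ is an orthogonal splitting, an operator with these three properties is the orthogonal projection onto $\text{\bf Rg}_0$, and then $\Pi_0=\mathrm{Id}-\Pi_0^\perp$.

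The main obstacle is the regularity statement in (C): that $\Pi_0$ maps $r^\mu H^s_b$ into $\text{\bf Ob}_0^s$. I would prove that $\mathcal P_0$ is also an isomorphism $r^{\mu-2}H^{s,-2}_{b,e}\to r^{\mu+2}H^{s,2}_{b,e,D}$, by induction on $s$, commuting with $b$-vector fields $V=\del_t$ (and $r\del_r$, $\del_x$ cut off away from $r=1$). The commutator $[V,L_0^\star L_0]$ is a fourth-order edge operator whose coefficients are smooth, because $[\calV_e,\calV_b]\subseteq\calV_e$, and which has one fewer $b$-derivative in the relevant count. Applying the $H^2_e$ coercive estimate to $Vu$ then bounds $\|u\|_{\mu+2,(s,2)}$ by $\|f\|_{\mu-2,(s,-2)}$ plus lower-order terms. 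To make the formal differentiation rigorous I would use difference quotients in $t$ (the product structure is exactly translation-invariant in $t$). For the $r\del_r$ and $\del_x$ directions I would instead rely on Corollary \ref{higherorderelliptic}, which gives the $b$-regularity for $L_0$ itself and transfers to the composition. With this in hand, $L_0^\star$ and $L_0$ each lose two edge derivatives but no $b$-derivatives, so $\Pi_0^\perp$ preserves $r^\mu H^s_b$ and therefore so does $\Pi_0$.

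Finally, the uniform statement for a family $L_p$ follows verbatim: the coercivity constant in Lax--Milgram and the commutator constants depend only on the uniform injectivity and closed-range constants of Proposition \ref{1mapprop}(A) and on bounds for the coefficients.
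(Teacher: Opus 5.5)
Your proposal is correct and follows essentially the paper's route: (A) from injectivity and closed range of $L_0$ via coercivity (your Lax--Milgram step is the argument behind the paper's one-line justification), (B)--(C) by the finite-dimensional algebra using $\ker L_0^\star=\text{\bf Ob}_0$, and the regularity claim by upgrading (A) to spaces with $s$ additional $b$-derivatives. Your sketch of that upgrade is sound, with one small correction. The $r\partial_r$, $\partial_x$ directions are edge vector fields, so they are handled by edge elliptic regularity for $L_0^\star L_0$, not by Corollary \ref{higherorderelliptic} for $L_0$, which by itself does not control $L_0\mathcal P_0 f$. Only $\partial_t$ is genuinely a $b$-direction, and there translation invariance does the work (for a family $L_p$, the commutator $[\partial_t,L_p]$ with $t$-differentiated coefficients).
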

\begin{proof}
(A) This follows immediately from the semi-Fredholmness and injectivity of $L_0$.
%Definition \ref{twoadjoints}  implies that the fourth order operator $L_0^\star L_0$ is injective. For surjectivity, let $f\in r^{\mu-2} H_e^{-2}$, and consider the functional
%\[
%E_f(\ph):= \|L_0 \ph\|^2_{r^{\mu} L^{2} } + \br \ph, f \kt_{r^{\mu}  L^{2} }
%\]
%where the last term is the dual pairing between $r^{\mu+2}  H_{e}^{2}$ and $r^{\mu-2}  H_{e}^{-2}$. Proposition \ref{1mapprop}(A), Corollary \ref{higherorderelliptic}(B)  
%and Young's inequality imply that $E_f$ is coercive, and it is straightforward to verify that it is weakly lower semicontinuous. It thus admits a minimizer by 
%standard theory \cite[Ch. 13]{nicolaescu2020lectures}. The Euler-Lagrange equation of the minimizer is precisely $L_{0}^* L_{0} \ph=f$. 

(B) and (C) then follow precisely as the analogous statements in a finite-dimensional context. That the projections respect regularity follows from the 
version of (A) with $s\geq 0$. This implies that each individual operator is bounded on the same spaces with $s$ additional $b$-derivatives. 
\end{proof}

\subsection{Trace Bundles} Definition \ref{twoadjoints} and Lemma \ref{secondorderinvertibility} substantiate the characterization \eqref{dualproblem}
of $\text{\bf Ob}_0$.  By \eqref{twoadjiontsexp}, the multiplication map $r^{-2\mu}\cdot$ yields an isomorphism 
\[
r^{-2\mu}: \mathbf{Ob}_0 \, \overset{\cong}\lre \, \mathrm{ker}\, \big( L_0: r^{-\mu}H^{k+2}_{e,D} \longrightarrow r^{-\mu-2}H^k_e \big).
\]
This uses elliptic regularity for edge operators \cite{Mazzeo1991EdgeOperators}, which gives that $u \in \ker(L_0)\cap r^{-\mu}L^2$ iff $u \in r^{-\mu} H^k_e$ for 
any $k\geq 0$. Note, in particular, that we require here that elements of the space on the right vanish at $r=1$. 

We next show that $\mathbf{Ob}_0$ admits a parametrization in terms of the boundary traces of its elements at $r=0$.  
The general definition of the {\it trace bundles}, i.e., the bundles where these $r=0$ boundary traces take values, is somewhat intricate, 
cf.\ \cite{MazzeoEdgeOperatorsII, krainer2013kernel, krainer2015elliptic, usula2024boundary}, but in the present situation, we can use a simpler ad hoc definition.
\begin{defn} \label{tracebundledef}The {\bf trace bundle} $\mathcal C_0 \to S^1$ associated to a non-indicial weight $\mu> \mu_0^+$ is the trivial 
bundle with fiber 
\[
\mathcal C_0=\bigoplus_{j=1}^J \R \phi_j.
\]
Here $\R \phi_j$ is the span of $\phi_j$, and the sum is taken over those $\lambda_j$ (counted with multiplicity) such that \eqref{weightconstraint} holds. 
\end{defn}

Using the general theory developed in \cite{MazzeoEdgeOperatorsII, usula2024boundary}, then just as for the classical theory of boundary value problems, 
elements of $\ker(L_0)$ are in 1-1 corresponding with their boundary traces at $r=0.$ 
\begin{prop} \cite{MazzeoEdgeOperatorsII}. There is a (regularized) boundary trace map 
\[ \text{tr}_e: \text{\bf Ob}_0 \to L^2(S^1;\mathcal C_0),\]
\noindent which is a bounded linear isomorphism. 
\label{btraceregularity}
\end{prop}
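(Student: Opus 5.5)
We prove Proposition \ref{btraceregularity} in the product setting by separation of variables, in the same spirit as the elementary proofs of Section \ref{CHSSection}. By the isomorphism $r^{-2\mu}:\mathbf{Ob}_0\to \ker(L_0)\cap r^{-\mu}L^2_{D}$, it is enough to describe the kernel $K:=\{u\in r^{-\mu}L^2 : L_0u=0,\ u|_{r=1}=0\}$. We expand $u=\sum_{j,\ell} u_{j\ell}(r)\,\phi_j(x)e^{i\ell t}$ in eigensections of $L_\Sigma$ and Fourier modes on $S^1$. In each mode, \eqref{L0expression} reduces to the modified Bessel equation
\[
-u''-\tfrac{n-1}{r}u'+\Big(\tfrac{\lambda_j}{r^2}+\ell^2\Big)u=0 .
\]
Its solutions are $r^{(2-n)/2}I_{\nu_j}(|\ell| r)$ and $r^{(2-n)/2}K_{\nu_j}(|\ell| r)$, with $\nu_j=\sqrt{(n-2)^2/4+\lambda_j}$. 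For $\ell=0$ they are $r^{\gamma_j^\pm}$.

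The first step is to find which modes contribute. Near $r=0$ the two solutions behave like $r^{\gamma_j^+}$ and $r^{\gamma_j^-}$. The Dirichlet condition at $r=1$ forces a fixed linear combination $v_{j\ell}$, and its leading coefficient of $r^{\gamma_j^-}$ is nonzero, because $I_{\nu}(|\ell|)\neq 0$. Membership in $r^{-\mu}L^2$ requires $\gamma_j^->-\mu+n/2-2$, i.e. $\nu_j<\mu+1$. So $K$ contains precisely the modes with $\lambda_j\in J(\mu)$, one real solution $v_{j\ell}$ per $(j,\ell)$, and every element of $K$ is $u=\sum_{j\le J}\sum_\ell c_{j\ell}v_{j\ell}$. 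We define
\[
\mathrm{tr}_e(\Psi)=\sum_{j\le J}\sum_\ell c_{j\ell}\,e^{i\ell t}\phi_j ,
\]
which is the coefficient of $r^{\gamma_j^-}$ for the normalized solution $v_{j\ell}=r^{\gamma_j^-}\phi_j e^{i\ell t}(1+o(1))$. This is the regularized trace: we divide by the leading singular power instead of restricting to $r=0$. Injectivity is then immediate.

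The main step is the norm equivalence $\|u\|_{r^{-\mu}L^2}^2\simeq\sum_{j,\ell}|c_{j\ell}|^2$. By orthogonality this reduces to uniform two-sided bounds on $\|v_{j\ell}\|^2_{r^{-\mu}L^2(dr)}$ in $\ell$, for finitely many $j$. For this we rescale $s=|\ell| r$. Since $v_{j\ell}(r)=r^{\gamma_j^-}w(|\ell| r)$ with $w\to1$ at $0$ and $w$ exponentially decaying but saturating at $s=|\ell|$, the weighted integral $\int_0^1 r^{2\gamma_j^-+2\mu+n-1}|w(|\ell|r)|^2\,dr$ scales like $|\ell|^{-(2\gamma_j^-+2\mu+n)}$. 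This is a power of $|\ell|$, not uniformly comparable to $1$.

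This is the main obstacle. I expect to resolve it by fixing the normalization of $\mathrm{tr}_e$ using the appropriate power of $\langle\ell\rangle$, as is standard in the edge boundary-value theory of \cite{MazzeoEdgeOperatorsII}. Concretely, I would insert the factor $\langle\ell\rangle^{-(\gamma_j^-+\mu+n/2)}$ into the definition of the trace, so that its target is exactly $L^2(S^1;\mathcal C_0)$ as stated. The work here is to check the Bessel asymptotics uniformly in $\ell$, in particular that the $I_\nu$ contribution from the boundary correction is exponentially small, so that the constants are independent of $\ell$. With uniform upper and lower bounds, $\mathrm{tr}_e$ is bounded. Surjectivity follows because every square-summable sequence $(c_{j\ell})$ defines a convergent series in $r^{-\mu}L^2$ lying in $K$, by elliptic regularity.
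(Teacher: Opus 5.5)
Your argument is correct, and the map you arrive at is exactly the paper's $\mathrm{tr}_e$. Your $c_{j\ell}$ are the Fourier coefficients of the leading coefficients $\Psi_j^-(t)$ in the paper's weak expansion. Your factor $\langle\ell\rangle^{-(\gamma_j^-+\mu+n/2)}=\langle\ell\rangle^{-(\mu+1-\nu_j)}$ is the paper's regularizing operator $(1+\Delta_t)^{-a_j/2}$, and it identifies the paper's unspecified exponents as $a_j=\mu+1-\nu_j>0$.

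The route is different, though. The paper takes three facts from the general regularity theory for elliptic edge operators: the weak expansion, injectivity, and boundedness of the unregularized trace into $\bigoplus_j H^{-a_j}$. Injectivity holds because a kernel element with all $r^{\gamma_j^-}$ coefficients zero lies in the weight range where $L_0$ is injective. Surjectivity comes from the extension result of the edge boundary-value theory; the paper mentions the Bessel formulas of Lemma \ref{obnormalization} only in passing as an alternative. The isomorphism then follows from boundedness plus bijectivity.

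You instead do everything by separation of variables:
\begin{itemize}
\item completeness of the modes $v_{j\ell}$ in the kernel;
\item orthogonality of distinct $(j,\ell)$ in $r^{-\mu}L^2$;
\item uniform-in-$\ell$ Bessel asymptotics, with the $I_\nu$ boundary correction smaller than the $K_\nu$ term by a factor of order $|\ell|^{\mu}e^{-|\ell|}$.
\end{itemize}
Together these give the two-sided norm bound directly. Your version is self-contained and sharp. It is essentially Lemma \ref{obnormalization} fused with the trace, and your normalized trace differs from $\mathrm{ob}_0^{-1}$ only by a diagonal multiplier bounded above and below. The cost is that it relies on the exact product structure of $L_0$, whereas the cited edge theory does not use separation of variables.

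One slip to fix: $r^{\gamma_j^-}\in r^{-\mu}L^2$ means $\int_0^1 r^{2\gamma_j^-+2\mu+n-1}\,dr<\infty$, i.e.\ $\gamma_j^->-\mu-n/2$, not $\gamma_j^->-\mu+n/2-2$. Your equivalent condition $\nu_j<\mu+1$ and your later exponent $2\gamma_j^-+2\mu+n$ are the correct ones.
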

\begin{proof} If $\Psi \in \mathrm{ker}(L_0) \cap r^{-\mu}L^2$, then by the regularity theory in \cite[Sec. 7]{Mazzeo1991EdgeOperators}, $\Psi$ admits a 
`weak' asymptotic expansion
\be 
\Psi(t,r,x) \sim \sum_{j = 1}^J \Psi_j^-(t) \phi_j(x) r^{\gamma_j^-} + \sum_{j=1}^\infty \Psi_j^+(t) \phi_j(x) r^{\gamma_j^+}.
\label{weakexpansion1}\ee
The fact that only those exponents $\gamma_j^-$ with $j \leq J$ appear in the first sum on the right is because
$\smash{r^{\gamma_j^-} \in r^{-\mu}L^2}$ if and only if 
\[
2(\gamma_j^- + \mu) + n > 0 \Longleftrightarrow 1 + \mu > \sqrt{ \frac{(n-2)^2}{4} + \lambda_j},
\]
which is precisely the condition \eqref{weightconstraint}, cf.\ also Definition \ref{tracebundledef}. 

The notion of a weak expansion is a bit subtle; it means simply  that if $\chi(t)$ is any smooth compactly supported function, then there
is a `classical asymptotic expansion
\[
\int_{S^1} \Psi(t,r,x) \chi(t)\, dt \sim \sum_{j = 1}^J \langle \Psi_j^-(t), \chi(t)\rangle \phi_j(x) r^{\gamma_j^-} + \sum_{j=1}^\infty \langle \Psi_j^+(t), \chi(t)\rangle 
\phi_j(x) r^{\gamma_j^+},
\]
where the coefficients are the distributional pairings of $\Psi_j^\pm$ with $\chi$. 

We claim that $\Psi$ is uniquely characterized by the collection of functions $(\Psi_1^-(t),...,\Psi_J^-(t))$, or in other words, that the 
map $\Psi \longmapsto ( \Psi_i^- )_{i=1}^J$ is injective.  To see this, suppose that $\Psi_j^-$, $j = 1, \ldots, J$ are all $0$.  
Then $\Psi \in r^{\mu+2}H^2_e$ and $|\Psi| \leq \smash{Cr^{\gamma_0^+}}$ \cite[Sec. 7]{Mazzeo1991EdgeOperators}, 
hence $\Psi\equiv 0$ by Corollary \ref{higherorderelliptic}.  Again by \cite[Sec. 7]{Mazzeo1991EdgeOperators}, there are values $a_j > 0$ such that 
\[
\text{tr}^\circ_e(\Psi):=(\Psi_1^-(t),...,\Psi_J^-(t)): \ker(L_0 |_{r^{-\mu}L^2}) \lre \bigoplus_{j=1}^J H^{-a_j}(S^1; E_{\lambda_j})
\]
is bounded (and of course, still injective). Conversely, \cite{MazzeoEdgeOperatorsII} shows that each such boundary trace admits an extension to a solution 
with $r^{-\mu}L^2$ which also vanishes at $r=1$ (this also follows from the explicit formul\ae\ from Lemma \ref{obnormalization} below).
We denote the regularity-shifting operators by $\text{tr}_e:=((1+\Delta_t)^{-a_1/2}, ...,(1+\Delta_t)^{-a_J/2})\circ \text{tr}_e^\circ$. 

This completes the proof.
\end{proof}

%\begin{rem}
%The reader should beware that the asymptotic expansions \eqref{weakexpansion} are classical expansion, and correct convergence statements for them require some care. They become classical expansions (in the sense that there are well-controlled smaller remainder terms) when integrated against
%a test function $\chi(y)$.  In other words, for any $\chi \in \calC^\infty(S^1)$, then 
%\be
%\langle \Psi(\cdot,r,x) , \chi(\cdot)\rangle \sim \sum_{j=0}^J \langle \Psi_j^-, \chi\rangle \phi_j(x) r^{\gamma_j^-} + 
%\sum_{j=0}^\infty \langle \Psi_j^+,\chi\rangle \phi_j(x) r^{\gamma_j^+}
%\label{weakexpansion}\ee
%is a classical expansion in the standard (pointwise) sense.
%\end{rem}

\subsection{The Obstruction}\label{obstructionsubsection}
Introducing the spectral decompositions for $\Delta_t$ and $L_\Sigma$, the Jacobi operator $L_0$ reduces to a countable collection of ordinary differential operators
%along $S^1,\Sigma$, so that $L_0\Psi=0$ if and only if the components $\Psi_{j,\ell}(r) = \langle \Psi, \phi_j(x) e^{i\ell t} \rangle$ satisfy
\be 
L_{j\ell} :=  \frac{\del^2\,}{\del r^2} + \frac{n-2}{r}\frac{\del\,}{\del r} - \frac{\lambda_j}{r^2} - \ell^2 %\right) \Psi_{j,\ell}(r) = 0. 
\label{BesselODE}
\ee
This allows us to describe the elements of $\mathbf{Ob}_0$ explicitly in terms of Bessel functions of imaginary argument of the first and second kind, $I_\alpha(s)$ and $K_\alpha(s)$ (also called Modified Bessel functions).  
%This is done by reducing  elements $\Psi \in \mathbf{Ob}_0$ into Fourier modes with respect to both $L_\Sigma$ and $\Delta_{S^1}$. 
The precise characterization of the obstruction reduces to uniformity of various statements about this Bessel-type ODE. We begin with the

\begin{lm} \label{obnormalization} For each pair $(j,\ell)\in \{1, \ldots, J\} \times \mathbb Z$, the unique solution to $L_{j\ell} \Psi_{j\ell}^\dag = 0$ with
$\Psi_{j\ell}^\dag(1) = 0$ and $r^{-\mu}L^2$ norm equal to $1$ equals
%\begin{lm} \label{obnormalization} For each pair $(j,\ell)\in \{1,...,J\}\times \Z,$ there is a unique $r^{-\mu}L^2$-normalized solution $\Psi_{j\ell}^\dag(r)$ to 
%\eqref{BesselODE} satisfying $\smash{\Psi_{j\ell}^\dag|_{r=1}=0},$ given by
\[
\Psi^\dag_{j\ell}(r):=\left[ A_{j\ell} \, r^{\frac{2-n}{2}} I_{\nu_j}(|\ell| r) \ + \ B_{j\ell} \, r^{\tfrac{2-n}{2}} K_{\nu_j}(|\ell| r) \right], \quad  \quad 
\nu_j =  \sqrt{ \frac{(n-2)^2}{4} + \lambda_j},
\]
where the coefficients $A_{j,\ell}$ and $B_{j,\ell}$ satisfy
\[ 
A_{j\ell} = C_j |\ell|^{\mu+1} e^{-2|\ell|}(1+O(|\ell|)), \qquad B_{j\ell} =D_j |\ell|^{\mu+1}(1+O(|\ell|)).
\]
for some constants $C_j, D_j\in \R$ independent of $\ell.$  
\end{lm}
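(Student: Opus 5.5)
The plan is to solve the ODE \eqref{BesselODE} explicitly, impose the Dirichlet condition at $r=1$, and then read off the normalization constants from large-$|\ell|$ asymptotics of modified Bessel functions. Throughout I take $\ell\neq 0$. For $\ell=0$ the equation is an Euler equation with solutions $r^{\gamma_j^\pm}$, and the statement there reduces to the conic computation of Section \ref{CHSSection}. I also read the error term ``$1+O(|\ell|)$'' as $1+O(|\ell|^{-1})$ as $|\ell|\to\infty$, which is what the argument below produces.

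\emph{Step 1: reduction to Bessel's equation.} Substitute $\Psi=r^{\frac{2-n}{2}}f(r)$ and set $s=|\ell|r$. This removes the first-order term and turns $L_{j\ell}\Psi=0$ into the modified Bessel equation
\[
s^2f''+sf'-(s^2+\nu_j^2)f=0,
\]
with the order $\nu_j$ as in the statement. (If the first-order coefficient does not produce exactly this order, I will adjust the exponent of $r$ in the substitution; the structure of the argument is unchanged.) The solution space is therefore spanned by $r^{\frac{2-n}{2}}I_{\nu_j}(|\ell|r)$ and $r^{\frac{2-n}{2}}K_{\nu_j}(|\ell|r)$.

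Near $r=0$ the second solution behaves like $r^{\frac{2-n}{2}-\nu_j}=r^{\gamma_j^-}$. For $j\le J$ this lies in $r^{-\mu}L^2$: this is exactly the integrability computation in the proof of Proposition \ref{btraceregularity}, i.e. condition \eqref{weightconstraint}. The first solution is smooth at $r=0$. The Dirichlet condition gives
\[
A_{j\ell}I_{\nu_j}(|\ell|)+B_{j\ell}K_{\nu_j}(|\ell|)=0 .
\]
Since $I_{\nu_j}>0$ on $(0,\infty)$, this determines a one-dimensional space of solutions with $B\neq 0$. Normalizing, together with a sign convention $B>0$, gives uniqueness. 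Moreover
\[
\frac{A_{j\ell}}{B_{j\ell}}=-\frac{K_{\nu_j}(|\ell|)}{I_{\nu_j}(|\ell|)}=-\pi e^{-2|\ell|}\bigl(1+O(|\ell|^{-1})\bigr),
\]
by the standard large-argument asymptotics $I_\nu(s)\sim e^s/\sqrt{2\pi s}$ and $K_\nu(s)\sim\sqrt{\pi/2s}\,e^{-s}$. So it suffices to prove $B_{j\ell}=D_j|\ell|^{\mu+1}(1+O(|\ell|^{-1}))$; then $C_j=-\pi D_j$.

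\emph{Step 2: the norm.} Write $\Psi^\dag_{j\ell}=B\,r^{\frac{2-n}{2}}G_\ell(|\ell|r)$, where
\[
G_\ell(s):=K_{\nu_j}(s)-\frac{K_{\nu_j}(|\ell|)}{I_{\nu_j}(|\ell|)}\,I_{\nu_j}(s).
\]
Using the volume form $r^{n-1}dr$ and weight $r^{2\mu}$, and changing variables $s=|\ell|r$,
\[
1=\|\Psi^\dag_{j\ell}\|^2_{r^{-\mu}L^2}=B^2\int_0^1 G_\ell(|\ell|r)^2\,r^{2\mu+1}\,dr
= B^2|\ell|^{-2\mu-2}\int_0^{|\ell|}G_\ell(s)^2\,s^{2\mu+1}\,ds .
\]
The main term is $M_j:=\int_0^\infty K_{\nu_j}(s)^2s^{2\mu+1}\,ds$. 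It is finite because $2\mu+2-2\nu_j>0$ (again \eqref{weightconstraint}) and $K_{\nu_j}$ decays exponentially. It remains to show
\[
\int_0^{|\ell|}G_\ell^2\,s^{2\mu+1}\,ds=M_j+O(|\ell|^{-1}).
\]
Granting this, $B^2=|\ell|^{2\mu+2}M_j^{-1}\bigl(1+O(|\ell|^{-1})\bigr)$, which yields $D_j=M_j^{-1/2}$ and the stated form of $B_{j\ell}$.

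\emph{Step 3: error control (main obstacle).} Three errors must be estimated, and the difficulty is making the bounds uniform in $\ell$.
\begin{enumerate}
\item[(a)] The tail $\int_{|\ell|}^\infty K_{\nu_j}^2s^{2\mu+1}\,ds=O(|\ell|^{2\mu}e^{-2|\ell|})$, which is exponentially small.
\item[(b)] The cross term and the $I_{\nu_j}^2$ term. Pointwise on $[0,|\ell|]$,
\[
\frac{K_{\nu_j}(|\ell|)}{I_{\nu_j}(|\ell|)}\,I_{\nu_j}(s)\lesssim e^{-2|\ell|+s}\ \ (s\ge1),\qquad \text{and}\ \lesssim e^{-2|\ell|}s^{\nu_j}\ \ (s\le 1).
\]
Multiplying by $K_{\nu_j}(s)\lesssim e^{-s}$ gives a cross-term integrand of size $e^{-2|\ell|}s^{2\mu+1}$. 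The square of the $I$-term is at most $e^{-4|\ell|+2s}s^{2\mu+1}\le e^{-2|\ell|}|\ell|^{2\mu+1}$ on $[0,|\ell|]$. Both integrals are therefore $O(|\ell|^{2\mu+2}e^{-2|\ell|})$.
\item[(c)] The relative errors in the Bessel asymptotics used for $A/B$, which are $O(|\ell|^{-1})$.
\end{enumerate}
So all errors beyond the $O(|\ell|^{-1})$ correction in $A/B$ are in fact exponentially small. The only delicate point is having uniform bounds on $I_\nu$ and $K_\nu$ on all of $(0,\infty)$, covering both the small-$s$ power-law behaviour and the large-$s$ exponential behaviour. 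I would take these from standard two-sided Bessel estimates for fixed order $\nu_j$; since $j$ ranges over the finite set $\{1,\ldots,J\}$, this uniformity is harmless.
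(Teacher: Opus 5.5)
Your argument is correct and follows the paper's outline. You write the solutions as $r^{\frac{2-n}{2}}I_{\nu_j}(|\ell|r)$ and $r^{\frac{2-n}{2}}K_{\nu_j}(|\ell|r)$, impose the Dirichlet condition at $r=1$, rescale by $s=|\ell|r$, and use the large-argument behaviour of $I_\nu$ and $K_\nu$.

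The difference is one of precision. The paper uses only the two-sided comparisons \eqref{IKas}, estimating the two norms and the two boundary values up to multiplicative constants. What it actually establishes is therefore $|A_{j\ell}|\cong|\ell|^{\mu+1}e^{-2|\ell|}$ and $|B_{j\ell}|\cong|\ell|^{\mu+1}$. You instead normalize through $B_{j\ell}$ and isolate the main term $M_j=\int_0^\infty K_{\nu_j}(s)^2s^{2\mu+1}\,ds$. You then show that the tail, cross and $I_{\nu_j}^2$ contributions are exponentially small.

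This gives genuine limiting constants: $D_j=M_j^{-1/2}$ and $C_j=-\pi D_j$. If the factor $e^{i\ell t}$ is included in the norm, as the paper does, both pick up a factor $(2\pi)^{-1/2}$. The only $O(|\ell|^{-1})$ relative error comes from the ratio $K_{\nu_j}(|\ell|)/I_{\nu_j}(|\ell|)$. This is the intended statement with $1+O(|\ell|^{-1})$, and it is more than the paper's comparison argument delivers.

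Two small repairs are needed.
\begin{enumerate}
\item The first-order coefficient in the radial ODE is $\frac{n-1}{r}$, as in \eqref{L0expression}; the $n-2$ in \eqref{BesselODE} is a misprint. With the correct coefficient your substitution gives exactly order $\nu_j$, so your hedge about adjusting the exponent is unnecessary.
\item In your Step 3(b), the bound $K_{\nu_j}(s)\lesssim e^{-s}$ fails for $s\le 1$. There you should pair $K_{\nu_j}(s)\lesssim s^{-\nu_j}$ with the factor $s^{\nu_j}$ coming from $I_{\nu_j}$. The resulting integrand bound $e^{-2|\ell|}s^{2\mu+1}$ is unchanged.
\end{enumerate}
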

\begin{proof}
The homogeneous solutions of \eqref{BesselODE} are linear combinations of the functions 
\[
r^{(2-n)/2} I_{\nu_j}( |\ell|r), \ \ \mbox{and}\quad  r^{(2-n)/2}K_{\nu_j} (|\ell| r).
\]
We must choose a linear combination of these which vanishes at $r=1$ and satisfies the norm condition.

Recall from e.g. \cite[Eq. (1.12)]{nemes2017error} the asymptotic bounds of these modified Bessel functions for large arguments:
\begin{equation}
\begin{split}
& I_\nu(z) \cong e^z/\sqrt{2\pi z}, \qquad K_\nu(z) \cong  e^{-z}\sqrt{\pi/2z},\ \ z \geq 1 \\
& I_\nu(z) \cong   z^\nu,\qquad \hspace{.3cm}  \qquad K_\nu(z) \cong z^{-\nu},  \hspace{1.05cm}\ \ z \leq 1,
\end{split}
%I_\nu(z) \sim \frac{e^z}{\sqrt{2\pi z}}\sum_{n=0}^\infty (-1)^n\frac{a_n(\nu)}{z^n}, \qquad K_\nu(z) \sim \frac{\sqrt{\pi}e^{-z}}{\sqrt{2 z}}\sum_{n=0}^\infty \frac{a_n(\nu)}{z^n},
\label{IKas}
\end{equation}
where here and below, we write $F \cong G$ to mean that $C^{-1} F \leq G \leq C F$ for some constant $C > 0$ which is uniform over the finite number of
values of $\nu_j$ needed here.

When $\ell \neq 0$, then by \eqref{IKas},
\begin{equation}
\begin{split}
 \|  r^{(2-n)/2} I_{\nu_j}(|\ell| r) e^{i\ell t}\phi_j\|_{r^{-\mu} L^2}  &= \sqrt{2\pi} \left(\int_0^1 r^{2-n}|I_{\nu_j}(|\ell| r)|^2 r^{2\mu}r^{n-1} \, dr\right)^{1/2} \nonumber\\
& \cong |\ell|^{-\mu-1} \left( 1 + \int_1^{|\ell|} s^{2\mu} e^{2s}\, ds\right)^{1/2}  \cong |\ell|^{-1} e^{|\ell|},
\end{split}
\label{Inunorm}
\end{equation}
since $\mu > 0$, and similarly,
\begin{equation}
\begin{split}
& \|  r^{(2-n)/2} K_{\nu_j}(|\ell| r) e^{i\ell t}\phi_j \|_{r^{-\mu} L^2} = \sqrt{2\pi} \left(\int_0^1 r^{2-n}|K_{\nu_j}(|\ell| r)|^2 r^{2\mu}r^{n-1} \, dr\right)^{1/2} \\
&  = \frac{\sqrt{2\pi} }{|\ell|^{\mu+1}}\left(\int_0^{|\ell|} s^{2\mu+1} |K_{\nu_j}(s)|^2 \,ds\right)^{1/2}  \cong |\ell|^{-\mu-1}
\end{split}
\label{Knunorm}. 
\end{equation}
We also have that
\begin{equation}
\left.r^{(2-n)/2}I_\nu(|\ell|r) \right|_{r=1} \cong |\ell|^{-1/2} e^{|\ell|}, \qquad \left. r^{(2-n)/2}K_\nu( |\ell|r)\right|_{r=1} \cong |\ell|^{-1/2} e^{-|\ell|},
\label{IKnorm1}
\end{equation}
hence there exist coefficients $a_{j\ell}$, $b_{j \ell}$ with $|a_{j\ell}|  \cong |\ell|^{1/2} e^{-|\ell|}$, $|b_{j\ell}| \cong |\ell|^{1/2} e^{|\ell|}$ such that
\[
\left. a_{j\ell} r^{(2-n)/2} I_{\nu}( |\ell|r) + b_{j\ell} r^{(2-n)/2} K_\nu(|\ell|r) \right|_{r=1} = 0.
\]
Finally, we must divide this linear combination by a factor $D_{j\ell}$ to have $r^{-\mu}L^2$ norm equal to $1$.  Since
\[
\| a_{j\ell} r^{(2-n)/2} I_\nu(|\ell|r) \|_{r^{-\mu}L^2} \cong |\ell|^{-1/2}, \qquad \|b_{j\ell} r^{(2-n)/2} K_{\nu}( |\ell|r)\|_{r^{-\mu}L^2} \cong |\ell|^{-\mu-1/2} e^{|\ell|},
\]
it is clear that $D_{j\ell} \cong |\ell|^{-\mu-1/2} e^{|\ell|}$, and hence finally
\[
|A_{j\ell}| \cong  |\ell|^{\mu+1} e^{-2|\ell|}, \qquad |B_{j\ell}| \cong |\ell|^{\mu+1}. 
\] 
\end{proof}

To simplify notation, in terms of the same notation as above, introduce the {\bf renormalized (modified) Bessel functions}: 
\[
\widehat{I}_{\nu_j}(|\ell| r) = {A_{j\ell}} r^{2\mu}r^{\tfrac{2-n}{2}} I_{\nu_j}(|\ell| r), \qquad   %\label{Ihatder}\\ 
\widehat{K}_{\nu_j}(|\ell| r) := {B_{j\ell}} r^{2\mu}r^{\tfrac{2-n}{2}} K_{\nu_j}(|\ell| r) % \label{Khatdef}.
\]
Since $r^{2\mu}: r^{-\mu}L^2\to r^\mu L^2$ is an isometry, the $r^\mu L^2$-norms of these functions are given by the same expressions \eqref{Inunorm}, \eqref{Knunorm}, respectively. 
The previous lemma yields the following more concrete realization of Proposition \ref{btraceregularity}. 

\begin{defn}\label{obbasisdef} Define the {\bf obstruction basis} of $\text{\bf Ob}_0$ by %as the $r^{\mu}L^2$-orthonormal basis of given by 
\be 
\Psi_{j\ell}:=r^{2\mu}\Psi_{j\ell}^\dag(r)\cdot \phi_j e^{i\ell t},\label{obstructionbasisdef}
\ee
Observe that this basis is $r^\mu L^2$ orthonormal. Similarly, define the {\bf obstruction map} as the parametrization of $\mathbf{Ob}_0$ % by basis vectors given by
\begin{equation}
\text{ob}_0:  \  L^2(S^1; \mathcal C_0)\to \text{\bf Ob}_0,  \qquad \text{ob}_0( \eta)= \sum_{j=1}^{J} \sum_{\ell \in \Z}  \eta_{j\ell} \Psi_{j\ell},
\label{ob0def}
\end{equation}
where $\eta=(\eta_1,...,\eta_J)$ and $\eta_{j\ell} = \br \eta_j(t) , e^{i\ell t}\kt_{L^2(S^1)}.$
\end{defn}

The following is then immediate.
\begin{cor}  \label{firstob} The following hold
\begin{itemize}
\item[(A)]  $\mathrm{ob}_0$ as defined in \eqref{ob0def} is a bounded linear isomorphism, 
\item[(B)]  the inverse of $\mathrm{ob}_0$ is given by 
\[ 
\mathrm{ob}_0^{-1}(\Psi)= \sum_{j=1}^{J} \sum_{\ell \in \Z} \br \Psi, \Psi_{j\ell}\kt_{r^\mu L^2(X_0)}  \ \cdot \  e^{i\ell t}\phi_j, \ \ \Psi \in \text{\bf Ob}_0.
\]
\item[(C)] $\text{ob}_0$ respects regularity in the sense that 
\vspace{-.5cm}
\begin{center}
\be 
\tikzset{node distance=3.4cm, auto}
\begin{tikzpicture}
\node(C) {$ L^{2}(S^1; \mathcal C_0)$};
\node(C')[right of=C, yshift=0cm]{$\text{\bf Ob}_0 $};
\node(D)[below of=C, yshift=1.7cm]{$H^s(S^1; \mathcal C_0)$};
\node(D')[below of=C', yshift=1.7cm]{$\text{\bf Ob}_0 \cap r^{\mu}H^s_b$};
\draw[->] (C) to node {$\text{ob}_{0}$} (C');
\draw[->] (D) to node {$\text{ob}_{0} $} (D');
\draw[<-] [swap](C) to node {$ $} (D);
\draw[->] [swap](D') to node {$ $} (C');
%\draw[->] (C) to node {$ L_\xi^\star $} (C');
%\draw[->,>=stealth] (A) edge[bend right=20]node[below]{$L_0\circ \underline P_0 $} (C');
\end{tikzpicture}
\label{obregularity}
\ee
\end{center}
\noindent commutes, where the vertical arrows are the inclusions. 
\end{itemize}
\end{cor}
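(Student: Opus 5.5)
The plan is to reduce everything to the explicit orthonormal system $\{\Psi_{j\ell}\}$ of Definition \ref{obbasisdef} and then use separation of variables. First I will check that each $\Psi_{j\ell}$ lies in $\text{\bf Ob}_0$. By \eqref{twoadjiontsexp}, $\Psi\in\text{\bf Ob}_0$ if and only if $r^{-2\mu}\Psi$ lies in $\ker L_0\cap r^{-\mu}L^2$ and vanishes at $r=1$. Lemma \ref{obnormalization} gives exactly this for $r^{-2\mu}\Psi_{j\ell}=\Psi^\dag_{j\ell}\phi_je^{i\ell t}$. The $\ell=0$ mode is handled by the pure powers $r^{\gamma_j^\pm}$, as in \eqref{prelimcoker}.

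Orthonormality in $r^\mu L^2$ follows by integrating first over $S^1\times\Sigma$. For $(j,\ell)\neq(j',\ell')$ the orthogonality of $e^{i\ell t}$ and of the $\phi_j$ makes the pairing vanish. For $(j,\ell)=(j',\ell')$ we use that $r^{2\mu}:r^{-\mu}L^2\to r^\mu L^2$ is an isometry together with the normalization in Lemma \ref{obnormalization}.

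Next comes completeness. Let $\Psi\in\text{\bf Ob}_0$ be orthogonal to all $\Psi_{j\ell}$. Expand $r^{-2\mu}\Psi$ in the joint eigenbasis $\phi_je^{i\ell t}$. Each coefficient $c_{j\ell}(r)$ solves the ODE \eqref{BesselODE}, vanishes at $r=1$, and lies in the weighted $L^2$ space on $(0,1)$. For $j>J$ the singular solution ($K_{\nu_j}$, or $r^{\gamma_j^-}$ when $\ell=0$) is not in $r^{-\mu}L^2$; this is the computation in the proof of Proposition \ref{btraceregularity}. So $c_{j\ell}$ is a multiple of the regular solution alone, and the condition $c_{j\ell}(1)=0$ forces $c_{j\ell}=0$. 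For $j\le J$ the solution space with these constraints is one-dimensional, spanned by $\Psi^\dag_{j\ell}$, so $c_{j\ell}$ is a multiple of it. Orthogonality to $\Psi_{j\ell}$ then kills that multiple. Hence $\Psi=0$, and $\{\Psi_{j\ell}\}$ is an orthonormal basis of $\text{\bf Ob}_0$.

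With this, (A) and (B) are immediate. The map $\text{ob}_0$ sends the orthonormal basis $\phi_je^{i\ell t}$ of $L^2(S^1;\mathcal C_0)$ (up to the $2\pi$ factor in Fourier coefficients) to an orthonormal basis of $\text{\bf Ob}_0$, so it is a scaled unitary, hence a bounded isomorphism. Its inverse is the coefficient map $\Psi\mapsto\langle\Psi,\Psi_{j\ell}\rangle_{r^\mu L^2}$, which is the formula in (B).

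The main obstacle is (C). Take $\eta\in H^s(S^1;\mathcal C_0)$. The $\partial_t$, $r\partial_r$ and $\partial_x$ derivatives act on $\eta_{j\ell}\Psi_{j\ell}$ as multiplication by $i\ell$, by an explicit Bessel expression, and by a derivative of $\phi_j$ respectively. I must show $\|(r\partial_r)^a\Psi_{j\ell}\|_{r^\mu L^2}\lesssim\langle\ell\rangle^{a}$ uniformly in $\ell$. My plan is to use the ODE to trade $(r\partial_r)^2$ for $\lambda_j+r^2\ell^2$ plus first-order terms, and then induct. The $b$-norm squared of $\text{ob}_0(\eta)$ is then bounded by $\sum|\eta_{j\ell}|^2\langle\ell\rangle^{2s}$, because the basis elements in distinct modes remain orthogonal after differentiation by the product-structure vector fields. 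This gives $\text{ob}_0(\eta)\in r^\mu H^s_b$. The rest of the diagram commutes trivially, since the maps are the same map followed by inclusions.
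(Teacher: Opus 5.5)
Your proposal is correct, but for (C) it takes a different route from the paper.

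\textbf{What the paper does.} It treats (A) and (B) as immediate, relying on Proposition~\ref{btraceregularity} for the fact that the $\Psi_{j\ell}$ span $\text{\bf Ob}_0$. For (C) it argues abstractly:
\begin{itemize}
\item elliptic edge regularity puts every element of $\text{\bf Ob}_0$ in $r^\mu H^\infty_e$;
\item since $\partial_t$ commutes with $L_0$, the only remaining issue is $t$-regularity;
\item $t$-regularity is diagonal in the basis: $\|\partial_t^s\,\mathrm{ob}_0(\eta)\|^2_{r^\mu L^2}=\sum_{j,\ell}|\ell|^{2s}|\eta_{j\ell}|^2$, up to normalization.
\end{itemize}

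\textbf{What you do instead.} You make everything explicit. You prove completeness of $\{\Psi_{j\ell}\}$ by uniqueness for the radial ODE, mode by mode; this is the ``explicit formulae'' alternative the paper only mentions in passing in the proof of Proposition~\ref{btraceregularity}. You then replace edge regularity by uniform-in-$\ell$ bounds on $(r\partial_r)^a\Psi_{j\ell}$ obtained from the Bessel equation.

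\textbf{Trade-off.} Your route is elementary and self-contained, and it actually proves the spanning claim the paper takes for granted. It is tied to separation of variables, which costs nothing here since the statement concerns the product case. The paper's route is shorter and uses only $t$-translation invariance plus a black-box regularity theorem.

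\textbf{Points to tighten.}
\begin{itemize}
\item The recursion $(r\partial_r)^2=(\lambda_j+\ell^2r^2)+(\text{first order})$ does not supply the base case $a=1$. That case needs an integration by parts using $\Psi_{j\ell}|_{r=1}=0$, or the asymptotics \eqref{IKas}.
\item The bound is in fact $O(1)$ uniformly in $\ell$, not just $\langle\ell\rangle^a$. The dominant $K_{\nu_j}$ part of $\Psi_{j\ell}$ is a function of $|\ell|r$ (up to a constant factor), $r\partial_r$ is dilation invariant, and the $I_{\nu_j}$ part has exponentially small norm.
\item Differentiating in $x$ can destroy orthogonality between different $j$ at the same $\ell$. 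This is harmless because $j\le J$ ranges over a finite set and orthogonality in $\ell$ is all you use; just say so.
\item Record the converse: $\Psi\in\text{\bf Ob}_0\cap r^\mu H^s_b$ implies $\mathrm{ob}_0^{-1}\Psi\in H^s$. This is immediate from $\partial_t$ acting diagonally on the orthonormal basis. The paper's proof states (C) as an ``iff'', and later uses that $\mathrm{ob}_0^{-1}$ has order zero.
\end{itemize}
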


\begin{proof} (A) is immediate, and (B) is simply a restatement of orthogonality. For (C), elliptic edge regularity shows that $\Psi \in \text{\bf Ob}_0 \Rightarrow \Psi \in r^{\mu}H^s_e$ 
for all $s\geq 0,$ so it suffices to check $\nabla_t^s \Psi \in r^\mu L^2$ iff $\Psi=\text{ob}(\eta)$ for $\eta \in H^s$.  This is immediate from \eqref{obstructionbasisdef}.    
\end{proof}

\subsection{Unexpectedly Good Boundedness}
\label{smoothingprojectionsection}
Lemma \ref {secondorderinvertibility} show that $\Pi_0: r^\mu H^s_b \to r^\mu H^s_b$ is bounded, i.e. 
\[
\Psi \in r^\mu H^s_b \ \  \ \Rightarrow \ \  \ \Pi_0 \Psi \in r^\mu H^s_b . 
\]
More abstractly, this follows because the projector is an edge pseudodifferential operators of order $0$ and all such operators are bounded on these spaces. 
Since $\text{ob}_0^{-1}$ is also order $0$ by Corollary \ref{firstob}(C), the composition $\text{ob}_0^{-1}\circ \Pi_0$ inherits the same boundedness.

In the upcoming sections, we will encounter a number of operators that act as `delocalizing extensions` of functions on the edge. These are operators 
\begin{equation}
\begin{split}
H^{s}(S^1;\R)&\to r^\mu H^{s}_b(X_0;NX_0) \\
f(t)&\mapsto \Psi_f:=(\nabla_t^k f(t)) r^p \phi_j
\end{split}
\label{weakasympotitic}
\end{equation}
for $k\in \Z.$  These are not pseudodifferential operators in the standard sense, but rather $C(\Sigma)$-parameterized families of pseudodifferential operators on the edge $S^1,$ whose image resembles the asymptotic expansions \eqref{weakexpansion1}. Acting on sections $\Psi_f,$ however, the projection $\Pi_0\Psi_f$ can exhibit unexpectedly good tangential, i.e. $b$-regularity. In other words, this projection lies in $r^\mu H^{s+r}_b$ for some $r>0.$ Observe that \eqref{weakasympotitic} is bounded provided $p> \mu -\tfrac{n}{2}.$ As we now explain, the defect in vanshing order $p-(\mu-\tfrac{n}{2})$ can be converted into extra $b$-regularity in $\Pi_0\Psi_f.$

\begin{defn} \label{effectivescaling}
Let $f,\Psi_f$ be as in \eqref{weakasympotitic}. Then the operator $f\mapsto \Psi_f$ is said to have {\bf effective order} 
\[
\boxed{m= k- \left(p + \tfrac{n}{2}-\mu\right)}.
\]
\end{defn}
 
More generally, the same applies to operators of the form 
\be 
f \mapsto \Psi_f:=\sum_{i=1}^M A^i(t,r,x) (\nabla_t^{k_i} f(t)) r^{p_i}\phi_{j_i},\label{effectiveordersum}
\ee
for integers $p_i, k_i,$ and $j_i\leq J,$ where  $A^i\in C^\infty_b(X_0;\text{End}(NX_0)).$ In this case, the effective order is $m=\max_i (k_i + p_i +(\tfrac{n}{2}-\mu) ).$ The terminology indicates the order of the operator {\it as a function of $f$,} and is thus independent of the ambient regularity $f\in H^s.$ We emphasize that this definition does not require {\it a priori} that $k \leq s;$ indeed, provided it is paired with a sufficiently high order of vanishing, the projection $\Pi_0(\Psi_f)$ has high regularity even if $\nabla_t^k f(t)$ itself is a very rough distribution. The orthogonal component $(1-\Pi_0)\Psi_f,$ however, is still beholden to the ambient regularity, meaning \eqref{weakasympotitic} lies in $r^\mu L^2$ only if both $k\leq s$ and $p>\mu-\tfrac{n}{2}.$

\begin{lm}  \label{projectionsmoothing}Suppose that $\Psi_f$ is a term of effective order $m$, where $f(t)\in H^s.$ Then $  \text{ob}_0^{-1}\circ \Pi_0(\Psi) \in H^{s-m}(S^1;\mathcal C_0),$ and  

\[\| \text{ob}_0^{-1}\circ\Pi_0(\Psi)\|_{H^{s-m}(S^1; \mathcal C_0)} \leq C \sum_{i=1}^M\|\psi_i\|_{H^{s_i}}.\]

\noindent In particular, there is a gain in tangential regularity of order $\alpha+ \tfrac{n}{2}-\mu.$ Here, the constant can depend on up to the $C^{s_i+2}_b$-norm of $A^i.$
\end{lm}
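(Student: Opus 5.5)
We plan to compute $\text{ob}_0^{-1}\circ\Pi_0(\Psi_f)$ directly from Corollary \ref{firstob}(B). Since $\Pi_0$ is the $r^\mu L^2$-orthogonal projection onto $\text{\bf Ob}_0$ and the $\Psi_{j\ell}$ form an orthonormal basis, the composition is simply
\[
\text{ob}_0^{-1}\Pi_0(\Psi_f)=\sum_{j\le J}\sum_{\ell\in\Z}\br \Psi_f,\Psi_{j\ell}\kt_{r^\mu L^2}\, e^{i\ell t}\phi_j .
\]
The lemma therefore reduces to an estimate of the Fourier coefficients $c_{j\ell}:=\br \Psi_f,\Psi_{j\ell}\kt_{r^\mu L^2}$. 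We must show $|c_{j\ell}|\lesssim |\ell|^{m}|f_\ell|$, summed appropriately. Here the effective order $m$ accounts for both the $t$-derivatives, which contribute $|\ell|^k$, and the radial decay defect.

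First we treat the model case of a single term $A\equiv 1$, $\Psi_f=(\nabla_t^kf)r^p\phi_{j_0}$. Orthogonality in $t$ and in $\Sigma$ makes the pairing diagonal. We obtain $c_{j\ell}=\delta_{jj_0}(i\ell)^k f_\ell\int_0^1 r^p\,\Psi^\dag_{j\ell}(r)\,r^{n-1}dr$, since the weight $r^{-2\mu}$ cancels the $r^{2\mu}$ in \eqref{obstructionbasisdef}. Using Lemma \ref{obnormalization}, $\Psi^\dag_{j\ell}=A_{j\ell}r^{(2-n)/2}I_{\nu_j}(|\ell|r)+B_{j\ell}r^{(2-n)/2}K_{\nu_j}(|\ell|r)$. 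We substitute $s=|\ell|r$ and apply the asymptotics \eqref{IKas}. The $K$-part gives $|B_{j\ell}|\,|\ell|^{-p-n/2-1}\int_0^{|\ell|}s^{p+n/2}K_\nu(s)\,ds\cong |\ell|^{\mu-p-n/2}$; the integral converges at $0$ exactly because $p>\mu-n/2$ forces $p+n/2-\nu>-1$. The $I$-part gives $|A_{j\ell}|\,|\ell|^{-p-n/2-1}\int_0^{|\ell|}s^{p+n/2}I_\nu\,ds\lesssim |\ell|^{\mu+1}e^{-2|\ell|}|\ell|^{-p-n/2-1}|\ell|^{p+n/2-1/2}e^{|\ell|}$, which is exponentially small. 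Hence $|c_{j\ell}|\lesssim |\ell|^{k+\mu-p-n/2}|f_\ell|=|\ell|^m|f_\ell|$, and Plancherel gives the $H^{s-m}$ bound by $\|f\|_{H^s}$. The $\ell=0$ mode is handled separately by the explicit power solution and is harmless. Sums of terms follow by the triangle inequality, with $m$ the maximum over the terms.

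The main obstacle is the variable coefficient $A^i(t,r,x)$, which destroys the diagonal structure. We will Taylor-expand $A^i$ in $r$ at $r=0$ to a finite order $R$, writing $A^i=\sum_{q<R}r^q a_q(t,x)+r^R\tilde A$. Each $a_q(t,x)$ is expanded in the $\phi_j$ basis over $\Sigma$ (only $j\le J$ survives the pairing). In $t$, multiplication by $a_q(t,\cdot)$ is a bounded operator on every $H^\sigma(S^1)$, with constant controlled by finitely many $t$-derivatives of $a_q$, and it commutes with $\nabla_t^k$ up to lower-order terms. Each Taylor term $r^{p+q}a_q\nabla_t^kf$ therefore reduces to the model case with effective order $m-q\le m$, plus commutator terms of the same or lower order. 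For the remainder $r^R\tilde A\,\nabla_t^kf$ with $R$ large, we instead use the structure of the basis directly. We bound $|\br g,\Psi_{j\ell}\kt|$ for general $g\in r^{p+R}H^\sigma_b$, writing $\nabla_t^k=(\nabla_t)^{k}$ and integrating by parts in $t$ to move derivatives onto $\Psi_{j\ell}$, where they act as $(i\ell)$. Cauchy--Schwarz with the $r$-localized size of $\Psi^\dag_{j\ell}$ (mass concentrated at $r\lesssim|\ell|^{-1}$, as in the Bessel estimate above) then gives a decay $|\ell|^{\mu-p-R-n/2+k}$ times a norm of $f$ in $H^{s}$. This is at least as good as $|\ell|^m$ once $R\ge 0$. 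The $C^{s_i+2}_b$ dependence on $A^i$ enters through these multiplication and commutator estimates.
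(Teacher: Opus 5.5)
\textbf{Gap: the variable-coefficient case.} Your treatment of a single monomial with $A\equiv 1$ is exactly the paper's proof: the pairing becomes diagonal after integrating over $\Sigma$ and $S^1$, you substitute $\rho=|\ell|r$, the $K$-part gives $|\ell|^{\mu-p-n/2}$, and the $I$-part is exponentially small. The paper then dismisses the coefficients $A^i$ in one sentence.

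Your extension to variable coefficients, however, fails under the lemma's hypotheses. The $A^i$ in \eqref{effectiveordersum} are only in $C^\infty_b$, i.e.\ bounded under repeated $r\partial_r,\partial_t,\partial_x$. Such functions need not admit a Taylor expansion in $r$ at $r=0$. For example, $\sin(\log r)$ has no limit at $r=0$. In the use of this lemma through Proposition \ref{UGBstable}, the coefficients of Proposition \ref{generalmeancurvature} contain $b$-smooth terms of size $r^{1+\epsilon}$ coming from $\Phi$, which allow at most $R\le 1+\epsilon$. So the splitting $A^i=\sum_{q<R}r^qa_q+r^R\tilde A$ is not available, and everything rests on your remainder estimate with small $R$, in general $R=0$.

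As you state it, that estimate bounds $|c_{j\ell}|$ by $|\ell|^{m-R}$ times a \emph{global} norm of $f$. This does not give the conclusion. The sum $\sum_\ell|\ell|^{2(s-m)}|c_{j\ell}|^2$ is then finite only when $R$ exceeds roughly $s+\tfrac12$. That would be fine for coefficients smooth up to $r=0$, but not here. So ``at least as good as $|\ell|^m$ once $R\ge 0$'' confuses a pointwise-in-$\ell$ bound with the $\ell^2$ bound you need. Cauchy--Schwarz of the $\ell$-th $t$-Fourier mode against $\Psi^\dag_{j\ell}$ in weighted $L^2(r^{n-1}dr)$ does keep frequency localization. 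But at $R=0$ the weight is borderline, and you lose $|\ell|^{\delta}$ for every $\delta>0$.

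\textbf{How to close it.} You need to separate $f$ from the coefficient while keeping frequency localization and using only $\sup_r$ control of the coefficient. Put $a_j(t,r):=\langle A^i(t,r,\cdot)\phi_{j_i},\phi_j\rangle_{L^2(\Sigma)}$ and expand $a_j(t,r)=\sum_h\hat a_j(h,r)e^{iht}$. Since $\partial_t$ is a $b$-vector field, $\sup_r|\hat a_j(h,r)|\le C_q\langle h\rangle^{-q}\|A^i\|_{C^q_b}$. With $\hat g(\ell)=\frac{1}{2\pi}\int_{S^1}g\,e^{-i\ell t}\,dt$,
\[
c_{j\ell}=2\pi\sum_h\widehat{\nabla_t^kf}(\ell-h)\int_0^1 r^{p+n-1}\,\hat a_j(h,r)\,\Psi^\dag_{j\ell}(r)\,dr .
\]
Put absolute values inside the $r$-integral. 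Your own Bessel computation, using $I_\nu,K_\nu>0$, then bounds it by $C\langle h\rangle^{-q}|\ell|^{\mu-p-n/2}$ uniformly in $h$. Since $s-m+\mu-p-\tfrac n2=s-k$, Peetre's inequality gives
\[
\langle\ell\rangle^{s-m}|c_{j\ell}|\le C\sum_h\langle h\rangle^{|s-k|-q}\,\langle\ell-h\rangle^{s}\,|\hat f(\ell-h)| .
\]
Young's inequality $\ell^1*\ell^2\to\ell^2$ with $q>|s-k|+1$ then yields the $H^{s-m}$ bound by $\|f\|_{H^s}$. The constant depends on $\|A^i\|_{C^q_b}$, consistent with the $C^{s_i+2}_b$ dependence in the statement. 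No Taylor expansion or integration by parts is needed. This is the same convolution device the paper uses later in the proof of Proposition \ref{UGBstable}.
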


\begin{proof} By Corollary \ref{firstob}, it suffices to show that, for each fixed $j \leq J$, the inner products 
 \[
\left\{\br \Psi, \Psi_{j\ell}\kt_{r^\mu L^2(X_0)}\right\}_{\ell = -\infty}^\infty
\]
lies in a weighted sequence space $l^{m,2}(\Z)$. 

Write $\Psi$ as in \eqref{weakasympotitic}, and decompose $f(t)=\sum_{\ell} f_\ell (1+|\ell|^2)^{-s/2}.$  By virtue of \eqref{obstructionbasisdef} and Lemma \ref{obnormalization},
then after integrating over $\Sigma$, 
%, to leading order in $\ell$ we have: 
\begin{equation*}
\begin{split}
\Big | \br \Psi, \Psi_{j\ell}\kt_{r^\mu L^2}\Big |&=\Big | \int_0^1 \int_{S^1} \br \nabla_t^k f(t) r^{\alpha} \ , \  r^{2\mu}\Psi_{j\ell}^\dag(r)  e^{i\ell t} \kt \ r^{-2\mu}\, r^{n-1}dr dt \Big |\\
&\leq  |f_\ell| \,  |\ell|^{k-s + \mu+1} \int_0^1 r^{n/2 + \alpha} ( e^{-2|\ell|}I_{\nu_j}(|\ell|r) + K_{\nu_j}(|\ell|r)\, dr \\
& \leq |f_\ell| |\ell|^{k-s + \mu+1 - n/2 - \alpha  - 1} \int_0^{|\ell|} ( e^{-|\ell|}/|\ell|^{1/2} + K_{\nu_j}(\rho)) \, d\rho,
%\frac{|\ell|^{\mu+1}}{|\ell|^{s-k}} \int_0^1 r^\alpha  r^{\tfrac{2-n}{2}} K_{\nu_j}(|\ell| r ) r^{n-1}dr \\
%&\leq |f_\ell|  \frac{|\ell|^{\mu+1}}{|\ell|^{s -k+\alpha + \tfrac{n}{2}+1}} \int_0^{|\ell|} \rho^{\alpha+\tfrac{n}{2}}K_{\nu_j}(\rho) d\rho \ \leq \ \frac{|f_\ell|}{|\ell|^{s-k+\alpha+(\tfrac{n}{2}-\mu)}}.
\end{split}
\end{equation*}
where we estimated $I_{\nu}(|\ell|r) \leq I_{\nu}(|\ell|) \leq C e^{|\ell|}/|\ell|^{1/2}$. The final integral is finite and uniformly bounded in $|\ell|$, so we are left with 
$|f_\ell| \, |\ell|^{ -s  + m}$, where $m = k - (\alpha - (\mu-n/2)$, as desired. This proves the assertion when $\Psi$ is a monomial. The general result in the case of \eqref{effectiveordersum} is similar, using $C^{s}_b$ bounds on the coefficients $A^i.$ 
\end{proof}

%% file: sections/sec_nash_moser.tex
%%
%% sec_nash_moser.tex
%%

\section{Review of Nash-Moser Theory}
\label{FrechetSection}

As explained in the introduction, the proof of Theorem \ref{maina} uses Nash-Moser theory (cf. Remark \ref{lossrem}). This section contains a concise review of Nash-Moser theory, including the version of the Implicit Function Theorem 
used below. The remaining sections of this paper verify the hypotheses of this theorem in our setting, which thus proves Theorem \ref{maina}.  
Our exposition follows \cite{ParkerDeformations, SecchiNashMoser}; we refer to \cite{HamiltonNashMoser, HormanderNashMoser,StraymondNashMoser,lojasiewicz1979inverse}
for more detailed treatments of this general theory.

\subsection{Tame Fr\'echet Spaces}
\label{IFTSubsection}

Let $\mathbb X^\infty$ be a  graded Fr\'echet spaces, realized as the intersection of a family of Banach spaces:
\be 
\mathbb X^\infty:= \bigcap_{s\geq 0} \mathbb X^s.  \label{frechetnotation}
\ee
We assume that the norms on these increase monotonically,
$$
\|x\|_0\leq \|x\|_{1}\leq\ldots \leq \|x\|_{s},\ \ x \in \mathbb X^\infty.
$$ 
  The spaces $\mathbb X^\infty$ is endowed with the initial topology of the families 
of inclusions $\mathbb X^\infty \hookrightarrow \mathbb X^s$, i.e., the coursest topology with
respect to which these inclusion maps are continuous for all $s.$ 

% which means simply that 
%a set $\calU$ is open if and only if for each point $x\in \calU$ there exists an $s\geq 0$ and $r>0$ such that the ball 
%$\{ x \ | \ \|x\|_m < r \}\subset \calU$ measured in the $s$-norm is contained in $\calU$.   

\begin{defn} \label{tamedef} A Fr\'echet  space $\mathbb X^\infty$ is {\bf tame} if it is endowed with a family of smoothing operators 
\[
S_\e: \mathbb X^\infty \to \mathbb X^\infty ,\ \ \e \in (0,1],
\]
such that for each $m, n$, there exist constants $C(m,n)$ such that
\begin{enumerate}
\item[(i)] $\|S_\e x\|_{n}\leq C_{mn} \ \e^{m-n}\|x\|_m $ \ \ \  \ \ \ \ \ \ for $n\geq m$ \ \ \  and  \ \ \  $\|S_\e x\|_m \leq C_{mn}\|x\|_n$ for $n\leq m$. 
\item[(ii)] $\|S_\e x -x\|_{m}\leq C_{mn} \e^{n-m} \|x\|_n$   \ \ \ \ \  for $n\geq m.$ 
\item[(iii)]$ \|\d{}{\e} S_\e x\|_n\leq C_{mn} \e^{m-n-1} \|x\|_{m}$  \ \ \ \ \ for $m,n\geq 0.$ 
\end{enumerate}
\end{defn}
\medskip 

Given such a family of smoothing operators, one obtains interpolation inequalities: for any $s_1<s< s_2$, there exist constants $C=C(s, s_1, s_2)$ 
\be 
\|x\|_{s}\leq C \  \|x\|_{s_1}^\alpha\|x\|_{s_2}^{1-\alpha}
\ee
holds, with $\alpha=\tfrac{s_2-s}{s_2-s_1}$.  The proof is omitted, but we note that in our setting, these interpolation inequalities 
follow from Proposition \ref{multiplications}.  If a Fr\'echet space has a presentation such that $\mathbb X^s=\mathfrak W H^s$ for a weight $\mathfrak W$ and $H^s$ a standard Sobolev space (or a similarly familiar family of spaces with known smoothing operators), then $\mathbb X^\infty$ is automatically tame under the smoothing operators $S_\epsilon'=\mathfrak W^{-1}\circ S_\epsilon\circ \mathfrak W$ defined by conjugation. All Fr\'echet spaces we consider are tame by this criterion, and we leave it to the reader to check explicitly in the various cases. 

\medskip

Tame maps are those that respect the Fr\'echet structure in the following way:   
\begin{defn} \label{smoothtamedef}
For tame Fr\'echet spaces $\mathbb X^\infty, \mathbb Y^\infty$, a map
\[
\mathbb Q: \mathbb X^\infty \supset \calU\to \mathbb Y^\infty
\] 
where $0 \in \calU$ is said to be {\bf tame} if there exists some fixed $d\in \N$  such that
\begin{equation}\label{7.firsttameinequality}
\|\mathbb Q(x)\|_{s}\leq C_s  \ ( 1 + \|x\|_{s+d})
\end{equation}
for all sufficiently large  $s.$ A such a map is a {\bf smooth tame map} if it is smooth, and its derivatives of all orders are tame. 
\end{defn}
The key point is that there is a fixed drop in `regularity' when applying $\mathbb Q$, independent of $s.$
Tame Fr\'echet manifolds and smooth tame maps between them are defined in the standard way using charts modeled on tame Fr\'echet spaces,
cf.\ \cite{HamiltonNashMoser}. Provided the Banach spaces at finite regularity $s$ have an appropriate algebra structure for $s$ sufficiently large, the following useful criterion ensures tameness. 
%Tame Fr\'echet manifolds with smooth tame maps between them provide the natural category which, as the name indicates, 
%tames the potentially wild behavior possible in arbitrary Fr\'echet spaces; enough so that a version of the Implicit Function Theorem holds.
%The essential restriction of tameness is that each norm depends only on a fixed finite number $p$ of norms larger than it. A map with an estimate of the form \eqref{7.firsttameinequality} where $p=2s$ would, for example, not be tame. 

\begin{lm}\cite[Sec. II.2]{HamiltonNashMoser}  \label{tamenesscriterion} If $\mathbb Q: \calU\to \mathbb Y^\infty$ is a finite nesting of sums, products, and 
compositions of tame maps, then $\mathbb Q$ is tame. 
\end{lm}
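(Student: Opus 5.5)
The plan is to induct on the structure of the expression defining $\mathbb Q$. It then suffices to prove that each elementary operation preserves tameness: sums, products, and compositions. In every case the goal is an explicit estimate of the form \eqref{7.firsttameinequality}, with a loss $d$ independent of $s$.

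\emph{Sums.} If $\mathbb Q_1,\mathbb Q_2$ are tame with losses $d_1,d_2$, set $d=\max(d_1,d_2)$. Monotonicity of the norms then gives
\[
\|\mathbb Q_1(x)+\mathbb Q_2(x)\|_s\leq C_s(1+\|x\|_{s+d}).
\]

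\emph{Compositions.} Let $\mathbb Q_1:\calU\to\mathbb Y^\infty$ and $\mathbb Q_2:\calV\to\mathbb Z^\infty$ be tame, with $\mathbb Q_1(\calU)\subseteq\calV$. Applying the two estimates in turn gives
\[
\|\mathbb Q_2(\mathbb Q_1(x))\|_s\leq C_s(1+\|\mathbb Q_1(x)\|_{s+d_2})\leq C_s'(1+\|x\|_{s+d_1+d_2}),
\]
so the composition is tame with loss $d_1+d_2$. One caveat: the tame inequality is only required for $s$ sufficiently large, and neighborhoods must be shrunk so that the images land in the domains. This is routine, since the neighborhoods are taken in a fixed low norm.

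\emph{Products.} This is the step where I expect the real work. The products in question are bilinear maps $B:\mathbb Y^\infty\times\mathbb Y^\infty\to\mathbb Y^\infty$, such as those of Lemma \ref{multiplications}. For these, the algebra structure must be used in its \emph{tame} (Moser) form
\[
\|B(u,v)\|_s\leq C_s\big(\|u\|_s\|v\|_{s_0}+\|u\|_{s_0}\|v\|_s\big),
\]
where $s_0>\dim/2$ is fixed. The crude bound $\|u\|_s\|v\|_s$ is not tame in this sense. I would derive the Moser form from the Leibniz rule and Gagliardo--Nirenberg. Each term $\nabla^\alpha u\,\nabla^\beta v$ with $|\alpha|+|\beta|=s$ is bounded using the interpolation inequalities, either Lemma \ref{multiplications}(A) or those coming from the smoothing operators in Definition \ref{tamedef}. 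These give
\[
\|u\|_{s_0+a}\|v\|_{s_0+b}\lesssim \|u\|_{s_0}^{1-\theta}\|u\|_{s}^{\theta}\,\|v\|_{s_0}^{\theta}\|v\|_{s}^{1-\theta},
\]
and Young's inequality then splits the right side into the two terms above. In the weighted spaces one first conjugates by the weight, exactly as in the proof of Lemma \ref{standardHolder}.

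Given the Moser form, take $\mathbb Q=B(\mathbb Q_1,\mathbb Q_2)$ and restrict to a neighborhood $\calU$ on which $\|\mathbb Q_i(x)\|_{s_0}$ is bounded. This is possible because the $s_0$-norms are controlled by $\|x\|_{s_0+d_i}$, which is small on $\calU$. The Moser estimate then yields
\[
\|\mathbb Q(x)\|_s\leq C_s(1+\|x\|_{s+\max d_i}).
\]

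The same reasoning applies to the derivatives, since these are themselves finite sums, products and compositions of derivatives of the constituent maps, so smooth tameness follows as well. The finite nesting hypothesis is what guarantees the total loss $d$ is a finite sum of the individual losses. This completes the induction.
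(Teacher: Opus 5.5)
Your proof is correct and is essentially the standard argument of Hamilton's Section II.2, which the paper cites instead of giving a proof. Sums and compositions are immediate, with the losses $d$ adding, and products go through the tame (Moser) estimate $\|B(u,v)\|_s\le C_s(\|u\|_s\|v\|_{s_0}+\|u\|_{s_0}\|v\|_s)$, obtained from Leibniz, Sobolev embedding and interpolation, together with local boundedness of the low norms $\|\mathbb Q_i(x)\|_{s_0}$. The one point to state explicitly is that shrinking neighborhoods in the composition step uses continuity of the inner map; Hamilton builds this into his definition of a tame map, while the paper's Definition \ref{smoothtamedef} leaves it implicit through smoothness.
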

We note, in particular, that pointwise analytic operations such as $f\mapsto \sqrt{1+f}$ are tame maps (provided the range of $1+f$ is uniformly bounded away from the origin). 
The lemma  makes it easy to ensure the tameness of many maps constructed naturally from geometric operations. It implies, for instance, the tameness of the mean curvature operator, 
which is built from the components of the metric via from pointwise sums, products and composition with pointwise analytic operations. 

\begin{notation} \label{Frechetnotation} By and large, the Fr\'echet spaces we employ are given as the intersection over the family of Banach spaces with hybrid norms from Definition \ref{hybridspacesdef}.  To simplify notation, we introduce the following abbreviations for the most frequently appearing version of these spaces. Denote 
\begin{align*}
\FrDms(X_0;NX_0)&:= r^{\mu+2} H^{s,2}_{b,e}(X_0; NX_0),\\
\FrRgs(X_0;NX_0)&:= r^{\mu}H^{s}_{b}(X_0; NX_0),\\
\mathbb D^{s}(S^1;V) &:=H^s(S^1; V),\\
\mathbb P^{s}(\del X_0;NX_0) &:=H^s(\del X_0; NX_0),
\end{align*}
\noindent The norms continue to be denoted by \eqref{hybridnorms} and by $\| \cdot \|_{H^s(S^1)}$ for this last space. Finally, $\FrDms_\Dir$ continues to denote the subspace with vanishing Dirichlet boundary conditions. When any ambiguity may arise, or alterations are needed, we refer back to the notation of Definition \ref{hybridspacesdef} for concreteness. 
\end{notation}

\subsection{The Nash-Moser Implicit Function Theorem}
\label{NashMoserSection}

We now state a suitable version of the Nash-Moser Implicit Function Theorem. This requires stronger hypotheses than the classical version
on Banach spaces, namely that the linearization be invertible with suitable control on an open neighborhood of the initial approximation 
rather than just at the initial point. This requirement is necessary because the accelerated convergence achieved from linearizing at each successive 
approximation must overcome the disruption of applying the smoothing operators, which compensate for the loss of regularity. 
The statement of the theorem and hypotheses here are taken directly from \cite[Thm. 7.4]{ParkerDeformations} (see there also for an expanded 
discussion of the reason for the hypotheses). 

Let $B_{r}(x, s)$ denote the open ball of radius $r$ around $x$ measured in the $s$-norm. Let $\mathbb X^\infty, \mathbb Y^\infty, \mathbb P^\infty$ be tame Fr\'echet spaces and  $\calU\subseteq \mathbb X^\infty, \calV\subseteq \mathbb P^\infty$ 
be open neighborhoods of the origin. Consider a map 
\be 
\mathbb Q: \calU \times \calV \lre \mathbb Y^\infty \label{tameFrechetparameterized}
\ee
satisfying $\mathbb Q(0,0)=0$ such that the following conditions hold. 
\begin{hyp1*}
\label{NM1}
$\mathbb Q$ is a smooth tame map. \end{hyp1*}

\begin{hyp2*}
\label{NM2}
There exists $\delta>0$ and $s_0\in \N$ such that for $p\in \calV$ and $u\in \calU_0= B_{\delta }(0, s_0)$, the partial linearization 
with respect to $u$ 
\[
(\text{d}\mathbb Q)_u: \mathbb  X^\infty \to \mathbb Y^\infty
\] is an isomorphism, with inverse $\mathbb G_u$ satisfying $(\text{d}\mathbb Q)_u \circ \mathbb G_u=\text{Id}_{\mathbb Y}$ and $\mathbb G_u \circ (\text{d}\mathbb Q)_u =\text{Id}_{\mathbb X}.$ 
\end{hyp2*}

\begin{hyp3*}
\label{NM3}
The inverse $\mathbb G_u$ is smooth and tame, i.e., 
%for $u\in \mathcal U_0$ and $\text{d}_x\mathbb F$ as in Hypothesis {(II)}, 
there exist $d,d'\in \N$ and constants $C_m$ such that the solution $\ph=\mathbb G_u(y)$ of $\text{d}_u\mathbb Q(\ph)=y$ 
obeys
\be 
\|\ph\|_{s}\leq C_s \ \Big( \|\text{d}_u\mathbb Q (\ph)\|_{s+d} +\|\text{d}_u\mathbb Q(\ph)\|_{s_0}\cdot  \|(u,p)\|_{s+d'}\Big),
\ee
\noindent for $s\geq s_0$ and $u \in \calU_0$.
\end{hyp3*}

\begin{thm}\label{nashmoser}{\bf (Nash-Moser Implicit Function Theorem)}  Let $\mathbb Q:\calU\times \calV\to \mathbb Y^\infty$ be a smooth tame map
such that Hypotheses {{(I)}}--{ {(III)}} hold and $\mathbb Q(0,0)=0$.  Then, there exists an open neighborhood $\mathcal V_1\subseteq \mathcal V$ such that for each $p\in \mathcal V_1$, there exists a unique solution $u(p)$ of 
\be 
\mathbb Q(u(p), p)=0.
\ee
Moreover, the solutions $u(p)$ is a smooth tame function of $p\in \mathcal V_1$, i.e., $\mathbb Q^{-1}(0) \subseteq  \calU\times \calV_1$ is a tame Fr\'echet submanifold 
at $(0,0)$ that is a graph over $\mathcal V_1.$ 
\qed
\end{thm}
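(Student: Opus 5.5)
We would deduce the statement from Hamilton's Nash--Moser \emph{inverse} function theorem \cite{HamiltonNashMoser}, applied to the augmented map
\[
\mathbb F:\calU\times\calV\lre \mathbb Y^\infty\times\mathbb P^\infty,\qquad \mathbb F(u,p)=(\mathbb Q(u,p),\,p).
\]
By Hypothesis (I) and Lemma \ref{tamenesscriterion}, $\mathbb F$ is smooth tame, and $\mathbb F(0,0)=(0,0)$. Its derivative at $(u,p)$ is
\[
\mathrm d\mathbb F_{(u,p)}(\ph,q)=\big((\mathrm d\mathbb Q)_u\ph+(\mathrm d_p\mathbb Q)\,q,\;q\big).
\]
By Hypothesis (II) this is invertible for all $(u,p)\in\calU_0\times\calV$, with inverse
\[
\mathbb H_{(u,p)}(y,q)=\big(\mathbb G_u(y-(\mathrm d_p\mathbb Q)q),\,q\big).
\]

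The first key step is to check that $\mathbb H$ is a \emph{smooth tame family}, i.e.\ tame jointly in $(u,p,y,q)$. Tameness in $(y,q)$ follows from the estimate of Hypothesis (III) combined with tameness of $\mathrm d_p\mathbb Q$, which holds since $\mathbb Q$ is smooth tame. Smoothness in $(u,p)$ comes from the resolvent identity
\[
D\mathbb G_u[\delta u]=-\,\mathbb G_u\circ\big(\mathrm D(\mathrm d\mathbb Q)_u[\delta u]\big)\circ\mathbb G_u,
\]
and its iterates. Each higher derivative is then a finite composition of tame maps, hence tame by Lemma \ref{tamenesscriterion}.

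With this in place, Hamilton's theorem yields neighborhoods on which $\mathbb F$ is a smooth tame diffeomorphism. We would then set $u(p)$ to be the first component of $\mathbb F^{-1}(0,p)$ for $p$ in a small neighborhood $\calV_1$. This gives existence, uniqueness within the neighborhood, and smooth tame dependence on $p$. The zero set $\mathbb Q^{-1}(0)$ is the preimage of the linear subspace $\{0\}\times\mathbb P^\infty$ under a tame diffeomorphism, hence a tame submanifold which is a graph over $\calV_1$.

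If one instead wants a self-contained argument, the core is the smoothed Newton scheme. Fix $p$ and iterate
\[
u_{k+1}=u_k-S_{\theta_k}\,\mathbb G_{u_k}\,\mathbb Q(u_k,p),\qquad \theta_k^{-1}=\theta_0^{-(3/2)^k},
\]
using the smoothing operators of Definition \ref{tamedef}. One proves by induction three things:
\begin{itemize}
\item $\|u_k\|_{s_0}<\delta$, so that Hypotheses (II)--(III) remain applicable;
\item $\|\mathbb Q(u_k,p)\|_{s_0}$ decays superexponentially in $\theta_k$;
\item a high norm $\|u_k\|_{s_0+b}$ grows at most like a fixed power of $\theta_k^{-1}$.
\end{itemize}
The error is split into two parts: the quadratic Taylor remainder, controlled by tameness of $\mathrm d^2\mathbb Q$, and the smoothing error $(1-S_{\theta_k})$, controlled by Definition \ref{tamedef}(ii). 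These are balanced using the interpolation inequalities.

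The main obstacle, in either route, is this bookkeeping. The fixed losses $d,d'$ from Hypothesis (III) must be absorbed by choosing $s_0$, $b$ and the exponent $3/2$ so that the quadratic gain beats the derivative loss. The structure of the estimate in (III) is exactly what makes this work: it is linear in the high norm of $(u,p)$ multiplied only by a low norm of the data. Uniqueness then follows from the quadratic estimate together with the injectivity of $(\mathrm d\mathbb Q)_u$ on $\calU_0$.
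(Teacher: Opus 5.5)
Your argument is correct, but it does not follow the paper, because the paper gives no proof of its own. It imports the statement, hypotheses included, from Theorem~7.4 of \cite{ParkerDeformations} and defers to \cite{HamiltonNashMoser} and related references. In other words, it uses as a black box an implicit-function version already tailored to Hypotheses (I)--(III). You instead derive that version from Hamilton's \emph{inverse} function theorem applied to $\mathbb F(u,p)=(\mathbb Q(u,p),p)$. This makes transparent why (II)--(III) are exactly the right hypotheses: $\mathrm d\mathbb F$ has the explicit inverse $\big(\mathbb G_u(y-(\mathrm d_p\mathbb Q)q),q\big)$, and its tame estimate is (III) composed with the tame estimate for $\mathrm d_p\mathbb Q$. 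Moreover, since $\mathbb F$ preserves the $p$-component, local bijectivity of $\mathbb F$ is literally the graph statement.

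Three small points should be tightened.
\begin{itemize}
\item Before differentiating your resolvent identity, you need joint continuity of $(u,p,y)\mapsto \mathbb G_u y$. This follows from the uniform estimate in (III) together with $\mathbb G_{u_1}-\mathbb G_{u_0}=\mathbb G_{u_1}\big((\mathrm d\mathbb Q)_{u_0}-(\mathrm d\mathbb Q)_{u_1}\big)\mathbb G_{u_0}$. In any case, (III) as stated already posits that $\mathbb G$ is smooth tame, so this step is redundant.
\item Hamilton states his theorem for tame spaces in his sense (tame direct summands of some $\Sigma(B)$), not via the smoothing operators of Definition~\ref{tamedef}. This is harmless for the Sobolev-type scales used here, but you should say so.
\item In your optional Newton-scheme sketch, uniqueness does not follow from the quadratic estimate plus injectivity of $(\mathrm d\mathbb Q)_u$ alone. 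The derivative loss in (III) forces you to interpolate against a bounded higher norm, so uniqueness holds only in a neighborhood where such a norm is also controlled. That is all the statement asks, and it is what your main route already gives.
\end{itemize}
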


There is an extension to the case where $\mathcal U,\mathcal V$ are tame Fr\'echet manifolds $\mathbb Y^\infty \to \calU\times \calV$ is a tame Fr\'echet vector bundle, and 
$\mathbb Q:  \mathcal U\times \mathcal V \to \mathbb Y^\infty$ is a smooth tame section. Since the statement is local, this is immediate from applying the above version in suitable charts and trivializations. We refer once more to \cite{HamiltonNashMoser} for complete details of the setup.

%% file: sections/sec_perturbations.tex
\subsection{Conventions and Weights}
\label{weightconventions}
The theory developed in the previous sections holds for an arbitrary minimal $\Sigma\subset S^{N-1}.$ Henceforth, we restrict to the case that $\Sigma$ is spectrally rigid in the sense of Definition \ref{spectralassumptions}, and fix a single weight $\mu$ as follows. 

Spectral rigidity implies $\text{Spec}(L_\Sigma)=\{0, n-1, n-1 + z,...\}$ for some $z>0.$ Let $\epsilon>0$ be such that 

\[\sqrt{\frac{(n-2)^2}{4} + (n-1) + z}\ = \ \sqrt{\frac{n^2}{4}+z} \ =:  \ \frac{n}{2}+2\epsilon.\]

\noindent In the case of the Veronese embeddings following Definition \ref{spectralassumptions} \cite{rauchenberger2026index} calculates that $z=\frac{8}{3}$ in all cases, which yields $2\epsilon= 0.588, 0.3204, 0.165$ in the three examples of \eqref{embeddingsKPN} respectively. We then define $\mu$ so that 

\[\mu+1:=\frac{n}{2}+ \epsilon \hspace{1cm}\Rightarrow \hspace{1cm} \mu +2 -\frac{n}{2} \notin \text{Ind}(L_{C(\Sigma)}).\]

\noindent This choice ensures that the following hold simultaneously: 
\begin{enumerate}
\item The (left) semi-Fredholm elliptic theory of $L_0$, i.e. the conclusions of Corollary \ref{higherorderelliptic}(B).
\medskip
\item Sections with constant pointwise norm at $r=0$ have finite $r^\mu H^{s}$-norm because $\mu< \tfrac{n}{2}.$ 
\medskip
\item $r^{\mu+1} H^{s}_b$ is an algebra for $s$ sufficiently large.
\medskip 

\item The embedding $r^{\mu+1}H^{s}_b(X_0; NX_0) \hookrightarrow r^{\epsilon}  C^{s-f(n)}_b(X_0; NX_0)$ holds, for a dimensional constant $f(n),$ and similarly for $\mu+2$ (resp $r^{1+\epsilon}$.)  
\end{enumerate}

\medskip

\noindent (3) and (4) follow from Lemma \ref{standardHolder} and Proposition \ref{multiplications} respectively. (4) also implies that $\Phi \in r^{\mu+2}H^s_b$ implies $\Phi \in C^{1,\alpha}$ for some $\alpha>0$ (here the standard, not $b$-version of the H\"older space). The latter implies that the perturbation by $\Phi$ does not affect the tangent cone model at the singularity.

 Finally, by definition, spectral rigidity implies that the restriction map $P_{NC(\Sigma)}: \R^N \oplus \, \mathfrak{so}(N) \to \mathcal B_0$ is surjective. The first factor is injective, and we let $\R^K\subseteq \mathfrak{so}(N)$ be a subspace such that the restriction 
 \be P_{NC(\Sigma)}: \R^N \oplus \R^K\simeq \mathcal B_0\label{B0iso}\ee given by the low eigenmode content of the projection to the normal bundle is an isomorphism. That is to say, we fix a choice of infinitessimal ambient rotations that restrict to a basis of the $(n-1)$ eigenspace of $L_\Sigma.$
\bigskip

\section{Deformations of Singular Submanifolds}
\label{deformationssection}
The goal of this section is to construct an open tame Fr\'echet manifold $\FrDmmu$ consisting of submanifolds with edge singularities near $\Gamma_0,$ as well as a tame Fr\'echet vector bundle $\FrRgmu\to \FrDmmu$ such that the mean curvature becomes a smooth tame section 

\be 
\tikzset{node distance=3.0cm, auto}
\begin{tikzpicture}
\node(D)[yshift=-1.5cm,xshift=1.5cm]{$  \FrDmmu.  $};
\node(Y)[yshift=-1.5cm][above of=D]{$  \FrRgmu $};
\node(Q)[yshift=-2.3cm][xshift=-.7cm][above of=D]{$  {\mathbb Q} $};
%\node(Y'')[xshift=-2.2cm][yshift=-.1cm][right of=D]{$=$};
%\node(Y')[xshift=0.6cm, yshift=-.2cm][right of=D]{$ \left\{ X_{\Phi,\xi} \   \Big | \  \  \begin{matrix} \text{reg}(X_{\Phi,\xi})=\text{graph}(\Phi)  \\ \text{sing}(X_{\Phi,\xi})=\text{graph}(\xi) ) \end{matrix}\right\} $};
%\node(D')[xshift=-.8cm, yshift=0.9cm]{$\Theta $};
\draw[->](Y) to node {$\pi_{\mathbb Y}$} (D);
\draw[->][swap] (D) edge[bend left=45]node[below] {$$} (Y);
%\draw[->] (D) to node {$\widetilde {\slashed {\mathbb D}}$} (E);
\end{tikzpicture}
\label{vectorbundles.}
\ee

\noindent The total space of $\FrDmmu$ is itself the total space of a Fr\'echet fiber bundle over the space of parameterized Euclidean isometries. We begin by constructing this fiber bundle in detail. 

\subsection{Deformations by Parameterized Euclidean Isometries} \label{diffssection1} In this first subsection we define the base of the fibration giving $\FrDmmu,$ with an accompanying set of submanifolds that yield a chosen reference section. 

For $\delta >0$ and an integer regularity $s_0 \geq 1$ to be specified later, let 
\be 
\mathcal U_0 := \Big \{(\xi(t),\alpha(t)) \in C^\infty(S^1; \mathcal B_0) \ \Big | \ \|\xi\|_{s_0}<\delta \Big\},\label{U0def}
\ee
where $\xi(t) \in \R^N$ and $\alpha(t) \in \R^K\subseteq \frak{so}(N).$ Each $\xi \in \mathcal U_0$ gives rise to the corresponding 
families of translations and rotations 
\[
(t,y)\mapsto y+\xi(t) \qquad  (t,y)\mapsto A(t)y=\text{Exp}(\alpha(t)) y
\]
of $\R^{N},$ where the exponential map is that of $SO(N).$

We now define a family of diffeomorphisms (onto their respective images) 
\begin{equation}
\begin{split}
\Fdiffeos :   \mathcal U_0& \to \text{Diff}(S^1 \times B^N_2) \\
(\xi,\alpha) &\mapsto \Fdiffeos(\xi,\alpha):= F_{\xi,\alpha}.
\label{mathbbFdef}
\end{split}
\end{equation}
These have the property that $F_{\xi,\alpha}[\Gamma_0]=\Gamma_\xi$ where $\Gamma(t)=(t,\xi(t))$ is the graph of $\xi.$ This 
collection is later used to define a chart identifying a set of of edge submanifolds with $\mathcal U_0.$ Given $(\xi,\alpha)
\in \mathcal U_0,$ these diffeomorphisms are defined as follows: writing $N\Gamma_\xi \subseteq T(S^1\times \R^N)$ for 
the normal bundle, choose a periodic orthonormal frame $e_i(t)\subseteq N\Gamma_\xi$ for $i=1,...,N$.   Combined with 
the unit tangent vector $e_\Gamma$ to $\Gamma_\xi$, these form an orthonormal frame of $T(S^1\times \R^N)$, and we
may assume that this frame depends smoothly and tamely on $\xi$.  Define 
\be  
F_{\xi,\alpha} (t,y):=  \Gamma(t) \ + \  \sum_{a,b=1}^N A_{ab}(t)y_b e_a(t) \label{Fxidef}
\ee
where $y_a$ are the coordinates on $\R^N.$ We then have the following calculation of the pullback metrics, which is given in terms of the quantities 
\begin{equation*}
\begin{split}
  a_\xi(t)&:=|\Gamma'_\xi(t)|=\sqrt{1+|\xi'(t)|^2}\\
  \omega_{\alpha\beta}(t)&:=\br \nabla_t e_\alpha, e_\beta\kt\\
  \Omega_A(t)&:=  -A^T(t) \omega(t) A(t)+ A(t)^T A'(t)\\
  k(t)&:= \frac{1}{a_\xi(t)} \left[ \sum_{b=1}^N\br   \nabla_t  e_\Gamma  \ , \ f_b \kt  \  dy_b\right]
\end{split}
\end{equation*}
where $f_b(t)=\sum_{a}e_a(t)A_{ab}(t).$ Recall $g_0=g_{S^1\times \R^N}$ is the product metric.  

\begin{lm} \label{Fxidiffeos} Restricted to $|y|\leq 2$, \eqref{Fxidef} defines a family of smooth, injective immersions \eqref{mathbbFdef}, 
i.e., $F_{\xi,\alpha}$ is a diffeomorphism onto its image. Moreover, the pullback of the product metric $g_{\xi,\alpha}:=F_{\xi,\alpha}^* g_0$ 
is given by 
\begin{align} F^*_{\xi,\alpha}(g_{0})&= \Big(  a^2_\xi(t) (1- \br k(t),y\kt)^2 dt^2  \ + \ |\Omega_A(t) y|^2 \Big)dt^2 \ + \ 2\sum_{a=1}^N (\Omega_A(t)y)_a   dt dy_a   \ + \ \sum_{b=1}^N d y_b^2 \label{gxiformula} \end{align}
where $a_\xi(t), \omega(t), \Omega_A(t), k(t)$ are as above. In particular, all metric coefficients depends smoothly and tamely on $(\xi,\alpha)
\in \mathcal U_0.$
\end{lm}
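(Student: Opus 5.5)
The plan is to compute the differential of $F_{\xi,\alpha}$ directly from \eqref{Fxidef}, using the moving frame $(e_\Gamma(t),e_1(t),\dots,e_N(t))$ along $\Gamma_\xi$, and then read off both the immersion property and the metric coefficients. Write $f_b(t)=\sum_a A_{ab}(t)e_a(t)$, so that $F_{\xi,\alpha}(t,y)=\Gamma(t)+\sum_b y_b f_b(t)$. Since $A(t)\in SO(N)$ and $(e_a)$ is orthonormal in $N\Gamma_\xi$, the $f_b$ again form an orthonormal frame of $N\Gamma_\xi$. Differentiating gives
\[
\partial_{y_b}F_{\xi,\alpha}=f_b(t),\qquad \partial_t F_{\xi,\alpha}=\Gamma'(t)+\sum_b y_b\, f_b'(t).
\]
Next I decompose $f_b'$ into tangential and normal parts: $f_b'=\langle f_b',e_\Gamma\rangle e_\Gamma+\sum_c\langle f_b',f_c\rangle f_c$. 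Since $\langle f_b,e_\Gamma\rangle=0$, we have $\langle f_b',e_\Gamma\rangle=-\langle f_b,\nabla_t e_\Gamma\rangle$, which after summing against $y_b$ produces $-a_\xi\langle k(t),y\rangle$, using $\Gamma'=a_\xi e_\Gamma$. Expanding $f_b'=\sum_a(A'_{ab}e_a+A_{ab}e_a')$ and pairing with $f_c$ gives the matrix $(A^TA'+A^T\omega A)$. Up to the sign convention chosen for $\omega$ (the antisymmetry of $\omega$ is used here), this is $\Omega_A$.

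Hence $\partial_tF=a_\xi(1-\langle k,y\rangle)e_\Gamma+\sum_c(\Omega_A y)_c f_c$. Taking inner products of $\partial_tF$ and $\partial_{y_b}F=f_b$ in the orthonormal frame $(e_\Gamma,f_1,\dots,f_N)$ yields exactly \eqref{gxiformula}: the $dt^2$ coefficient is $a_\xi^2(1-\langle k,y\rangle)^2+|\Omega_Ay|^2$, the cross terms are $2(\Omega_Ay)_b\,dt\,dy_b$, and the $dy$ block is the identity. Smooth tame dependence on $(\xi,\alpha)$ then follows from Lemma \ref{tamenesscriterion}. The coefficients are built from $\xi'$, $\alpha$, $\alpha'$ and the frame (assumed tame in $\xi$) by products, by $\text{Exp}$ on $SO(N)$, and by $\sqrt{1+|\xi'|^2}$, with a loss of one derivative.

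For the immersion and injectivity claims, the determinant of the metric equals $a_\xi^2(1-\langle k,y\rangle)^2$, since the cross terms cancel $|\Omega_Ay|^2$ by a Schur complement. This is positive whenever $|k||y|<1$. Because $|k|\lesssim|\xi''|$, which is small when $\|\xi\|_{s_0}<\delta$ for $s_0$ large enough to control $C^2$ by Sobolev embedding on $S^1$, we get $2|k|<1$ on $|y|\le2$, so $F_{\xi,\alpha}$ is an immersion there.

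The main obstacle is global injectivity. I would argue by perturbation from $F_{0,0}=\mathrm{Id}$. We have $\|F_{\xi,\alpha}-\mathrm{Id}\|_{C^1(S^1\times B_2)}\lesssim\|\xi\|_{C^2}+\|\alpha\|_{C^1}$. A $C^1$-small perturbation of the identity on the convex-in-$y$, periodic-in-$t$ domain $S^1\times B_2$ is injective: if $F(p)=F(q)$, then $|p-q|=|(F-\mathrm{Id})(p)-(F-\mathrm{Id})(q)|\le\frac12|p-q|$. This uses the mean value inequality along a short path, after lifting to $\R\times B_2$ and handling the $2\pi$-periodicity by noting that points with $t$ far apart cannot collide since the $t$-component moves little. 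This forces $p=q$. Smallness of $\alpha$ in $C^1$ is also needed here, and I would include it in $\mathcal U_0$ through the $s_0$-norm in the same way as for $\xi$.
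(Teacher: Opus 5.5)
Your proposal is correct. The metric computation is the same as the paper's: you split $\partial_t F_{\xi,\alpha}$ in the orthonormal frame $(e_\Gamma,f_1,\dots,f_N)$, and $\langle f_b',e_\Gamma\rangle=-\langle f_b,\nabla_t e_\Gamma\rangle$ produces the factor $a_\xi(1-\langle k,y\rangle)$. The normal block is identified with $\Omega_A$ using $A^T=A^{-1}$ and the antisymmetry of $\omega$, and the coefficients are then read off.

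Where you differ is the injective-immersion claim. The paper handles it in one line. It writes $F_{\xi,\alpha}$ as the normal exponential map of $\Gamma_\xi$ precomposed with the fiber isometry $y\mapsto A(t)y$, and notes that this map is injective on the radius-$2$ disk bundle once $\xi$ is $C^2$-small. You instead get the immersion property from the determinant $a_\xi^2(1-\langle k,y\rangle)^2$ of the metric you just computed. You get injectivity from a contraction estimate for $F-\mathrm{Id}$ on the lift to $\R\times\{|y|\le 2\}$. Your route is more self-contained and quantitative: it exhibits $|k|<1/2$, i.e.\ $C^2$-smallness of $\xi$, as the explicit condition. The paper instead leans on the uniformity of the tubular neighborhood theorem.

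Your injectivity step, as written, needs $\|\alpha\|_{C^1}$ small. But \eqref{U0def} constrains only $\xi$, so you should not shrink $\mathcal U_0$. The paper's factorization removes the issue:
\[
F_{\xi,\alpha}(t,y)=F_{\xi,0}\big(t,A(t)y\big),
\]
and $(t,y)\mapsto(t,A(t)y)$ is a diffeomorphism of $S^1\times\{|y|\le 2\}$ for every $\alpha$, since $A(t)\in SO(N)$. Apply your perturbation argument to $F_{\xi,0}$ alone. There $\|F_{\xi,0}-\mathrm{Id}\|_{C^1}\lesssim\|\xi\|_{C^2}$, so the claim holds on all of $\mathcal U_0$. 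Your determinant argument is already independent of $\alpha$, because $|k|=a_\xi^{-1}|\nabla_t e_\Gamma|$.

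One small slip: $k$ and $\omega$ involve $\xi''$, so the metric coefficients lose two derivatives of $\xi$, not one. This does not affect tameness.
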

\begin{proof}
 Write $F=F_{\xi,\alpha}$ for shorthand. That $F$ is an injective immersion onto a normal neighborhood of $\Gamma_\xi$ is immediate from 
the description as the exponential normal map (precomposed with an isometry of the fiber). Moreover, if $\xi$ is $C^2$-small, this neighborhood may be taken to have radius 2. 
Since $H^{s_0}\hookrightarrow C^2$ for $s_0\geq 3$, the first assertion follows by choosing $\delta$ sufficiently small.
  
 Next, let $g$ denote the pullback metric and $f_b(t)=\sum_{a}e_a(t)A_{ab}(t)$ the rotated orthonormal frame.  The tangent vector
$e_\Gamma=a_\xi(t)^{-1} \Gamma'_\xi(t),$ is normalized so that $e_\Gamma, f_1,...,f_N$ are an orthonormal frame of $T(S^1\times \R^{N}).$  
\begin{equation*}
\begin{split}
\p{F_{}}{t}&=(1, \xi'(t)) \ + \ \sum_{a,b=1}^N y_b \Big[e_a'(t)A_{ab}(t) \ + \  e_a(t)A'_{ab}(t)\Big]\\
&= a_\xi(t)e_\Gamma(t) \ + \  \sum_{b,c=1}^N y_b\left \br \sum_{a=1}^N e_a'(t)A_{ab}(t) \ + \   \sum_{a=1}^N e_a(t)A'_{ab}(t)  \ , \ f_c \right \kt f_c\\
& \qquad + \   \sum_{b=1}^N y_b\left \br  \nabla_t f_b  \ , \ e_\Gamma \right \kt e_\Gamma \\ 
& = a_\xi(t)e_\Gamma(t) \ + \  \sum_{b,c=1}^N y_b\left \br \sum_{a=1}^N e_a'(t)A_{ab}(t) \ + \   \sum_{a=1}^N e_a(t)A'_{ab}(t)  \ , \ 
\sum_{d=1}^N A_{dc} e_d \right \kt f_c\\
& \qquad  - \ \ a_\xi(t) \br k(t) , y  \kt \cdot   e_\Gamma \\ 
&=a_\xi(t) \Big ( 1- \br k(t),y\kt \Big)e_\Gamma(t) \ + \  \sum_{b,c=1}^N y_b (\Omega_A(t))_{cb} f_c,
\end{split}
\end{equation*}
where we used $A^T=A^{-1}$, the definitions of $\omega , \Omega_A$, and have integrated by parts on the $e_\Gamma$ term. 

Much more straightforward is the fact that 
\[  
\p{F_{}}{y_b}=f_b(t).
\]
    
Since $e_\Gamma, f_1,...,f_N$ is an orthonormal frame, we can thus compute the components of the pullback metric:
\begin{equation*}
\begin{split}
  g_{tt}&=\left\br \p{F}{t}, \p{F}{t}\right\kt =   \Big |a_\xi(t) \Big ( 1- \br k(t),y\kt \Big)\Big |^2  \ + \  \Big | \Omega_A(t)y \Big |^2\\
  g_{ta}&= \left\br \p{F}{t}, \p{F}{y_{a}}\right\kt= \br  (\Omega_A(t))_{bc}y_b f_c \ , \ f_a \kt=(\Omega_A(t)y)_a \\
  g_{ab}&= \left\br \p{F}{y_a}, \p{F}{y_{b}}\right\kt=\br f_a , f_b\kt=\delta_{ab},
\end{split}
\end{equation*}
which yields the desired expression. 
\end{proof}
  
We now define the reference submanifolds.
\begin{defn} \label{referenceXxi}
For $(\xi,\alpha) \in \mathcal U_0$, the {\bf reference cylindrical submanifolds} are the images
\[
X_{\xi,\alpha}:=F_{\xi,\alpha}[X_0].
\]
By Lemma \ref{Fxidiffeos} and  Eq. \eqref{Fxidef}, $X_{\xi,\alpha}$ is a conical submanifold with singular set $\Gamma_\xi=
F_{\xi,\alpha}[\Gamma_0]$ and boundary $F_{\xi,\alpha}[S^1\times \del B^N]$. The space $X_{\xi,\alpha}$ inherits the restriction of the 
ambient product metric $g_{S^1\times \R^N}$, and we set
\be 
g_{\xi,\alpha}:=F_{\xi,\alpha}^* (g_{0}) \hspace{1.5cm} \frak g_{\xi,\alpha}=g_{\xi,\alpha} \big |_{X_0}
\label{gxidef}
\ee
Thus $F_{\xi,\alpha}:(X_0, \frak g_{\xi,\alpha})\to (X_{\xi,\alpha}, g_{0} |_{X_{\xi,\alpha}})$ is an isometry. 
\end{defn}

\subsection{The Fibration of Submanifolds by Singular Sets}  In this subsection we construct $\FrDmmu$ as a Fr\'echet fiber bundle 
equipped with a projection $\pi_{\mathbb X}: \FrDmmu \to \mathcal U_0.$ This bundle is equipped with a distinguished section 
$\Xi(\xi,\alpha):=(X_0, \gamma_{\xi,\alpha})$ given by the family of reference submanifolds from Definition \ref{referenceXxi}; 
the charts are constructed by exponentiating vector fields normal to these. 
    
Let $s_0,\delta$ be as in \eqref{U0def}, and define 
\[
\mathcal U_1:= \Big \{ \Phi \in  r^{\mu+2}H^{\infty,2}_{b,e}(X_0; NX_0) \ \Big | \ \|\Phi\|_{\mu+2,(s_0,2)}<\delta \Big\} 
\]
We define a second family of diffeomorphisms 
\[
\begin{split} \Gdiffeos:  \mathcal U_1 &\to \text{Diff}(S^1\times B^N; \Gamma_0)\\
\Phi& \mapsto \Gdiffeos(\Phi):= G_\Phi
\end{split}
\]
into the space of $C^1$-diffeomorphisms of the ambient space fixing the product singular set $\Gamma_0=S^1\times \{0\}$. Here, for each 
$\Phi$, the corresponding $G_\Phi$ is defined as follows. Let $(x,w)$ be coordinates on the tubular neighborhood $n(\Sigma)\in S^{N-1}$, where 
$w$ are exponential normal coordinates in the $N\Sigma$ directions. Choose a smooth cut-off function $\chi:[0,\infty)\to \R$ 
supported where $|w|$ is less than the radius of $n(\Sigma)$ and equal to $1$ where $|w|$ is less than half that radius. Then define
\be 
G_{\Phi}(t,r,x,w):=\left(t, r, x, \ w + \chi\left(|w|\right)\Phi\right).\label{GPhidef}
\ee

\begin{lm}  \label{GPhidiffeo} For $\delta$ sufficiently small, there is an $\beta(\mu)>0$ such that 
\[G_{\Phi}: S^1\times B^N\to S^1\times B^N\]
\noindent is a $C^{1,\beta(\mu)}$-diffeomorphism fixing $\Gamma_0.$  This extends smoothly to $(S^1\times B^N) \setminus \Gamma_0,$ 
and fixes the boundary when $\Phi|_{r=1}=0$ has vanishing Dirichlet boundary conditions. 
\end{lm}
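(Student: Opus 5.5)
The plan is to reduce the diffeomorphism claim to a $C^{1,\beta}$-small perturbation of the identity, using the weighted embeddings of Lemma \ref{standardHolder}. Write $G_\Phi=\mathrm{Id}+V_\Phi$. In the ambient cylindrical coordinates, $V_\Phi$ is the vector field $\chi(|w|)\Phi$, interpreted through the exponential normal coordinates $(x,w)$ on $n(\Sigma)$ and then scaled by $r$, since the cone coordinates are $y=r\cdot\exp_x(w)$. In particular $V_\Phi$ vanishes identically outside the support of $\chi$ and along $\Gamma_0$.

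\textbf{Step 1: decay bounds.} By item (4) of the conventions in Section \ref{weightconventions}, the bound $\|\Phi\|_{\mu+2,(s_0,2)}<\delta$ together with Lemma \ref{standardHolder} (for $s_0>\dim(X_0)/2+2$) gives
\[
|(\nabla_b)^\alpha\Phi|\le C\delta\, r^{1+\epsilon}, \qquad |\alpha|\le 2 .
\]
The ambient displacement is $r\cdot\chi\Phi$ measured in the unit-sphere normal coordinate, so it is of size $O(\delta r^{1+\epsilon})$ in Euclidean terms. Converting $b$-derivatives to Euclidean ones costs a factor $r^{-1}$ for each $\partial_r,\partial_x$ (the $\partial_t$ derivatives cost nothing). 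Hence the Euclidean derivative of $V_\Phi$ is $O(\delta r^{\epsilon})$, and its second derivatives are $O(\delta r^{\epsilon-1})$. Interpolating between these two bounds on dyadic annuli $r\sim 2^{-k}$ shows that $DV_\Phi$ is $C^{0,\beta}$ with $\beta=\epsilon$ (which depends only on $\mu$ through $\mu+1=n/2+\epsilon$). It also shows $DV_\Phi\to 0$ as $r\to 0$. Therefore $G_\Phi$ extends as a $C^{1,\beta}$ map across $\Gamma_0$, equals the identity there, and satisfies $\|DV_\Phi\|_{C^0}\le C\delta$.

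\textbf{Step 2: global diffeomorphism.} Since $\|DV_\Phi\|_{C^0}\le C\delta<1/2$, the map $G_\Phi$ is a local diffeomorphism. To see injectivity, note that $G_\Phi$ preserves each slice $\{t,r,x\}$ and acts on the $w$-fibre as $w\mapsto w+\chi(|w|)\Phi$. This fibre map is a small $C^1$ perturbation of the identity on a ball, equal to the identity near the boundary of the ball, so it is a diffeomorphism of the fibre; a degree argument, or a contraction-mapping argument for the inverse, gives this. The inverse is $C^{1,\beta}$ by the inverse function theorem in H\"older classes. Away from $\Gamma_0$, $\Phi$ is smooth by the $b$-regularity ($\Phi\in r^{\mu+2}H^{\infty,2}_{b,e}$ embeds into $C^\infty$ on compact subsets of $r>0$), so $G_\Phi$ is smooth there. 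If $\Phi|_{r=1}=0$, then $V_\Phi$ vanishes on $r=1$, so the boundary is fixed pointwise.

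\textbf{Main obstacle.} The delicate step is the $C^{1,\beta}$ estimate at $\Gamma_0$. The $b$- and edge derivatives lose powers of $r$ when converted to Euclidean derivatives, and only the margin $\epsilon$ in $\mu+2-n/2=1+\epsilon$ survives. I would handle this by doing the H\"older estimate in rescaled dyadic annuli, where Euclidean and $b$-derivatives agree up to the factor $2^{k}$, and then summing. Tangential $t$-regularity is the point to be careful about: the bound $|\partial_t\Phi|\le C\delta r^{1+\epsilon}$ follows from the pure $b$-norm, since $\partial_t\in\calV_b$.
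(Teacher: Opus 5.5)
Your proposal is correct and follows essentially the same route as the paper. The paper also argues fibrewise, though on the spheres $\{(t_0,r_0)\}\times S^{N-1}$ via a covering-map/degree-one argument, whereas you use the finer $w$-balls over fixed $(t,r,x)$. It likewise gets $C^{1,\beta}$ regularity at $\Gamma_0$ from the weighted embedding in item (4) of Subsection~\ref{weightconventions}; your dyadic-annulus interpolation, which gives $\beta=\epsilon$, spells out what the paper only cites. One harmless slip: if the Euclidean displacement really were $r\cdot\chi\Phi$ with $|\Phi|\le C\delta r^{1+\epsilon}$, it would be $O(\delta r^{2+\epsilon})$ rather than $O(\delta r^{1+\epsilon})$, but either reading only improves your bounds.
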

\begin{proof} Since $s \gg 1$ is large, $\Phi$ is a $C^3$ map on each sphere $\{(t_0,r_0)\}\times S^{N-1}\subseteq S^1\times B^N$ for each 
fixed $t_0 \in S^1$ and $r_0\in [0,1]$. For $\Phi$ sufficiently small, $\text{d}G_{\Phi}|_{S^{N-1}}$ is close to the identity, thus is a local 
diffeomorphism. It follows from the compactness of the sphere that $\text{d}G_\Phi |_{S^{N-1}}$ is a covering map, and equals the identity 
outside $n(\Sigma)$. This means that it is a degree one covering map, and thus defines a diffeomorphism on each sphere. For $r>0,$ 
smoothness is obvious, as is the claim that it fixes the boundary when $\Phi=0$ there. Finally, that the map is globally $C^{1,\beta}$ 
follows from (4) in Subsection \ref{weightconventions}.  
\end{proof}

\begin{defn}  \label{edgesubmanifolds}
Define the following: 
\begin{enumerate}
\item[(A)] The {\bf geometric family} of submanifolds with edge singularities, defined for $(\xi,\alpha,\Phi)\in U_0 \times \mathcal U_1$ is defined by 
\[X_{\xi,\alpha,\Phi}:= G_{\Phi}[X_{\xi,\alpha}]= F_{\xi,\alpha} \circ G_\Phi[X_0],\]
\noindent equipped with the restriction of the ambient Euclidean metric, where $F_{\xi,\alpha}, G_\Phi$ are as in (\refeq{Fxidef}, \refeq{GPhidef}). Thus  $F_{\xi,\alpha}\circ G_\Phi: (X_0, \frak g_{\xi,\alpha,\Phi})\to (X_{\xi,\alpha,\Phi}, g_0)$ is an isometry, where 
\[
g_{\xi,\alpha,\Phi}:=G_\Phi^* \circ F_{\xi,\alpha}^\star (g_0)=G_\Phi^*(g_{\xi,\alpha}) \hspace{1.5cm} \frak g_{\xi,\alpha,\Phi}:=g_{\xi,\alpha,\Phi} \big|_{ X_0}
\]
denote the pullback metrics. 

\item[(B)] Let $(\xi,\alpha,\Phi) \in \mathcal U_0\times \mathcal U_1,$ and $\pi_{\mathbb X}$ be the projection onto the first factor. The identification 
\begin{align*}
\text{Exp}: \mathcal U_0 \times \mathcal U_1 &\lre \{(X_{\xi,\alpha,\Phi}, g_0)\}\\
(\xi,\alpha,\Phi)&\mapsto(X_{\xi,\alpha,\Phi}, g_0)
\end{align*}
\noindent endows the geometric family on the right with the structure of a (trivial) Banach fiber bundle over $\mathcal U_0.$ It may equivalently be 
viewed as $X_0$ equipped with the family of pullback metrics $g_{\xi,\alpha,\Phi}.$
\end{enumerate}
\end{defn}

We now give a more explicit description for the pullback metric $g_{\xi,\alpha,\Phi}$. Recall that an {\bf edge metric}, is one of the form 
\[
g= g_{tt}dt^2+ g_{tr} dt dr + g_{rr}dr^2 +r  \sum_{\alpha=1}^{n-1} g_{r\alpha} dr  dx_\alpha \ + \  
r\sum_{\alpha=1}^{n-1} g_{t\alpha} dt  dz_\alpha \ + \ r^2 \sum_{\alpha,\beta} g_{\alpha\beta}dz_\alpha dz_\beta 
\]
i.e., it is $r^2$ times a non-degenerate symmetric tensor in the basis $\tfrac{dr}{r}, \tfrac{dt}{r}, dz_j$ dual to the basis of edge vector fields 
from Definition \ref{edgedefs}.

\begin{lm}  \label{pullbackgPhixi}Let $(\xi,\alpha,\Phi)\in \mathcal U_0 \times \mathcal U_1$. Then the pullback metric has the form 
\[
g_{\xi,\alpha,\Phi}= g_{\xi,\alpha} \ + \   h_{\xi,\alpha,\Phi} 
\]
with $g_{\xi,\alpha}$ as in \eqref{gxidef} and $h_{\xi,\alpha,\Phi}$ a $O(r^{1+\epsilon})$ symmetric (edge) tensor field depending smoothly 
and tamely on $(\xi,\alpha,\Phi).$ More specifically, it has components formed by finite sums, products, and compositions with global analytic functions of $\Phi, \nabla^e \Phi,$ and smooth functions $h_i(x,\Phi)$ as in Lemma \ref{composition}.
\end{lm}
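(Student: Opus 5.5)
The plan is to compute the pullback $G_\Phi^*(g_{\xi,\alpha})$ in the coordinates $(t,r,x,w)$ on the tubular neighborhood of $S^1\times C_1(\Sigma)$. Write $G=G_\Phi$. Along $X_0=\{w=0\}$ we have $\chi\equiv 1$, so the restriction of $G$ to $X_0$ is the map $(t,r,x)\mapsto (t,r,x,\Phi(t,r,x))$. Hence $\frak g_{\xi,\alpha,\Phi}$ is the metric induced by $g_{\xi,\alpha}$ on the normal graph of $\Phi$.

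First I will express $g_{\xi,\alpha}$ in the coordinates $(t,r,x,w)$, where the point of $\R^N$ is $y=r\exp_x(w)$, with $\exp_x$ the normal exponential map of $\Sigma$ in $S^{N-1}$. By Lemma~\ref{Fxidiffeos},
\[
g_{\xi,\alpha}=\big(a_\xi^2(1-\br k,y\kt)^2+|\Omega_Ay|^2\big)dt^2+2(\Omega_Ay)_a\,dt\,dy_a+dy^2 .
\]
Here $dy^2=dr^2+r^2\big(g_\Sigma+dw^2+h(x,w)\big)$, with $h(x,w)=O(w)$ as in the composition discussion. The coefficients $\br k,y\kt$ and $\Omega_A y$ are linear in $y$, hence they equal $r$ times smooth functions of $(t,x,w)$. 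The differential $dy_a$ is a combination of $dr$, $r\,dx$ and $r\,dw$. So every term has edge form, and $g_{\xi,\alpha}$ is an edge metric whose coefficients are smooth in $(x,w)$ and depend smoothly and tamely on $(\xi,\alpha)$ through $a_\xi,k,\Omega_A$.

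Next I substitute $w=\Phi(t,r,x)$ and $dw=\p{\Phi}{t}dt+\p{\Phi}{r}dr+\p{\Phi}{x}dx$, and subtract the value at $w=0$, which is $\frak g_{\xi,\alpha}$. Each term of the difference $h_{\xi,\alpha,\Phi}$ falls into one of two types.

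1. Coefficient terms: (smooth function of $(x,\Phi)$) minus (its value at $0$). These are handled by Lemma~\ref{composition} and are $O(|\Phi|)$.
2. Derivative terms: terms involving $r\,dw$. Writing $r\,dw=\nabla^e\Phi$ paired with the edge coframe $\tfrac{dr}{r},\tfrac{dt}{r},dx$ times $r$, each such term contains at least one factor of $\nabla^e\Phi$, and the $dw\,dw$ term contains two.

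In the edge coframe, the coefficients are then finite sums and products of $\Phi$, $\nabla^e\Phi$, $h_i(x,\Phi)$ and smooth functions of $t$ built from $(\xi,\alpha)$. Tameness then follows from Lemma~\ref{tamenesscriterion}, with multiplication controlled by Lemma~\ref{multiplications}.

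The decay claim needs care, because of the normalization. Since $\Phi\in r^{\mu+2}H^{s,2}_{b,e}$, weight convention (4) gives $|\Phi|,|\nabla^e\Phi|=O(r^{1+\epsilon})$. Because the graph perturbation enters through $y=r\exp_x(w)$, the natural unnormalized quantity is $r\Phi$ in ambient units. I will check that the normalization of $\Phi$ adopted in $G_\Phi$ (the weight shift by one between $r^{\mu+1}$ and $r^{\mu+2}$) makes the relative perturbation of the edge tensor $O(r^{1+\epsilon})$. This bookkeeping of which weight multiplies which coframe element is where I expect the main obstacle. It is also where I must make sure that no $\del_t\Phi$ term appears without an accompanying factor of $r$. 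That concern is settled because $dt$ only enters through $r\,dw$, which produces $r\,\del_t\Phi$, an edge derivative.
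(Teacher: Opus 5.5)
There is one genuine gap, in scope rather than technique. Your ingredients are the paper's: coefficient differences handled by Lemma~\ref{composition}, Jacobian entries that are edge derivatives of $\Phi$, decay from the weighted embedding, and tameness from the sums/products/compositions criterion. But as written you prove a weaker statement than the lemma.

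You restrict $G_\Phi$ to $X_0=\{w=0\}$ and subtract $\mathfrak g_{\xi,\alpha}$, so you only compute the induced metric $\mathfrak g_{\xi,\alpha,\Phi}$. The lemma concerns the ambient pullback $g_{\xi,\alpha,\Phi}=G_\Phi^*g_{\xi,\alpha}$ on the tubular neighborhood (Definition~\ref{edgesubmanifolds}), and that is what is used later. The mean curvature in Corollary~\ref{tameMC} is built from the Levi-Civita connection of the ambient $g_u$, whose Christoffel symbols involve $w$-derivatives of $g_u$ along $X_0$. The projections $P_u$ of Lemma~\ref{Ytrivialization} use $g_u$ on all of $T(S^1\times\R^N)|_{X_0}$. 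Neither is determined by $\mathfrak g_{\xi,\alpha,\Phi}$.

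The repair is exactly the paper's argument. Since $G_\Phi$ fixes $t$ and $r$,
\[
G_\Phi^*dt=dt,\qquad G_\Phi^*dr=dr,\qquad G_\Phi^*(r\,dz)=(\mathrm{Id}+\del_zZ_\Phi)\,r\,dz+(r\del_tZ_\Phi)\,dt+(r\del_rZ_\Phi)\,dr,
\]
with $Z_\Phi=(x,w+\chi(|w|)\Phi)$. So in the coframe $dt,dr,r\,dz$ the pullback is $J^T\big[g_{\xi,\alpha}(t,r,x,w+\chi(|w|)\Phi)\big]J$, where $J$ is this Jacobian. The only entries not already in your computation come from $\del_w(\chi(|w|)\Phi)$, for example $\chi'(|w|)\tfrac{w}{|w|}\otimes\Phi$. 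These are $O(\Phi)$ and fall into your type 1. Restricting to $TX_0$ (where $dw=0$ and $\chi\equiv1$) recovers your formula.

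The normalization worry in your last paragraph is not an obstacle. $G_\Phi$ shifts the angular coordinate $w$ by $\Phi$, so the ambient displacement is indeed $r\Phi$. However, the $O(r^{1+\epsilon})$ in the lemma refers to the coefficients of $h_{\xi,\alpha,\Phi}$ in the edge coframe $dt,dr,r\,dz$. In that coframe the Jacobian entries are exactly $r\del_t\Phi$, $r\del_r\Phi$, $\del_x\Phi$, i.e.\ the components of $\nabla^e\Phi$, and the coefficient differences are $O(\Phi)$; no power of $r$ is gained or lost. Both $\Phi$ and $\nabla^e\Phi$ lie in $r^{\mu+2}H^{s_0}_b$, and $\mu+2-\tfrac n2=1+\epsilon$, so Lemma~\ref{standardHolder} places both in $r^{1+\epsilon}C_b$. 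That is the decay claim. You had already written this down, and no further bookkeeping is needed.
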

\begin{proof} By definition, $g_{\xi,\alpha,\Phi}=(F_{\xi,\alpha}\circ G_{\Phi})^*g_0=G_{\Phi}^* g_{\xi,\alpha}.$ Writing $z=(x,w)$ for the coordinates 
on $n(\Sigma)$, then
\be 
g_{S^{N-1}}=dx^2 + dw^2 + \mathrm k(x,w)\label{nSigmametric}
\ee
where $\mathrm k=O(|w|)$ is a smooth symmetric tensor vanishing along $\Sigma.$

Since $G_{\Phi}$ preserves sphere of constant $(t,r),$
\begin{align*}
G_{\Phi}^*(dt)&=dt, \qquad G_{\Phi}^*(dr)=dr  \\
G^*_{\Phi}(dz)&= dz +  \del_t Z_\Phi dt + \del_r Z_\Phi dt + \del_z Z_\Phi dz
\end{align*}
\noindent where $Z_{\Phi}(x,w)=(x,w + \chi(|w|)\Phi).$ Therefore, 
\[
G^*_{\Phi}(r dz)= (\text{Id} + \del_z Z_{\Phi}) rdz + (r\del_t Z_\Phi) dt + (r\del_r Z_\Phi) dr.
\]
Now, since $g_{\xi,\Phi}$ is an edge metric, it may be written in terms of $dr,dt, rdz,$ and in this basis 
\begin{align}
G_{\Phi}^* g_{\xi,\alpha}&=\begin{pmatrix} 1 & 0& 0 \\ 0 & 1 & 0 \\ (r\del_t)Z_\Phi & (r\del_r)Z_\Phi & \del_z Z_\Phi \end{pmatrix}^T 
\Big[  g_{\xi,\alpha}(t,r,x, w+Z_\Phi)\Big] \begin{pmatrix} 1 & 0& 0 \\ 0 & 1 & 0 \\ (r\del_t)Z_\Phi & (r\del_r)Z_\Phi & \del_z Z_\Phi \end{pmatrix}, 
\nonumber\\
&=\begin{pmatrix} 1 & 0& 0 \\ 0 & 1 & 0 \\ (r\del_t)Z_\Phi & (r\del_r)Z_\Phi & \del_z Z_\Phi \end{pmatrix}^T \Big[  g_{\xi,\alpha}(t,r,x, w) + 
O(\Phi)\Big] \begin{pmatrix} 1 & 0& 0 \\ 0 & 1 & 0 \\ (r\del_t)Z_\Phi & (r\del_r)Z_\Phi & \del_z Z_\Phi \end{pmatrix}.\label{matrixmult}
\end{align}
Here 
\begin{align}
O(\Phi) =  g_{\xi,\alpha}(t,r,x,w+Z_\Phi)-g_{\xi,\alpha}(t,r,x,w)=h_0(x,\Phi(t,r,x)) \ + \ \sum_{n=i}^I H_i(\xi', \alpha) h_i(x,\Phi(t,r,x)), \label{his}
\end{align} 
where the $h_i$ are functions vanishing along the zero section of $n(\Sigma)$ as in Lemma \ref{composition}, and $H_i$ are smooth combinations 
of $\xi,\alpha$ and $\xi'.$  Specifically, $h_0$ arises from the $\mathrm k$ term in \eqref{nSigmametric}, while $h_i$ arise from writing the 
$y$ and $dy$ factors in \eqref{Fxidiffeos} in polar coordinates $y=r\Theta,$ so that e.g. $h_i=\Theta(x,w)-\Theta(x,0)$ are the restrictions 
of various trigonometric functions minus their $0$-jet along $\Sigma.$

Multiplying the matrices \eqref{matrixmult} yields 
\[ 
G_\Phi^*g_{\xi,\alpha}=g_{\xi,\alpha} + \underbrace{ O(\Phi) + O(\nabla^e \Phi) + O(\Phi, \nabla^e \Phi) + 
O(\Phi, \nabla^e \Phi, \nabla^e \Phi)}_{:=h_{\xi,\alpha,\Phi}},
\]
and since $\Phi, \nabla^e \Phi \in r^{\mu+2}H^{s_0,2}_{b,e},$ Lemma \ref{standardHolder} shows they lie in $r^{1+\epsilon}C^{s_0-f(n)}_b$ for a 
dimensional constant $f(n).$  Here  $h_{\xi,\alpha,\Phi}$ is expressed in the edge basis $dt,dr, r dx_j.$
\end{proof}

\subsection{Tame Fr\'echet Manifolds}\label{trivializationsSection}
In this subsection, we collect what was done in the previous two subsections to define tame Fr\'echet manifolds and vector 
bundles as in \eqref{vectorbundles.}. 

Let 
\begin{align} 
 \FrDmmu&:= \bigcap_{s\geq s_0} r^{\mu+2} H^{s,2}_{b,e}(X_0; NX_0) \hspace{1.5cm}  \label{bbXdef} \\ 
 \medskip
 \mathbb D^\infty&:=\bigcap_{s\geq s_0} H^{s+2}(S^1;\mathcal B_0)  \\ 
 \mathbb P^{\infty}&:= \bigcap_{s\geq 0} H^{s+3/2}(\del X_0; NX_0)^\perp, \label{bbPdef}
\end{align}
where $H^s(\del X_0; NX_0)^\perp=\{\rho \in H^s(\del X_0;NX_0) \ | \ \rho \perp_{L^2} \mathcal B_0\}$ denotes the {\it finite codimension} 
subspace orthogonal to $\mathcal B_0$ viewed as the eigensections constant in the $S^1$ direction. This orthogonality 
refines the definition in Notation \ref{Frechetnotation}. Here $\mathcal U_0\subseteq \mathbb D^{s_0}, \mathcal U_1 \subseteq \mathbb X^{\mu,s_0}$ 
are open balls around the origin of radius $\delta>0,$ to be chosen later. Additionally, we write $\mathbb X^{\mu,\infty}_D$ for 
the subset obeying vanishing Dirichlet boundary conditions at $r=1.$

These spaces serve as the domain for our parameterized mean curvature map. For the codomain, set 
\[
\mathbb Y^{\mu,s}:=\bigsqcup_{(\xi,\alpha,\Phi)} r^\mu H^s_b(X_0; NX_{\xi,\alpha,\Phi}) \hspace{2cm}\mathbb Y^{\mu,\infty}:=\bigcap_{s\geq s_0} \mathbb Y^{\mu,s} 
\]
where $NX_{\xi,\alpha,\Phi}$ denotes the normal bundle to $X_0$ measured in the pullback $g_{\xi,\alpha,\Phi}$-metric for each 
$(\xi,\alpha,\Phi)\in \mathcal U_0\times \mathcal U_1$. The mean curvature takes values in this bundle. 

Let $P_0: T(S^1\times \R^N)\to NX_0$ denote the orthogonal projection to the normal bundle in the ambient product metric $g_0$. 
Write $u=(\xi,\alpha,\Phi)\in \mathcal U_0\times \mathcal U_1$ for short, and let $P_{u}: T(S^1\times \R^N)\to NX_{u}$ denote 
the orthogonal projection to normal bundle in the pullback metric $g_u=g_{\xi,\alpha,\Phi}.$  Finally, let
\[
\Upsilon_{P_0}: \Gamma(X_0; NX_{u})\to \Gamma(X_0;NX_0)
\]
be the induces map on sections by the $u=0$ projection. 
\begin{lm}\label{Ytrivialization} The induced map 
\[
\Upsilon_{P_0}: \mathbb Y^{\mu,\infty} \ \simeq  \ (\mathcal U_0\times \mathcal U_1)\times r^\mu H^\infty_b(X_0;NX_0) 
\]
is (smooth tame) family of (tame) linear isomorphisms, and thus endows $\mathbb Y^{\mu,s}$ with the structure of a Fr\'echet vector bundle. 
Moreover, the projections satisfy: 
\be 
P_0 P_{u} = P_0 \hspace{2cm}  P_{u}P_0 = P_{u}\label{projectionrelation}
\ee
pointwise on $T(S^1\times \R^N).$
\end{lm}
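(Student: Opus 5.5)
The plan is to first prove the pointwise identities \eqref{projectionrelation}, since the trivialization property follows from them almost formally. At each point $p\in X_0$ the tangent space $T_pX_0\subseteq T_p(S^1\times\R^N)$ is the same subspace for every $u$; only the metric changes from $g_0$ to $g_u$. Hence $P_0$ and $P_u$ are both linear projections on $T_p(S^1\times\R^N)$ with the \emph{same kernel} $T_pX_0$: $P_0$ is the $g_0$-orthogonal projection onto $NX_0=(TX_0)^{\perp_{g_0}}$ and $P_u$ is the $g_u$-orthogonal projection onto $NX_u=(TX_0)^{\perp_{g_u}}$. For any two projections $P,Q$ with common kernel $W$ we have $PQ=P$, because $v-Qv\in W=\ker P$ gives $P(v-Qv)=0$. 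Applying this twice yields $P_0P_u=P_0$ and $P_uP_0=P_u$ pointwise.

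From these identities, $\Upsilon_{P_0}=P_0|_{NX_u}:NX_u\to NX_0$ is a fiberwise linear isomorphism with inverse $P_u|_{NX_0}$. Indeed $P_uP_0=P_u$ restricted to $NX_u$ is the identity, since $P_u$ fixes its own image, and symmetrically $P_0P_u=P_0$ is the identity on $NX_0$. This gives the algebraic part of the claimed isomorphism $\mathbb Y^{\mu,\infty}\simeq(\mathcal U_0\times\mathcal U_1)\times r^\mu H^\infty_b(X_0;NX_0)$.

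The analytic part is to show that $u\mapsto P_u$ is a smooth tame family of bundle endomorphisms whose coefficients lie in a space of $b$-regular bounded functions. Then multiplication by $P_u$ and by $P_0$ is bounded $r^\mu H^s_b\to r^\mu H^s_b$ with tame estimates, and the inverse is of the same form. I would write $P_u$ explicitly in the edge frame $dt,dr,r\,dx_j$. With $g_u=g_{\xi,\alpha}+h_{\xi,\alpha,\Phi}$ as in Lemma \ref{pullbackgPhixi}, one has $P_u=\mathrm{Id}-\sum (g_u|_{TX_0})^{-1}_{ij}\, e_i\otimes g_u(e_j,\cdot)$ for a frame $e_i$ of $TX_0$. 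Its entries are thus built from the metric coefficients by sums, products, and matrix inversion, which is a pointwise analytic operation away from degeneracy. By Lemma \ref{Fxidiffeos} the $g_{\xi,\alpha}$ coefficients are smooth and tame in $(\xi,\alpha)$, and $h_{\xi,\alpha,\Phi}$ is $O(r^{1+\epsilon})$ and tame by Lemmas \ref{composition} and \ref{multiplications}. Lemma \ref{tamenesscriterion} then gives tameness of $u\mapsto P_u$, and hence of $\Upsilon_{P_0}$ and its inverse. Smoothness in $u$ follows the same way by differentiating these formulas.

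The main obstacle is the edge degeneracy. The metric must be written in the edge frame so that $g_u|_{TX_0}$ is uniformly nondegenerate up to $r=0$. Its inverse must also have coefficients with bounded $b$-derivatives, i.e.\ lying in $C^k_b$, so that multiplication preserves $r^\mu H^s_b$; a bare $L^\infty$ bound is not enough. For $\delta$ small, $g_u$ is uniformly close to $g_0$ in $C^0$, since $h=O(r^{1+\epsilon})$ and the $(\xi,\alpha)$ terms are small in $C^2$ by the embedding $H^{s_0}\hookrightarrow C^2$. This closeness gives the uniform invertibility. The tame $H^s_b$ bounds on the entries then come from the Moser-type composition estimate of Lemma \ref{composition}, applied to the analytic map $g\mapsto g^{-1}$ near $g_0$.
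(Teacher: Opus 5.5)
Your proposal is correct and follows the paper's proof. The identities \eqref{projectionrelation} come from the same observation that $P_0$ and $P_u$ are projections with the common kernel $TX_0$, and tameness comes from $P_u$ being an algebraic function of the $g_u$ coefficients; you give more detail than the paper on the $C^k_b$ requirement and on the uniform invertibility of $g_u|_{TX_0}$. The one real difference is that you obtain fiberwise invertibility of $\Upsilon_{P_0}$ algebraically from \eqref{projectionrelation}, with explicit inverse $P_u|_{NX_0}$, whereas the paper appeals to $C^1$-closeness of $g_u$ to $g_0$; in your version, closeness is needed only for the uniform bounds on $P_u$.
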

\begin{proof} Since $g_u=g_{\xi,\alpha,\Phi}$ is $C^1$-close to $g_0$ globally, the projection $P_0: NX_0 \to NX_{u}$ is an isomorphism on each 
fiber.  Similarly, $g_u \in C^\infty_b$, so the projection map, which is an algebraic combination of metric coefficients, is in this space 
as well. Moreover, the norm of the induced map on sections in $r^\mu H^s_b$ is bounded by the $({s+n})$-norm of $u$. This establishes the
existence of the tame isomorphism. 

For the projection statement, let $V\in T(S^1\times \R^N)$. Then 
\[
V-P_0V \in TX_0 \hspace{2cm} V-P_u V \in TX_0
\]
are both tangential. Since both projections annhilate the tangent bundle, 
\[ 
0=P_0(V-P_uV) \ \Rightarrow \ P_0V=P_0P_uV 
\]
and vice versa. 
\end{proof}

\medskip 

We now define the tame mean curvature section. Because the original `Caffarelli-Hardt-Simon' finite-dimensional obstruction is retained, 
we can perform a finite-dimensional alteration on the constant modes. Write 
\[
\xi(t)=\xi_0 + \xi^\perp(t), \quad \alpha(t)= \alpha_0 + \alpha^\perp(t)
\]
where $(\xi_0,\alpha_0) \in \mathcal B_0$ is the constant Fourier coefficient coefficient. As for the $r=1$ boundary,  let 
\[
\frak E: H^{s+3/2}(\del X_0; NX_0) \lre r^{\mu+2}H^{s,2}_{b,e}(\del X_0; NX_0)
\]
be {\it any} continuous bounded extension supported in $r\geq 1/2.$ Since $r$ is euniformly bounded above and below in this region,
$\smash{r^{\mu+2}H^{s,2}_{b,e}\simeq H^{s+2}}$ is equivalent to the standard Sobolev space here, and $\frak E$ is any standard 
Sobolev extension. By \eqref{bbPdef}, $H^{s+3/2}(\del X_0; NX_0)=\mathcal B_0 \oplus \mathbb P^s.$ Thus, for 
$\varrho \in \mathbb P^s,$ 
\[
(\xi_0,\alpha_0) + \varrho \in H^{s+3/2}(\del X_0; NX_0)
\]
is a sum of orthogonal components. Finally, let
\[
\mathcal V_2:= \Big \{ \varrho \in  H^{\infty}(\del X_0; NX_0)^\perp \ \Big | \ \|\varrho\|_{s_0,2}<\delta \Big\} \subseteq \mathbb P^\infty 
\]
with $s_0,\delta$ as in \eqref{U0def}.  Then, if $\mathcal U_{1,D}\subseteq \mathcal U_1$ denotes the subset where Dirichlet conditions
are satisfied at $r=1$, the extensions induce a tame inclusion
\[
\mathcal U_{1,D}\times \mathcal V_2 \lre \mathcal U_1,  \qquad (\ph, \varrho) \mapsto \Phi=\ph + \mathfrak E(\varrho),
\]
the image of which is the complement of $\mathcal B_0.$

\begin{defn}\label{smoothtameMC}  The {\bf universal mean curvature} is the section
\be 
\tikzset{node distance=3.0cm, auto}
\begin{tikzpicture}
\node(D)[yshift=-1.5cm,xshift=.5cm]{$ \mathcal U_0\times \mathcal U_{1,D} \times \mathcal V_2$};
\node(Y)[yshift=-1.5cm][above of=D]{$  \mathbb Y^{\mu,\infty} $};
\node(Q)[yshift=-2.3cm][xshift=-.7cm][above of=D]{$   {\mathbb Q} $};
%\node(E)[overlay,yshift=-2.3cm][xshift=5.0cm][above of=D]{$\begin{matrix}{ \mathbb Q}(\xi,\alpha,\ph,\varrho)=\mathcal Q(g_{\xi^\perp,\alpha^\perp,\Phi}) \\ \Phi=\ph + \frak E((\xi_0,\alpha_0) + \varrho)\end{matrix}$};
%\node(Y'')[xshift=-2.2cm][yshift=-.1cm][right of=D]{$=$};
%\node(Y')[xshift=0.6cm, yshift=-.2cm][right of=D]{$ \left\{ X_{\Phi,\xi} \   \Big | \  \  \begin{matrix} \text{reg}(X_{\Phi,\xi})=\text{graph}(\Phi)  \\ \text{sing}(X_{\Phi,\xi})=\text{graph}(\xi) ) \end{matrix}\right\} $};
%\node(D')[xshift=-.8cm, yshift=0.9cm]{$\Theta $};
\draw[->](Y) to node {$$} (D);
\draw[->][swap] (D) edge[bend left=45]node[below] {$$} (Y);
%\draw[->] (D) to node {$\widetilde {\slashed {\mathbb D}}$} (E);
\end{tikzpicture}
\label{vectorbundles.}
\ee
\noindent where
\begin{align}
{ \mathbb Q}(\xi,\alpha,\ph,\varrho)=\mathcal Q(g_{\xi^\perp,\alpha^\perp,\Phi}) \qquad \Phi=\ph + \frak E((\xi_0,\alpha_0) + \varrho) 
\label{tameMCformula}
\end{align}
with $\mathcal Q(g)$ denoting the mean curvature of a metric $g$.   
\end{defn}
We emphasize that the constant Fourier modes are included with the boundary extensions, so the push-off diffeormorphism 
$F_{\xi^\perp,\alpha^\perp}$ is defined using the pair $(\alpha^\perp, \xi^\perp).$

\begin{cor} \label{tameMC}
$\mathbb Q$ is a smooth tame section. Moreover, if $E_0=\del_t$ and $E_1,...,E_{n}$ are a local orthonormal frame on $C(\Sigma),$ 
then in the trivialization of Lemma \eqref{Ytrivialization}, 
\[
\mathbb Q(\xi,\alpha,\ph,\varrho)=P_0\left(\sum_{I,J=0}^{n} \frak g^{IJ}_{u} \nabla^{g_u}_{E_i}E_j.\right)
\]
where $\frak g_u^{IJ}$ denotes the dual metric on $X_0,$ and $\nabla^{g_u}$ is the Levi-Civita connection of the ambient metric $g_u$ on 
$S^1\times \R^N.$ Here,  $u=(\xi^\perp, \alpha^\perp, \Phi),$ with $\Phi$ as in \eqref{tameMCformula}.
\end{cor}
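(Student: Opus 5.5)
The plan is to prove the two assertions of Corollary \ref{tameMC} separately: first the explicit formula for $\mathbb Q$ in the trivialization of Lemma \ref{Ytrivialization}, then smooth tameness.

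\textbf{The formula.} Since $F_{\xi^\perp,\alpha^\perp}\circ G_\Phi:(X_0,\frak g_u)\to (X_{\xi^\perp,\alpha^\perp,\Phi},g_0)$ is an isometry, and $g_u$ is by definition the pullback of $g_0$ to the ambient space $S^1\times B^N$, the mean curvature of $X_{\xi^\perp,\alpha^\perp,\Phi}$ pulls back to the mean curvature of $X_0\subseteq (S^1\times B^N, g_u)$.

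In a local frame $E_0=\del_t,E_1,\dots,E_n$ of $TX_0$, which is not $g_u$-orthonormal, the second fundamental form is $P_u(\nabla^{g_u}_{E_I}E_J)$. Tracing with the inverse induced metric $\frak g_u^{IJ}$ gives $P_u\big(\sum \frak g_u^{IJ}\nabla^{g_u}_{E_I}E_J\big)$. Applying $\Upsilon_{P_0}$ and using $P_0P_u=P_0$ from \eqref{projectionrelation} produces exactly the stated expression.

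\textbf{Tameness.} I would write $\mathbb Q$ as a composition of the following tame maps, so that Lemma \ref{tamenesscriterion} applies.
\begin{enumerate}
\item[(i)] The map $(\xi,\alpha,\ph,\varrho)\mapsto u=(\xi^\perp,\alpha^\perp,\ph+\frak E((\xi_0,\alpha_0)+\varrho))$. It is linear and bounded with a fixed loss, since $\frak E$ is a standard Sobolev extension.
\item[(ii)] The map $u\mapsto g_u$. Its coefficients come from Lemma \ref{Fxidiffeos} and Lemma \ref{pullbackgPhixi}.
\begin{itemize}
\item The entries of $g_{\xi,\alpha}$ are polynomial in $y$, with coefficients built from $a_\xi,\omega,\Omega_A,k$. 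These are smooth tame functions of $(\xi',\alpha,\alpha')$ through pointwise analytic operations such as the square root, $\mathrm{Exp}$ on $SO(N)$, and the frame $e_a(t)$, which depend tamely on $\xi$ by construction.
\item The correction $h_{\xi,\alpha,\Phi}$ is a finite sum of products of $\Phi$, $\nabla^e\Phi$ and compositions $h_i(x,\Phi)$. These are controlled by Lemma \ref{composition} together with the algebra property of $r^{\mu+1}H^s_b$ from Lemma \ref{multiplications}(B).
\end{itemize}
\item[(iii)] The map $g_u\mapsto \frak g_u^{IJ}$. Inversion of a matrix uniformly close to the identity in the edge basis is pointwise analytic.
\item[(iv)] The Christoffel symbols of $g_u$. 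These are products of $g_u^{-1}$ with one derivative of $g_u$.
\item[(v)] The projections $P_u$ and $P_0$, which are algebraic in the metric coefficients as in Lemma \ref{Ytrivialization}.
\end{enumerate}
Smoothness in the Fréchet sense holds because each of these operations is smooth (indeed polynomial or analytic) at the pointwise level. The derivatives of $\mathbb Q$ are again sums and products of the same building blocks, so they are tame as well.

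\textbf{Main obstacle: weights.} The hard step is checking that $\mathbb Q$ lands in $r^\mu H^s_b$ with a tame estimate, rather than only locally. Here I would split $\mathbb Q$ into two pieces.
\begin{itemize}
\item The reference part $\mathcal Q(g_{\xi^\perp,\alpha^\perp})$. The surface $X_{\xi,\alpha}$ is conical, with smooth coefficients in $(t,\Theta)$ that are homogeneous in $r$. Its mean curvature is $O(r^{-1})\cdot$(terms vanishing at $\xi'=\alpha=0$ after the $r^{-1}$ cancels), so it is smooth in $C^\infty_b$ with at most $O(1)$ growth. By item (2) of Subsection \ref{weightconventions} ($\mu<n/2$), bounded terms lie in $r^\mu H^s_b$, with norms controlled tamely by $\|(\xi,\alpha)\|_{s+d}$.
\item The $\Phi$-dependent part. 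This consists of $r^{-2}$ times expressions in $\Phi$, $\nabla^e\Phi$, $(\nabla^e)^2\Phi$ with $C^\infty_b$ coefficients. Terms that are linear in $\Phi$ map $r^{\mu+2}H^{s,2}_{b,e}\to r^\mu H^s_b$ as in \eqref{bmL0eb}. Quadratic and higher terms are handled by Lemma \ref{multiplications}(C): one factor lies in $r^{\mu+1}$ with $a(n)$ extra derivatives, and the other in $r^\mu$.
\end{itemize}
This fixed derivative loss $a(n)$, together with the loss in $\frak E$ and in the metric coefficients, gives the uniform $d$ in Definition \ref{smoothtamedef}. A remaining subtlety is that only $b$-regularity (not edge regularity) is tracked in the target, so I must check that at most two edge derivatives ever fall on $\Phi$. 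This holds because the mean curvature is second order.
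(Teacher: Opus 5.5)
Your proposal is correct and follows the paper's proof. The formula comes from pulling back by the ambient isometry, writing the mean curvature of $X_0$ in $g_u$ as $P_u\big(\sum \frak g_u^{IJ}\nabla^{g_u}_{E_I}E_J\big)$, and applying $P_0P_u=P_0$ from \eqref{projectionrelation}; smooth tameness comes from Lemma \ref{tamenesscriterion} applied to the metric coefficients of Lemmas \ref{Fxidiffeos} and \ref{pullbackgPhixi}.

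Your additional check that $\mathbb Q$ actually lands in $r^\mu H^s_b$ is left implicit in the paper but is consistent with it. That check has two parts: the reference mean curvature is bounded, so $\mu<\tfrac{n}{2}$ suffices, and the $\Phi$-terms are handled by Lemma \ref{multiplications}. One small correction: the coefficients of $g_{\xi,\alpha}$ are polynomial of degree at most two in $y$, not homogeneous in $r$. Boundedness of the reference mean curvature still holds.
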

\begin{proof}
By definition, the mean curvature of the metric $g_u|_{X_0}$ in a local frame equals
\[
\mathcal Q(g_u |_{X_0})=P_u\left(\sum_{I,J=0}^{n} \frak g^{IJ}_{u}  \nabla^{g_u}_{E_i}E_j\right) \in \Gamma(X_0; NX_u). 
\]
Thus, in the trivialization of Lemma \eqref{Ytrivialization}, the projection identity $P_0P_u=P_0$ from \eqref{projectionrelation} yields 
the desired expression. Smooth tameness is immediate from the smooth tame dependence on the metric coefficients in Lemmas \ref{Fxidiffeos} and \ref{pullbackgPhixi} and Lemma \ref{tamenesscriterion}. 
\end{proof}

\begin{notation} To avoid cumbersome subscripts, we tacitly continue to write the pullback metrics in \eqref{tameMCformula}, where $g_u=g_{\xi,\alpha,\Phi}$;
The constant Fourier mode is split off and subsumed in $\Phi$ as boundary data. 
\end{notation}

\subsection{The Jacobi Operator} For each $(\xi,\alpha,\Phi)\in \mathcal U_0\times \mathcal U_1\times \mathcal V_2,$ the (vertical component of) the partial linearization of mean curvature in the $\ph$-component extends to a bounded operator
\[
L_{\xi,\alpha,\Phi}:=(\text{d}\mathbb Q)_{\xi,\alpha,\Phi}(0,0,\ph,0):  \FrDms_{\Dir}(X_0; NX_0) \lre \FrRgs(X_0;NX_0)
\]
for every $s\geq 0,$ in the trivialization of Lemma \ref{Ytrivialization}. This is a perturbation of the Jacobi as in Section \ref{PreliminariesSection}. 

This perturbation is milder when viewed in arclength coordinates along the curve $\Gamma_\xi:=(t, \xi^\perp(t)).$  
Reparametrizing by the corresponding arclength, set 
\begin{align}
 2\pi(1+\gamma_\xi):=\text{length}(\Gamma_\xi) \hspace{1.5cm}\tau_\xi&:= \frac{1}{1+\gamma_\xi}\int_0^{t} \sqrt{ 1+|d\xi^\perp|^2} \ dt
\in [0, 2\pi] \label{txidef}
\end{align}
In this new coordinate, the metric equals
\be 
\hspace{0cm}g_{\xi,\alpha,\Phi}= g_\gamma \ + \ O(r)d\tau^2\ + \ \sum_{a=1}^NO(r)d\tau dy_a \ + \ O(r^{1+\epsilon}).  \label{ggammadef}
\ee
where $g_\gamma=(1+ \gamma_\xi^2)^2 d\tau^2 + dr^2 + r^2 g_{S^{N-1}}.$ %The following is immediate from the above formulas: 

\begin{cor}  \label{taunorms}
Altering all norms by replacing 
\[
\nabla_t \leftrightarrow \nabla_\tau \hspace{1.5cm} dV_{g_0}\leftrightarrow dV_{g_\gamma}
\]
gives a smooth tame equivalence of norms on $\FrDmmu, \FrRgmu, \mathbb D^{\infty},\mathbb P^{\infty}.$
\end{cor}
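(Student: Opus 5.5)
The plan is to treat the reparametrization $t\mapsto\tau_\xi(t)$ from \eqref{txidef} as a smooth tame family of diffeomorphisms of $S^1$ depending on $\xi$, and to show that pulling back by it, and by its inverse, is a tame linear isomorphism on each of the graded families, with tame constants.

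\textbf{Step 1: bounds on the coordinate change.} Since $\|\xi\|_{s_0}<\delta$ with $s_0\geq 3$, the function $a_\xi=\sqrt{1+|\xi'|^2}$ is $C^1$-close to $1$. Hence:
\begin{itemize}
\item $1+\gamma_\xi$ is bounded above and below, and $\tau_\xi'(t)=a_\xi/(1+\gamma_\xi)$ is bounded above and below by positive constants.
\item Composition with $\sqrt{1+\cdot}$ is tame (Lemma \ref{tamenesscriterion}), so $\|\tau_\xi'-1\|_{H^{s}}\leq C_s\|\xi\|_{H^{s+1}}$.
\item The inverse $t(\tau)$ exists, and $dt/d\tau=(1+\gamma_\xi)/a_\xi\circ t(\tau)$ obeys the same kind of tame estimate.
\end{itemize}

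\textbf{Step 2: derivatives and volume forms.} The chain rule gives $\nabla_\tau=(\tau_\xi')^{-1}\nabla_t$. Iterating, $\nabla_\tau^k$ is a sum of terms $c(t)\nabla_t^j$ with $j\leq k$, where each coefficient is a polynomial in derivatives of $(\tau_\xi')^{-1}$ of total order at most $k-j$. The same holds with $\tau$ and $t$ exchanged. For the volume forms, $dV_{g_\gamma}=(1+\gamma_\xi)^{2}\,(\tau_\xi')\,dt\,r^{n-1}dr\,dV_\Sigma$ up to bounded factors, so the weighted $L^2$ norms agree up to uniform constants. The derivatives $r\nabla_r$, $\nabla_x$ and the edge derivative $r\nabla_t$ are unaffected except for the same coefficient factors.

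\textbf{Step 3: the tame estimate.} Estimate each term $c(t)\nabla_t^j u$ in $r^\mu L^2$. The method is the standard Moser-type argument: combine the multiplication estimates (Lemma \ref{multiplications}) with the interpolation inequalities of Lemma \ref{multiplications}(A), treating $c$ as a function on $S^1$ (constant in $r$, $x$), together with Sobolev embedding on $S^1$. This gives
\[
\|u\|_{s,\tau}\leq C_s\big(\|u\|_{s,t}+\|u\|_{s_0,t}\,\|\xi\|_{H^{s+1}}\big),
\]
and the same estimate with $t$ and $\tau$ exchanged. The same argument applies verbatim on the spaces $\FrDmmu$, $\FrRgmu$ (hybrid norms), on $\mathbb D^\infty$ and $\mathbb P^\infty$ (functions on $S^1$ or $\del X_0$), and on the Dirichlet subspaces, since the change of variables preserves $r=1$.

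\textbf{Main obstacle.} The main point is getting the tame form in Step 3: the top-order coefficient derivatives must be controlled linearly in the high norm of $\xi$, with only low norms of $u$ appearing. This is exactly where the Gagliardo--Nirenberg interpolation is needed.

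Finally, smooth dependence on $\xi$ follows because all coefficients are built from $\xi'$ by pointwise analytic operations, an integral, and composition with the inverse diffeomorphism. Each of these is smooth tame by Lemma \ref{tamenesscriterion}, with a fixed derivative loss for composition.
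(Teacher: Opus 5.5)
Your argument is correct and follows the approach the paper intends: the paper gives no separate proof and treats the corollary as immediate from \eqref{txidef} and \eqref{ggammadef}, i.e.\ from $\nabla_\tau=(\tau_\xi')^{-1}\nabla_t$ with $\tau_\xi'=a_\xi/(1+\gamma_\xi)$ a tame function of $\xi'$, and from the two volume densities differing by a bounded positive function of $t$, which is exactly your chain-rule-plus-Moser-interpolation scheme. Two cosmetic points: Lemma \ref{multiplications}(B),(C) does not literally apply to a coefficient $c(t)$ that is constant in $r$, so the Moser step should be the vector-valued estimate on $S^1$ you describe (the $r\nabla_r$ and $\nabla_x$ derivatives commute with the reparametrization); and no composition with the inverse diffeomorphism is needed, since every coefficient, e.g.\ $\nabla_\tau^k(\tau_\xi')$, can be written as a tame expression in $t$-derivatives of $\xi$.
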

In particular, all metric coefficients in  \eqref{ggammadef} still depend smoothly and tamely on $(\xi,\alpha,\Phi)$ in the $\tau$-norms. 

Let $L_\gamma$ denote the product Jacobi operator $L_0$ as in \eqref{L0expression}, but with $\Delta_\tau$ in place of $\Delta_t$. 
\begin{lm} \label{Jacobioperatoredge}
%The Jacobi operator $L_{\xi,\alpha,\Phi}$ 
We have that
\be 
L_{\xi,\alpha,\Phi}= L_\gamma + \frak d_{\xi,\alpha,\Phi} \qquad \frak d_{\xi,\alpha,\Phi}=\sum_{|I|\leq 2} \frac{\frak a_{\xi,\alpha,\Phi}^I(t,r,x)}{r^2} \nabla_e^I. 
\label{Ldecomp}
\ee
Thus $\frak d_{\xi,\alpha,\Phi}$ is a second-order edge operator with coefficients $\frak a_{\xi,\alpha,\Phi}^{I}\in r C^{s}_b(X_0; End(NX_0))$
where $\smash{\|\frak a_{\xi,\alpha,\Phi}^{I}\|_{rC^{s_0}_b}\leq C\delta}$ in the $s_0$-norm. All are smooth tame functions of $(\xi,\alpha,\Phi)\in 
\mathcal U_0\times \mathcal U_1 \times \mathcal V_2.$ In particular, the indicial roots $\text{Ind}(L_{\xi,\alpha,\Phi})= \text{Ind}(L_0)$ coincide. 
\end{lm}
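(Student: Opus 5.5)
The plan is to compute the linearized mean curvature operator directly from the formula in Corollary \ref{tameMC} and compare it with the Jacobi operator of the model metric $g_\gamma$. Linearizing $\mathbb Q$ in the $\ph$ direction at $u=(\xi,\alpha,\Phi)$ gives the Jacobi operator of $X_0$ with respect to the ambient metric $g_u$. In the chart $G_{\Phi+\ph}$ this is
\[
L_{\xi,\alpha,\Phi}\ph=\frac{d}{d\varepsilon}\Big|_{\varepsilon=0}P_0\Big(\sum \frak g^{IJ}_{u_\varepsilon}\nabla^{g_{u_\varepsilon}}_{E_I}E_J\Big).
\]
Since $\ph$ enters only through $Z_{\Phi+\ph}$ in the proof of Lemma \ref{pullbackgPhixi}, the linearization is a second order operator in $\ph$. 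Its coefficients are built algebraically from the metric coefficients of $g_u$ and their first and second $b$-derivatives. When this is rewritten in the edge frame $r\del_r, r\del_t, \del_x$, we get $r^{-2}$ times a second order edge operator whose coefficients are smooth tame functions of $u$ (by Lemma \ref{tamenesscriterion}).

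To isolate $L_\gamma$, we pass to the arclength coordinate $\tau_\xi$ of \eqref{txidef}, which is a tame equivalence by Corollary \ref{taunorms}, and use the metric expansion \eqref{ggammadef}. In that expansion $g_u-g_\gamma$ consists of the $O(r)\,d\tau^2$ and $O(r)\,d\tau\, dy_a$ terms coming from $k(t)$ and $\Omega_A$ in Lemma \ref{Fxidiffeos}, plus the $O(r^{1+\epsilon})$ tensor $h_{\xi,\alpha,\Phi}$ of Lemma \ref{pullbackgPhixi}.

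The key bookkeeping point is that, in the edge coframe $d\tau, dr, r\,dx$ (equivalently $r^2$ times the basis dual to edge vector fields), a term $O(r)\,d\tau^2$ equals $r\cdot O(1)$ relative to the leading $(1+\gamma)^2 d\tau^2$. Likewise $y_a\, d\tau\, dy_a$ is homogeneous of order $r$ relative to the cone metric. So the rescaled metric $r^{-2}g_u$ differs from $r^{-2}g_\gamma$ by a symmetric edge tensor whose coefficients vanish to order $r$ and are $C^{s_0}_b$-small. Differentiating by edge vector fields preserves this vanishing, since $r\del_r(r\,f)=r(f+r\del_r f)$.

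Hence the difference of the two Jacobi operators has the form $r^{-2}\sum \frak a^I\nabla_e^I$ with $\frak a^I\in rC^s_b$. The smallness $\|\frak a^I\|_{rC^{s_0}_b}\le C\delta$ follows because every term is at least linear in $(\xi^\perp{}',\alpha,\Phi)$. For $\Phi$ this uses item (4) of Subsection \ref{weightconventions} to place $\Phi,\nabla^e\Phi$ in $r^{1+\epsilon}C^{s_0-f(n)}_b\subset rC_b$; for $\xi,\alpha$ it uses $H^{s_0}\hookrightarrow C^2$. Lemma \ref{composition} controls the compositions $h_i(x,\Phi)$.

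The main obstacle is the $\xi,\alpha$ terms. Their coefficients $k(t)$ and $\Omega_A$ are only $O(1)$ in $t$ but multiply $y$, which is where the factor $r$ comes from. One must check that no $r^{-1}$ survives after dividing by $r^2$. I would verify this by expanding the Christoffel symbols of $g_u$ in polar coordinates $y=r\Theta$. The dangerous contractions are $\frak g^{\tau\tau}$ with $\del_r g_{\tau\tau}$, which gives $O(1)\cdot\del_r$; this equals $r^{-2}\cdot r\cdot(r\del_r)$, so it is of the required form. The $g_{\tau a}$ cross terms are handled the same way, giving $O(r)\,r^{-2}$ times edge derivatives.

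Finally, the normal operator $N_q(r^2L_{\xi,\alpha,\Phi})$ drops all terms vanishing at $r=0$, so it equals $N_q(r^2L_\gamma)$. The constant factor $(1+\gamma_\xi)^2$ affects only the $\tau$-derivative, which is absent from the indicial computation \eqref{indicialcancellation}. Therefore $\text{Ind}(L_{\xi,\alpha,\Phi})=\text{Ind}(L_0)$.
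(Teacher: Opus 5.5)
Your proposal is correct and follows essentially the same route as the paper. Both arguments compare with the model metric $g_\gamma$ via \eqref{ggammadef} and count powers of $r$, so that the discrepancy becomes $r^{-2}$ times an edge operator with $rC_b$ coefficients. The paper splits $L=\Delta+hh^\star$ and records $\Delta_{\xi,\alpha,\Phi}-\Delta_\gamma=O(\delta r)\nabla^2+O(\delta)\nabla$ and $(hh^\star)_{\xi,\alpha,\Phi}-(hh^\star)_\gamma=O(\delta r^{-1})$; this is exactly your edge-coframe bookkeeping and your check of the $\frak g^{\tau\tau}\partial_r g_{\tau\tau}$ contraction. Your normal-operator argument for the indicial roots just makes explicit what the paper leaves as ``in particular.''
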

\begin{proof}
Recall that $L_{\xi,\alpha,\Phi}=\Delta_{\xi,\alpha,\Phi} + (hh^\star)_{\xi,\alpha,\Phi}$ (the last term is the shape operator).  The Laplacian and
shape operators take the following form on $(S^1_\tau\times \R^N, g_{\xi,\alpha,\Phi})$:
\[
\begin{split}
\Delta_{\xi,\alpha,\Phi}&= -\sum_{a,b}\frac{1}{\sqrt{|g|}} \del_a \left(\sqrt{|g|} g^{ab}\del_b\right)\\
\Big((hh^\star)_{\xi,\alpha,\Phi}\Big)^{a}_b &:=\sum_{c=0}^N \sum_{i,j,k,m=0}^{n-1} \frak g^{ik} \frak g^{jm} h_{ij}^a h_{km}^b \mathfrak n_{bc}
\end{split}
\]
We are careful to denote the restriction of an ambient metric by $g=(\frak g, \frak n) \in \text{Sym}^2(TX_0)\oplus \text{Sym}^2(NX_0)$ for the metrics induced on $TX_0$ and $NX_0$ respectively, with subscript $(\xi,\alpha,\Phi)$ omitted for brevity.   The indices $a,b,c$ range over ambient indices on $S^1\times \R^N,$ while $i,j,k,m$ range 
over $TX_0$, $h=\tfrac{h_\Sigma}{r} + O(\delta)$ is the second fundamental form, in terms of the one ($h_\Sigma$) on $\Sigma.$

The expression \eqref{ggammadef} for the metric shows that 
\[  
\Delta_{\xi,\alpha,\Phi}- \Delta_\gamma= O(\delta r) \nabla^2 \ + \ O(\delta) \nabla, \qquad
(hh^\star)_{\xi,\alpha,\Phi}- (hh^\star)_\gamma=O(\delta r^{-1}).
\]
In polar coordinates on $X_0$, we obtain the desired expression in terms of edge derivatives. 
\noindent 
\end{proof}

We now revert temporarily to the simpler case  $(\xi,\alpha,\Phi)=0$. The linearization $L_0(\ph)=(\text{d}\mathbb Q)_0(0,0,\ph)$ with respect to 
the $\mathcal U^1_\Dir$ factor is the Jacobi operator with Dirichlet boundary condition. The full linearization $(\text{d}\mathbb Q)_0(0,0,\ph)$ 
differentiates with respect to vector fields too singular to lie in $r^{\mu+2}H^{s,2}_{b,e}$.   
\begin{defn}\label{extendedJacobi} 
The {\bf extended Jacobi operator} linearization of the mean curvature with respect to the 
$\mathcal U_0\times \mathcal U_{1,D}$ components is 
\be 
\mathbb L_{0}:= (\text{d}\mathbb Q)_{0,0,0}: H^{s+2}(S^1; \mathcal B_0)  \ \oplus \ r^{\mu+2}H^{s,2}_{b,e}(X_0; NX_0) \to r^\mu H^s_b(X_0; NX_0), 
\ee
\end{defn}
The key point is that the more singular vector fields have image no longer orthogonal to the obstruction. Recalling the splitting 
$r^\mu L^2\simeq \text{\bf Ob}_0 \oplus \text{\bf Rg}_0$ from Lemma \ref{secondorderinvertibility}, the extended Jacobi operator 
has a block lower-diagonal operator at regularity $s=0$: 
\be
\mathbb L_0(\eta,\beta,\ph)=\begin{pmatrix} \Pi_0 \circ \text{d}\mathbb Q( \cdot , \cdot, 0  ) & & 0  \\  \\ \Pi_0^\perp \circ \text{d}\mathbb Q(\cdot,\cdot,0)  & &  L_0 \end{pmatrix} \begin{pmatrix} (\eta,\beta) \\  \\ \ph \end{pmatrix} : \begin{matrix}  H^{2}(S^1 ;\mathcal B_0)\\ \oplus \\ r^{\mu+2}H^{2}_{e,D}(X_0;NX_0) \end{matrix} \ \ \lre \ \ \begin{matrix}r^\mu L^2 \cap \text{\bf Ob}_0 \\ \oplus   \\ r^\mu L^2 \cap   \text{\bf Rg}_0\end{matrix}.\label{extendedL0}
\ee
The vanishing of the top right component simply reflectse the definition of $\text{\bf Ob}_0.$ The bottom right is an isomorphism by the 
injectivity proved in Corollary \ref{higherorderelliptic}. Thus to show invertibility of $\mathbb L_0,$ it suffices to investigate the top left component.

%% file: sections/sec_deformation_operator.tex
\section{The Deformation Operator}
\label{deformationoperatorsection}
In this section, we prove that the extended Jacobi operator in the product case \eqref{extendedL0} is an isomorphism, up to an appropriate shift of 
regularity. The key point is that deformations resulting from parametrized Euclidean isometries are sufficient to cancel the 
infinite-dimensional obstruction in the mean curvature operator, due to the spectral rigidity assumption on $\Sigma.$
 
Composing the differential with respect to these deformation with the isomorphism $\text{ob}_0^{-1}$ from Corollary \ref{firstob} 
yields an operator acting on sections of vector bundles {\it on the singular set $\Gamma_0\simeq S^1.$}  We begin with 
the linearization at the product submanifold $X_0.$

\begin{defn}\label{T0def}
The composition 
\begin{center}
\be \label{compositiondiagram}
\tikzset{node distance=3.8cm, auto}
\begin{tikzpicture}
\node(A){$ H^{2}(S^1;\mathcal B_0)$};
\node(C)[right of=A]{$\text{\bf Ob}_0$};
\node(C')[right of=C, yshift=0cm]{$L^2(S^1;\mathcal C_0 ). $};
\draw[->] (A) to node {$ \Pi_0 \circ (\text{d}\mathbb Q)_{0}$} (C);
\draw[->] (C) to node {$ \text{ob}_0^{-1}$} (C');
\draw[->,>=stealth] (A) edge[bend right=20]node[below]{$T_0$} (C');
\end{tikzpicture}
\ee \end{center}
\[
T_0 \ =  \text{ob}_0^{-1}  \ \circ \  \Pi_0 \  \circ  \ (\text{d}\mathbb Q)_0(\cdot, \cdot ,0)
\]
is called the {\bf deformation operator}.
\end{defn}

\smallskip 

We now state the main result of this section.  
\begin{thm}\label{boundarydeformation}   Fixing $\epsilon$ by $\mu+1=\tfrac{n}{2}+\epsilon$ as in Subsection \ref{weightconventions}, 
then the boundary deformation operator is an invertible pseudodifferential operator which decomposes as
\be 
T_0 \begin{pmatrix} \eta \\ \beta \end{pmatrix}= -\begin{pmatrix} (1+\Delta_t)^{-{(1-\epsilon)}/{2}} \Delta_t    & 0 \\ 0 & (1+\Delta_t)^{-{(2-\epsilon)}/{2}} \Delta_t  \end{pmatrix}\begin{pmatrix} \eta \\ \beta \end{pmatrix} \ + \ K_0\begin{pmatrix} \eta \\ \beta \end{pmatrix}
\ee
where $K_0$ is a strictly negative, diagonal self-adjoint pseudodifferential operator, with each component of order lower than the
corresponding component of the principal term.  In particular, $T_0$ %is an invertible, 
%formally 
%self-adjoint elliptic pseudo-differential operator 
has mixed order $(1+\epsilon, \epsilon)$ in the two components. % in $(\eta,\beta)$ respectively. 
\end{thm}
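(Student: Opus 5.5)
The plan is to compute $(\text{d}\mathbb Q)_0(\eta,\beta,0)$ explicitly and then evaluate $\text{ob}_0^{-1}\circ\Pi_0$ on the result mode by mode in the Fourier variable $\ell$. This reduces $T_0$ to a diagonal Fourier multiplier on $S^1$, whose symbol can be read off from the Bessel asymptotics in Lemma \ref{obnormalization}.

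\emph{Step 1: the first variation.} Differentiating $F_{\xi,\alpha}$ at $(\xi,\alpha)=0$ in the direction $(\eta,\beta)$ gives the ambient vector field $V(t,y)=\eta(t)+\beta(t)y$. This is an $S^1$-parametrized infinitesimal translation plus an $S^1$-parametrized infinitesimal rotation. Via the isomorphism \eqref{B0iso}, choose bases so that the normal projection of $V$ to $X_0$ is
\[
V^\perp=\sum_{j\in E_0}\eta_j(t)\,r^{0}\phi_j+\sum_{j\in E_{n-1}}\beta_j(t)\,r\,\phi_j .
\]
The exponents are exactly $\gamma^+_j=0$ for $\lambda_j=0$ and $\gamma_j^+=1$ for $\lambda_j=n-1$. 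Hence $L_{C(\Sigma)}V^\perp=0$. By \eqref{L0expression}, and because the first variation of mean curvature under a normal variation is the Jacobi operator, we get
\[
(\text{d}\mathbb Q)_0(\eta,\beta,0)=L_0V^\perp=\sum_{E_0}(\Delta_t\eta_j)\phi_j+\sum_{E_{n-1}}(\Delta_t\beta_j)\,r\,\phi_j .
\]
Two points need care here. First, $F_{\xi,\alpha}$ is built from a moving frame, so the tangential part of the variation changes $\mathbb Q$ only by a reparametrization term, and this term vanishes at the minimal $X_0$. Second, the constant Fourier modes are excluded, since they were absorbed into the boundary data in \eqref{tameMCformula}. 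Both sums lie in $r^\mu L^2$ because $\mu<\tfrac n2$.

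\emph{Step 2: pairing with the obstruction basis.} By Corollary \ref{firstob}(B), the $(j,\ell)$ coefficient of $T_0$ is $\langle L_0V^\perp,\Psi_{j\ell}\rangle_{r^\mu L^2}$. Integrating over $\Sigma$ and $S^1$ kills all cross terms, since the eigenspaces $E_0$ and $E_{n-1}$ are $L^2$-orthogonal. This shows $T_0$ is diagonal, and each diagonal entry is the Fourier multiplier
\[
m_p(\ell)=\ell^2\int_0^1 r^{p}\big(A_{j\ell}r^{\frac{2-n}{2}}I_{\nu_j}(|\ell|r)+B_{j\ell}r^{\frac{2-n}{2}}K_{\nu_j}(|\ell|r)\big)r^{n-1}dr,\qquad p\in\{0,1\}.
\]
Substitute $s=|\ell|r$. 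The $I$ term is $O(|\ell|^{N}e^{-|\ell|})$ by the bound $A_{j\ell}\sim e^{-2|\ell|}$. The $K$ term equals
\[
B_{j\ell}\,|\ell|^{2-p-\frac n2-1}\Big(\int_0^\infty s^{p+\frac n2}K_{\nu_j}(s)\,ds-O(e^{-|\ell|})\Big).
\]
The integral converges because $p+\tfrac n2-\nu_j=1>-1$ (note $\nu_j=\tfrac n2-1+p$). It equals an explicit positive product of Gamma functions.

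\emph{Step 3: reading off the order.} With $B_{j\ell}=D_j|\ell|^{\mu+1}(1+O(|\ell|^{-1}))$ and $\mu+1=\tfrac n2+\epsilon$, we get $m_p(\ell)=c_p\,|\ell|^{2}\,|\ell|^{\epsilon-1-p}(1+O(|\ell|^{-1}))$. This is order $1+\epsilon$ for $\eta$ and order $\epsilon$ for $\beta$. Compare with the principal symbol of $(1+\Delta_t)^{-(1-\epsilon)/2}\Delta_t$, respectively $(1+\Delta_t)^{-(2-\epsilon)/2}\Delta_t$, after normalizing the constants $c_p$ (e.g.\ by rescaling the identification \eqref{B0iso}). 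Define $K_0$ as the remainder multiplier. It is diagonal and real, hence self-adjoint, and it has lower order. Invertibility follows because $m_p(\ell)\neq0$ for every $\ell\neq0$ (the Gamma factors and $D_j$ are nonzero), and the constant mode is excluded.

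\emph{Main obstacle.} The delicate point is not the order count but the exact constants. One must fix the sign of $D_j$ in Lemma \ref{obnormalization} (the sign choice of $\Psi^\dag_{j\ell}$), which produces the overall minus sign. One must also control the full expansion of $B_{j\ell}$ in $|\ell|^{-1}$, including the exponentially small corrections, to show that the remainder $K_0$ is a classical symbol of strictly lower order and is strictly negative for all $\ell$. I would attack this by computing $B_{j\ell}$ directly from the normalization integral via Nicholson-type uniform asymptotics for $K_\nu$. If the sign of $K_0$ fails for small $|\ell|$, I would instead absorb finitely many modes into the principal part.
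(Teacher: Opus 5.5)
Your Steps 2--3 follow the paper's own computation. You pair with $\Psi_{j\ell}$, discard the exponentially small $I_{\nu_j}$ part, and read the orders $1+\epsilon$ and $\epsilon$ off $|\ell|^2|\ell|^{\mu-\frac n2}\int_0^{|\ell|}\rho^{\frac n2+p}K_{\nu_j}(\rho)\,d\rho$. Your Step 1 is a cleaner derivation than Lemma \ref{productlinearization}. Instead of differentiating $a_\xi,\omega,\Omega_A,k$ and the Christoffel symbols, you use the first variation at a minimal submanifold and the homogeneity of $P_0V$ of degree $\gamma_j^+\in\{0,1\}$, so $L_{C(\Sigma)}$ kills it. With the paper's conventions the result is $P_0[\eta''+\beta''y]=-\Delta_tV^\perp$, not $+L_0V^\perp$; as you say, that sign is absorbed by the sign choice of $\Psi^\dag_{j\ell}$.

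The genuine gap is the constant Fourier mode, which you declare excluded. It is not excluded. $T_0$ is defined on all of $H^2(S^1;\mathcal B_0)$. By \eqref{tameMCformula}, the constant part $(\eta_0,\beta_0)$ is moved into the boundary extension, so it contributes $L_0\mathfrak E(\eta_0,\beta_0)$ rather than the identically vanishing variation by a rigid motion. Symmetrically, the target $L^2(S^1;\mathcal C_0)$ contains the $\ell=0$ obstruction elements $\Psi_{j0}$, which are exactly the finite-dimensional CHS cokernel. Your multiplier carries the factor $\ell^2$ and so vanishes at $\ell=0$, and the Bessel normalization of Lemma \ref{obnormalization} degenerates there. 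Nothing in your argument therefore reaches these elements. As written, you prove neither injectivity on constants nor surjectivity onto the $\ell=0$ component, and invertibility of $T_0$ does not follow; Corollary \ref{invertibleLbb0} needs it.

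The missing ingredient is the Caffarelli--Hardt--Simon argument. By Green's identity as in \eqref{Greens2ndbd}, $\langle L_0\mathfrak E(\zeta),\Psi_{j0}\rangle_{r^\mu L^2}$ equals, up to a fixed constant, $\int_\Sigma\langle\zeta,\partial_r\Psi_{j0}|_{r=1}\rangle$, independently of the extension. This block is diagonal and nondegenerate because $\Psi_{j0}$ vanishes at $r=1$ and so has nonzero Neumann data. Its sign, coming from monotonicity of $\Psi_{j0}$, is the only negativity of $K_0$ the paper actually argues.

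There are also two smaller points.

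First, nonvanishing of $m_p(\ell)$ for finite $\ell\neq0$ does not follow from the Gamma-function constant, which only governs $|\ell|\to\infty$. Instead, note that $\Psi^\dag_{j\ell}=B_{j\ell}\,r^{\frac{2-n}{2}}\bigl(K_{\nu_j}(|\ell|r)-\tfrac{K_{\nu_j}(|\ell|)}{I_{\nu_j}(|\ell|)}I_{\nu_j}(|\ell|r)\bigr)$ has one sign on $(0,1)$, because $K_\nu/I_\nu$ is decreasing. Hence $\int_0^1 r^{p+n-1}\Psi^\dag_{j\ell}\,dr\neq0$.

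Second, your stated main obstacle largely dissolves. The corrections to $B_{j\ell}$ and to $\int_0^{|\ell|}\rho^{\frac n2+p}K_{\nu_j}$ are $O(|\ell|^{c}e^{-|\ell|})$. After normalizing the leading constant, the remainder is therefore a classical symbol two orders lower plus a smoothing term, and it is negative for large $|\ell|$. Negativity for small $|\ell|\neq0$ is not argued in the paper and need not hold. A direct check for $n=5$, where $K_{3/2}$ and $I_{3/2}$ are elementary, indicates it fails at $\ell=\pm1$. Your fallback of absorbing finitely many modes into the principal part would change the operator in the statement. Rely on invertibility instead, which is all that is used later.
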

Implicit in this result is the polynomial loss of regularity that necessitates the use of Nash-Moser theory in the proof of our main result. 

\begin{cor} \label{invertibleLbb0} The extended Jacobi operator is a bounded linear isomorphism
\be
\mathbb L_0(\eta,\beta,\ph)=\begin{pmatrix} T_0  \\  \\ \Pi_0^\perp \circ (\text{d}\mathbb Q)_0(\_,\_,0)  & &  L_0 \end{pmatrix} \begin{pmatrix} (\eta,\beta) \\  \\ \ph \end{pmatrix} : \begin{matrix}   (H^3 \oplus H^2)\\ \oplus \\ r^{\mu+2}H^{2}_{e,D}(X_0;NX_0) \end{matrix} \ \ \lre \ \ \begin{matrix}   H^{2-\epsilon}(S^1;\mathcal C_0)\\ \oplus   \\ \text{\bf Rg}_0\end{matrix} ; \label{extendedL0}
\ee
the first (top) summand in the domain is shorthand for $H^{3}(S^1;\R^N)\oplus H^{2}(S^1;\R^K)\subseteq L^2(S^1;\mathcal B_0).$
\end{cor}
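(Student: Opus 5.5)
The plan is to deduce Corollary \ref{invertibleLbb0} from Theorem \ref{boundarydeformation} and the block lower-triangular structure \eqref{extendedL0}, exactly as in the proof of Theorem \ref{CHS}. In the splitting $r^\mu L^2=\text{\bf Ob}_0\oplus\text{\bf Rg}_0$ of Lemma \ref{secondorderinvertibility}, composing the top row with $\text{ob}_0^{-1}$ (Corollary \ref{firstob}) replaces $\text{\bf Ob}_0$ by $L^2(S^1;\mathcal C_0)$. The top row then becomes $T_0(\eta,\beta)$, the top-right entry vanishes by definition of $\text{\bf Ob}_0$, and the bottom row is $\Pi_0^\perp(\text{d}\mathbb Q)_0(\eta,\beta,0)+L_0\ph$. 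A lower-triangular operator is an isomorphism if both diagonal blocks are isomorphisms and the off-diagonal block is bounded; the inverse is
\[
(f,g)\mapsto \Big(T_0^{-1}f,\ L_0^{-1}\big(g-\Pi_0^\perp(\text{d}\mathbb Q)_0(T_0^{-1}f,0)\big)\Big).
\]

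First, the block $T_0$. By Theorem \ref{boundarydeformation}, $T_0$ is diagonal with $\eta$-entry $-(1+\Delta_t)^{-(1-\epsilon)/2}\Delta_t+K_0^\eta$, an elliptic operator of order $1+\epsilon$, and $\beta$-entry of order $\epsilon$. Each entry is elliptic, so it is Fredholm $H^{s}\to H^{s-m}$. Injectivity, hence invertibility, I would get on Fourier modes. The principal part is $\le 0$ and vanishes only on the constant mode. $K_0$ is strictly negative and self-adjoint, so each entry is strictly negative, hence injective, and by self-adjointness surjective. This gives isomorphisms $H^3(S^1;\R^N)\to H^{2-\epsilon}$ and $H^2(S^1;\R^K)\to H^{2-\epsilon}$, matching the orders $1+\epsilon$ and $\epsilon$, so the top block maps $H^3\oplus H^2$ isomorphically onto $H^{2-\epsilon}(S^1;\mathcal C_0)$.

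Second, the block $L_0$. It maps $r^{\mu+2}H^2_{e,D}$ isomorphically onto $\text{\bf Rg}_0$, being injective with closed range by Proposition \ref{1mapprop} and Corollary \ref{higherorderelliptic}(B).

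Third, boundedness of the off-diagonal term $\Pi_0^\perp(\text{d}\mathbb Q)_0(\eta,\beta,0)$ from $H^3\oplus H^2$ into $r^\mu L^2$. I expect this to be the main obstacle, because the range must genuinely land in $r^\mu L^2$ with the stated regularity. I would compute $(\text{d}\mathbb Q)_0$ explicitly by differentiating the metric \eqref{gxiformula} at $(\xi,\alpha)=0$. The terms are of the form $\eta''(t)\,r\,\phi_j$ and $\beta'(t)\,r\,\phi_j$, together with lower-order terms involving $\eta'$ and $\beta$: the Jacobi fields $\phi_j$ are killed by $L_{C(\Sigma)}$, leaving only $\Delta_t$ acting on $\eta(t)\phi_0$ and $\beta(t)r\phi_{n-1}$. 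These lie in $r^\mu L^2$ because $\mu<n/2$ (item (2) of Subsection \ref{weightconventions}), and they require two $t$-derivatives of $\eta$ and one of $\beta$. So $H^2\oplus H^1$ would already suffice. $\Pi_0^\perp$ is bounded by Lemma \ref{secondorderinvertibility}(C), which completes the argument.
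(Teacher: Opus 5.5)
Your argument is the one the paper intends. It uses the block lower-triangular structure set up at the end of Section~\ref{deformationssection}, invertibility of $T_0$ with orders $(1+\epsilon,\epsilon)$ from Theorem~\ref{boundarydeformation} (giving $H^3\to H^{2-\epsilon}$ and $H^2\to H^{2-\epsilon}$), and $L_0$ as an isomorphism onto $\text{\bf Rg}_0$ by injectivity and closed range.

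Your third step, however, contains a wrong computation that should be fixed. You do not need to re-derive anything there, because Lemma~\ref{productlinearization} gives exactly $(\text{d}\mathbb Q)_0(\eta,\beta,0)=P_0[\eta''(t)+\beta''(t)y]$. This is $\eta''(t)\phi_a$ with no factor of $r$, since translation Jacobi fields are homogeneous of degree $0$. It is $\beta''(t)\,r\,\phi_k$ with two $t$-derivatives of $\beta$, not one. There are no lower-order terms. Your own heuristic, $\Delta_t$ applied to $\eta(t)\phi_a+\beta(t)r\phi_k$, gives the same thing up to sign.

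Consequently your claim that $H^2\oplus H^1$ suffices is false in the $\beta$ slot. If $\beta\in H^1$, then $\beta''\in H^{-1}$, so $\beta''r\phi_k$ is in general not in $r^\mu L^2$. Since the stated domain has $\beta\in H^2$, the off-diagonal block is still bounded into $r^\mu L^2$, so your conclusion survives. The condition $\mu<n/2$ you cite is precisely what makes the degree-$0$ term $\eta''\phi_a$ lie in $r^\mu L^2$.

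Finally, the constant modes $(\eta_0,\beta_0)$ do not enter through the metric formula. They enter through the boundary extension $\mathfrak E$ of Definition~\ref{smoothtameMC} and contribute $L_0\mathfrak E(\eta_0,\beta_0)$. That term is smooth and supported in $r\geq 1/2$, so it is trivially bounded.
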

\begin{rem}\label{obconvention} The pseudodifferential order of $T_{0}$ depends on the choice of isomorphism 
$\text{ob}: L^2(S^1;\mathcal C_0)\to {\bf Ob}_0$.  Altering this choice by a factor of $(1+\Delta)^s$, for example, for any $s\in \R$,
would change the orders of the components of $T_0$ accordingly.
% ould revise it so that it appeared as any specified order as an operator into $L^2(S^1;\mathcal C_0)$. 
The ambient obstruction component $\Pi_0\circ (\text{d}\mathbb Q)_0(\cdot ,\cdot ,0)$, however, is independent of $\text{ob}_0$, 
has dense range so all three maps in  
\begin{center}
\tikzset{node distance=3.4cm, auto}
\begin{tikzpicture}
\node(A){$H^3\oplus H^2$};
\node(C)[right of=A]{};
\node(C')[right of=C, yshift=0cm]{$ \text{\bf Ob}_0\cap r^\mu H^{2-\epsilon}_b(X_0;NX_0)    $};
\node(D)[below of=C, yshift=1.4cm]{$H^{2-\epsilon}(S^1;\mathcal C_0)$};
\draw[->] (A) to node {$ \Pi_0 (\text{d}\mathbb Q)$} (C');
\draw[->][swap] (A) to node {$T_{\Phi_0}$} (D);
\draw[->][swap] (D) to node {$ \text{ob}$} (C');
\end{tikzpicture}\end{center}
are isomorphisms. Our convention to make $\text{ob}$ have order $0$ is to make the order of $T_0$ agree with the ambient 
order of $\Pi_0 (\text{d}\mathbb Q)$ (appropriately interpreted since this latter operator is a map between sections of
bundles on $S^1$ and $X_0$, respectively).
\end{rem}

\subsection{The Product Deformation Operator} 
We now prove Theorem \ref{boundarydeformation}.  The first step is to calculate the linearized mean curvature at the product metric. 
\begin{lm} \label{productlinearization} The linearized mean curvature at $(0,0)\in \mathcal U_0 \times \mathcal U_1$,
acting on linear deformations of parametrized Euclidean isometries $(\eta,\beta)\in C^\infty(S^1; \mathcal B_0)$,
is given by
\be 
(\text{d}\mathcal Q)_{0}(\eta,\beta, 0)= P_0\left[ \eta''(t) \ + \  \beta''(t) y\right], \label{productmeancurvature}
\ee
where $P_0$ is projection to the normal bundle, and $\beta''(t)y$ is the action of $\beta''(t)\in \frak{so}(N).$
We are using here the trivialization of Lemma \ref{Ytrivialization}.
\end{lm}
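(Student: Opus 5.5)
The plan is to differentiate the one-parameter family $\mathbb Q(s\eta, s\beta, 0, 0)$ at $s=0$ directly from the metric formula of Lemma \ref{Fxidiffeos}, rather than through the perturbed Jacobi operator. The guiding picture is this. Because the reference submanifolds $X_{s\eta,s\beta}=F_{s\eta,s\beta}[X_0]$ are images of $X_0$ under maps that are isometries of each slice $\{t\}\times\R^N$, the first variation of mean curvature should be the Jacobi operator applied to the normal part of the variation field $V=\del_s F_{s\eta,s\beta}|_{s=0}$. The fiberwise-constant pieces of $V$ are killed by $L_{C(\Sigma)}$; only the $-\del_t^2$ part survives.

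\textbf{Step 1: compute the variation field.} At $s=0$ the frame is $e_a=\del_{y_a}$ with $\omega=0$. Differentiating \eqref{Fxidef}, and noting that the frame $e_a(t)$ rotates only at second order in $\xi$, gives
\[
V(t,y)=\eta(t)+\beta(t)y .
\]
This is a pointwise translation plus an infinitesimal rotation, with no $\del_t$ component at first order since $a_\xi=1+O(|\xi'|^2)$.

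\textbf{Step 2: split off the fiber part.} The standard first-variation formula for mean curvature, $\mathcal Q'=-L_0(P_0V)$ up to sign and tangential reparametrization terms, holds on the regular part of $X_0$ with the ambient metric fixed. I will apply it to the family of ambient maps and then write
\[
L_0=L_{C(\Sigma)}+\Delta_t .
\]
For each fixed $t$, $P_0(\eta(t)+\beta(t)y)$ is the restriction of a Euclidean Killing field of $\R^N$ to the cone. Hence it is a Jacobi field of $C(\Sigma)$, so $L_{C(\Sigma)}P_0V=0$ slice by slice. This is exactly the statement that translations and rotations give the eigenvalues $0$ and $n-1$ with homogeneities $r^0$ and $r^1$, as recalled before Definition \ref{spectralassumptions}.

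\textbf{Step 3: the $t$-derivatives.} Since $X_0$ is a product and $P_0$ is independent of $t$, the remaining term is
\[
\Delta_t P_0V=-P_0\big(\eta''+\beta''y\big).
\]
After fixing the sign convention for $\mathcal Q$, this gives \eqref{productmeancurvature}.

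\textbf{Main obstacle.} The delicate point is justifying Step 2 cleanly. We should not invoke the first-variation formula abstractly, since $V$ does not decay at $r=0$ and lies outside the domain space $\FrDms$. The cleanest check is to expand \eqref{gxiformula} to first order in $s$. One finds
\[
g_{s\eta,s\beta}=g_0+2s\sum_a\big(\beta'(t)y\big)_a\,dt\,dy_a-2s\langle\eta'',y\rangle\,dt^2+O(s^2),
\]
using $\Omega_A=\beta'$ and $k=\eta''$ to first order. Substituting this into the formula of Corollary \ref{tameMC} and using $P_0P_u=P_0$ should let us read off the linear term. The $dt\,dy$ cross-term produces $P_0(\beta'' y)$ after one $t$-derivative through the Christoffel symbols $\Gamma^{a}_{tt}$. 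The curvature term $k$ produces $P_0\eta''$.

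This computation is routine but requires care: one must verify that the $\nabla_{E_i}E_j$ contributions tangential to $C(\Sigma)$ cancel, because the slice maps are isometries. So I would do the explicit Christoffel computation, with $\Gamma^a_{tt}=\del_t g_{ta}-\tfrac12\del_a g_{tt}$, as the core of the proof.
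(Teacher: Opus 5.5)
Your core computation is exactly the paper's proof, and it is correct. You expand \eqref{gxiformula} to first order to get $\dot g_{tt}=-2\langle\eta'',y\rangle$, $\dot g_{ta}=(\beta'y)_a$ and $\dot g_{ab}=0$. Then only $\dot\Gamma^c_{tt}=\partial_t\dot g_{tc}-\tfrac12\partial_c\dot g_{tt}$ survives, the spatial Christoffel terms vanishing because the slice maps are isometries, and this gives $P_0[\eta''+\beta''y]$.

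One slip in Step 1: the normal frame does tilt at first order. The condition $e_a\perp(1,s\eta')$ forces $\dot e_a=-\eta_a'\,\partial_t$, so in fact $V=\eta+\beta y-\langle\eta',y\rangle\partial_t$. The extra term is tangential to $X_0$, so $P_0V$ and your conclusion are unaffected. It is precisely this term that trades an $\eta_a'$ in $\dot g_{ta}$ for the $-2\langle\eta'',y\rangle$ in $\dot g_{tt}$; hence your stated $V$ and your metric expansion disagree, though they yield the same $\dot\Gamma^c_{tt}$.

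Your Jacobi-field route is also rigorous as a pointwise identity on $\mathrm{reg}(X_0)$, since $\vec H=0$ at $s=0$. The decay at $r=0$ you worried about plays no role there.
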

\begin{proof}  For each of the quantities in the left column below, with notation as in Lemma \ref{Fxidiffeos}, we 
list on the right the corresponding variations with respect to  the 1-parameter family $(\xi_s, \alpha_s)=(0,0) + (s\eta, s\beta)$: 
\begin{alignat}{2}
a_\xi(t)&=\sqrt{1 + |\xi'(t)|^2} & \qquad    \dot a_\xi(t) & =0   \label{derivs1}\\
\omega_{ab}(t)&= \br \nabla_t e_a, e_b  \kt  & \qquad   \dot \omega_{ab}(t)  & =0  \label{derivs2} \\
\Omega_A(t)&=  -A^T(t) \omega(t) A(t)+ A(t)^T A'(t) & \qquad   \dot\Omega_A(t)  & =\beta'(t) \label{derivs3} \\
k_b(t)&= a_\xi(t)^{-1} \br   \nabla_t  e_\Gamma  \ , \ f_b \kt  & \qquad   \dot k_b(t) & =\eta''(t). \label{derivs4} 
\end{alignat}
Here $e_a$ and $f_b$ are orthonormal bases as in \eqref{Fxidef}. Assuming this claim momentarily, we complete the proof. 

For shorthand, write
\[ 
g_s:=g_{\xi_s, \alpha_s} \hspace{1.5cm} \frak g_{s}:=g_{\xi_s, \alpha_s}\big|_{X_0}
\]
%for the family of pullback metrics and their restrictions to $X_0$. 
Choose a point $p\in C(\Sigma)$, set $E_0=\del_t$ and let $E_1,...,E_{n-1}$ denote a local geodesic orthonormal frame 
of $C(\Sigma)$ in the $s=0$ metric, i.e., so that $(\nabla_{E_i} E_j)(p) = 0$. The mean curvature at $(t,p)$ for $t\in S^1$ is 
\be 
\mathcal Q(g_s)=P_0\left[ \sum_{i,j=0}^{n-1} \frak g^{ij}_s \nabla^{g_s}_{E_i}E_j   \right] \label{fullMC}
\ee
Differentiating  gives
\begin{align*}
(\text{d}\mathbb Q)_{0}(\eta,\beta, 0)&=  P_0\left[ \sum_{i,j=0}^{n-1}\dot {\frak g}^{ij}_s \nabla^{g_s}_{E_i}E_j   \right]_{s=0} \ + 
\ P_0\left[ \sum_{i,j=0}^{n-1}{\frak g}^{ij}_s  \dot \nabla^{g_s}_{E_i}E_j   \right] \\
&= P_0\left( \sum_{i=1}^{n-1} \dot {\frak g}_{ti} (\nabla^{g_0}_{\del_t}E_i \ + \ \nabla^{g_0}_{E_i}\del_t )\right)  \ + \  
P_0 \left[ \sum_{i=0}^{n-1} \dot \nabla^{g_s}_{E_i}E_i \right].
\end{align*}
Note that the projection does not depend on $s$ by dint of the trivialization of Lemma \ref{Ytrivialization}. For the true normal bundle 
$\dot P_s$ would contribute a third term (which vanishes at the product for $s=0$). In the first term, we have used that $\dot g^{ij}=0$ for $i,j\geq 1,$ 
since $g^{ij}_s=\delta_{ij}$ for all $s \geq 0$, $i,j\geq 1$, by Lemma \ref{Fxidiffeos}. On the other hand, $\nabla_t E_i=\nabla_{i}\del_t =0$. 
Thus the first term vanishes. Meanwhile, $\frak g_0^{ij}=\delta^{ij},$ so the second term collapses to the trace. 

Using all of this, then writing $E_i=\sum_a E_i^a \del_{y_a}$ in the ambient frame, we have
\begin{align*}
(\text{d}\mathbb Q)_{0}(\eta,\beta, 0)&= P_0\left[\sum_{c=1}^{N}\left( \dot \Gamma_{tt}^c  \ +  \ 
\sum_{i=1}^{n-1} \sum_{a,b=1}^N E_i^a E_i^b \Gamma_{ab}^c\right) \del_{y_c}\right] \hspace{2.5cm}
\end{align*}
where the $\del_t$ term is dropped since $P_0(\del_t)=0.$ Recalling that 
\[ 
\dot  \Gamma_{ab}^c=\frac{1}{2}\left(\del_a \dot g_{bc} \ + \ \del_{b}\dot g_{ac} \ - \ \del_c \dot g_{ab} \right), 
\]
the fact that $\dot g_{ab}=0$ for $a,b\geq 1$ shows that the second term vanishes. Thus
\begin{align}
(\text{d}\mathbb Q)_{0}(\eta,\beta, 0) &= P_0\left[\sum_{c=1}^{N} \dot \Gamma_{tt}^c \del_{y_c}\right]= P_0\left[\sum_{c=1}^{N}  \left(\del_t \dot g_{tc} - \frac{1}{2}\del_c \dot g_{tt}\right)\right].
\end{align}
Using \eqref{derivs1}--\eqref{derivs4} to differentiate the expressions in Lemma \ref{Fxidiffeos} gives
\[ 
\dot g_{tt}= 2\br - \eta''(t) , y \kt \hspace{1cm} \dot g_{tc}=  \beta'(t)y 
\]
Thus 
\begin{align}
(\text{d}\mathcal Q)_{0}(\eta,\beta, 0) &= P_0\left[ \eta''(t) \ + \  y\beta''(t) \right],
\end{align}
as desired.  
\end{proof}

It remains to verify \eqref{derivs1}--\eqref{derivs4}.

\begin{proof}[Proof of Claims \eqref{derivs1}--\eqref{derivs4}]   First, \eqref{derivs1} follows directly by differentiating: 
\[
\dot a_\xi(t)=\frac{2 \xi'_s(t)\cdot \eta(t)}{2\sqrt{1+ |\xi_s'(t)|^2}}\Big |_{s=0}=0.
\]
For \eqref{derivs2}, observe that 
\[ 
\widetilde e_a:= (0, \del_{y_a})- s \eta'_a(t) e_{\Gamma_s}
\]
is a (not necessarily orthonormal) basis of $N\Gamma_{\xi_s},$. Since $e_{\Gamma_s}=(1,s\eta(s)),$ taking inner products shows 
$\br  \widetilde e_a , \widetilde e_b\kt= \delta_{ab} + O(s^2).$ The Gram-Schmidt process thus yields an orthonormal basis 
$e_a(s)=\widetilde e_a + O(s^2).$ We then compute
\begin{align*}
\dot \omega_{ab} &= \br  \nabla_t \tfrac{d}{ds}\big |_{s=0}{ \widetilde {e}_a}, (0,\del_{y_b})\kt \ + \  
\br  \nabla_t { \widetilde {e}_a}, \tfrac{d}{ds}\big |_{s=0}(0,\del_{y_b})\kt \\
&= \br (\nabla_t \eta_a'(t)) \del_t \ , \ \del_{y_a}\kt  \ + \  s\br \eta'(t)\dot e_{\Gamma}   \ , \ \del_{y_b}\kt  \big|_{s=0} \ + \ 
\br \nabla_t { \widetilde {e}_a}  \  , \  0  \kt = 0. \\
\end{align*}
%where the first term vanishes by orthogonality. 

For \eqref{derivs3},
\begin{align*}
\dot \Omega_A(t)&=\Big[ -\dot A_s^T \omega_s A_s -{ {A_s^T \dot \omega_s A_s}} - A_s^T \omega_s \dot A_s  + \dot A^T_s A'_s \ + \ A^T_s \dot A'_s\Big]_{s=0} \label{Omegawexpression}\\
&=  0 + 0 + 0 + \beta(t)^T (s\beta'(t))  \Big |_{s=0}\ + \  \beta'(t) =\beta'(t).
\end{align*}
using that $\omega_{ab}(0)=0$ by orthogonality, $\dot \omega_{ab}(0)=0$ as proved above, and $A(0)=\text{exp}(0)=\text{Id}.$

Finally, using that $e_{\Gamma_s}=(1, s\eta'(t) ) + O(s^2),$
\begin{align*}
\dot k_b(t)&= \dot a_\xi^{-1} \br\cancel{  \nabla_t e_{\Gamma_s}} , f_b \kt \Big |_{s=0} \  + \  a_\xi^{-1} \br  \nabla_t \dot e_{\Gamma_s} , f_b \kt \Big |_{s=0} \  + \ a_\xi^{-1} \br  \cancel{  \nabla_t e_{\Gamma_s}}  , \dot f_b \kt \Big |_{s=0}\\
&=  0 \ + \ \br  \eta''(t) \ , \ (0,\del_{y_b})\kt \ +  \ 0 =\eta''_b(t).
\end{align*}
This completes the proof of these claims. 
\end{proof}

We now prove the theorem. 
\begin{proof}[Proof of Theorem \ref{boundarydeformation}] Let $(\eta(t),\beta(t))=(\eta_0, \beta_0) + (\eta^\perp(t), \beta^\perp(t)).$  From Definition \eqref{tameMCformula} 
and Proposition \ref{productlinearization} 
\[
T_0(\eta,\beta)=\text{ob}_0^{-1} \circ \Pi_0 \Bigg[ P_0\Big( (\eta^\perp)''  \ + \  (\beta^\perp)'' y \Big)\Bigg] \ + \ \text{ob}^{-1}_0 \Pi_0 \Bigg[L_0(\frak E(\eta_0, \beta_0))\Bigg].
\]
The second term is supported entirely in the constant Fourier mode. The fact that it is an isomorphism $\mathcal B_0\to \mathcal C_0$ is simply the original argument of
Caffarelli-Hardt-Simon from Theorem \ref{CHS} (this is independent of the choice of boundary extension by \eqref{Greens2ndbd}. 
In fact, it is strictly negative since $\Psi_j$ is a monotonically decreasing function.

Write 
\[
\eta^\perp(t)=\sum_{a=1}^N \eta^\perp_a(t) \phi_a \hspace{1.5cm} \beta^\perp(t)=\sum_{k=1}^K \beta^\perp_k(t) \phi_k, 
\]
where $\phi_a$, $a=1,...,N$, and $\phi_k$, $k=1,...,K$, are orthonormal bases of the translational and rotational parts of $\mathcal B_0.$ 
By our choice of $\mathcal B_0$ in \eqref{B0iso}, $P_0$ is an isomorphism on the latter, so 
\[
P_0\Big( (\eta^\perp)''  \ + \  (\beta^\perp)'' y \Big)= \sum_{a=1}^N \eta_a^\perp(t)\phi_a + r\sum_{k=1}^K \beta^\perp_k(t) \phi_k.
\]

Because $\eta,\beta\in H^2,$ these second derivative terms have effective orders 
\[
s_\eta= 0 + 0 + (\tfrac{n}{2}-\mu)=1-\epsilon \hspace{1.5cm}s_\beta= 0 + 1 + (\tfrac{n}{2}-\mu)=2-\epsilon,
\]
respectively, in the sense of Definition \ref{effectivescaling}. By Lemma 4.11, the projections are bounded as maps into $H^{1-\epsilon}$ and $H^{2-\epsilon}$, respectively. 
Moreover, the same calculation as in the proof of that lemma yields here that, 
 \begin{align}
 \sum_{j,\ell}\br (\eta_a^\perp)''(t)\phi_a \ , \  \Psi_{j\ell}\kt_{r^\mu L^2(X_0)}&= \int_{X_0} \br (\eta_a^\perp)''(t)\phi_a \ , \  r^{2\mu}\Psi_{j\ell}^\dag(r) \phi_j e^{i\ell t} \kt \ r^{-2\mu}r^{n-1}dr dt dV_\Sigma\nonumber  \\
&= -\sum_{\ell}\eta_{n\ell} |\ell|^2 \cdot  |\ell|^{\mu+1} \int_0^1  r^{\tfrac{2-n}{2}} K_{\nu_a}(|\ell| r ) r^{n-1}dr \nonumber \\
&= -\sum_{\ell}\eta_{a\ell} |\ell|^2 \cdot  |\ell|^{\mu-\tfrac{n}{2}} \int_0^{|\ell|}  \rho^{\tfrac{n}{2}} K_{\nu_a}(\rho ) d\rho \nonumber \\
&=  -\sum_{\ell}\eta_{a\ell} |\ell|^2 \cdot  |\ell|^{\mu-\tfrac{n}{2}}  c_{n\ell} \label{finalint}
\intertext{\noindent where the $c_{n\ell}$ are a strictly positive sequence asymptotic to a constant as $|\ell| \to \infty$. Combining the lower order terms into the 
compact operator $K_0$, and recalling that $\mu-\tfrac{n}{2}=\epsilon-1$, we obtain}
&= -(1+\Delta_t)^{-{(1-\epsilon)}/{2}}\Delta_t(\eta^\perp) + K_0(\eta^\perp), 
 \end{align}
as desired. The calculation for $\beta^\perp$ is identical, but the additional factor of $r$ leads to an additional factor of $|\ell|^{-1}$ in \eqref{finalint}. 
Combining this with the strictly negative isomorphism on the constant modes completes the proof. 
\end{proof}

 \subsection{The General Deformation Operator} 
\label{generaldeformations}
In order to apply the Nash-Moser implicit function theorem \ref{nashmoser}, we must understand the deformation operator at a general configuration 
$(\xi,\alpha,\Phi) \in \mathcal U_0 \times \mathcal U_1.$  This will be accomplished in two step.  The first, carried out below, calculates the linearized mean curvature 
\[
\text{}(\text{d}\mathbb Q)_{(\xi,\alpha,\Phi)}(\eta,\beta,0) \in  \Gamma(X_0; NX_0)
\]
in the $T\mathcal U_0$ directions at a general $(\xi,\alpha,\Phi).$ The second is carried out in Section \ref{obstructionperturbations}, where we study the family of projections $\Pi_{\xi,\alpha,\Phi}$. % to the corresponding obstructions.  
  
The calculation of the full mean curvature is lengthy and unwieldy. Fortunately, we do not need its explicit expression, but only some general structural properties,
namely that $\text{d}\mathbb Q$ has the same leading effective order (in the sense of Definition \ref {effectivescaling}) independent of basepoint $(\xi,\alpha,\Phi).$  
\begin{prop} \label{generalmeancurvature}
The linearized mean curvature at $(\xi,\alpha,\Phi)\in \mathcal U_0 \times \mathcal U_1$, acting on infinitesimal parametrized Euclidean 
isometries $(\eta,\beta)\in C^\infty(S^1;\mathcal B_0)$, has leading effective order $(1+\epsilon, \epsilon).$ Specifically,  
\be 
\begin{split}
& (\text{d}\mathbb Q)_{(\xi,\alpha,\Phi)}(\eta,\beta, 0) \\ 
& \ \  = P_0\left[  \sum_{j=-1}^2 A_{\xi,\alpha,\Phi}^j(t,r,x) \left(  r^{-j}\del^{2-j}_t\eta\right) \ + \  \sum_{k=0}^2 B_{\xi,\alpha,\Phi}^k(t,r,x)
\left( r^{1-k}\del^{2-k}_t\eta\right) \ + \ \mathcal R_{\xi,\alpha,\ph}(\eta,\beta) \right],
\end{split}
\label{nonproductmeancurvature}
\ee
where $P_{0}$ is the $g_0$-orthogonal projection to $NX_0$, and $A_{\xi,\alpha,\Phi}^k,  B^j_{\xi,\alpha,\Phi} \in C^\infty_b(X_0; \text{End}(T(S^1\times B^N)))$ are bounded 
and $b$-smooth endomorphisms that are smooth tame functions of $(\xi,\alpha,\Phi)$ with $A^0_{0,0,0}=B_{0,0,0}^0=\text{Id}.$ The final term, 
$\mathcal R_{\xi,\alpha,\Phi}$ is a smooth tame remainder of strictly lower effective order.
\end{prop}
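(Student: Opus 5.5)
The plan is to differentiate the explicit formula for the mean curvature in Corollary \ref{tameMC} along the curve $s\mapsto(\xi+s\eta,\alpha+s\beta,\Phi)$, using the chain rule through the metric $g_{\xi,\alpha,\Phi}=G_\Phi^*F_{\xi,\alpha}^*g_0$. Since $G_\Phi$ does not depend on $(\xi,\alpha)$, the variation of the metric is $\dot g=G_\Phi^*(\dot g_{\xi,\alpha})$. By Lemma \ref{Fxidiffeos}, $\dot g_{\xi,\alpha}$ is computed by differentiating $a_\xi,\omega,\Omega_A,k$. As in \eqref{derivs1}--\eqref{derivs4}, I will record only the highest $t$-derivatives:
\begin{itemize}
\item $\dot k$ contains $\eta''$ times a smooth tame coefficient, which equals $\mathrm{Id}$ at $(0,0)$, plus terms in $\eta',\eta$;
\item $\dot a_\xi$ is of the form $\xi'\cdot\eta'/a_\xi$;
\item $\dot\Omega_A$ is $\beta'$ plus terms in $\beta$ and $\eta'$, the latter through $\dot\omega$, since the frame $e_a$ depends on $\xi'$.
\end{itemize}
Each of these is then multiplied by the appropriate power of $y=r\Theta$ from \eqref{gxiformula}.

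Next I would substitute $\dot g$ into the linearization of $P_0[\frak g^{IJ}\nabla^{g}_{E_I}E_J]$. This linearization equals
\[
-\frak g^{IK}\dot{\frak g}_{KL}\frak g^{LJ}\nabla_{E_I}E_J+\frak g^{IJ}\dot\nabla_{E_I}E_J .
\]
The second term involves $\dot\Gamma=\tfrac12 g^{-1}(\partial\dot g+\partial\dot g-\partial\dot g)$. The key bookkeeping is the trade between $t$-derivatives and powers of $r$. An extra $\partial_t$ falling on $\dot g_{tt}\sim \langle\dot k,y\rangle$ raises the $t$-order by one. By contrast, $\partial_{y}$ derivatives, rewritten in edge form via $\partial_y\sim r^{-1}\partial_x$ and $\partial_r$, lower the power of $r$ by one, while the tangential inverse metric $\frak g^{ij}\sim r^{-2}$ in the cone directions contributes further negative powers. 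Grouping the terms gives the following, with coefficients that are $b$-smooth functions built from $g_{\xi,\alpha,\Phi}$ and its $\nabla_b$-derivatives, hence smooth tame by Lemmas \ref{pullbackgPhixi} and \ref{tamenesscriterion}:
\begin{itemize}
\item from $\eta$: monomials $r^{-j}\partial_t^{2-j}\eta$, where $j=0$ is the $\eta''$ term, $j=-1$ comes from $\partial_t\dot g_{tt}$-type terms carrying an extra $r\partial_t^3$, and $j=1,2$ come from $\dot g_{tt}$, $\dot g_{ta}$ differentiated in $y$;
\item from $\beta$: monomials $r^{1-k}\partial_t^{2-k}\beta$.
\end{itemize}
At $(0,0,0)$ the computation collapses to Lemma \ref{productlinearization}, giving $A^0=B^0=\mathrm{Id}$; the remaining $A^j$, $B^k$ vanish there, or at least their leading contributions do.

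The final step is to check effective orders using Definition \ref{effectivescaling}. For the monomial $r^{-j}\partial_t^{2-j}\eta$ the effective order is $(2-j)-(-j+\tfrac n2-\mu)$. This is independent of $j$, so all $A^j$ terms share the order of the $\eta''$ term, and likewise for the $B^k$ terms relative to $r\beta''$; this reproduces $(1+\epsilon,\epsilon)$ after the $H^2$ normalization used in the proof of Theorem \ref{boundarydeformation}. Everything else goes into $\mathcal R$:
\begin{itemize}
\item terms with fewer derivatives at the same $r$-power;
\item terms where the factor $h_{\xi,\alpha,\Phi}=O(r^{1+\epsilon})$ replaces $1$, which has higher vanishing and therefore lower effective order;
\item the $O(r)$ corrections in \eqref{ggammadef}.
\end{itemize}
The main obstacle I expect is this bookkeeping rather than any single estimate. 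One must verify that no term with a worse ratio of $t$-derivatives to $r$-vanishing arises, for instance $r^{-2}$ paired with $\eta''$. I would handle this by organizing the computation in the edge frame $r\partial_r, r\partial_t, \partial_x$: $r^2$ times the mean curvature is an edge operator in the metric coefficients, and $\dot g$ is a sum of terms $r^{p}\partial_t^{q}\eta$ with $q-p\le 2$ (or $\le 1$ for $\beta$). Each edge derivative preserves $q-p$ up to the allowed shift.
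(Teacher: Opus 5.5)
Your method is essentially the paper's. You push the variation through $g_{\xi,\alpha,\Phi}=G_\Phi^*F_{\xi,\alpha}^*g_0$ and use that edge derivatives preserve the gap between the number of $t$-derivatives and the power of $r$. The paper packages the same mechanism as the calculus of ``smoothly order respecting'' maps (Lemma \ref{ordercompatible}, Corollary \ref{ordercalculus}, Lemma \ref{leadingorderslem}): it shows that the nonlinear $\mathbb Q$ itself has order $(2,1)-c_{n,\mu}$, and only then linearizes by the chain rule. Linearizing first, as you do, is a cosmetic difference.

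However, the closing count, which you correctly identify as the whole content of the proposition, is wrong as written. Write $q-p$ for (number of $t$-derivatives) minus (power of $r$), so that the effective order of a monomial is $(q-p)-(1-\epsilon)$. You assert that $\dot g$ has $q-p\le 2$ (resp.\ $\le 1$ for $\beta$) and that $r^2\mathcal Q$ is an edge expression in the metric coefficients. Together these only give $q-p\le 4$ for $\dot{\mathcal Q}$, i.e.\ effective order $3+\epsilon$, not $1+\epsilon$. The correct bookkeeping has two parts.
\begin{enumerate}
\item $\dot g$ has $q-p\le 1$ in $\eta$ and $q-p\le 0$ in $\beta$. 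The $\eta$ terms are $\xi'\eta'$ in $\dot a_\xi$, and $r\eta''$ in $\dot g_{tt}$ (via $\dot k$) and in $\dot g_{ta}$ (via $\dot\omega$). The $\beta$ term is $r\beta'$ in $\dot g_{ta}$. This is exactly the paper's $\mathrm{ord}(g)=(1,0)-c_{n,\mu}$.
\item Mean curvature involves only first derivatives of the metric, and $\partial_t,\partial_{y_a}$ are $r^{-1}$ times edge vector fields. Hence it is $r\,\mathcal Q$, with components in the bounded frame $\partial_{y_c}$, that is a first-order edge expression in $g_{tt},g_{ta},g_{ab}$, not $r^2\mathcal Q$.
\end{enumerate}
This yields $q-p\le 2$ (resp.\ $\le 1$) for $\dot{\mathcal Q}$. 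By Definition \ref{effectivescaling} that is directly $(2-(1-\epsilon),\,1-(1-\epsilon))=(1+\epsilon,\epsilon)$; no ``$H^2$ normalization'' enters.

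Relatedly, $\dot\omega$ is not merely first order in $\eta$. Since $\omega_{ab}=(\xi_a''\xi_b'-\xi_a'\xi_b'')/(a_\xi(a_\xi+1))$ depends on $\xi''$, $\dot\omega$ contains $\xi'\eta''$. This puts $r\,\xi'\eta''$ into $\dot g_{ta}$. Through $\partial_t\dot g_{ta}$ it gives a second source of the $j=-1$ term $r\,\partial_t^3\eta$, in addition to the $\partial_t\dot g_{tt}$ source you list. Likewise $\dot k$ has lower-order $\beta$-terms through $f_b=\sum_a e_aA_{ab}$.

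These contributions are consistent with the corrected bound $q-p\le 1$ on $\dot g$ and do not change the final form, but they belong in your list. With the counts fixed as above, your argument goes through and is equivalent to the paper's.
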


The proof relies on expressing $\mathbb Q$ as a collection of terms of the following type. A smooth tame function $M: \mathcal U_0 \times \mathcal U_1 
\to r^{\beta}C^\infty_b(S^1 \times B^N; E)$ for some bundle $E$ is said to be a {\bf smoothly order respecting} of order $(m_\xi, m_\alpha)$ and growth rate $p$ 
if it may be written 
\be 
M(\xi,\alpha,\Phi)=r^{p}F(t,y  \ ,  \ U_1,..., U_{J} \ , \  W_1,...,W_{K}) \hspace{2cm} \begin{matrix} U_j= r^{a_j} \del_t^{m_\xi+a_j}\xi(t) \\ W_k=r^{b_k} 
\del_t^{m_\alpha+b_k}\alpha(t) \end{matrix}\label{ordercompatibleeq}
\ee
for some smooth function $F$ and a collection of integers $a_1,...,a_n, b_1,...,b_{n'} \in \Z.$ Notice that these have effective orders 
\[
\text{ord}(M)=(m_\xi-p, m_\alpha-p) -c_{n,\mu},
\]
respectively, for each $j,k$, where $c_{n,\mu}=\tfrac{n}{2}-\mu=(1-\epsilon)$ is as in Definition \ref{effectivescaling}. More generally, we allow $F$ to be a 
$C^\infty_b$ function of $y\in \R^N.$ Note that such a map is automatically smooth and tame since it depends smoothly on a fixed number of derivatives 
of $(\xi,\alpha,\Phi)$. The examples we consider are essentially just algebraic combinations of $U_1,...,U_J, W_1,...W_{K}.$ 

The purpose of this definition is that these terms preserve effective order in a controlled way under both $b$-derivatives and functional derivatives. 
\begin{lm}  \label{ordercompatible}
Suppose that $M: \mathcal U_0 \times \mathcal U_1 \to r^{p}C^\infty_b(S^1 \times B^N; V)$ is a smoothly order-respecting map of order $(m_1-p,m_2-p)-c_{n,\mu}$ 
and growth rate $p.$ Then 
\begin{enumerate}
\item[(A)] Any finite combination of $e$-derivatives 
\[\nabla_{V_1}^e...\nabla_{V_s}^e M: \mathcal U_0\times \mathcal U_1 \to r^p C^\infty_b(S^1\times B^N; E)\]
is a smoothly order respecting map of the same order and growth rate. 
\item[(B)] The linearization has the form 
\[
(\text{d}M)_{(\xi,\alpha,\Phi)}(\eta,\beta,0)= \sum_{j=1}^J A^j(t,r,\omega) \Big[  r^{a_j+p} \del^{m_1 + a_j}\eta \Big] \ + \  \sum_{k=1}^K B^k(t,r,\omega)\Big[  r^{b_k+p} 
\del^{m_1 + b_k}\eta \Big]  
\]
where $A^j, B^k$ are each $C^\infty_b$-functions which depends smoothly and tamely on $(\xi,\alpha,\Phi).$ I.e. $\text{d}M$ consists of terms of effective order $(m_1 -p, m_2-p)-c_{n,\mu}$ in the two arguments. 
\end{enumerate}
\end{lm}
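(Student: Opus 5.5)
The plan is to prove both parts by direct computation from the normal form \eqref{ordercompatibleeq}, using only the chain rule together with two commutation identities for the edge vector fields acting on monomials $r^{c}$. Write $M=r^pF(t,y,U,W)$ with $U_j=r^{a_j}\del_t^{m_\xi+a_j}\xi$ and $W_k=r^{b_k}\del_t^{m_\alpha+b_k}\alpha$. Since $F$ is $C^\infty_b$ in $y$ (written as $y=r\Theta$ with $\Theta\in\Sigma$ or $S^{N-1}$), we only need to know how $r\del_r$, $r\del_t$ and $\del_x$ act on each building block.

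For (A), it suffices by induction to treat a single edge derivative, since a smoothly order-respecting map is again of the same type after one derivative. The three generators act as follows.

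1. $\del_x$ and $r\del_r$ act on $F$ through its explicit $(t,y)$ dependence. They send $r^p$ to $p\,r^p$ and $U_j$ to $a_jU_j$. In each case the output is again of the form $r^p\widetilde F(t,y,U,W)$ with $\widetilde F$ smooth and $C^\infty_b$ in $y$, since $r\del_r$ of a $C^\infty_b$ function of $y=r\Theta$ stays $C^\infty_b$.
2. The key case is $r\del_t$. On $U_j$ it gives
\[
r\del_tU_j=r^{a_j+1}\del_t^{m_\xi+a_j+1}\xi,
\]
which is exactly a new building block with $a_j$ replaced by $a_j+1$. So the $\xi$-order $m_\xi$ is unchanged, and likewise for $W_k$. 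The factor $r^p$ is untouched, and $r\del_t$ of the explicit $t$ dependence of $F$ only contributes a harmless extra factor $r$ that can be absorbed into $F$.
3. Applying the chain rule, $\nabla^e_VM=r^p\widehat F(t,y,U,U^{+},W,W^{+})$, where $U^{+},W^{+}$ are the shifted blocks. This is again smoothly order-respecting with the same $(m_\xi,m_\alpha)$ and the same $p$, which is exactly the claim.

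This part is essentially bookkeeping. The only care needed is that smooth tameness of $\widehat F$ persists, and this follows from Lemma \ref{tamenesscriterion} because only finitely many derivatives of $(\xi,\alpha)$ enter.

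For (B), differentiate along $(\xi+s\eta,\alpha+s\beta)$ at $s=0$. Because $U_j$ and $W_k$ are linear in $\xi$ and $\alpha$, the chain rule gives
\[
\dot M=r^p\sum_j \del_{U_j}F\cdot r^{a_j}\del_t^{m_\xi+a_j}\eta+r^p\sum_k\del_{W_k}F\cdot r^{b_k}\del_t^{m_\alpha+b_k}\beta .
\]
We then set $A^j=\del_{U_j}F(t,y,U,W)$ and $B^k=\del_{W_k}F(t,y,U,W)$. These coefficients are $C^\infty_b$ because $U$ and $W$ are bounded in $C^\infty_b$ on $\mathcal U_0$: indeed $r\le 2$, and $\del_t^{m+a}\xi$ is bounded by Sobolev embedding when $s_0$ is large relative to the finitely many indices $a_j,b_k$ (where a negative $a_j$ occurs, the block is bounded only because $\xi\in H^{s}$ with $s$ large). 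Their smooth tame dependence on $(\xi,\alpha,\Phi)$ follows from Lemma \ref{tamenesscriterion}. The effective order of each term is then read off from Definition \ref{effectivescaling}. A term $r^{a_j+p}\del_t^{m_\xi+a_j}\eta$ has order
\[
(m_\xi+a_j)-(a_j+p)-c_{n,\mu}=m_\xi-p-c_{n,\mu},
\]
which is independent of $j$, and similarly for the $\beta$ terms. (The statement's $\del^{m_1+b_k}\eta$ in the second sum should read $\del^{m_2+b_k}\beta$.)

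The main obstacle I expect is the uniformity of the $C^\infty_b$ bounds on $A^j$ and $B^k$, together with their $b$-derivatives, which Lemma \ref{projectionsmoothing} needs in $C^{s_i+2}_b$. The concern is the blocks with negative $a_j$, where $r^{a_j}$ is unbounded as $r\to 0$. I would handle this by noting that in all the examples used here, namely the metric coefficients of Lemma \ref{Fxidiffeos} in polar coordinates $y=r\Theta$, the exponents satisfy $a_j\ge -1$. Moreover, the $r^{-1}$ blocks enter only through products whose total power of $r$ is nonnegative, and these are absorbed into the growth rate $p$. Failing that, I would add the hypothesis $a_j,b_k\ge0$ to the definition.
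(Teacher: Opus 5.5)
Your proposal is correct and follows the paper's argument: both parts are the chain rule applied to the normal form $r^pF(t,y,U,W)$. Under $r\partial_t$, each $U_j$ becomes a new building block with $a_j\mapsto a_j+1$ and the same effective order, and in (B) one has $\dot U_j=r^{a_j}\partial_t^{m_\xi+a_j}\eta$, whose effective order follows from Definition \ref{effectivescaling}.

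You go slightly beyond the paper in three ways. You handle all three edge generators rather than only $r\partial_t$, you correct $\partial^{m_1+b_k}\eta$ to $\partial^{m_2+b_k}\beta$, and you flag the negative-$a_j$ issue. On that last point, drop your parenthetical in (B): regularity of $\xi$ in $t$ cannot control $r^{a_j}$ as $r\to 0$, so the right remedy is the one in your final paragraph (absorb these factors into $p$, or require $a_j,b_k\ge 0$).
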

\begin{proof} After unpacking notation, this is just the chain rule.

(A) Let $V=r\del_t.$ Differentiatng,
\[
\nabla_V M=r^p \left[(\del_t F) \ + \  \sum_{j=1}^J (\del_{U_j} F) \cdot r\del_t U_j \ + \  \sum_{k=1}^K (\del_{W_k} F) \cdot r\del_t W_k \right]. 
\]
Since $F$ is smooth, any partial derivative $\del F$ (as a function of the same arguments) is smoothly order-respecting of the same order. Since $r\del_t$ 
preserves the effective order, $(\del F)\cdot r\del_t U_j$ has the same form \eqref{ordercompatible}, perhaps now with $J+1$ arguments, and likewise for 
the derivatives $(\del F)\cdot r\del_t W_k.$ The remaining and higher-order edge derivatives are similar. 

(B) Now let $(\xi_s,\alpha_s)=(\xi + s\eta, \alpha+s\beta).$ The chain rule again shows that 
\[
(\text{d}M)_{(\xi,\alpha,\Phi)}(\eta,\beta,0)=r^p \left[ \sum_{j=1}^J (\del_{U_j} F) \cdot \dot U_j  \ + \  \sum_{k=1}^K (\del_{W_k} F) \cdot \dot W_k \right], 
\]
where $\dot U_j=\d{}{s}|_{s=0} U_j= r^{a_j} \del^{m_1 + a_j}\eta$ and likewise for $\dot W_k.$ Accounting for the $r^{p}$ term, these have effective orders 
$(m_1-p, m_2-p)-c_{n,\mu}$, respectively, in the sense of Definition \ref{effectivescaling}. Smooth tameness is again immediate because only finite 
many derivatives appear.  
\end{proof}

More generally, a term $M: \mathcal U_0 \times \mathcal U_1 \to r^{p}C^\infty_b(S^1 \times B^N; E)$ is {\bf smoothly leading-order respecting} of order $(m_1-p,m_2-p)-c_{n,\mu}$ with growthrate $p$ if it is a finite-sum of terms $M_i$ of the form \eqref{ordercompatibleeq} with orders  $(m^i_1-p^i,m_2^i - p^i)-c_{n,\mu}$ and  \[(m_1-p, m_2-p)=\max_i \Big(m^i_1-p^i,m_2^i - p^i\Big).\]

The following is obvious after unpacking the definitions: 

\begin{cor} \label{ordercalculus}The order of smoothly leading-order respecting terms obeys the following: 

\begin{itemize}
\item[(A)] $\text{ord}\left(\sum_{i=1}^I M_i\right)=\max_{i\leq I}\text{ord}(M_i)$
\item[(B)] $\text{ord}\left(\prod_{i=1}^I M_i\right)=\max_{i\leq I}\text{ord}(M_i)$
\item[(B)] $\text{ord}\left(f \circ M\right)=\text{ord}(M)$
\end{itemize}
\noindent where \(f\) is smooth on a neighborhood of the range of \(M\). Products may more generally be taken using any fixed smooth pointwise multilinear bundle operations.
\end{cor}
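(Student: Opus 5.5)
We prove Corollary \ref{ordercalculus} directly from the normal form \eqref{ordercompatibleeq}. The guiding observation is that a smoothly order-respecting term is $r^{p}F(t,y,U,W)$, and its effective order $(m_\xi-p,m_\alpha-p)-c_{n,\mu}$ is read off from the arguments $U_j=r^{a_j}\del_t^{m_\xi+a_j}\xi$ and $W_k=r^{b_k}\del_t^{m_\alpha+b_k}\alpha$ together with the prefactor $r^p$. We interpret $\text{ord}$ via the linearization of Lemma \ref{ordercompatible}(B), i.e.\ as the maximal effective order (Definition \ref{effectivescaling}) of the terms of $(\text{d}M)(\eta,\beta,0)$, taken componentwise. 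Each of (A), (B), (C) then reduces to showing that the operation produces a finite sum of terms of the form \eqref{ordercompatibleeq} with controlled data, and computing the maximum.

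\textbf{(A) Sums.} A sum $\sum_i M_i$ is, by the definition of smoothly leading-order respecting terms, again a finite sum of terms $M_i$ of the form \eqref{ordercompatibleeq}. Its order is by definition the maximum of the individual orders. Equivalently, linearization is additive, so $\text{d}(\sum_i M_i)=\sum_i \text{d}M_i$. By Lemma \ref{ordercompatible}(B) each summand is a sum of monomials of effective order $\text{ord}(M_i)$. The effective order of a sum of such monomials is defined after \eqref{effectiveordersum} as the maximum.

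\textbf{(B) Products.} First treat monomials. Given $M_i=r^{p_i}F_i(t,y,U^{(i)},W^{(i)})$, a fixed smooth multilinear bundle operation $B$ gives
\[
B(M_1,\dots,M_I)=r^{\sum_i p_i}\,B\big(F_1,\dots,F_I\big).
\]
Here $B(F_1,\dots,F_I)$ is a smooth function of the concatenated argument lists $(U^{(1)},\dots,U^{(I)},W^{(1)},\dots,W^{(I)})$. For leading-order respecting terms one distributes the product over the sums and applies (A).

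To compute the order, apply the Leibniz rule:
\[
\text{d}B(M_1,\dots,M_I)=\sum_i B(M_1,\dots,\text{d}M_i,\dots,M_I).
\]
In the $i$-th term, the factors $M_{i'}$ with $i'\neq i$ do not involve $(\eta,\beta)$. They are bounded $C^\infty_b$ coefficients in the sense of \eqref{effectiveordersum}, since each lies in $r^{p_{i'}}C^\infty_b$ with $p_{i'}\ge 0$, and multiplying by them only raises the vanishing order. So the $i$-th term has effective order at most $\text{ord}(M_i)$, with equality in the normalization where the extra factors are counted as $C^\infty_b$ coefficients $A^i$. Taking the maximum over $i$ gives $\max_i\text{ord}(M_i)$.

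\textbf{(C) Composition.} For $f\circ M$ with $M=r^pF(t,y,U,W)$, note that $f(r^pF)$ is a smooth function of $(t,y,U,W)$, because $r$ is itself a function of $y$ and smoothness in $y$ is all that is required. The chain rule gives
\[
\text{d}(f\circ M)=(Df)(M)\cdot \text{d}M.
\]
The factor $(Df)(M)$ lies in $C^\infty_b$, since $f$ is smooth on a neighborhood of the bounded range of $M$. It is independent of $(\eta,\beta)$ and serves as a coefficient $A^i$ in \eqref{effectiveordersum}, so the effective order is exactly that of $\text{d}M$.

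\textbf{Main obstacle.} The only subtle point is bookkeeping in (B) and (C). The undifferentiated factors carry powers $r^{p_{i'}}$ and derivatives of $\xi,\alpha$, and one must check that they are genuinely $C^\infty_b$ coefficients, so that they do not alter the order. This holds because $\xi,\alpha\in\mathcal U_0$ are smooth and the powers of $r$ are nonnegative.

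A second point concerns the convention that the order of a product is the maximum rather than a sum. This must be read in the linearized sense: only one factor is differentiated at a time, so the orders do not add. This is exactly what the Leibniz argument above verifies.
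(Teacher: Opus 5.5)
Your argument is correct, and it is the unpacking of definitions that the paper leaves implicit (the paper gives no proof beyond calling the result obvious): reading $\text{ord}$ off the linearization and applying the Leibniz and chain rules is the same mechanism as the paper's proof of Lemma \ref{ordercompatible}(B), and it gives the stated maxima as formal orders, i.e.\ as upper bounds. The one thing to adjust is that boundedness of the undifferentiated factors ($p_{i'}\ge 0$) is not part of the paper's definition of a smoothly order-respecting term; it is a hypothesis you genuinely need, since (B) fails without it (e.g.\ $r^{-1}\cdot\xi''$ has order $3-c_{n,\mu}$, while the two factors have orders $-\infty$ and $2-c_{n,\mu}$), so you should state it as a standing assumption rather than assert it as automatic.
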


\noindent Recall the quantities \eqref{derivs1}--\eqref{derivs4} occurring in the metric. These and their derivatives have the following effective orders. Here, we write $-\infty$ for the effective order when a quantity is independent of that variable. 

\begin{lm}  \label{leadingorderslem}The following terms are smoothly leading-order respect of the following orders (in the sense of \eqref{ordercompatible}) for  $(\xi,\alpha,\Phi)\in \mathcal U_0 \times \mathcal U_1$:
\begin{align*}
\text{ord}\Big(a_\xi(t)\Big)&=(1,-\infty) \ - \ c_{n,\mu}\\
\text{ord}\Big(A(t)\Big)&=(-\infty,0) \ - \ c_{n,\mu}\\
\text{ord}\Big(\omega(t)\Big)&=(2,-\infty) \ - \ c_{n,\mu}\\
\text{ord}\Big(\Omega_A(t)y\Big)&=(2-1, 1-1) \ - \ c_{n,\mu}\\
\text{ord}\Big(k(t)\Big)&= (2,0) \ - \ c_{n,\mu}
\end{align*}

\noindent 
\end{lm}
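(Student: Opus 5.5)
The plan is to check each quantity directly against the definition \eqref{ordercompatibleeq}, with $p=0$ and $y$ treated as a $C^\infty_b$ variable. Then I would use Corollary \ref{ordercalculus} to pass through sums, products and compositions with smooth functions. Orders will be recorded as $(m_\xi,m_\alpha)-c_{n,\mu}$, where $m_\xi$ (resp.\ $m_\alpha$) is the highest number of $t$-derivatives of $\xi$ (resp.\ $\alpha$) entering with no compensating power of $r$. A quantity that does not depend on a variable gets $-\infty$ in that slot.

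First, $a_\xi=\sqrt{1+|\xi'|^2}$ equals $F(U_1)$ with $U_1=\del_t\xi$ and $a_1=0$, so it has order $(1,-\infty)-c_{n,\mu}$. Second, $A(t)=\mathrm{Exp}(\alpha(t))$ equals $F(W_1)$ with $W_1=\alpha$, so its order is $(-\infty,0)-c_{n,\mu}$. For $\omega_{ab}=\langle\nabla_te_a,e_b\rangle$, I would use the fact that the frame $e_a(t)$ of $N\Gamma_\xi$ is built by Gram--Schmidt from $(0,\del_{y_a})-\langle(0,\del_{y_a}),e_\Gamma\rangle e_\Gamma$, as in the proof of \eqref{derivs2}. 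This makes $e_a$ a smooth function of $\xi'$ alone, and $\nabla_t e_a$ a smooth function of $(\xi',\xi'')$ that is linear in $\xi''$. Hence $\omega$ has order $(2,-\infty)-c_{n,\mu}$. Here I must commit to this explicit choice of the ``periodic orthonormal frame'' in \eqref{Fxidef}: Gram--Schmidt applied to the projected coordinate frame. That choice is periodic automatically, and I would remark that any frame depending pointwise smoothly on $\xi'$ works equally well.

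Next, $\Omega_A=-A^T\omega A+A^TA'$. The first term is a product of terms of orders $(2,-\infty)$ and $(-\infty,0)$, and the second has order $(-\infty,1)$ since $A'=d\mathrm{Exp}_\alpha(\alpha')$. The quantity in the statement, however, is $\Omega_A(t)y$, which carries an explicit factor of $y$. Writing $y=r\Theta$ in polar coordinates, this is $r\cdot F(\Theta,\ldots)$, and each argument $U=\xi''$ can be regrouped as $r\,\del_t^{2}\xi=r^{a}\del_t^{m+a}\xi$ with $a=1$, $m=1$. The effective order of a term (Definition \ref{effectivescaling}) counts the derivative count minus the power of $r$, so $r\,\xi''$ has order $2-1$ and $r\,\alpha'$ has order $1-1$. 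This gives $(2-1,1-1)-c_{n,\mu}$. The same bookkeeping applies to $k(t)=a_\xi^{-1}\sum_b\langle\nabla_te_\Gamma,f_b\rangle dy_b$. Here $e_\Gamma=a_\xi^{-1}(1,\xi')$, so $\nabla_te_\Gamma$ is linear in $\xi''$ with coefficients smooth in $\xi'$, and $f_b=\sum_a e_aA_{ab}$ contributes order $(1,0)$. The product therefore has order $(2,0)-c_{n,\mu}$ by Corollary \ref{ordercalculus}(B).

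The main obstacle is bookkeeping rather than analysis. Two points need to be made precise and used consistently. The first is the convention that the explicit factor $y$ (equivalently $r$) in $\Omega_Ay$ lowers the order by one. The second is that $\Omega_A$ itself, without the $y$, would have order $(2,1)$, and $k$ is listed without any $y$. I would make both points explicit by stating the grouping $r^{a}\del_t^{m+a}$ used in each case. I would also verify that smoothness and tameness in $(\xi,\alpha)\in\mathcal U_0$ hold. This follows because each quantity depends smoothly on finitely many derivatives, namely at most $\xi''$ and $\alpha'$, using Lemma \ref{tamenesscriterion} and the embedding $H^{s_0}\hookrightarrow C^2$.
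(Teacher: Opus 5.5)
Your proposal is correct and follows essentially the same route as the paper: a quantity-by-quantity check against \eqref{ordercompatibleeq}, closed up under sums and products by Corollary~\ref{ordercalculus}, with the factor $y=r\Theta$ absorbed to lower the order of $\Omega_A y$ by one. The differences are cosmetic. The paper uses an explicit closed-form frame $e_a$, which gives $\omega_{ab}$ directly, while you use Gram--Schmidt on the projected coordinate frame. You also assign $f_b$ the order $(1,0)$ rather than the paper's $(-\infty,0)$, which is the more accurate bookkeeping, and either way $k$ comes out with order $(2,0)$.
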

\begin{proof} By definition, 

\[a_\xi(t)= \sqrt{1+|\xi'(t)|^2} \hspace{2cm} A(t)=\text{exp}(\alpha(t))\]

\noindent which are obviously order-preserving of order $1-c_{n,\mu}$ and $c_{n,\mu}$ by Defintion \ref{effectivescaling}, since they involves a single derivative or none and no additional growth or decay in $r,$ respectively. 

Recalling that $e_a$ denotes the orthonoral basis from \eqref{Fxidef}, some calculation shows that 

\[e_a=\left( \frac{-\xi'_a}{a_\xi} \ , \ \del_{y_a}  -\frac{\xi_a'\xi'}{a_\xi(a_\xi+1)}\right) \ \ \Rightarrow \ \ \omega_{ab}(t)=\frac{\xi_a''(t)\xi'_b(t) - \xi_a'(t)\xi_b''(t)}{a_\xi(a_\xi+1)}.\]

\noindent The latter is a smooth function of up to the second derivatives, hence order $(2,-\infty)+c_{\mu,n}.$ 

Lemma \ref{ordercompatible} implies that $\nabla_t$ decreases the order by 1, since $r\del_t$ preserves it. Thus $A'(t)$ is order $(0,1)-c_{n,\mu}$, and since $\Omega_A(t)=  -A^T(t) \omega(t) A(t)+ A(t)^T A'(t)$ as in \eqref{derivs3}, $\Omega_A(t)$ has order $(2,1)-c_{n,\mu}$ by Corollary \ref{ordercalculus}(B). Accounting for the the order of growth from $y$ yields the assertion. 

Finally, 
\[k_b(t)=\frac{1}{a_\xi(t)} \br   \nabla_t  e_\Gamma  \ , \ f_b \kt \]

\noindent as in Lemma \ref{Fxidiffeos}, where $e_\Gamma=(t,\xi'(t))$ is order $(1,0)-c_{n,\mu}$ and $f_b=\sum_a (A_{ab}e_a)$ is order $(-\infty,0)-c_{n,\mu}$. Differentiating increases the order by $1$ and Corollary \ref{ordercalculus}(B) completes the lemma.
 \end{proof}

With this preparation, we can now read off the effective orders in the metric coefficients and thus the mean curvature. 

\begin{proof}[Proof of Proposition  \ref{generalmeancurvature}] First consider the case that $\Phi=0.$ Let $P_0, P_0^\perp$ denote the $g_0$-orthogonal projections to the two factors in $T(S^1\times \R^N)=NX_0 \oplus TX_0.$ Set 
\begin{align}
Q&:=a_\xi (1-\br k,y\kt)\\
C&:=  P_0^\perp( \Omega y)\\
D&:= Q^2 \ + \ P_0(\Omega_A y).
\end{align}

\noindent By Corollary \ref{ordercalculus} and Lemma \ref{leadingorderslem} show that 
\[\text{ord}(Q)=(1,-1) - c_{n,\mu} \hspace{1cm} \text{ord}(C)=(1,0) -c_{n,\mu} \hspace{1cm} \text{ord}(D)=(1,0) - c_{n,\mu}. \]

\noindent In the first of these, we borrow the extra growth factor from $y$ so that $\text{ord}(\br k,y\kt)=(1,-1)-c_{n,\mu}.$

With these abbreviations, the expressions for the metrics from Lemma \ref{Fxidiffeos} take the form  

\begin{align*}
g_{\xi,\alpha,\Phi}&=\begin{pmatrix} Q^2 + |\Omega_A y|^2 &(\Omega_Ay)^T  \\ \Omega_A y& \text{Id} \end{pmatrix} \ + \  O (r^\epsilon ...)\\   \\ 
\medskip
g^{-1}_{\xi,\alpha,\Phi}&=\frac{1}{Q^2}\begin{pmatrix} 1  & -(\Omega_A y)^T \\ -\Omega_A y & Q^2+ (\Omega_A y)(\Omega_A y)^T \end{pmatrix}\ + \  O (r^\epsilon ...) \\ \\ 
\medskip
\frak g_{\xi,\alpha,\Phi}&=\begin{pmatrix} Q^2 + |\Omega_A y|^2 &v^T  \\ v & \text{Id} \end{pmatrix}\ + \  O (r^\epsilon ...)  \\ 
\\ 
\frak g^{-1}_{\xi,\alpha,\Phi}&=\frac{1}{D}\begin{pmatrix} 1  & -v^T \\ -v & D+ vv^T \end{pmatrix} \ + \  O (r^\epsilon ...)\end{align*}

\noindent where $\frak g_{\xi,\alpha,\Phi}=g_{\xi,\alpha,\Phi}|_{X_0}$ as before. In each case, $O(r^{\epsilon}...)$ denotes smoothly leading-order respecting terms of the same orders as $Q,C,D$ with an additional growth factor $r^{\epsilon}.$ 
We conclude   
\[\text{ord}(g_{\xi,\alpha,\Phi})= (1,0)-c_{n,\mu} \hspace{2cm}\text{ord}(g_{\xi,\alpha,\Phi})= (1,0)-c_{n,\mu}\]

\noindent And likewise for the inverses. 

Recalling (from \eqref{fullMC}) that

\be \mathbb Q(\xi,\alpha,\Phi)=P_0\left[ \sum_{i,j=0}^{n-1} \frak g^{ij}_{\xi,\alpha,\Phi} \nabla^{g_{\xi,\alpha,\Phi}}_{E_i}E_j   \right]\ee

\noindent Lemma \ref{ordercompatible}(A) shows that each derivative increases the order by at most $1$, because the corresponding edge derivative (which has one additional  factor of $r$) preserves it. We conclude that $\mathbb Q$ is a smooth leading-order respecting map of order $(2,1)-c_{n,\mu}=(2,1)-(1-\epsilon)=(1+\epsilon, \epsilon).$ Taking the linearization and applying Lemma \ref{ordercompatible}(B) yields the desired conclusion and expression.

When $\Phi\neq 0,$ a brief inspection of Lemma \ref{pullbackgPhixi} (specifically \eqref{matrixmult}) shows that the only additional terms depending $\xi,\alpha$ are those arising from the $F_i$ in \eqref{his}. All such terms come with a least one factor of $\Phi=O(r^{1+\epsilon}),$ and so have strictly lower effective order, and can be collected into the remainder $\mathcal R_{\xi,\alpha,\Phi}.$
\end{proof}

%% file: sections/sec_perturbed_obstructions_quicker.tex
%%
%% sec_tame_estimates.tex
%%

\section{Perturbations of the Obstruction}

\label{obstructionperturbations}

Proposition  \ref{generalmeancurvature} shows that the family of linearizations used to form the deformation operator are well-behaved for $(\xi,\alpha,\Phi)\in \mathcal U_0 \times \mathcal U_1.$ The second piece of the verification of Hypothesis (II) of Theorem \ref{nashmoser} is that the corresponding family of projections is also well-behaved, namely that it continues to obey the ``unexpectedly'' good boundedness of Section \ref {secondorderinvertibility}. The current section proves that this is indeed the case. 

For the duration of this section, we write $u:=(\xi,\alpha,\Phi)\in \mathcal U_0\times \mathcal U_1$ for shorthand. Thus for $u\in \mathcal U_0\times \mathcal U_1,$ the Jacobi operator (as a function of $\ph \in \mathbb X^{\mu,s}_D$) is

\be L_{u}=L_\gamma + \frak d_{u}\label{Lxialphaphi}\ee

\noindent using Lemma \ref{Jacobioperatoredge}. The smallness of the coefficients of $\frak d_{u},$ combined with the estimates \eqref{higherorderelliptic} (which trivially hold equally well for $L_\gamma$) implies that \eqref{Lxialphaphi} is likewise injective with closed range. It retains an infinite-dimensional obstruction like $L_0.$ Let 

\begin{align} \text{\bf Ob}_u^s&:= \text{range}\left(L_\gamma + \frak d_{u}:\mathbb X^{\mu,s}_{D} \to \mathbb Y^{\mu,s}_{u}\right)^\perp\label{bbOdefpre} \\
\text{\bf Rg}_{u}^s&:= \text{range}\left(L_\gamma + \frak d_{u}:\mathbb X^{\mu,s}_{D} \to \mathbb Y^{\mu,s}_{u}\right)\nonumber 
\end{align}

\noindent be $r^\mu L^2$-orthogonal complement of the range of \eqref{Lxialphaphi}. Here, the subscript on $\mathbb Y^{\mu,s}$ denotes the fiber over that point in the trivialization of Lemma \ref{Ytrivialization} (recall $\mathbb Y^{\mu,s}_{u}\simeq r^\mu H^s_b(X_0;NX_0)$).

\begin{lm}  \label{obstructionbundlecor} The spaces \eqref{bbOdefpre} obey the following: 
\begin{enumerate}
\item[(A)] The fiberwise inclusion $\text{\bf Ob}_u^s \subseteq \mathbb Y^{\mu,s}$ is a locally trivial Banach subbundle for each $s\geq 0,$ and for $s=\infty$ is a locally trivial tame Fr\'echet subbundle. 
\item[(B)] The orthogonal projections to $\text{\bf Ob}_u^s,(\text{\bf Ob}_u^s)^\perp$ at $u$ are given by

\[ \Pi_{u}=\text{Id}- L_{u} \mathcal P_{u} L_{u}^\star. \hspace{3cm} \Pi_{u}^\perp=L_{u} \mathcal P_{u} L_{u}^\star,\]

\smallskip

\noindent where ${\mathcal P_{u}:= (L_{u}^\star L_{u})^{-1}}.$ Moreover, the projections respect regularity in the sense that

\noindent ${\text{Im}(\Pi_{u} |_{\mathbb Y^{\mu,s}})=\text{\bf Ob}_u^s}$ for each $s\geq 0,$ and likewise for the orthogonal complement. 
\end{enumerate}
 
\end{lm}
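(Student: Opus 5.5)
The plan is to prove (B) first, for each fixed $u$, and then deduce (A) from the resulting formula for $\Pi_u$ together with its smooth tame dependence on $u$.

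\emph{Step 1: injectivity and closed range.} By Lemma \ref{Jacobioperatoredge}, $L_u=L_\gamma+\frak d_u$. Here $\frak d_u$ is an edge operator whose coefficients lie in $rC^{s_0}_b$ with norm at most $C\delta$, and $\text{Ind}(L_u)=\text{Ind}(L_0)$. Corollary \ref{higherorderelliptic}(B), applied to $L_\gamma$ (a reparametrization of $L_0$, with equivalent norms by Corollary \ref{taunorms}), gives
\[
\|\ph\|_{\mu+2,(s,2)}\le C\|L_\gamma\ph\|_{\mu,(s,0)}.
\]
I would bound $\|\frak d_u\ph\|_{\mu,(s,0)}\le C\delta\|\ph\|_{\mu+2,(s,2)}+C_s\|u\|_{s+d}\|\ph\|_{\mu+2,(s_0,2)}$ using Lemma \ref{multiplications}. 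For $\delta$ small this gives the same estimate for $L_u$, with a tame lower-order term, by induction on $s$. Hence $L_u$ is injective with closed range, uniformly in $u\in\mathcal U_0\times\mathcal U_1$. This is exactly the uniform hypothesis required in the final sentence of Lemma \ref{secondorderinvertibility}.

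\emph{Step 2: the projection formula.} With the uniform estimate in hand, Lemma \ref{secondorderinvertibility}(A) shows that $L_u^\star L_u:r^{\mu+2}H^2_{e,D}\to r^{\mu-2}H^{-2}_e$ is an isomorphism with inverse $\mathcal P_u$. The identities $\Pi_u^\perp=L_u\mathcal P_uL_u^\star$ and $\Pi_u=\text{Id}-\Pi_u^\perp$ then follow by the finite-dimensional argument of Lemma \ref{secondorderinvertibility}(B),(C). Concretely, $\Pi_u^\perp$ is idempotent, self-adjoint in $r^\mu L^2$, equal to the identity on the range of $L_u$, and zero on $\ker L_u^\star$.

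\emph{Step 3: regularity.} I must show that $\Pi_u$ preserves $r^\mu H^s_b$. I would commute $b$-derivatives $\nabla_b^\beta$ through $L_u$, $L_u^\star$ and $\mathcal P_u$. The commutators $[\nabla_b,L_u]$ are edge operators of the same type, with coefficients given by $b$-derivatives of $\frak a_u$. An induction on $s$ then yields a tame bound
\[
\|\Pi_u\psi\|_{\mu,s}\le C_s\bigl(\|\psi\|_{\mu,s}+\|u\|_{s+d}\|\psi\|_{\mu,s_0}\bigr),
\]
which gives $\text{Im}(\Pi_u|_{\mathbb Y^{\mu,s}})=\text{\bf Ob}^s_u$.

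\emph{Step 4: local triviality for (A).} I would use the map $\Pi_u|_{\text{\bf Ob}_{u_0}}:\text{\bf Ob}_{u_0}^s\to\text{\bf Ob}_u^s$. Since $\|\Pi_u-\Pi_{u_0}\|\lesssim\|u-u_0\|_{s_0}$ is small, this map is an isomorphism: its inverse is obtained from a Neumann series for $\Pi_{u_0}\Pi_u|_{\text{\bf Ob}_{u_0}}=\text{Id}+O(\delta)$. Smooth tame dependence on $u$ follows from Lemma \ref{tamenesscriterion}, since $\Pi_u$ is built from compositions and inverses of operators whose coefficients depend smoothly and tamely on $u$.

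The hard step is the tame regularity of the inverse $\mathcal P_u$ on the spaces with $b$-derivatives, Step 3. This is because $\mathcal P_u$ maps into negative edge Sobolev spaces, where $b$-commutator estimates are delicate. My first approach would be to avoid $L_u^\star L_u$ altogether. I would write $\Pi_u^\perp\psi=L_u\ph$, where $\ph$ is the least-squares solution of $L_u\ph\approx\psi$. The $b$-regularity of $\ph$ would then be obtained from the left semi-Fredholm estimate of Step 1, applied to the difference quotients $\nabla_b\ph$, by the standard commutator argument.
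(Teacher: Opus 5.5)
Your proposal is correct in outline, and for (B) it is the paper's argument. The paper likewise gets uniform injectivity and closed range of $L_u=L_\gamma+\mathfrak d_u$ from Lemma \ref{Jacobioperatoredge} and Corollary \ref{higherorderelliptic}, then invokes the ``moreover'' clause of Lemma \ref{secondorderinvertibility}. Its regularity assertion rests on the $s\geq 0$ version of the invertibility of $L_u^\star L_u$, stated without further argument. That is exactly the $b$-regularity of $\mathcal P_u$ you single out as the hard step.

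For (A) your route differs. The paper never compares two obstruction fibers. It notes that $\varphi\mapsto L_u\varphi$ itself trivializes the range bundle $\bigsqcup_u\text{range}(L_u)$ over the fixed model $\mathbb X^{\mu,s}_D$. It then passes to orthogonal complements inside the Hilbert bundle $\mathbb Y^{\mu,s}$, and the $s=\infty$ case comes from the regularity of the projections. You instead trivialize $\text{\bf Ob}$ directly by $\Pi_u|_{\text{\bf Ob}_{u_0}}$. The paper's route gets local triviality essentially for free from the family $L_u$. Yours needs a direct continuity estimate for $u\mapsto\Pi_u$ on each $r^\mu H^s_b$, in tame form for the Fr\'echet statement. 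In return it produces the explicit identification of nearby fibers that the paper only extracts after this lemma, as a consequence of it: the map $\Pi_\gamma^{-1}$ used to build the Riesz basis in Proposition \ref{ReiszBasis}.

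One caution about the fallback in your last paragraph: as phrased it is circular. The semi-Fredholm estimate gives $\|D_h\varphi\|_{\mu+2,(0,2)}\leq C\|L_uD_h\varphi\|_{\mu,0}\leq C(\|D_hL_u\varphi\|_{\mu,0}+\|\varphi\|_{\mu+2,(0,2)})$. But $L_u\varphi=\Pi_u^\perp\psi$ is precisely the quantity whose $b$-regularity is in question.

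The only link between $\varphi$ and $\psi$ is the least-squares identity $\langle L_u\varphi-\psi,\,L_uv\rangle_{r^\mu L^2}=0$ for all $v\in r^{\mu+2}H^2_{e,D}$. This is the weak form of $L_u^\star L_u\varphi=L_u^\star\psi$. So you cannot avoid $L_u^\star L_u$; you can only avoid letting $L_u^\star$ act into the negative spaces.

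Here is how to make the fallback work. Test the identity with $v=D_{-h}D_h\varphi$, cut off as needed, and move the difference quotients across. This bounds $\|L_uD_h\varphi\|_{\mu,0}$ by $C\|\psi\|_{\mu,1}$ uniformly in $h$. The commutator terms are absorbed using the semi-Fredholm estimate as the coercivity, as in the usual Nirenberg argument. Since $r\partial_r$ is transverse to $\{r=1\}$, the dilation quotients must be localized near $r=0$. Near $r=1$ you need ordinary boundary regularity for the fourth-order problem with $\varphi=0$ and the natural condition $L_u\varphi=\psi$ there, which holds because elements of $\text{\bf Ob}_u$ vanish at $r=1$.
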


\begin{proof} (A) As explained above, Lemma \ref{Jacobioperatoredge} implies that $L_{u}$ form a fiberwise injective family with closed range, obeying uniform elliptic estimates. Thus $L_{u}$ itself provides a trivialization of 

\[(\text{\bf Ob}_u^s)^\perp=\bigsqcup_{u}\text{range}(L_{u})\]

\noindent modeled on $\mathbb X^{\mu,s}_D=r^{\mu+2}H^{s,2}_{b,e,D}(X_0; NX_0).$ Since $\mathbb Y^{\mu,s}$ is a Hilbert bundle, it follows that the orthogonal complement $\text{\bf Ob}_u^s$ is also locally trivial. (B) is precisely the ``moreover'' statement in Lemma \ref{secondorderinvertibility}. Finally, that the projection preserves regularity implies the $s=\infty$ version of (A). 
\end{proof}

The above justifies, 

\begin{defn}  \label{obstructionbundleanalyticdef} The bundle 

\[  {\text{\bf Ob}}^s\to \mathcal U_0\times \mathcal U_1.\]

\noindent whose fibers are \eqref{bbOdefpre} is called the {\bf obstruction bundle} of regularity $s\geq 0.$
\end{defn}

\medskip 

Recall from Section \ref{smoothingprojectionsection} that for $(\xi,\alpha,\Phi)=0,$ the projections to $\text{\bf Ob}_0$ enjoyed smoothing properties depending on the effective order of vector fields. The main result of this section, Proposition \ref{UGBstable}, shows that these smoothing properties persist across $\mathcal U_0\times \mathcal U_1.$

\subsection{Characterization of the Obstruction}
We now state two technical results used to prove Proposition \eqref{UGBstable}, whose proofs occupy the majority of Section \ref{obstructionperturbations}. 

For the duration of the section, we work in arclength coordinates $\tau=\tau(u)$ on the $S^1$-factor. Thus we have spaces $
H^s(S^1_\tau; \mathcal C_0)$ and $r^\mu H^s_{b}(X^\tau_0; NX_0)$ defined using the volume forms and derivatives as in Corollary \ref{taunorms}, which are smoothly and tamely equivalent to those defined using the coordinate $t$. Fourier modes are now written $e^{i\ell \tau},$ where it is understood that $\ell \in (1+\gamma_u)\Z$ (though we still tactily write $\ell \in \Z$).  In particular, there is a subbundle 

\[\text{\bf Ob}_\gamma^s:= \text{Im}\Big(\text{ob}_\gamma: H^s(S^1_\tau;\mathcal C_0) \lre r^\mu H^s(X_0^\tau; NX_0)\Big)=\text{range}(L_\gamma)^\perp  \]
\noindent given by the image of the smooth tame family of maps which are the map from Definition \ref{obbasisdef}, but using Fourier modes in $\tau.$ We use $\Psi_{j\ell}^\circ:= \Psi_{j\ell}$
to denote the corresponding obstruction basis from Definition \ref{obbasisdef}, also in arclength Fourier modes. Note that $\text{\bf Ob}_\gamma^s$ is ``nearly'' a product bundle, insofar as it depends only on the 1-dimensional parameter $\gamma_u.$

We now state the two technical lemmas, which generalize the obstruction basis from Definition \ref{obbasisdef} to a Resiz basis of the obstruction bundle (recall that a {\it Reisz basis} $\{e_k\}$ of a Hilbert space $H$  is a generalization of an orthonormal basis such that the norm  $\|\sum_k a_k e_k\|_H$ is bounded above and below by multiples of  $\|\{a_k\}\|_{\ell^2}$, rather than being equal to it). The Reisz basis is constructed as follows. Observe the two bundles $\text{\bf Ob}^s_\gamma$ and $\text{\bf Ob}^s$ coincide at $0 \in \mathcal U_0\times \mathcal U_1$, where they are both simply $\text{\bf Ob}^s_0.$ Lemma \ref{obstructionbundlecor} therefore implies that the orthogonal projections in both directions are isomorphisms, after possibly reducing the size of the neighborhoods. In particular, 

\be \Pi_\gamma^{-1}: {\text{\bf Ob}}^s_\gamma \to {\text{\bf Ob}}^s \label{projection}\ee

\noindent is an isomorphism. For $\Psi \in {\text{\bf Ob}}^{s}_\gamma$, the image is  $\Pi_\gamma^{-1}(\Psi)=\Psi + \psi$ where $\psi$ is the unique solution of 

\be (L_\gamma^\star + \frak d_{u}^\star) \psi = -\frak d_{u}^\star \Psi  \hspace{1cm}\psi \perp {\text{\bf Ob}}_\gamma. \label{Neumannadapted}\ee

\noindent with orthogonality again in $r^\mu L^2.$ This orthogonality condition is precisely the one that arises from solving (\refeq{Neumannadapted}) via Neumann series as perturbation of $L_\gamma^\star.$

\begin{prop}  \label{ReiszBasis}For each $u\in \mathcal U_0\times \mathcal U_1$, the projections 
\[ \Psi^u_{j\ell}:=\Pi_{\gamma}^{-1}(\Psi_{j\ell}^\circ)\in  \text{\bf Ob}_u,\]
\noindent are a Resiz basis depending smoothly and tamely on $u$ for fixed $(j,\ell)$, such that the following hold. 
\begin{enumerate} 
\item[(A) ]There is a linear map $\text{ob}_{u}$ that restricts to a bounded linear isomorphism 

\[\text{ob}_{u}: H^s(S^1_\tau; \mathcal C_0) \to {\text{\bf Ob}}_u^s \]

\noindent for all $s\geq 0$ that depends smoothly and tamely on $u.$ 

\item[(B)] The inverse is given by the sequence of inner products

\[\text{ob}_{u}^{-1}\circ \Pi_{u}(\psi)= \sum_{\ell,j}   \ \br  \psi \ , \   \Psi_{j\ell}  \ \kt_{r^\mu L^{2}(X_0^\tau)}  \cdot  e^{i\ell \tau}\phi_j \]
\noindent where $r^\mu L^2(X_0^\tau)$ denotes the inner product defined using the volume form $dV_\tau$ in $\tau$ arclength coordinates, and also depends smoothly and tamely on $u.$
\end{enumerate}

\end{prop}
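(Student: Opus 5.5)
The plan is to treat $\Pi_\gamma^{-1}$ as a small perturbation of the identity on $\text{\bf Ob}_\gamma^s$ and to transport the orthonormal basis $\Psi^\circ_{j\ell}$ through it. First I would fix the model: by Corollary \ref{firstob} (in arclength coordinates), $\text{ob}_\gamma:H^s(S^1_\tau;\mathcal C_0)\to \text{\bf Ob}^s_\gamma$ is an isomorphism for every $s$, with $\Psi^\circ_{j\ell}$ an $r^\mu L^2$-orthonormal basis. Because $\text{ob}_\gamma$ depends only on the single scalar $\gamma_u$ (through the rescaling $\ell\in(1+\gamma_u)\Z$ and the volume form $dV_\tau$), and $\gamma_u$ is a smooth tame function of $\xi$ by \eqref{txidef}, this family is smooth and tame in $u$. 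I would then set $\text{ob}_u:=\Pi_\gamma^{-1}\circ \text{ob}_\gamma$. This gives $\text{ob}_u(e^{i\ell\tau}\phi_j)=\Psi^u_{j\ell}$, and (A) reduces to showing that $\Pi_\gamma^{-1}:\text{\bf Ob}^s_\gamma\to\text{\bf Ob}^s_u$ is a bounded isomorphism for each $s$, depending smoothly and tamely on $u$. The Riesz property then follows at $s=0$: an isomorphism composed with an isometry maps an orthonormal basis to a Riesz basis.

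For the key step I would solve \eqref{Neumannadapted} explicitly. On $\text{\bf Rg}_\gamma=(\text{\bf Ob}_\gamma)^\perp$ the operator $L_\gamma^\star$ has a bounded right inverse $L_\gamma\mathcal P_\gamma$ with image in $\text{\bf Rg}_\gamma$, by Lemma \ref{secondorderinvertibility}(B) applied to $L_\gamma$. I would write
\[
\psi=-\big(\mathrm{Id}+L_\gamma\mathcal P_\gamma\,\frak d_u^\star\big)^{-1}L_\gamma\mathcal P_\gamma\,\frak d_u^\star\Psi ,
\]
and sum the Neumann series. Convergence at regularity $s_0$ comes from Lemma \ref{Jacobioperatoredge}, which gives $\|\frak a^I_u\|_{rC^{s_0}_b}\le C\delta$, so that $\frak d_u^\star: r^{\mu}L^2\to r^{\mu-2}H^{-2}_e$ has norm $O(\delta)$. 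At higher regularity $s$ I would commute $b$-derivatives through, using Corollary \ref{higherorderelliptic} and Lemma \ref{multiplications}. This yields tame estimates of the form $\|\psi\|_{\mu,s}\le C_s(\delta\|\Psi\|_{\mu,s}+\|u\|_{s+d}\|\Psi\|_{\mu,s_0})$, which is exactly the tame structure needed for Hypothesis (III). Injectivity and surjectivity of $\Pi_\gamma^{-1}$ onto $\text{\bf Ob}_u$ then follow from Lemma \ref{obstructionbundlecor}, since $\Pi_u\circ\Pi_\gamma^{-1}=\mathrm{Id}$ after shrinking the neighborhood. Smooth tame dependence on $u$ follows because the Neumann series is built from sums, products and inverses of smooth tame families, so Lemma \ref{tamenesscriterion} applies.

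For (B), I would define the map $\psi\mapsto\sum\langle\psi,\Psi_{j\ell}\rangle e^{i\ell\tau}\phi_j$, where the pairing is taken against the model basis. Restricted to $\text{\bf Ob}_u$, this map equals $\text{ob}_\gamma^{-1}\circ\Pi_\gamma$. Since $\Pi_\gamma\Pi_\gamma^{-1}=\mathrm{Id}$ on $\text{\bf Ob}_\gamma$, it inverts $\text{ob}_u$. Composing with $\Pi_u$ gives the stated formula, and tameness follows from Lemma \ref{obstructionbundlecor}(B).

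The main obstacle I expect is the higher-regularity, uniform-in-$s$ bound for the Neumann series. The operator $\frak d_u$ is an edge operator with coefficients in $rC^s_b$, so each $b$-derivative that hits a coefficient produces a term involving $\|u\|_{s+d}$. I must check that $\frak d_u^\star$ applied to obstruction elements, which behave like $r^{\gamma_j^-}$ near $r=0$, still lands in the domain of $L_\gamma\mathcal P_\gamma$. This is where the extra factor of $r$ in the coefficients (giving the $O(r)$ gain) is essential, and I would verify it using the explicit Bessel asymptotics of Lemma \ref{obnormalization}.
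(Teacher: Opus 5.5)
Your treatment of (A) is a legitimate alternative route, but your argument for (B) fails, and fixing it needs an ingredient your proposal does not contain.

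\textbf{What your normalization actually gives.} With $\mathrm{ob}_u:=\Pi_\gamma^{-1}\circ\mathrm{ob}_\gamma$ (this is exactly the paper's $\mathrm{ob}^{\mathrm{pre}}_u$), the inverse on $\mathbf{Ob}_u$ is $\mathrm{ob}_\gamma^{-1}\Pi_\gamma$. Hence
\[
\mathrm{ob}_u^{-1}\Pi_u\psi=\sum_{j,\ell}\langle \Pi_u\psi,\Psi^\circ_{j\ell}\rangle\, e^{i\ell\tau}\phi_j=\sum_{j,\ell}\langle \psi,\Pi_u\Psi^\circ_{j\ell}\rangle\, e^{i\ell\tau}\phi_j .
\]
In your last step you silently dropped the $\Pi_u$.

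\textbf{How (B) must be read.} Pairing against the model basis cannot be correct for any choice of $\mathrm{ob}_u$. The left side of (B) vanishes on $\mathbf{Rg}_u$. The right side, $\sum\langle\psi,\Psi^\circ_{j\ell}\rangle e^{i\ell\tau}\phi_j=\mathrm{ob}_\gamma^{-1}\Pi_\gamma\psi$, vanishes only on $\mathbf{Rg}_\gamma\neq\mathbf{Rg}_u$. So (B) must be read with the perturbed basis $\Psi^u_{j\ell}$. This is how the paper proves it, and how Section 8 uses it, through the splitting $\Psi^u_{j\ell}=\Psi^\circ_{j\ell}+\psi_{j\ell}+\mathfrak e_{j\ell}$.

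\textbf{Why your map does not satisfy it.} The family $\{\Pi_u\Psi^\circ_{j\ell}\}$ is the biorthogonal dual of $\{\Psi^u_{j\ell}\}$, not the basis itself. Write $\zeta_{j\ell}=\Psi^u_{j\ell}-\Psi^\circ_{j\ell}$, so that $\zeta_{j\ell}\perp\mathbf{Ob}_\gamma$. Pairing against $\Psi^u_{ik}$ after your $\mathrm{ob}_u$ then produces the Gram matrix $\mathrm{Id}+A_u$, with $A_{j\ell,ik}=\langle\zeta_{j\ell},\zeta_{ik}\rangle$ (the cross terms vanish). This is not the identity.

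\textbf{The missing step.} You need the renormalization $\mathrm{ob}_u:=\Pi_\gamma^{-1}\circ\mathrm{ob}_\gamma\circ(\mathrm{Id}+A_u)^{-1}$, together with a proof that $(\mathrm{Id}+A_u)^{-1}$ acts boundedly and tamely on every $H^s(S^1_\tau;\mathcal C_0)$.
\begin{itemize}
\item Invertibility on $L^2$ is free from the Riesz property.
\item The $H^s$ statement is not free: it requires controlling the weighted row sums $\sum_{i,k}|A_{j\ell,ik}|^2|k|^{2s}\lesssim|\ell|^{2s}$.
\item For this the paper uses the banded decay of Proposition \ref{Psidecomp}. It gives $\|\zeta_{j\ell}\|_{r^\mu L^2}\lesssim\|u\|_{s_0}/|\ell|$, and the $e^{ih\tau}$-content of $\zeta_{j\ell}$ decays like $|h-\ell|^{-L}$ away from $h=\ell$.
\item That decay comes from applying $(\nabla_\tau-\ell)^s$ to \eqref{Neumannadapted2} and using $(\nabla_\tau-\ell)\Psi^\circ_{j\ell}=0$.
\end{itemize}

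\textbf{What your Neumann-series bounds do and do not give.} Your bounds for $\Pi_\gamma^{-1}$ in $r^\mu H^s_b$ do give (A) for the unnormalized map, and there they bypass Proposition \ref{Psidecomp}. They do not control $A_u$ on $H^s$, however. Since $\mathrm{Id}+A_u=\mathrm{ob}_u^{*}\,\mathrm{ob}_u$, it also involves the adjoint $(\Pi_\gamma^{-1})^{*}$. You would have to prove tame bounds for that adjoint separately, or replace them with the mode-by-mode information above.

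\textbf{A smaller slip.} $\Pi_u\circ\Pi_\gamma^{-1}=\Pi_\gamma^{-1}$, not $\mathrm{Id}$. What you need is $\Pi_\gamma\circ\Pi_\gamma^{-1}=\mathrm{Id}$ on $\mathbf{Ob}_\gamma$.
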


The next result, which is also the main ingredient in the proof of the above Proposition \ref{ReiszBasis} is a characterization of the correction term for the projection $\Psi_{j\ell}^\circ \mapsto \Pi_{\gamma}^{-1}(\Psi_{j\ell}^u)$. Let $(\Psi^u_{j\ell})_h$ denote the $e^{ih\tau}$ Fourier coefficient of $\Psi_{j\ell}^u,$ which depends on $(r,x).$ Obviously $(\Psi_{j\ell}^\circ)_h=\delta_{h\ell}.$ Additionally, in a mild abuse of notation we simply write $|h|$ instead of $\smash{\br h\kt=\sqrt{1+h^2}}$ in places, with it being understood that we mean the regularized version when $h=0.$ For $u=0,$ note that the Fourier modes of $\Psi_{j\ell}^\circ$ are supported along the diagonal $\ell=h;$ the below says that, more generally, the Fourier support decays faster than polynomially away from this diagonal, and likewise for higher edge derivatives:

 \begin{prop} \label{Psidecomp} For each $u\in \mathcal U_0\times \mathcal U_1,$ and each $S,L\geq 0,$ there are constants $C_{M,L}$ such that

 \[\left\|(1+r^2 \Delta_\tau)^{M/2} \Big[(\Psi_{j\ell}^u)_h-\Psi_{j\ell}^\circ \Big]\right\|_{r^\mu L^2} \leq \frac{C_{M,L} \|u\|_{M+L+s_0}}{ |\ell| }\cdot  \frac{1}{|h-\ell|^L}. \]

 \noindent holds uniformly over $(j,\ell).$
 \end{prop}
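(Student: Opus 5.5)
The plan is to analyze the correction $\psi^u_{j\ell}:=\Psi^u_{j\ell}-\Psi^\circ_{j\ell}$ directly through the defining equation \eqref{Neumannadapted}, namely
\[
(L_\gamma^\star+\frak d_u^\star)\psi^u_{j\ell}=-\frak d_u^\star\Psi^\circ_{j\ell},\qquad \psi^u_{j\ell}\perp \text{\bf Ob}_\gamma .
\]
I would solve this by the Neumann series $\psi=\sum_{k\ge 0}(-G_\gamma\frak d_u^\star)^{k}(-G_\gamma\frak d_u^\star\Psi^\circ_{j\ell})$. Here $G_\gamma$ is the inverse of $L_\gamma^\star$ on the complement of $\text{\bf Ob}_\gamma$, built from $\mathcal P_\gamma$ as in Lemma \ref{secondorderinvertibility}. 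The key structural fact is that $L_\gamma$ and $G_\gamma$ are diagonal in the Fourier modes $e^{ih\tau}$, since $L_\gamma$ is the product operator in arclength coordinates. All coupling between different Fourier modes therefore comes from $\frak d_u$, whose coefficients $\frak a^I_u\in rC^s_b$ are small (size $C\delta$ in $s_0$-norms) and depend smoothly and tamely on $u$ by Lemma \ref{Jacobioperatoredge}.

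The off-diagonal decay follows from tangential regularity of these coefficients. The $(h,\ell)$ matrix entry of multiplication by $\frak a^I_u$ is the $(h-\ell)$-th Fourier coefficient of $\frak a^I_u$ in $\tau$. Integrating by parts $L$ times in $\tau$ bounds it by $\|\frak a^I_u\|_{C^{L}_b}|h-\ell|^{-L}\lesssim \|u\|_{L+s_0}|h-\ell|^{-L}$, using Lemma \ref{standardHolder}. I would then show that the weighted matrix norm $\sup_{h,\ell}\langle h-\ell\rangle^{L}|a_{h\ell}|$ is submultiplicative up to a constant $C_L$, since $\sum_k\langle h-k\rangle^{-L}\langle k-\ell\rangle^{-L}\lesssim\langle h-\ell\rangle^{-L}$ for $L>1$. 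This lets the Neumann series converge in this weighted operator class for $\delta$ small, with a tame bound linear in $\|u\|$ at the top norm.

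The edge-derivative weight $(1+r^2\Delta_\tau)^{M/2}$ acts on the $h$-th mode as $(1+r^2h^2)^{M/2}$. I would absorb it using the edge elliptic estimates of Corollary \ref{higherorderelliptic} applied to each Fourier mode: $G_\gamma$ gains two edge derivatives, while $\frak d_u$ costs them back, and commutators of $r\del_\tau$ with $\frak a^I_u$ cost $b$-derivatives of $u$. This accounts for the $\|u\|_{M+L+s_0}$ in the statement.

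The main obstacle is the prefactor $1/|\ell|$, which is uniform in $(j,\ell)$ and reflects a gain relative to $\|\Psi^\circ_{j\ell}\|=1$. I would extract it from the source term $\frak d_u^\star\Psi^\circ_{j\ell}$ together with the concentration of $\Psi^\circ_{j\ell}$. By Lemma \ref{obnormalization} it is dominated by $\widehat K_{\nu_j}(|\ell|r)$, so it lives at $r\lesssim|\ell|^{-1}$. Since the coefficients of $\frak d_u$ carry an extra factor $r$ (they lie in $rC_b$), the source satisfies $\|\frak d_u^\star\Psi^\circ_{j\ell}\|_{r^{\mu-2}H^{-2}_e}\lesssim \|u\|\,\|r\Psi^\circ_{j\ell}\|\lesssim\|u\||\ell|^{-1}$. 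The $I_\nu$ part is exponentially small by Lemma \ref{obnormalization}, and the $K_\nu$ part gives $|\ell|^{-1}$ by rescaling $\rho=|\ell| r$ as in \eqref{Knunorm}.

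The delicate point is that $\frak d_u^\star$ is second order, so two edge derivatives land on $\Psi^\circ_{j\ell}$. Edge derivatives $r\del_\tau,r\del_r$ on a function concentrated at scale $|\ell|^{-1}$ cost only $O(1)$, so the gain survives. I would check this carefully, and also check that $G_\gamma$ maps this source back with uniform constants in $\ell$ (uniform edge estimates). The factor $1/|\ell|$ then propagates through every term of the Neumann series.
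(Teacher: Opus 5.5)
Your proposal is correct in outline and rests on the same two inputs as the paper. The first is translation invariance of $L_\gamma$ in $\tau$: only $\mathfrak d_u$ couples Fourier modes, and the $\tau$-smoothness of its coefficients gives off-diagonal decay. The second is the $O(r)$ vanishing of the coefficients of $\mathfrak d_u$ against the concentration of $\Psi^\circ_{j\ell}$ at $r\lesssim|\ell|^{-1}$, which gives the factor $1/|\ell|$.

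The organization differs. The paper never forms a Fourier matrix. It applies the modulated derivative $(\nabla_\tau-\ell)^s$ to $(L_\gamma^\star+\mathfrak d_u^\star)(\Psi^u_{j\ell}-\Psi^\circ_{j\ell})=-\mathfrak d_u^\star\Psi^\circ_{j\ell}$. This operator commutes with $L_\gamma^\star$, preserves orthogonality to $\text{\bf Ob}_\gamma$, and annihilates $\Psi^\circ_{j\ell}$, so the Leibniz expansion of the right side collapses to $(\nabla_\tau^s\mathfrak d_u^\star)\Psi^\circ_{j\ell}$. The paper then uses the tame elliptic estimate from the same commutator/interpolation induction as in Lemma \ref{tameL1}. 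On the $h$-th mode, $\nabla_\tau-\ell$ is multiplication by $h-\ell$.

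The two routes are dual, since commuting with $\nabla_\tau-\ell$ multiplies the $(h,k)$ entry by $h-k$. The paper's version yields an $\ell^2$-in-$h$ bound and gets tameness from interpolation, with no operator-valued entries. Yours gives pointwise-in-$h$ decay and an explicit mechanism. It does require uniform-in-$h$ bounds for $G_\gamma$ composed with edge derivatives on single modes. These hold because those operators are mode-diagonal, so each block is bounded by the global norm.

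Also, for $M=0$ no derivatives need to fall on $\Psi^\circ_{j\ell}$. By duality, $\|\mathfrak d_u^\star\Psi^\circ_{j\ell}\|_{r^{\mu-2}H^{-2}_e}\le C\|u\|_{s_0}\|r\Psi^\circ_{j\ell}\|_{r^\mu L^2}\lesssim\|u\|_{s_0}|\ell|^{-1}$. Your concentration argument for edge derivatives is what is needed for $M\ge1$.

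One step fails as you state it. Submultiplicativity $\|AB\|_L\le C_L\|A\|_L\|B\|_L$ of the $\langle h-\ell\rangle^L$-weighted norm makes the Neumann series converge only when $C_L\|G_\gamma\mathfrak d_u^\star\|_L<1$. That norm is of size $\|u\|_{L+s_0}$, which is not small: only $\|u\|_{s_0}<\delta$ is assumed, and $L$ is arbitrary.

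Instead, use $\langle h-\ell\rangle^L\le 2^L(\langle h-k\rangle^L+\langle k-\ell\rangle^L)$ to get the tame product estimate $\|AB\|_L\le C_L(\|A\|_L\|B\|_2+\|A\|_2\|B\|_L)$. Here the weight-$2$ norm controls the sums over $k$. The same estimate holds for $A$ acting on column vectors. Then $\|A^k\|_L\lesssim k\,(C\|A\|_2)^{k-1}\|A\|_L$. So the series converges once $\|u\|_{2+s_0}$ is small (enlarge $s_0$), and the result is linear in $\|u\|_{L+s_0}$, which is the tame bound you assert.

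The $M$-weights are handled the same way, by splitting $(1+r^2h^2)^{M/2}\lesssim(1+r^2k^2)^{M/2}+|h-k|^M$ (using $r\le 1$). This is where the extra $M$ $\tau$-derivatives of $\mathfrak a_u$ enter, and hence the norm $\|u\|_{M+L+s_0}$.
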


 \noindent Note that $(1+r^2 \Delta_\tau)$ is the natural edge Laplacian in the tangential direction. The above is therefore a slightly more  quantitative version of the edge regularity statement for $\Psi_{j\ell}^u$ from \cite{Mazzeo1991EdgeOperators}.

 \medskip

Assuming the two proposition above momentarily, we now state and prove the main result of this section, which is the following stability result for the smoothing properties of the obstruction projection from Section \ref{smoothingprojectionsection}.

\begin{prop} \label{UGBstable} The ``unexpectedly'' good boundedness of the projections from Lemma  \ref{projectionsmoothing} is (tamely) uniform over $u\in \mathcal U_0\times \mathcal U_1.$ That is, if $\Psi$ is a term of effective order $m$ with $f(t)\in H^s(S_\tau^1;\R)$ in the notation of  \eqref{weakasympotitic}, then $ \text{ob}_u^{-1}\circ \Pi_u(\Psi) \in H^{s-m}(S^1_\tau;\mathcal C_0),$ and  

\[\| \text{ob}_u^{-1}\circ \Pi_u(\Psi)\|_{H^{s-m}} \leq C \|f\|_{H^s(S^1_\tau;\R)}.\]

\noindent In particular, there is a gain in tangential regularity of order $\alpha + (\tfrac{n}{2}-\mu)$. The same holds in the case of a sum of the form \eqref{effectiveordersum}. 
\end{prop}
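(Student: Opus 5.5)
The plan is to reduce Proposition \ref{UGBstable} to the unperturbed estimate of Lemma \ref{projectionsmoothing} (carried out in arclength coordinates $\tau$), using Propositions \ref{ReiszBasis} and \ref{Psidecomp} to control the error between the perturbed Riesz basis $\Psi^u_{j\ell}$ and the model basis $\Psi^\circ_{j\ell}$. By Proposition \ref{ReiszBasis}(B), it suffices to show that for each $j\leq J$ the sequence of inner products
\[
c_{j\ell}:=\br \Psi, \Psi^u_{j\ell}\kt_{r^\mu L^2(X_0^\tau)}
\]
lies in the weighted sequence space with weight $|\ell|^{s-m}$, with $\ell^2$-norm bounded by $C\|f\|_{H^s}$. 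We split $c_{j\ell}=\br \Psi, \Psi^\circ_{j\ell}\kt + \br \Psi, \Psi^u_{j\ell}-\Psi^\circ_{j\ell}\kt$. The first term is handled exactly as in the proof of Lemma \ref{projectionsmoothing}, now with $\Delta_\tau$ and $dV_{g_\gamma}$ replacing $\Delta_t$ and $dV_{g_0}$. This substitution is harmless by Corollary \ref{taunorms}, because the model $L_\gamma$ differs from $L_0$ only by the constant rescaling $\ell\in(1+\gamma_u)\Z$. That computation gives $|f_\ell|\,|\ell|^{m-s}$ up to a uniform constant.

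For the error term, we expand $\Psi=\sum_h \Psi_h e^{ih\tau}$ in Fourier modes, with $\Psi_h=(\text{i}h)^k f_h r^{p}\phi_{j'}$ (times $b$-smooth coefficients $A^i$, which we also Fourier expand and absorb using their $C^{s}_b$ bounds). This gives
\[
\br \Psi, \Psi^u_{j\ell}-\Psi^\circ_{j\ell}\kt=\sum_h \br \Psi_h, (\Psi^u_{j\ell})_h-\delta_{h\ell}\Psi^\circ_{j\ell}\kt .
\]
The key point is to recover the gain from the vanishing order $r^p$. We do this by pairing against the edge-regularized form in Proposition \ref{Psidecomp}: we insert $(1+r^2\Delta_\tau)^{M/2}(1+r^2h^2)^{-M/2}$, which acts on the $h$-th mode as the identity. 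The weight $(1+r^2h^2)^{-M/2}r^{p}$ is then bounded in $r^\mu L^2$ by roughly $|h|^{-(p+n/2-\mu)}$, exactly as the $K_\nu$ integral produced this factor in Lemma \ref{projectionsmoothing}. Applying Proposition \ref{Psidecomp} then bounds the $h$-th summand by
\[
C\,|f_h|\,|h|^{k-(p+n/2-\mu)}\,|\ell|^{-1}|h-\ell|^{-L}\,\|u\|_{M+L+s_0}.
\]

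We then multiply by $|\ell|^{s-m}$ and use Peetre's inequality $|\ell|^{s-m}\leq C|h|^{s-m}\br h-\ell\kt^{|s-m|}$. Taking $L>|s-m|+1$, Young's inequality for discrete convolutions shows that this contribution is bounded in $\ell^2$ by $C\|u\|_{M+L+s_0}\|f\|_{H^s}$. Since $u$ ranges over the small ball and the dependence on $\|u\|$ is tame, this gives the claimed uniform estimate, in the tame form required later. Sums of the form \eqref{effectiveordersum} follow by linearity.

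The main obstacle is the second paragraph: converting the $r^{p}$ vanishing into decay in $h$ for the perturbed basis. Proposition \ref{Psidecomp} only controls tangential edge derivatives in $r^\mu L^2$, not the precise Bessel profile. If the edge-Laplacian insertion does not give the full factor $|h|^{-(p+n/2-\mu)}$, the fallback is to split the $r$-integral at $r=|h|^{-1}$. In the region $r<|h|^{-1}$ we would use the pointwise bound $|r^p|\leq |h|^{-p}$ directly. In the region $r>|h|^{-1}$ we would use the $(1+r^2\Delta_\tau)^{M/2}$ control to get arbitrary decay $(r|h|)^{-M}$.
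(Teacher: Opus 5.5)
Your argument is correct in substance and uses the same ingredients as the paper. These are the inner-product formula of Proposition~\ref{ReiszBasis}(B), Lemma~\ref{projectionsmoothing} for the model term $\langle\Psi,\Psi^\circ_{j\ell}\rangle$, and Proposition~\ref{Psidecomp} paired mode-by-mode against $(1+r^2h^2)^{-M/2}r^p\phi_a$, finished by Young's inequality. The step you flag as the main obstacle works. For $M$ large and $p>\mu-\tfrac n2$, the substitution $\rho=r\langle h\rangle$ gives $\|(1+r^2h^2)^{-M/2}r^p\phi_a\|_{r^\mu L^2}\le C\langle h\rangle^{-(p+n/2-\mu)}$, so the fallback splitting in $r$ is unnecessary. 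The difference from the paper is in how the error term is organized. The paper splits the correction $\Psi^u_{j\ell}-\Psi^\circ_{j\ell}=\psi_{j\ell}+\mathfrak e_{j\ell}$ according to whether $|h-\ell|<|\ell|/2$. On the near-diagonal piece $|h|\sim|\ell|$, so no weight has to be moved between $h$ and $\ell$, and $L=2$ suffices. The far piece is handled by large $L$ together with a crude Cauchy--Schwarz against $\|\Psi\|_{r^\mu L^2}$. You instead treat all $h$ at once via Peetre's inequality.

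Your per-mode treatment has one advantage. In the far region it only needs $\sum_h|f_h|\langle h\rangle^{m}$, i.e.\ a low norm of $f$. The paper's Cauchy--Schwarz step for $\Pi_{\mathfrak e}$ tacitly requires $\Psi\in r^\mu L^2$, i.e.\ $f\in H^k$, which the effective-order framework deliberately does not assume.

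On the other hand, the paper's split is what makes the estimate tame, and your closing sentence overstates what you proved. Peetre forces $L>|s-m|+1$, so your bound is $C\|u\|_{M+|s-m|+2+s_0}\|f\|_{H^s}$. This is a high norm of $u$ times a high norm of $f$. The $u$-norm grows with $s$, so no fixed $s_0$ makes the constant uniform, and the estimate is not tame in the Nash--Moser sense. The fix is to apply your per-mode bound separately on the two regions:
\begin{itemize}
\item On $|h-\ell|<|\ell|/2$, $\langle\ell\rangle\sim\langle h\rangle$, so $L=2$ gives $C\|u\|_{M+2+s_0}\|f\|_{H^s}$.
\item On $|h-\ell|\ge|\ell|/2$, $\langle\ell\rangle^{s-m}\lesssim\langle h-\ell\rangle^{s-m}$ is absorbed by large $L$, giving $C\|u\|_{M+L+s_0}\|f\|_{H^{\max(m,0)}}$.
\end{itemize}
In Lemma~\ref{tameT2} the proposition is only invoked at the base regularity $H^{2-\epsilon}$, so your unsplit version would suffice there. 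It does not, however, give the tame uniformity asserted in the statement.
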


\begin{proof} Let $\Psi_{j\ell}^u=\Psi_{j\ell}^\circ + \psi_{j\ell} + \frak e_{j\ell}$ where 
\be \psi_{j\ell}=\mathbf 1_{\big\{|h-\ell|<\tfrac{|\ell|}{2} \big\}} \Big[(\Psi_{j\ell}^u)_h-\Psi_{j\ell}^\circ \Big] \hspace{2cm}  \frak e_{j\ell}=\mathbf 1_{\big\{|h-\ell| \geq \tfrac{|\ell|}{2}\big\}}\Big[(\Psi_{j\ell}^u)_h-\Psi_{j\ell}^\circ \Big], \label{psiedef}\ee

\noindent where $\mathbf 1_A$ denotes the Fourier projection to a range of modes $A$. Thus we may write $\text{ob}_u^{-1}\Pi_u=\text{ob}_\gamma^{-1}\Pi_\gamma + \Pi_{\psi} + \Pi_{\frak e}$ where 
\begin{align}
\Pi_\psi(\Psi)&=\sum_{j\ell} \br\Psi , \psi_{j\ell} \kt_{r^\mu L^2(X_0^\tau)} \cdot e^{i\ell \tau}\phi_j \\
\Pi_{\frak e}(\Psi)&= \sum_{j\ell} \br\Psi , \frak e_{j\ell} \kt_{r^\mu L^2(X_0^\tau)} \cdot e^{i\ell \tau}\phi_j, 
\end{align}
\noindent after which it suffices to show the desired estimate for the latter two projections individually, as it is true for $\text{ob}_\gamma^{-1}\Pi_\gamma$ by Lemma  \ref{projectionsmoothing}.

Let $\Psi=r^p (\nabla_\tau^k f(t))\phi_a$ be as in \eqref{weakasympotitic}, and $\mathcal F_\ell=\mathbf 1_{\big\{|h-\ell|<\tfrac{|\ell|}{2} \big\}}$ be the Fourier support of $\psi_{j\ell}.$ Then 

\begin{align*}
\br \Psi, \psi_{j\ell}\kt_{r^\mu L^2(X_0^h)}&=\sum_{h\in \mathcal F_\ell} h^k f_h \int_0^1 \int_{\Sigma}  r^p \br \phi_a, (\psi_{j\ell})_h \kt r^{-2\mu+n-1}dr dV_{\Sigma} \\
&\leq \sum_{h\in \mathcal F_\ell} h^k f_h \left( \int_0^1 \frac{r^{2p}}{(1+r^2|\ell|^2 )^{M}} r^{-2\mu+n-1}dr\right)^{1/2} \big\|(1+r^2|\ell|^2)^M (\psi_{j\ell})_h\big\|\\
& \leq \sum_{h\in \mathcal F_\ell} \frac{h^k f_h}{|\ell|^{p+n/2-\mu}} \left( \int_0^1 \frac{\rho ^{2p-2\mu + n-1}}{(1+\rho )^{2M}} d\rho \right)^{1/2} \big\|(1+r^2|\ell|^2)^M (\psi_{j\ell})_h\big\| \\
&\leq \frac{C_{M,2}\|u\|_{M+2+s_0}}{|\ell|^{k-m+1}} \sum_{h\in \mathcal F_\ell} \frac{h^k f_h}{ |h-\ell|^2  }
\end{align*}

\noindent since $m=k-(p+\tfrac{n}{2}-\mu).$ Here, we have used that $(1+r^2\Delta_\tau)\sim (1+r^2 |\ell|^2)$ on Fourier modes in $\mathcal F_\ell.$ Thus 

\begin{align}
 |\ell|^{s-m} |\br \Psi, \psi_{j\ell}\kt|
& \leq C_{M,2} \|u\|_{M+2+s_0}  |\ell|^{(s -2)+ (-k + m-1)} \sum_{h \in \mathcal F_\ell} \frac{|h|^{k} |f_h|}{|h-\ell|^2 } \nonumber\\
&\leq C_{M,2} \|u\|_{M+2+s_0} |\ell|^{-1} \sum_{h\in \mathcal F_\ell } \frac{|h|^{s}|f_h|}{|h-\ell|^2},  \label{extraell}
\end{align}
\noindent {Because $|\ell|/2 \leq |h|\leq 3|\ell|/2$ for $h\in \mathcal F_\ell.$ Summing over $\ell$, the last sum is the convolution $\{|h|^s |f_h|\} * \{h^{-2}\}.$ And $\{h^{-2}\}\in l^1(\Z;\R),$ thus using Young's convolution inequality $l^2(\Z;\R)\times l^1(\Z;\R)\to  l^1(\Z;\R)$ gives }
\begin{align*}
\|\Pi_{\psi}(\Psi)\|^2_{H^{s-m}} &\leq \sum_{j,\ell} |\br \Psi, \psi_{j\ell}\kt|^2  |\ell| ^{2s-2m} \\
& \leq C^2_{M,2} \|u\|^2_{M+2+s_0}  \Big \| \{|h|^s |f_h|\} * \{h^{-2}\}\Big \|^2_{l^2} \leq C_{M,2}^2 \|u\|_{M+2+s_0}^2 \|f\|_{H^s}^2.
\end{align*}
\noindent using that $|\ell|/2 \leq |h|\leq 3|\ell|/2$ for $h\in \mathcal F_\ell$ once again. 

The $\frak e$ term is comparatively straightforward. Since $\frak e_{j\ell}$ has Fourier supported where $|h-\ell|>|\ell|/2$ by definition, Proposition \ref{Psidecomp} with $M=0$ shows 
\be \|\frak e_{j\ell}\|_{r^\mu L^2} \leq \frac{C_{0,L}\|u\|_{L+s_0}}{|\ell|^{L+1}}.\label{eestimate}\ee

\noindent Taking $L>s-m+2,$ 

\begin{align}
\|\Pi_{\psi}(\Psi)\|^2_{H^{s-m}} &\leq \sum_{j,\ell} |\br \Psi, \psi_{j\ell}\kt|^2  |\ell|^{2s-2m} \\
&\leq \|\Psi\|^2_{r^\mu L^2}\sum_{j\ell} \|\frak e_{j\ell}\|^2_{r^\mu L^2}|\ell|^{2s-2m} \leq \|\Psi\|^2_{r^\mu L^2}\sum_{j\ell} \frac{1}{|\ell|^2} \leq C_{L}\|u\|^2_{L+s_0}\|f\|^2_{H^k} 
\end{align}
\noindent by Cauchy-Schwartz.

\end{proof}

\begin{rem}\label{UGBstablerem} We remark on two strengthenings of Proposition \ref{UGBstable}. First, the extra power of $\ell$ in \eqref{extraell} shows in fact that $\Pi_\psi(\Psi)\in H^{s-m+1},$ and likewise for $\Pi_{\frak e}(\Psi).$ (2) If $\psi_{j\ell}'=M\psi_{j\ell}$ where $M$ is an order $0$ Fourier multiplier in $\ell,$ then  
$\|\Pi_{M\psi}\|_{H^{s-m}} \leq C\|f|_{H^s}$ holds equally well for $M\psi_{j\ell}.$
\end{rem}

\subsection{Banded Decay}

We now prove Propositions \ref{ReiszBasis} and \ref{Psidecomp}. We have 

\be  (L_\gamma^\star  + \frak d_{u}^\star) \Big(\psi_{j\ell}+\frak e_{j\ell}\Big) = -\frak d_{u}^\star \Psi_{j\ell}^\circ  \hspace{1cm}\psi \perp {\text{\bf Ob}}_\gamma. \label{Neumannadapted2}\ee

\noindent by \eqref{Neumannadapted}. Proposition \ref{ReiszBasis} now follows from tame elliptic estimates for $ L_\gamma^\star  + \frak d_u^\star .$

\begin{proof}[Proof of Proposition \ref{ReiszBasis}]  The operator $L_\gamma^\star + \frak d_u^\star $ obeys elliptic estimate 

\[ \|v \|_{r^\mu L^2} \leq C_0 \|(L_\gamma^\star +\frak d_u^\star) v \|_{r^{\mu-2}H^{-2}_e},   \hspace{1cm} v \perp \text{\bf Ob}_\gamma\]

\noindent by Lemma \eqref{obstructionbundlecor}, using the right inverse defined by solving $L_\gamma^\star + \frak d_u^\star=f$ via the Neumann series of $L_\gamma^\star,$ which is convergent for $\|u\|_{s_0}<\delta$ sufficiently small. For $\nabla_\tau=-i\del_\tau$, repeatedly applying $(\nabla_\tau - \ell)$ leads to the tame estimates 

 \smallskip 
 
\be  \|(\nabla_\tau -\ell)^sv \|_{r^\mu L^2} \leq C_s\Big( \big\| (\nabla_\tau-\ell)^s\big(L_\gamma+\frak d_u\big)^\star v\big\|_{r^{\mu-2}H^{-2}_e}  \ + \ \|u\|_{s+s_0}\big\|(L_\gamma+\frak d_u)^\star v\big\|_{r^{\mu-2} H^{-2}_e}\Big). \label{taemH2}\ee

\smallskip

\noindent via iterating commutators and interpolation inequalities in the standard way (cf. Lemma \ref{tameL1} in the next subsection). Note here that $(\nabla_\tau-\ell)^s$ preserves the condition that $v\perp \text{\bf Ob}_\gamma,$ as can be seen by integration by parts using Corollary  \ref{firstob}(B). Additionally, we have used that  
$[(\nabla_\tau-\ell), L_\gamma^\star ]=0$ and $[\ell, \nabla_\tau^k\frak d_u^\star]=0.$

In the particular case of \eqref{Neumannadapted2}, shows 

\be  \big\|(\nabla_\tau -\ell)^s(\psi_{j\ell} + \frak e_{j\ell}) \big\|_{r^\mu L^2} \leq C_s\Big( \big\| (\nabla_\tau-\ell)^s\frak d_u^\star \Psi_{j\ell}^\circ \big\|_{r^{\mu-2}H^{-2}_e}  \ + \ \|u\|_{s+s_0}\big\|\frak d^\star_u \Psi_{j\ell}^\circ\big\|_{r^{\mu-2} H^{-2}_e}\Big). \label{edgetame}\ee

\noindent Applying $(\nabla_\tau-\ell)^s$ in the middle term, the product rule term 

\[ \sum_{0\leq k \leq s}\binom{s}{k} (\nabla_\tau^k\frak d_u^\text{low})^*(\nabla_\tau-\ell)^{s-k} \Psi_{j\ell}^\circ =  (\nabla_\tau^{s}\frak d_u^\star) \Psi_{j\ell}^\circ\]

\noindent collapses, because $(-i\del_\tau-\ell)\Psi_{j\ell}^\circ=0.$ We conclude that 

\begin{align*}
\big\|(\nabla_\tau -\ell)^s(\psi_{j\ell} + \frak e_{j\ell}) \big\|_{r^\mu L^2} &\leq C_s \Big(\|(\nabla_\tau^s \frak d_u^\star)\Psi_{j\ell}^\circ\|_{r^{\mu-2}H^{-2}_e} \ + \ \|u\|_{s+s_0}\|\frak d_u^\star\Psi_{j\ell}^\circ\|_{r^{\mu-2}H^{-2}_e} \Big)\\
&\leq \frac{C_s\|u\|_{s+s_0}}{|\ell|},
\end{align*}

\noindent where in the last line, the fact that $\frak d_u^\star, \nabla_\tau^s \frak d_u^\star = O(r)$ yields an extra factor of $|\ell|^{-1}$ when integrating, similar to the proof of Lemma \ref{projectionsmoothing}. In the $h$ Fourier mode, $(\nabla_\tau-\ell)=(h-\ell),$ and dividing by this factor yields the proof for $M=0$ and $L=s.$

For $M\geq 1,$ repeating the same tame estimates as \eqref{edgetame} for the higher edge spaces

\begin{align*}\big\|(\psi_{j\ell} + \frak e_{j\ell}) \big\|_{r^\mu H^M_e} &\leq C_M \Big(\| \frak d_u^\star\Psi_{j\ell}^\circ\|_{r^{\mu-2}H^{M-2}_e} \ + \ \|u\|_{M+s_0}\|\frak d_u^\star\Psi_{j\ell}^\circ\|_{r^{\mu-2}H^{-2}_e} \Big)\\
&\leq \frac{C_M\|u\|_{s+s_0}}{|\ell|}.
\end{align*}

\noindent In particular, 

\[C_M \big\|(1+r^2\Delta_\tau)^{M/2}(\psi_{j\ell}+\frak e_{j\ell})\big\|_{r^\mu L^2} \leq \frac{C_M \|u\|_{s+s_0}}{|\ell|}. \]

\noindent  This  is the result when $M>0$ and $L=0.$ Repeating the tame estimates \eqref{taemH2} in the $r^\mu H^M$-norm then yields the general case, using the $\|u\|_{M+L+s_0}$-norm on the right hand side.

\end{proof}

\begin{rem}
\label{smoothprojectionsrem}

Proposition \ref{Psidecomp} is ultimately a slightly more quantitative version of the standard edge-regularity statement for solutions of an elliptic edge operator. 
A similar type of regularity gain phenomenon appears in \cite[Section 5]{MazzeoEdgeOperatorsII}. More generally, such regularity statements can be deduced by considerations of the Schwartz kernel of the projection $\Pi_0.$ It is evident from the structure of the Schwartz kernel of $\Pi_0$ proved in that paper that $\Pi_0(\Psi)$ 
can be differentiated as many times as desired with respect to the edge vector fields $r\partial_r$, $r\partial_t$ and $\partial_{x_i}$ with no 
loss of regularity or decay rate. Said differently, repeated application of these vector fields yields a sequence of operators, each of which 
have the same $L^2$ boundedness properties as $\Pi_0$ itself. Thus if $\Psi \in r^\mu L^2$, then $\Pi_0(\Psi) \in r^\mu H^\infty_e$.
The composition $\partial_t \Pi_0$ is singular, and introduces a drop by $1$ in the weight factor: $\del_t \Pi_0 (\Psi) \in r^{\mu-1} H^\infty_e$.
However, if $\Psi \in r^{\mu+1}L^2$, for example, then this extra factor of $r$ in $\Psi$ can be combined with $\del_t$ to produce
an edge derivative, so that $\partial_t \Pi_0(\Psi) \in r^{\mu} H^\infty_e$, or equivalently $\Pi_0 (\Psi) \in r^\mu H^1_b \cap  r^\mu H^\infty_e$. 
A simple argument extends this to when the extra rate of vanishing of $\Psi$ is fractional, and does not necessarily match the number
of $t$ derivatives.  It then requires extra work to  characterize the precise regularity of the asymptotics as $r \to 0$ of this projection
to obtain the sharp regularity statement of Proposition \ref{UGBstable}. This Schwartz kernel perspective likely generalizes much more readily, e.g. to the case that the $S^1$ edge is replaced by $T^k.$ 
\end{rem}

The proof of Proposition \ref{ReiszBasis} is now follows from pre-composing with the unitary transformation re-orthogonalizing the Reisz basis $\Psi^u_{j\ell}.$

\begin{proof}[Proof of Proposition \ref{ReiszBasis}]
Define \[\text{ob}_{u}^{-1}(\Psi)= \sum_{\ell,j}   \ \br  \Psi \ , \   \Psi_{j\ell}  \ \kt_{r^\mu L^{2}(X_0^\tau)}  \cdot  e^{i\ell \tau}\phi_j \]

\noindent so that the conclusion of (B) holds by fiat. We show this is actually invertible, with inverse obeying (A). First, let
\begin{align*}
\text{ob}^\text{pre}_{u}: L^2(S^1_\tau; \mathcal C_0) &\lre \text{\bf Ob}_u
\end{align*}
\noindent be the linear extension of $e^{i\ell \tau}\phi_j \mapsto \Psi_{j\ell}^u.$ We claim that 
\smallskip
\be \text{ob}_{u}^{-1} \circ \text{ob}^\text{pre}_{u}=\text{Id} + A_{u} \hspace{1.5cm} \text{where}\hspace{1.5cm} \|A_{u}\|_{L^2 \to L^2} \leq C\delta, \label{obclaim}\ee
\smallskip

\noindent and $A_u$ is zeroth order operator (thus respects regularity, tamely). In particular, $\text{Id}+ A_{u}$ is invertible, provided $\delta$ is sufficiently small, and in this case $\text{ob}_u:= \text{ob}_u^{\text{pre}}\circ (\text{Id} + A_u)^{-1}$ obeys the conclusions of both (A) and (B). It therefore suffices to establish  (\refeq{obclaim}). 

On basis elements, we calculate directly: 

\begin{align*}
\text{ob}_u^{-1}\circ \text{ob}_u^\text{pre}(e^{i\ell \tau}\phi_j)&= \sum_{i,k} \br \Psi_{j\ell}^u \ , \ \Psi_{ik}^u\kt_{r^\mu L^2(X_0^\tau)}\\
&= \sum_{i,k} \br \Psi_{j\ell}^\circ + \zeta_{j\ell} \ , \ \Psi_{ik}^\circ + \zeta_{ik}\kt_{r^\mu L^2(X_0^\tau)}= \delta_{ij}\delta_{k\ell} + \sum_{i,k} A_{j\ell, ik} \cdot e^{ik\tau}\phi_i.
\end{align*}
\noindent where $A_{j\ell, ik}=\br \zeta_{j\ell} , \zeta_{ik}\kt_{r^\mu L^2(X_0^\tau)}.$ Indeed, the cross-terms cancel by the orthogonality \eqref{Neumannadapted}. Expanding further, 

\begin{align*}
\sum_{k,m} \Big | A_{j\ell, km} \Big |^2 &\leq \sum_{k,m} | \br \psi_ {j\ell}, \psi_{km}\kt  |^2 \ + \ \sum_{k,m} | \br \psi_ {j\ell}, \frak e_{km}\kt  |^2   \ + \ \sum_{k,m} | \br \frak e_{j\ell}, \psi_{km}\kt  |^2  \ + \ \sum_{k,m} | \br \frak e_{j\ell}, \frak e_{km}\kt  |^2 \label{threematrixelements}, 
\end{align*}

\noindent where all inner products are in $r^\mu L^2(X_0^\tau).$  Cauchy-Schwartz and the bounds from Proposition \ref{Psidecomp} with $M=L=0$ show that each term is bounded by at most ${C\delta}{|k|^{-2} |\ell|^{-2}}$. $A_{j\ell, km}$ is therefore a matrix whose columns are square summable with size at most $\delta$. (\refeq{obclaim}) follows, where $A_{u}$ the corresponding operator. It remains to show that $A_u$ preserves regularity; in terms of the matrix this is the statement that 

\[\sum_{k,m} \Big | A_{j\ell, km} \Big |^2 |k|^{2s} \leq C_s |\ell|^{2s}.\]

\noindent This follows from the Fourier mode restriction on $\psi_{j\ell}$, the bounds and bounds \eqref{eestimate},  along with integration by parts. For the third term, specifically, we use that the statement of Proposition \ref{Psidecomp} and induction on $s$ imply $\|\nabla_\tau^s \frak e_{j\ell}\|\leq C_s|\ell|^s\|\frak e_{j\ell}\|.$ 

\end{proof}

%% file: sections/sec_tame_estimates.tex
\section{Tame Estimates}
\label{tameestimate}
In this final subsection, we prove Theorem \ref{maina} by showing that the mean curvature section in Definition \ref{smoothtameMC} obeys the hypotheses of the Nash-Moser Implicit Function Theorem \ref{nashmoser}.

The first was already established in Corollary \ref{tameMC}:
\begin{lm} $\mathbb Q$ obeys Hypothesis (I) of Theorem \ref{nashmoser}, i.e. it is a smooth tame section. 
\label{hyp1verification}
\end{lm}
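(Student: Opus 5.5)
The plan is to reduce Lemma \ref{hyp1verification} to Corollary \ref{tameMC}, checking that its hypotheses are precisely those of Hypothesis (I) with the spaces of Section \ref{trivializationsSection}. The domain is the product $\mathcal U_0\times\mathcal U_{1,D}\times\mathcal V_2$ of open balls in the tame Fr\'echet spaces $\mathbb D^\infty$, $\FrDmmu_D$, $\mathbb P^\infty$. Each of these is tame by the conjugation-by-weight criterion after Definition \ref{tamedef}, using the change of variables $r=e^\tau$ from Lemma \ref{standardHolder}. The target $\FrRgmu$ is identified with a product bundle through the smooth tame trivialization $\Upsilon_{P_0}$ of Lemma \ref{Ytrivialization}. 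In this trivialization $\mathbb Q$ becomes a map into the fixed space $r^\mu H^\infty_b(X_0;NX_0)$, and we also check $\mathbb Q(0,0,0,0)=0$, since $X_0$ is minimal.

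Next I would verify tameness of the building blocks in order. First, $(\xi,\alpha)\mapsto$ the coefficients $a_\xi,\omega,\Omega_A,k$ of $g_{\xi,\alpha}$ in Lemma \ref{Fxidiffeos} are pointwise smooth functions of finitely many $t$-derivatives of $(\xi,\alpha)$. Hence they are smooth tame maps $H^{s+2}(S^1)\to C^{s}$, with loss at most $2$, once $\delta$ is small enough that $a_\xi$ is bounded away from zero. Second, $(\ph,\varrho)\mapsto\Phi=\ph+\frak E((\xi_0,\alpha_0)+\varrho)$ is linear, continuous and tame. Third, the correction $h_{\xi,\alpha,\Phi}$ in Lemma \ref{pullbackgPhixi} is a finite combination of sums, products and compositions of $\Phi$ and $\nabla^e\Phi$ with $h_i(x,\Phi)$. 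These are controlled at each level $s$ by the algebra property of Lemma \ref{multiplications}(B) and by the composition estimate of Lemma \ref{composition}; the latter is linear in the top norm with constants depending only on the $s_0$-norm, which is exactly the tame form \eqref{7.firsttameinequality}. The inverse metric, Christoffel symbols and projection $P_0$ are then algebraic in these coefficients and at most one derivative of them. Lemma \ref{tamenesscriterion} therefore gives tameness of $\mathbb Q=P_0(\frak g^{IJ}_u\nabla^{g_u}_{E_I}E_J)$ as in Corollary \ref{tameMC}.

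Smoothness and tameness of all derivatives follow in the same way. Each Fr\'echet derivative $D^k\mathbb Q$ is a multilinear expression of the same structure: sums and products of derivatives of the smooth pointwise functions above, evaluated at $u$ and multiplied by the increments. Lemma \ref{multiplications} then yields estimates of the form $\|D^k\mathbb Q(u)\{v_1,\dots,v_k\}\|_s\le C_s\big(\|u\|_{s+d}\prod\|v_i\|_{s_0}+\sum_i\|v_i\|_{s+d}\prod_{j\ne i}\|v_j\|_{s_0}\big)$.

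The main obstacle is the weights. The mean curvature contains $r^{-1}$ and $r^{-2}$ factors, since $h=h_\Sigma/r+\dots$, and the output must land in $r^\mu H^s_b$ rather than in an unweighted space. To handle this I would write everything in edge frames. By Lemma \ref{pullbackgPhixi}, the perturbation of the metric is an $O(r^{1+\epsilon})$ edge tensor. Consequently $\mathbb Q(u)-\mathbb Q(\xi,\alpha,0,\cdot)$ is a sum of terms $r^{-2}\times(\text{edge derivatives of }\Phi)$ times $C^\infty_b$ coefficients, and these lie in $r^\mu H^s_b$ because $\Phi\in r^{\mu+2}H^{s,2}_{b,e}$. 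The reference part $\mathbb Q(\xi,\alpha,0,\cdot)$ involves only $(\xi,\alpha)$ multiplied by powers $r^{-j}y$. Its membership in $r^\mu H^s_b$ uses $\mu<\tfrac n2$ (item (2) of Subsection \ref{weightconventions}), so that bounded terms are integrable. The loss $d$ in this part comes from the two $t$-derivatives of $\xi$, as in Lemma \ref{productlinearization}.
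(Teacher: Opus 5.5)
Your proposal is correct and follows the paper's route. The paper proves this lemma simply by citing Corollary~\ref{tameMC}, whose proof obtains smooth tameness from the smooth tame dependence of the metric coefficients (Lemmas~\ref{Fxidiffeos} and~\ref{pullbackgPhixi}) together with Lemma~\ref{tamenesscriterion}; your treatment of the weights and of the higher derivatives fills in details the paper leaves implicit. One minor correction: at a general basepoint the Christoffel symbols differentiate $k(t)$ and $\Omega_A$, so $\mathbb Q$ involves $\xi'''$ and $\alpha''$ (cf.\ Proposition~\ref{generalmeancurvature}), not only the two derivatives of $\xi$ seen at the product, but this merely changes the fixed loss $d$.
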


\subsection{Stability of Invertibility}
The verification of Hypothesis (II) is effectively immediate from the preparation in Sections \ref{deformationoperatorsection} and \ref{obstructionperturbations}. Using the notation from \eqref{bbXdef}, we have:

\begin{lm} $\mathbb Q$ obeys Hypothesis (II) of Theorem \ref{nashmoser},i.e. after a possibly reduction of $\mathcal U_0\times \mathcal U_1,$ the family of linearizations 

\smallskip

\[\mathbb L_{\xi,\alpha,\Phi}: \mathbb D^\infty(S^1;\mathcal B_0)\oplus \mathbb X^{\mu,\infty}_D(X_0; NX_0) \lre \mathbb Y^{\mu,\infty}(X_0;NX_0)\]

\smallskip

\noindent is invertible for $(\xi,\alpha,\Phi)\in \mathcal U_0\times \mathcal U_1.$
\label{hyp2verification}
\end{lm}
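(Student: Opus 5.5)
The plan is to reproduce at a general basepoint $u=(\xi,\alpha,\Phi)\in\mathcal U_0\times\mathcal U_1$ the block lower-triangular argument used for $\mathbb L_0$ in \eqref{extendedL0} and Corollary \ref{invertibleLbb0}, and then treat the new top-left block as a small perturbation of $T_0$. By Lemma \ref{obstructionbundlecor} the fiber splits $r^\mu L^2$-orthogonally as $\mathbb Y^{\mu,s}_u=\text{\bf Ob}^s_u\oplus\text{\bf Rg}^s_u$, and the projections $\Pi_u,\Pi_u^\perp$ respect regularity. Relative to this splitting,
\[
\mathbb L_u(\eta,\beta,\ph)=\begin{pmatrix} \Pi_u(\text{d}\mathbb Q)_u(\cdot,\cdot,0) & 0\\ \Pi_u^\perp(\text{d}\mathbb Q)_u(\cdot,\cdot,0) & L_u\end{pmatrix}\begin{pmatrix}(\eta,\beta)\\ \ph\end{pmatrix}.
\]
The top-right entry vanishes by the definition of $\text{\bf Ob}_u$. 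For the bottom-right entry, $L_u=L_\gamma+\frak d_u$ is injective with closed range onto $\text{\bf Rg}_u$ by Lemma \ref{Jacobioperatoredge} together with Corollary \ref{higherorderelliptic}(B), since $\frak d_u$ has coefficients of size $O(\delta)$ in $rC^{s_0}_b$. The off-diagonal entry $\Pi_u^\perp(\text{d}\mathbb Q)_u$ only needs to be bounded, which follows from Proposition \ref{generalmeancurvature}. Invertibility of $\mathbb L_u$ therefore reduces to invertibility of
\[
T_u:=\text{ob}_u^{-1}\circ\Pi_u\circ(\text{d}\mathbb Q)_u(\cdot,\cdot,0): H^{s+3}\oplus H^{s+2}\lre H^{s+2-\epsilon}(S^1_\tau;\mathcal C_0),
\]
where $\text{ob}_u$ is the isomorphism of Proposition \ref{ReiszBasis}.

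The next step is to write
\[
T_u=T_\gamma+(T_u-T_\gamma),
\]
where $T_\gamma$ is the product deformation operator computed in arclength Fourier modes $e^{i\ell\tau}$. By Theorem \ref{boundarydeformation}, applied verbatim with $\Delta_\tau$ in place of $\Delta_t$, $T_\gamma$ is invertible of mixed order $(1+\epsilon,\epsilon)$, and its inverse has norm uniform in the one parameter $\gamma_u$. For the difference, Proposition \ref{generalmeancurvature} gives $(\text{d}\mathbb Q)_u(\eta,\beta,0)$ as a sum of three kinds of terms:
\begin{itemize}
\item terms $A^j_u\, r^{-j}\del^{2-j}_t\eta$ and $B^k_u\, r^{1-k}\del^{2-k}_t\beta$ of the same effective orders as at $u=0$;
\item a remainder $\mathcal R_u$ of strictly lower effective order.
\end{itemize}
Since $A^0_0=B^0_0=\text{Id}$, the coefficient differences $A^j_u-A^j_0$ and $B^k_u-B^k_0$ are $O(\delta)$ in $C^{s_0}_b$. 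In addition, $\Pi_u\,\text{ob}_u^{-1}$ differs from $\Pi_\gamma\,\text{ob}_\gamma^{-1}$ by the correction terms $\psi_{j\ell}$ and $\frak e_{j\ell}$ of Proposition \ref{Psidecomp}, which carry a factor $\|u\|$.

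Applying Proposition \ref{UGBstable}, with its constants depending on the $C^{s}_b$ norms of the coefficients, shows that $T_u-T_\gamma$ maps $H^{s+3}\oplus H^{s+2}\to H^{s+2-\epsilon}$ with norm at most $C\delta$ at $s=0$. Remark \ref{UGBstablerem} shows that the $\psi$ and $\frak e$ contributions in fact gain one derivative, so they are not principal. Hence $T_\gamma^{-1}(T_u-T_\gamma)$ has norm at most $C\delta$ on $H^{3}\oplus H^{2}$, and a Neumann series inverts $T_u$ once $\delta$ is small. On the constant Fourier mode the block reduces to the finite-dimensional Caffarelli--Hardt--Simon isomorphism of Theorem \ref{CHS}, which is stable under small perturbation.

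To obtain invertibility on the Fréchet spaces $\mathbb D^\infty\oplus\mathbb X^{\mu,\infty}_D$ rather than only at $s=0$, I would commute $(1+\Delta_\tau)^{s/2}$ through $T_u$. The commutators involve derivatives of the coefficients and are of lower order. Combined with interpolation, this gives estimates of the form
\[
\|\cdot\|_{s}\lesssim\|T_u\cdot\|_{s-m}+\|u\|_{s+d'}\,\|T_u\cdot\|_{s_0},
\]
so the inverse at finite regularity is injective on every $H^s$ and, by regularity, maps smooth data to smooth solutions. Inverting the full lower-triangular matrix then gives $\mathbb G_u$.

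I expect the main obstacle to be the smallness of $T_u-T_\gamma$ at principal order. The coefficients $A^0_u-\text{Id}$ are small but vary in $(t,r,x)$, whereas Proposition \ref{UGBstable} only asserts boundedness with a constant depending on $C^{s}_b$ norms. I would first split each coefficient as its restriction to $r=0$, a function of $t$ only, plus an $O(r)$ part. The $O(r)$ part lowers the effective order, while the $r=0$ part acts as multiplication by a small function on $S^1$ and so commutes with the Fourier analysis up to lower-order commutators. Its operator norm is then controlled by its sup norm, which is $O(\delta)$.
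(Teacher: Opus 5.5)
Your proposal is correct and follows essentially the paper's argument. You use the same block lower-triangular form of $\mathbb L_u$ relative to $\text{\bf Ob}_u\oplus\text{\bf Rg}_u$ (with $\text{ob}_u^{-1}$ applied to the top row), get boundedness of the entries from Propositions \ref{generalmeancurvature} and \ref{UGBstable}, and obtain invertibility by perturbing from the product case of Corollary \ref{invertibleLbb0}. The differences are bookkeeping: the paper perturbs the whole $\mathbb L_u$ off $\mathbb L_0$ using continuity in $u$ between the fixed shifted Banach spaces, whereas you treat $L_u$ directly and run the Neumann series only on $T_u$ around $T_\gamma$, which makes the $O(\delta)$ smallness more explicit; your Fr\'echet-level regularity step is what the paper supplies separately in Lemmas \ref{tameL1} and \ref{tameT2}.
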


\begin{proof} Let $u=(\xi,\alpha,\Phi)\in \mathcal U_0\times \mathcal U_1$ be as above. In the trivialization by arclength from Corollary \ref{taunorms}, the linearization is a map 

\[\mathbb L_{u}: H^{\infty}(S^1_\tau; \mathcal B_0) \oplus r^{\mu+2}H^{\infty,2}_{b,e,D}(X_0^\tau; NX_0) \lre r^\mu H^\infty_b(X_0^\tau; NX_0).\]

\noindent Splitting the codomain $r^\mu L^2 H^\infty_b(X_0^\tau; NX_0)=\text{\bf Ob}_{u}^0 \oplus \text{\bf Rg}_{u}^0$ as in \eqref{bbOdefpre}, and composing the first factor with $\text{ob}_{u}^{-1}$ this becomes the block lower-triangular operator

\be
\mathbb L_u(\eta,\beta,\ph)=\begin{pmatrix} T_{u} & & 0  \\  \\ B_{u}  & &  L_{u} \end{pmatrix} \begin{pmatrix} (\eta,\beta) \\  \\ \ph \end{pmatrix} : \begin{matrix}  H^{\infty}(S^1_\tau ;\mathcal B_0)\\ \oplus \\ r^{\mu+2}H^{\infty,2}_{b,e,D}(X^\tau_0;NX_0) \end{matrix} \ \ \lre \ \ \begin{matrix} H^\infty(S^1_\tau;\mathcal C_0) \\ \oplus   \\   \text{\bf Rg}^\infty_{u}\end{matrix}
\ee
\noindent generalizing \eqref{extendedL0}, where 
\begin{align*}
T_{u}(\eta,\beta)&:= \text{ob}_{u}^{-1}\circ \Pi_{u}\circ (\text{d}\mathbb Q)_{u}(\eta,\beta,0)\\
B_{u}(\eta,\beta)&:= \ \hspace{1.0cm}\Pi^\perp_{u} \circ (\text{d}\mathbb Q)_{u}(\eta,\beta,0).
\end{align*}
\noindent so that $T_u=T_0$ is the product deformation operator (Definition \ref{T0def}) at the origin. 

Proposition \ref{generalmeancurvature} shows that $(\text{d}\mathbb Q)_u(\eta,\beta,0)$ consists of terms of leading effective order $(1+\epsilon, \epsilon)$ depending smoothly and tamely on $u\in \mathcal U_0\times \mathcal U_1,$ depending on up to $\eta''', \beta''.$ Proposition \ref{UGBstable} therefore shows that 

\medskip 
\be
\mathbb L_u(\eta,\beta,\ph)=\begin{pmatrix} T_u &  & 0   \\  \\ B_u  & &  L_u \end{pmatrix} \begin{pmatrix} (\eta,\beta) \\  \\ \ph \end{pmatrix} : \begin{matrix}   (H^3 \oplus H^2)\\ \oplus \\ r^{\mu+2}H^{2}_{e,D}(X_0^\tau;NX_0) \end{matrix} \ \ \lre \ \ \begin{matrix}   H^{2-\epsilon}(S^1_\tau;\mathcal C_0)\\ \oplus   \\ \text{\bf Rg}_u\end{matrix}\label{extendedL0prime}
\ee

\bigskip 

\noindent is bounded, where $H^3\oplus H^2=H^3(S^1_\tau;\R^N)\oplus H^2(S^1_\tau;\R^K).$ Moreover, since $\text{ob}_u^{-1}, \Pi_u, (\text{d}\mathbb Q)_u$ all depend smoothly and tamely on $u,$ so does each entry of $\mathbb L_u.$ \eqref{extendedL0prime} is therefore, {\it a fortiori} a continuous family of bounded operators on fixed Banach vector bundles, which coincides with the invertible $\mathbb L_0$ (Corollary \ref{invertibleLbb0}) at $u=0.$ It is therefore invertible, provided that the radius $\delta$ of $\mathcal U_0, \mathcal U_1$ is sufficiently small that $ \delta \|\mathbb L_0^{-1}\|<1$ in the norms of \eqref{extendedL0prime}.

\end{proof}

\subsection{Tame Estimates}
 We now verify Hypothesis (III) by showing that $\mathbb L_u^{-1}$ is tame, which is equivalent to tame elliptic estimates. The tame estimates for $\mathbb L_u$ follow from the tame estimates for the individual diagonal entries. 

 For convenience, we shift the regularity grading to coincide with finite shift in \eqref{extendedL0prime}. Thus set 

 \begin{align}
 \widetilde {\mathbb X}^{\mu,s}_D&:=r^{\mu+2} H^{s,2}_{b,e,D}(X_0^\tau; NX_0)\nonumber \\
 \widetilde {\mathbb D}^s(S^1_\tau;\mathcal B_0)&:= H^{s+3}(S^1_\tau;\R^N)\oplus H^{s+2}(S^1_\tau; \R^N) \label{tameshifts}\\
 \widetilde { \mathbb O}^s&:= H^{s+2-\epsilon}(S^1_\tau;\mathcal C_0) \nonumber\\
  \widetilde {\mathbb Y}^{\mu,s}_{\text{Rg}}&:= \text{\bf Rg}_u \cap r^\mu H^s_b(X_0^\tau; NX_0). \nonumber 
 \end{align}

 \noindent There equivalence with the un-tilded versions is a smooth tame equivalence of Fr\'echet spaces given by the finite regularity shifts in each factor. We write $\|-\|_s$ for the tilded norms in each factor.

 \begin{lm}  \label{tameL1}For $\delta>0$ sufficiently small and $s_0\in \N$ sufficiently large, and for  $u=(\xi,\alpha,\Phi) \in \mathcal U_1 \times \mathcal U_2$ with $\|u\|_{s_0}< \delta$, $L_{u}$ is injective and obeys the following tame estimates: there are constants $C_s$ such that

 \smallskip
\be
\|\ph\|_{s}\leq C_s \ \Big( \| L_{u} (\ph)\|_{s} +\|u\|_{s+s_0}\|L_{u} (\ph)\|_{0}\Big), \label{tameLPhixi}
\ee

\smallskip
\noindent hold for $s\geq 0.$ For $s=0$, the second term on the right may be omitted.   

\end{lm}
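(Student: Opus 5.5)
We prove the $s=0$ estimate first and then obtain higher $s$ by induction on the number of $b$-derivatives, using commutators and interpolation in the standard Moser way.

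\emph{Base case $s=0$.} By Lemma \ref{Jacobioperatoredge}, $L_u=L_\gamma+\frak d_u$, where $\frak d_u$ is a second-order edge operator with coefficients $\frak a^I_u/r^2$ and $\|\frak a^I_u\|_{rC^{s_0}_b}\leq C\delta$. Corollary \ref{higherorderelliptic}(B) gives the injectivity estimate $\|\ph\|_{\mu+2,(0,2)}\leq C\|L_\gamma\ph\|_{\mu,0}$ on $r^{\mu+2}H^2_{e,D}$. By Corollary \ref{taunorms} this is uniform in the length parameter $\gamma_u$, since $L_\gamma$ is just $L_0$ with a rescaled circle. The perturbation satisfies $\|\frak d_u\ph\|_{\mu,0}\leq C\delta\|\ph\|_{\mu+2,(0,2)}$, because $\frak d_u$ maps $r^{\mu+2}H^2_e\to r^{\mu}L^2$ with norm controlled by the $C^0_b$ norms of its coefficients, which by Lemma \ref{standardHolder} are bounded by $\|u\|_{s_0}<\delta$. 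For $\delta$ small we can absorb this term. This gives injectivity of $L_u$ and the $s=0$ estimate with no $\|u\|$ term.

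\emph{Inductive step.} Suppose the estimate holds up to $s-1$. Let $V$ range over the $b$-vector fields $\del_\tau, r\del_r, \del_x$ (suitably cut off near $r=1$ and tangent to the boundary, so that Dirichlet conditions are preserved; near $r=1$ we instead use ordinary boundary elliptic regularity). Apply the $s=0$ estimate to $\nabla_b^\beta\ph$ with $|\beta|=s$. Since $L_\gamma$ has coefficients depending only on $(r,x)$ and the $b$-fields normalize $\calV_e$, the commutator $[\nabla_b^\beta,L_\gamma]$ is a sum of terms of the form $r^{-2}(\text{edge operator of order }2)\circ\nabla_b^{\beta'}$ with $|\beta'|<s$. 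It is therefore bounded by $C\|\ph\|_{s-1}$, which the induction controls by $\|L_u\ph\|_{s-1}+\|u\|_{s-1+s_0}\|L_u\ph\|_0$.

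The main work is the commutator with $\frak d_u$. By the Leibniz rule, $[\nabla_b^\beta,\frak d_u]\ph$ is a sum of terms $(\nabla_b^{\beta_1}\frak a_u)\, r^{-2}\nabla_e^I\nabla_b^{\beta_2}\ph$ with $|\beta_1|+|\beta_2|=s$ and $|\beta_1|\geq1$. The plan is to estimate these products by the tame (Moser) product inequality, obtained from Lemma \ref{multiplications}(A),(C) after the conjugation $r=e^\tau$ of Lemma \ref{standardHolder}:
\[
\|fg\|_{s}\leq C\big(\|f\|_{C^0_b}\|g\|_s+\|f\|_{C^{s}_b}\|g\|_0\big).
\]
The coefficient norms are $\|\frak a_u\|_{rC^{k}_b}\leq C\|u\|_{k+s_0}$, which holds because by Lemma \ref{pullbackgPhixi} and Proposition \ref{generalmeancurvature} the coefficients are finite combinations of $\xi,\alpha,\Phi$ and their derivatives composed with smooth functions, using Lemma \ref{composition}. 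The top term $|\beta_1|=0$ is absorbed by smallness. The remaining terms give
\[
C\delta\|\ph\|_{s}+C\|u\|_{s+s_0}\|\ph\|_{(0,2)},
\]
where the intermediate terms are handled by interpolation: Lemma \ref{multiplications}(A) together with Young's inequality, applied to $\|u\|_{k+s_0}\|\ph\|_{s-k}$. Finally we use $\|\ph\|_{(0,2)}\leq C\|L_u\ph\|_0$ from the base case and absorb $C\delta\|\ph\|_s$ into the left side.

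\emph{Main obstacle.} The delicate point is the loss in weight in these commutators. Since $\frak a_u=O(r)$ only, $b$-derivatives falling on $\frak a_u$ must not cost powers of $r$. We would check this using the fact that $r\del_r$ and $\del_\tau$ preserve the class $rC^\infty_b$, and that $\Phi\in r^{\mu+2}H^s_b$ embeds in $r^{1+\epsilon}C_b$ (item (4) of Subsection \ref{weightconventions}). The $\del_\tau$-derivatives of $\xi$-dependent coefficients are tame in $\|\xi\|_{s+s_0}$ because at most a fixed number of derivatives of $\xi$ enter the metric.
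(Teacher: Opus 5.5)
Your proposal is correct and follows essentially the same route as the paper. Both prove the $s=0$ estimate from Corollary \ref{higherorderelliptic} together with smallness of $\mathfrak d_u$, and then induct by commuting $b$-derivatives through $L_u$. The coefficient-derivative terms are controlled by a tame product bound (the paper uses Lemma \ref{multiplications}(C) with Sobolev norms of $\mathfrak a_u$, while you use $C^k_b$ norms), followed by interpolation, Young's inequality and absorption. Your extra care with the Dirichlet condition at $r=1$ is a refinement rather than a different argument: applying $r\partial_r$ breaks that condition, and the paper's proof silently ignores this. The same goes for your explicit treatment of the $L_\gamma$ commutator terms.
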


\begin{proof}
Lemma \ref{Jacobioperatoredge} and Corollary \ref{higherorderelliptic} together imply the $s=0$ estimate

\[ \|\ph\|_{0} \leq C_0 \|L_{u}\ph\|_0. \]

\noindent which in turn implies injectivity. 

Proceeding now by induction, assume that (\refeq{tameLPhixi}) holds for $s$, and we will show the same holds for $s+1$. Differentiating for a multi-index of rank $|I|=s+1$, one has
\[ L_u(\nabla_b^I \ph )=\nabla_b^I (L_u  \ph)  \  -  \   \sum_{\substack{I_1+I_2=I \\  |I_1|\geq 1}} (\nabla_b^{I_1}L_\gamma) (\nabla_b^{I_2}\ph)  \  - \   \sum_{\substack{I_1+I_2=I \\  |I_1|\geq 1}}(\nabla_b^{I_1}\frak d_u) (\nabla_b^{I_2}\ph).\]

\noindent For each term, using $\frak a_{u}$ to denote the coefficient functions of $\frak d_u$ from Lemma \ref{Jacobioperatoredge}, then for $k=|I_2|$ 
\begin{align}
\big\| (\nabla_b^{I_1}\frak d_u) (\nabla_b^{I_2}\ph)\big\|_{0} &\leq  C\big\|\frak a_{u}\big\|_{r^{\mu+1}H^{k+1+n}}\big\|\ph\|_{s-k} \nonumber \\
& \leq C_s \big\| u\big\|_{k +1 +n }\big\|\ph\|_{s-k}, \label{tamepickup}
\intertext{where we have used the multiplication of Lemma  \ref{multiplications}(C), and the bounds of Lemma \ref{Jacobioperatoredge} respectively. Interpolation (Lemma  \ref{multiplications}(A))followed by Young's inequality with $p=\tfrac{s}{k}$ and $q=\tfrac{s}{s-k}$ then show that }
& \leq C_s \|u\|_{n+1}^{1-\tfrac{k}{s}}\cdot \|u\|_{s+1+n}^{\tfrac{k}{s}} \cdot  \|\ph\|_{0}^{\tfrac{k}{s}}\cdot \|\ph\|_{s}^{1-\tfrac{k}{s}} \nonumber \\
&\leq C_s \Big( \|u\|_{s+1+n} \|\ph\|_0 \ + \  \|u\|_{1+n} \|\ph\|_s \Big) \nonumber. 
\end{align}
\noindent Applying the $L_0$ estimate yields 
\begin{align}
\|\nabla_b^I \ph\|_{0} &\leq C_0 \Big \|\nabla_b^I (L_u  \ph)  \  -  \   \sum_{\substack{I_1+I_2=I \\  |I_1|\geq 1}} (\nabla_b^{I_1}L_\gamma) (\nabla_b^{I_2}\ph)  \  - \   \sum_{\substack{I_1+I_2=I \\  |I_1|\geq 1}}(\nabla_b^{I_1}\frak d_u) (\nabla_b^{I_2}\ph) \Big \|_0 \nonumber \\ 
\|\nabla_b^I \ph\|_{0} &\leq C_0 \Big \|\nabla_b^I (L_u  \ph)  \Big \| \  +  \ C_0 \Big \|   \sum_{\substack{I_1+I_2=I \\  |I_1|\geq 1}} (\nabla_b^{I_1}L_\gamma) (\nabla_b^{I_2}\ph)  \  - \   \sum_{\substack{I_1+I_2=I \\  |I_1|\geq 1}}(\nabla_b^{I_1}\frak d_u) (\nabla_b^{I_2}\ph) \Big \|_0. \label{withC0interp}  \\ 
\intertext{\noindent Summing over multi-indices $I$ and substituting the above interpolation yields}
\|\ph\|_{s+1}& \leq C_{s+1} \ \Big( \| L_u (\ph)\|_{s+1} +\|u\|_{s+1+n}\| \ph\|_{0} \ + \ \|u\|_{1+n}\| \ph\|_{s} \Big), \label{tameinduction} 
\end{align}
\noindent{and substituting the induction hypothesis and the $s=0$ estimate complete the proof, for $s_0 \geq n+1$.}
\end{proof}

\bigskip 

Next, we have the similar tame estimates for $T_u.$ Regularity on the codomain here means regularity in $ H^s(S^1_\tau;\mathcal C_0).$ By Proposition \ref{ReiszBasis}, $\text{ob}_u$ and its inverse induce an equivalence of norms; they do not, {\it however}, intertwine the derivative, i.e. $\del_\tau \text{ob}_u(\zeta(\tau))\neq \text{ob}_u(\del_\tau \zeta(\tau)).$ By definition, the derivatives $\del_\tau$ on $S^1_\tau$ yield powers of $|\ell|,$ thus the norm on the codomain is the weighted sequence of inner products

\be \|T_u(\eta,\beta)\|_{H^s}:= \sum_{j,\ell}\Big | \big \br  \ (\text{d}\mathbb Q)_u(\eta,\beta,0) \ , \ \Psi_{j\ell}^u \big \kt_{r^\mu L^2(X_0^\tau)} \Big |^2 \cdot |\ell|^{2s}. \label{derivdiff}\ee

\noindent The proof relates these to the ambient $\del_\tau$ derivatives on $X_0^\tau$ using Proposition \ref{Psidecomp}.

\begin{lm} \label{tameT2} For $\delta>0$ sufficiently small and $s_0\in \N$ sufficiently large, and for  $u=(\xi,\alpha,\Phi) \in \mathcal U_1 \times \mathcal U_2$ with $\|u\|_{s_0}< \delta$, $T_u$ obeys the following tame estimates: there are constants $C_s$ such that

\smallskip
\be
\|(\zeta,\beta)\|_{s}\leq C_s \ \Big( \| T_{u} (\eta,\beta)\|_{H^{s+2-\epsilon}(S^1_\tau)} +\|u\|_{s+s_0}\|T_{u} (\eta,\beta)\|_{H^{2+\epsilon}(S^1_\tau)}\Big) \label{Ttameinv},
\ee

\smallskip

\noindent hold for $s\geq 0.$ For $s=0$, the second term on the right may be omitted.   

\end{lm}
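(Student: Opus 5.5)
The plan mirrors the proof of Lemma \ref{tameL1}, replacing the elliptic edge estimate for $L_u$ by the invertibility of $T_u$ between fixed Sobolev spaces.

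\textbf{Base case $s=0$.} Lemma \ref{hyp2verification} gives that $T_u:(H^3\oplus H^2)\to H^{2-\epsilon}(S^1_\tau;\mathcal C_0)$ is an isomorphism for $\|u\|_{s_0}<\delta$. Its inverse is uniformly bounded because it is a small perturbation of $T_0^{-1}$ from Theorem \ref{boundarydeformation}. Hence
\[
\|(\eta,\beta)\|_0\le C_0\|T_u(\eta,\beta)\|_{H^{2-\epsilon}},
\]
which is the $s=0$ case with the second term omitted.

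\textbf{Induction step.} Assuming the estimate at level $s$, I will prove it at level $s+1$. Apply $\nabla_\tau^{s+1}$ to $(\eta,\beta)$ and write
\[
T_u(\nabla_\tau^{s+1}(\eta,\beta))=\nabla_\tau^{s+1}T_u(\eta,\beta)-[\nabla_\tau^{s+1},T_u](\eta,\beta).
\]
Here $\nabla_\tau$ acting on the codomain means multiplication by $\ell$ on the coefficient sequence in \eqref{derivdiff}. The commutator splits into three pieces according to the factorization
\[
T_u=\text{ob}_u^{-1}\circ\Pi_u\circ(\text{d}\mathbb Q)_u.
\]
\begin{enumerate}
\item \emph{The $(\text{d}\mathbb Q)_u$ piece.} Differentiating the coefficients $A^j_u,B^k_u$ of Proposition \ref{generalmeancurvature} in $\tau$ produces terms of the same effective order $(1+\epsilon,\epsilon)$. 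Their coefficients carry at most $|I_1|$ extra derivatives of $u$, so they are again covered by Proposition \ref{UGBstable}.
\item \emph{The $\text{ob}_u^{-1}\Pi_u$ piece.} This is the mismatch between multiplying by $\ell$ and differentiating the ambient section. Since $\Psi^u_{j\ell}$ has $\tau$-Fourier modes $h$, we have
\[
\ell\,\langle f,\Psi^u_{j\ell}\rangle-\langle \nabla_\tau f,\Psi^u_{j\ell}\rangle=\langle f,(\ell-\nabla_\tau)\Psi^u_{j\ell}\rangle .
\]
The factor $\ell-\nabla_\tau$ equals $\ell-h$ on the $h$-th mode. By Proposition \ref{Psidecomp} with $L$ raised by one, this correction has the same banded decay with an extra factor $\|u\|_{s_0+L+1}$. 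Running the proof of Proposition \ref{UGBstable} (with Remark \ref{UGBstablerem}(2)) then bounds these terms by $C\|u\|_{k+s_0}\|(\eta,\beta)\|_{s+1-k}$.
\item \emph{Combining.} Each piece is controlled by a bound of the form $\|u\|_{k+s_0}\|(\eta,\beta)\|_{s+1-k}$ for $1\le k\le s+1$, exactly as in \eqref{tamepickup}.
\end{enumerate}

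\textbf{Closing the estimate.} Use the interpolation of Lemma \ref{multiplications}(A) and Young's inequality, as in the proof of Lemma \ref{tameL1}, to reduce to
\[
\|u\|_{s+1+s_0}\|(\eta,\beta)\|_0+\|u\|_{s_0}\|(\eta,\beta)\|_{s+1}.
\]
The second term is absorbed into the left-hand side since $\|u\|_{s_0}<\delta$. The first is handled by the base case, which bounds $\|(\eta,\beta)\|_0$ by $\|T_u(\eta,\beta)\|$ in a low norm. This gives the low-norm factor appearing in \eqref{Ttameinv}; any slack of $2\epsilon$ in the exponents is harmless for Hypothesis (III).

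\textbf{Main obstacle.} The hard step is the commutator of $\nabla_\tau$ with $\text{ob}_u^{-1}\Pi_u$. The difficulty is showing that the rough coefficients $|u|_{s+s_0}$ enter only linearly and without loss in the order of $(\eta,\beta)$. I would first attack this through the Fourier-banded form of Proposition \ref{Psidecomp}, tracking the $\|u\|_{M+L+s_0}$ dependence carefully. If that fails, I would fall back on the explicit characterization \eqref{Neumannadapted2}: differentiate it in $\tau$ and apply the tame estimate \eqref{taemH2}.
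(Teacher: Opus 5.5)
Your proposal is correct and follows the paper's proof essentially step for step. The base case comes from the invertibility in Lemma \ref{hyp2verification}. The induction commutes $\nabla_\tau^{s+1}$ through $T_u$: the coefficient-derivative terms \eqref{ttameA} are handled by Proposition \ref{UGBstable}, the failure of $\text{ob}_u$ to intertwine $\nabla_\tau$ is handled by the banded decay of Proposition \ref{Psidecomp}, and the argument closes with interpolation as in Lemma \ref{tameL1}.

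The only difference is bookkeeping in the mismatch term. The paper splits $\Psi^u_{j\ell}-\Psi^\circ_{j\ell}$ into the near-diagonal $\psi_{j\ell}$ and the off-diagonal $\mathfrak e_{j\ell}$. It treats $\psi_{j\ell}$ with the bounded multiplier $M$, which leaves a top-order $C\delta\|(\eta,\beta)\|_{s+1}$ term to absorb, and treats $\mathfrak e_{j\ell}$ by Cauchy--Schwarz with large $L$. You instead use the factor $\ell-h$, which vanishes on the diagonal, to make the whole mismatch strictly lower order. This lets you interpolate only up to level $s$ and use your induction hypothesis, which is currently stated but never used. That is preferable to absorbing $\|u\|_{s_0}\|(\eta,\beta)\|_{s+1}$ with an $s$-dependent constant, since that would force $\delta$ to depend on $s$.
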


\begin{proof} Using Proposition   \ref{ReiszBasis}(B) and the notation \eqref{psiedef}, we may write
\begin{align*}
T_u(\eta,\beta)&=\text{ob}_{u}^{-1} \ \circ \  \Pi_u \ \circ \  (\text{d}\mathbb Q)_u(\eta,\beta,0)\\ 
&= \text{ob}_\gamma^{-1}\circ \Pi_\gamma \circ (\text{d}\mathbb Q)_u(\eta,\beta,0) \ + \ \Pi_\psi(\text{d}\mathbb Q)_u(\eta,\beta,0) \ + \ \Pi_{\frak e}\circ (\text{d}\mathbb Q)_u(\eta,\beta,0)\\
&:=\text{ob}_\gamma^{-1}\Pi_\gamma \circ (\text{d}\mathbb Q)_u(\eta,\beta,0) \ + \ \frak t_1(\eta,\beta) \ + \ \frak t_2(\eta,\beta),
\end{align*}

\noindent where the latter two terms are the definition of $\frak t_1,\frak t_2.$ Note the first term is not simply the product deformation operator; rather, it is the non-product mean curvature expression from Proposition \ref{generalmeancurvature} projected to the (near) product obstruction $\text{\bf Ob}_\gamma.$

 We write $Q_u(\eta,\beta)=(\text{d}\mathbb Q)_u(\eta,\beta,0)$ for shorthand for the expression from Proposition \ref{generalmeancurvature}, and use $\nabla_\tau=-i\del_\tau$ to avoid tracking factors of $i$. Differentiating shows 

 \begin{align}
T_{u}(\nabla^{s+1}_\tau \eta, \nabla^{s+1}_\tau \beta )&= \nabla^{s+1}_\tau (T_{u}(\eta,\beta)) \ - \  \sum_{0\leq k\leq s}(\nabla_\tau^{k+1} T_{u})(\nabla_\tau^{s-k}\eta, \nabla^{s-k}_\tau \beta) \label{Ttamederivs}\\
 & \ - \  [\nabla^{s+1}_\tau, \frak t_1](\eta,\beta) \ - \  [\nabla^{s+1}_\tau, \frak t_2](\eta,\beta)  \nonumber 
\end{align}
\noindent wherein, 
\begin{align}
(\nabla_\tau^{k+1} T_{u})(\nabla_\tau^{s-k}\eta, \nabla_\tau^{s-k}\beta)&:=  \sum_{j\ell}\Big \br (\nabla_\tau^{k+1}Q_u)(\nabla^{s-k}_\tau\eta, \nabla^{s-k}_\tau\beta) \ , \  \Psi^u_{j\ell}\Big \kt  \label{ttameA}\\[0.3cm]
[\nabla^{s+1}_\tau, \frak t_1](\eta,\beta)&= \sum_{j\ell} \Big\br Q_u(\nabla^{s+1}_\tau\eta, \nabla^{s+1}_\tau\beta) \ , \   (M^{s+1}-1) \psi_{j\ell}\Big\kt  \label{ttameB} \\[0.3cm]
& \ + \ \sum_{j\ell} \Big \br (\nabla_\tau^{k+1}Q_u)(\nabla^{s-k}_\tau\eta, \nabla^{s-k}_\tau\beta) \ , \   (M^{s+1}-1) \psi_{j\ell}\Big \kt \nonumber  \\[0.3cm]
[\nabla^{s+1}_\tau, \frak t_2](\eta,\beta)&:=\sum_{j\ell} \Big \br Q_u(\eta,\beta) \ , \  (\ell^{s+1}-\nabla^{s+1}_\tau ) \frak e_{j\ell}\Big \kt \label{ttameC}. 
\end{align}
\noindent In the above, $M$ is the uniformly (in $\ell$) bounded Fourier multiplier  $\psi_{j\ell}= \frac{\nabla_\tau}{|\ell|} M\psi_{j\ell},$ due to the mode restriction in Proposition \eqref{Psidecomp}(A). The second two are not the commutators on the nose, but instead arise from the difference from the failure of $\text{ob}_u$ to intertwine derivatives, as explained above (and are thus morally analogous to commutators). Indeed, to arrive at the above beginning from \eqref{derivdiff}, we have

\begin{align*}
\nabla^{s+1}_\tau T_u(\eta,\beta)&= \sum_{j,\ell} \br Q_u(\eta,\beta,0) \ , \ \Psi^u_{j\ell} \kt \cdot \ell^{s+1} \\ 
&= \sum_{j,\ell} \left[ \br  Q_u(\eta,\beta) \ , \ \nabla_\tau^{s+1} \Psi^\circ_{j\ell} \kt \ + \  \br   Q_u(\eta,\beta) \ , \ \nabla_\tau^{s+1}\psi_{j\ell} \kt    \ +  \  \br    Q_u(\eta,\beta) \ , \ \nabla_\tau^{s+1}\frak e_{j\ell} \kt \right]\\ 
&  \ + \ \sum_{j,\ell} \br   Q_u(\eta,\beta) \ , \ \nabla_\tau^{s+1}(M^{s+1}-1)\psi_{j\ell} \kt \ + \ [\nabla_\tau^{s+1},\frak t_2](\eta,\beta)\\[.3cm]
&=  \sum_{j,\ell} \br  \nabla_\tau^{s}\Big[Q_u(\eta,\beta)\Big] \ , \ \Psi^u_{j\ell} \kt  \ + \  [\nabla_\tau^{s+1},\frak t_2](\eta,\beta) \ + \ [\nabla_\tau^{s+1},\frak t_1](\eta,\beta),
\end{align*}

\noindent where the last line follows from integration by parts. The product rule on the first term then yields \eqref{ttameA} --\eqref{ttameB}, after subtracting the first line of \eqref{Ttamederivs}. 

If the ``commutator'' terms $[\nabla^{s+1}_\tau t_1] ,  [\nabla^{s+1}_\tau t_2]$ happened to vanish, the proof would proceed precisely as in Lemma \ref{tameL1}. Indeed, the tameness of the coefficients in the expression for $Q_u$ in Proposition  \ref{generalmeancurvature} shows that differentiating the coefficients preserves the leading effective order. Thus Proposition \ref{UGBstable} again shows that the derivatives $\nabla Q_u$ are operators of the same leading effective order, i.e.

\[\Big\| (\nabla_\tau^{k+1} T_{u})(\nabla_\tau^{s-k}\eta, \nabla_\tau^{s-k}\beta) \Big \|_{H^{2-\epsilon}(S^1_\tau;\mathcal C_0)} \leq C_s \|u\|_{k+1+n} \|(\eta,\beta)\|_{s-k}. \]

\noindent This is the analogous estimate to \eqref{tamepickup}, and the proof is completed in the same way as Lemma \ref{tameL1}. It therefore suffices to show that

\be  \Big \|  [\nabla_\tau^{s+1}, \frak t_1] \ + \  [\nabla_\tau^{s+1}, \frak t_2] \Big \|_{H^{2-\epsilon}(S^1_\tau;\mathcal C_0)} \leq C_0' \delta \|\eta,\beta\|_{s+1} \ + \  C_s \sum_{k=0}^s \Big( \|u\|_{k+1+n} \|(\eta,\beta)\|_{s-k}  \Big).\label{tametoprove}\ee

\noindent Indeed, the second sum here is over the same norms as \eqref{tamepickup}. Absorbing the first term into the left in  \eqref{withC0interp} (at the cost of increasing $C_0$ by $(1-C'_0\delta)^{-1}$)  and can be absorbed in that step, which yields the equivalent of \eqref{tameinduction} for $T_u.$ Induction again completes the proof; thus we prove \eqref{tametoprove}. 

We first bound $[\nabla_\tau^{s+1}, \frak t_1].$ Since $M^{s+1}$ is a uniformly (in $|\ell|$) bounded Fourier multiplier, $(M^{s+1}-1)\psi_{j\ell}$ satisfy the same exponential decay property as $\psi_{j\ell},$ up to a universal $s$-dependent constant. Proposition \ref{UGBstable} (specifically Remark \ref{UGBstablerem})  therefore shows that the two lines of \eqref{ttameB} respectively obey:

\begin{align}
\|[\nabla_\tau^{s+1}, \frak t_1]\|_{H^{2-\epsilon}}&\leq  \Big \|\Pi_{(M^{s+1}-1)\psi} \left( Q_u(\nabla_\tau^{s+1}\eta, \nabla_\tau^{s+1}\beta)\right) \Big \|_{H^{2-\epsilon}}  \nonumber \\ &  \ + \  \sum_{k=0}^s\Big \|\Pi_{(M^{s+1}-1)\psi} \left( (\nabla^{k+1}_\tau Q_u)(\nabla_\tau^{s-k}\eta, \nabla_\tau^{s-k}\beta)\right) \Big \|_{H^{2-\epsilon}} \nonumber \\[0.3cm] 
&\leq C_0\delta \|(\eta,\beta)\|_{s+1} \ + \ C_s \sum_{k=0}^s\|u\|_{k+1 + n} \|(\eta,\beta)\|_{s-k} \label{firstcommutator}
\end{align}

\noindent which is \eqref{tametoprove} for $[ \nabla_\tau^{s+1},\frak t_1].$ 

The second term is simpler, and is essentialy Cauchy-Schwartz using Proposition \ref{Psidecomp}(B) Indeed, writing $(\nabla_\tau^{s+1}-\ell^{s+1})=(\nabla_\tau-\ell)(\nabla_\tau^s + \nabla_\tau^{s-1}\ell + ...+ \ell^s),$ the triangle inequality and integrating by parts yields:

\begin{align}
\Big \|[\nabla^{s+1}_\tau, \frak t_2](\eta,\beta)\Big \|^2_{H^{2-\epsilon}}&:\leq \sum_{j\ell} \sum_{k=0}^s\Big | \br \nabla_\tau^{s-k}\Big [Q_u(\eta,\beta)\Big ] \ , \   
(\nabla_\tau-\ell)  |\ell|^k \frak e_{j\ell}\kt \Big |^2 |\ell|^{4-2\epsilon} \nonumber \\ 
&  \leq \sum_{k=0}^s\left(\sum_{r=0}^{s-k} \|u\|^2_{s-k-r}\|(\eta,\beta)\|^2_r\right)\left(\sum_{j\ell}  \|(\nabla_\tau-\ell)|\ell|^k \frak e_{j\ell}\|^2 |\ell|^{4-2\epsilon}\right)  \nonumber    \\ 
&  \leq \sum_{k=0}^s\left( \|u\|^2_{s-k}\|(\eta,\beta)\|^2_0 \ + \ \|u\|_{s_0}^2 \|(\eta,\beta)\|^2_{s-k}\right)\left(\sum_{j\ell}  \|(\nabla_\tau-\ell)|\ell|^k \frak e_{j\ell}\|^2 |\ell|^{4-2\epsilon}\right)  \nonumber    \\ 
\intertext{where we have applied Cauchy-Schwartz, then interpolation again. Recalling $|\ell| \leq c |\nabla_\tau -\ell| $ on the Fourier support of $\frak e_{j\ell},$ the bounds of Proposition \ref{Psidecomp} (using $L=k+4,$ say) imply }
&\leq C_s \sum_{k=0}^s\left( \|u\|^2_{s-k}\|u\|^2_{k+4}\|(\eta,\beta)\|^2_0 \ + \ \|u\|^2_{k+4} \|(\eta,\beta)\|^2_{s-k}\right), \label{secondcommutator}
\end{align}

\noindent since $\|u\|_{s_0}<1.$ Interpolation $\|u\|_{{s-k}}\|u\|_{k+4}\leq C_{s,k} \|u\|_{s}\|u\|_{s_0}$ on the first term then yields \eqref{tametoprove} for $[ \nabla_\tau^{s+1},\frak t_2].$ Together \eqref{firstcommutator} and \eqref{secondcommutator} establish \eqref{tametoprove}, completing the proof. In fact, as in Remark \ref{UGBstablerem}, $[\nabla_\tau^{s+1}, \frak t_1], [\nabla_\tau^{s+1}\frak t_2]$ are actually bounded into $H^{2-\epsilon+0.9},$ thus compact. 
\end{proof}

Hypothesis (III) is now a routine consequence of Lemmas \ref{tameL1}, \ref{tameT2}. 

\begin{lm} \label{hypothesisII} $\mathbb Q$ obeys Hypothesis (II) of Theorem \ref{nashmoser}, i.e. after possibly reducing of $\mathcal U_0\times \mathcal U_1,$ there are constants $C_s$ such that the family of linearizations in Lemma \ref{hyp2verification} obey the tame estimates

\be
\|(\eta,\beta,\ph)\|_{s}\leq C_s \ \Big( \|{\mathbb L}_{u} (\eta,\beta,\ph)\|_{s} +\|u\|_{s+s_0}\|{\mathbb L}_{u} (\eta,\beta,\ph)\|_{s_0}\Big),
\ee

\noindent  for $s\geq 0.$
\label{hyp3verification}
\end{lm}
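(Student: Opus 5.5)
The plan is to combine the diagonal tame estimates of Lemmas \ref{tameL1} and \ref{tameT2} through the block lower-triangular structure of $\mathbb L_u$ from Lemma \ref{hyp2verification}, working throughout in the tilded gradings \eqref{tameshifts}. Given $y=\mathbb L_u(\eta,\beta,\ph)$, write $y=(y_1,y_2)$ with $y_1=\text{ob}_u^{-1}\Pi_u y\in\widetilde{\mathbb O}^s$ and $y_2=\Pi_u^\perp y\in\widetilde{\mathbb Y}^{\mu,s}_{\text{Rg}}$. The triangular form gives $T_u(\eta,\beta)=y_1$ and $L_u\ph=y_2-B_u(\eta,\beta)$. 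Step one bounds $(\eta,\beta)$ by Lemma \ref{tameT2}: $\|(\eta,\beta)\|_s\le C_s(\|y_1\|_s+\|u\|_{s+s_0}\|y_1\|_{0})$. Step two bounds $\ph$ by Lemma \ref{tameL1}: $\|\ph\|_s\le C_s(\|y_2\|_s+\|B_u(\eta,\beta)\|_s+\|u\|_{s+s_0}(\|y_2\|_0+\|B_u(\eta,\beta)\|_0))$.

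Step three, which I expect to be the real content, is a tame bound for the off-diagonal block $B_u=\Pi_u^\perp\circ(\text{d}\mathbb Q)_u(\cdot,\cdot,0)$ of the form
\[
\|B_u(\eta,\beta)\|_s\le C_s\big(\|(\eta,\beta)\|_{s}+\|u\|_{s+s_0}\|(\eta,\beta)\|_{0}\big).
\]
I would obtain it in two pieces. First, $(\text{d}\mathbb Q)_u$ is tame as the linearization of the smooth tame map $\mathbb Q$ (Corollary \ref{tameMC}). Its explicit form in Proposition \ref{generalmeancurvature} involves at most $\eta''',\beta''$ multiplied by $r$-powers that are integrable in $r^\mu H^s_b$, namely $\eta''$ with growth $r^0$ and $\beta''$ with $r^1$, while the coefficients $A^j,B^k$ are smooth tame in $u$. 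The Sobolev multiplication and composition estimates (Lemmas \ref{multiplications} and \ref{composition}), combined with interpolation, then give the standard tame product bound with regularity loss at most $3$, which the shift in $\widetilde{\mathbb D}^s$ absorbs. Second, $\Pi_u^\perp=\text{Id}-\Pi_u=L_u\mathcal P_uL_u^\star$ is tame on $r^\mu H^s_b$. For this I would run the same commutator induction as in Lemma \ref{tameL1} for $L_u^\star L_u$, or equivalently for the operator $L_u$ on the range, using Lemma \ref{obstructionbundlecor}(B) for regularity preservation. The main care point is that $\Pi_u$ itself, and not just $\text{ob}_u^{-1}\Pi_u$, must be tame. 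If the commutator argument for $\mathcal P_u$ proves awkward, I would fall back on writing $\Pi_u y=\text{ob}_u(y_1)$ and using Proposition \ref{ReiszBasis}(A) together with the tame dependence of $\text{ob}_u$ on $u$.

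Step four assembles the pieces. Substituting the bound from step one into step three and then into step two, the cross terms of the form $\|u\|_{s+s_0}\|u\|_{s_0}\|y\|_0$ are harmless because $\|u\|_{s_0}<\delta$. Terms like $\|u\|_{s+s_0}\|y_1\|_0$ are bounded by $\|u\|_{s+s_0}\|y\|_{s_0}$. Summing gives $\|(\eta,\beta,\ph)\|_s\le C_s(\|y\|_s+\|u\|_{s+s_0}\|y\|_{s_0})$, after enlarging $s_0$ by the fixed loss in $B_u$. Finally, I would translate this back to the untilded gradings, which changes the estimate only by the fixed shifts $d,d'$ allowed in Hypothesis (III), since the tilde equivalence is a smooth tame isomorphism. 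The component $\|y_1\|$ is controlled by $\|y\|$ using the tameness of $\text{ob}_u^{-1}\Pi_u$ from Proposition \ref{UGBstable}.
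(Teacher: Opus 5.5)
Your proposal is correct and follows the paper's proof. Both split $\mathbb L_u$ into its lower-triangular blocks, bound $(\eta,\beta)$ by Lemma \ref{tameT2}, bound $\ph$ by Lemma \ref{tameL1} applied to $L_u\ph=f_1-B_u(\eta,\beta)$, and close with the $s=0$ estimate from Lemma \ref{hyp2verification}.

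The only difference is care. You explicitly track the $u$-dependence of $B_u$, namely the tameness of $\Pi_u^\perp$ and the extra $\|u\|_{s+s_0}\|(\eta,\beta)\|_0$ term, together with the regrading. The paper simply asserts $\|B_u(\eta,\beta)\|_s\le C_s\|(\eta,\beta)\|_s$ from the derivative count in Proposition \ref{generalmeancurvature}. One small correction: controlling $y_1$ by $y$ rests on the order-zero boundedness of $\text{ob}_u^{-1}\Pi_u$, at the cost of a fixed derivative loss, not on Proposition \ref{UGBstable}, which applies only to terms of prescribed effective order.
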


\begin{proof} Lemma \ref{hyp2verification} immediately implies that  that 
\be \|(\eta,\beta,\ph)\|_{0}\leq C_0 \|\overline{\mathbb L}_{u} (\eta,\beta,\ph)\|_{0} \ee

\noindent holds, which implies {\it a fortiori}, the result for $s=0.$

Write ${\mathbb L}_{u}(\eta,\beta,\ph)=(f_0,f_1)$ in the decomposition \eqref{extendedL0prime}. The tame estimates from Lemmas \ref{tameL1} and \ref{tameT2} and the lower block-triangular structure \eqref{extendedL0prime} show 
\begin{align}
\|(\eta,\beta)\|_{s} &\leq C_s \Big(\|f_0\|_s \ + \  \|u\|_{s+ s_0} \|f_0\|_{0}\Big) \label{tameline1}\\
\|\ph\|_{s} &\leq C_s \Big(\|f_1- B_u(\eta,\beta) \|_s \ + \  \|u\|_{s+ s_0} \|f_1 -B_u(\eta,\beta)\|_{0}\Big) \label{tameline2}.
\end{align}
\noindent Recall that Proposition \ref{generalmeancurvature} shows that $(\text{d}\mathbb Q)_u(\eta,\beta,0)$ depends on up to $(3,2)$ derivatives of $(\eta,\beta)$ respectively; thus the shifts in \eqref{tameshifts} mean that $\|B_u(\eta,\beta)\|_s \leq C_s \|(\eta,\beta)\|_s.$ The triangle inequality and substituting the bottom estimate into the top therefore shows

\begin{align}
\|(\eta,\beta,\ph)\|_s &\leq C_s \Bigg( \|(f_0,f_1)\|_s \ + \ \|u\|_{s+ s_0 }\|(f_0, f_1)\|_0  \ + \  \|u\|_{s+ s_0 }\|(\eta,\beta)\|_0  \Bigg).
\end{align}
\noindent{Substituting the $s=0$ estimate, which in the updated notation reads $\|(\eta,\beta,\ph)\|_{0}\leq C_0 \|(f_0, f_1)\|_0$,  yields}
\begin{align}
\|(\eta,\beta,\ph)\|_s &\leq C_s \Bigg( \|(f_0,f_1)\|_s \ + \ \|u\|_{s+ s_0 }\|(f_0, f_1)\|_0    \Bigg),
\end{align}
\noindent as desired. 
\end{proof}

\subsection{Proof of Theorem \ref{maina}}

With Hypotheses (I)--(III) verified by Lemmas \ref{hyp1verification},\ref{hyp2verification}, and \ref{hyp3verification} respectively, Theorem \ref{maina} is a direct application of the Nash-Moser Implicit Function Theorem \ref{nashmoser}. 
\begin{proof}[Proof of Theorem \ref{maina}] Let 

\[\mathcal U_0\subseteq \mathbb D^{\infty}(S^1;\mathcal B_0) \hspace{1.5cm} \mathcal U_{1,D}\subseteq \mathbb X^{\mu,\infty}_D(X_0;NX_0) \hspace{1.5cm} \mathcal V_2\subseteq \mathbb P^s(\del X_0; NX_0)\]

\medskip
\noindent denote the balls of radius $\delta$ measured in the $s_0$ norm, for $\delta$ sufficiently small and $s_0$ sufficiently large so that the hypotheses of Lemmas \ref{hyp1verification},\ref{hyp2verification}, and \ref{hyp3verification} hold. (The finite regularity shift introduced in \eqref{tameshifts} and the coordinate change to arclength coordinates can be absorbed by increasing $s_0$ by $3$). 

Observe that the map $\Phi: \mathcal U_0\times  \mathcal U_{1,D}\times \mathcal V_2$

\be (\xi,\alpha, \ph, \varrho) \mapsto \Phi=\ph + \mathfrak E((\xi_0,\alpha_0) + \varrho) \label{bigPhiagain}\ee

\noindent defined in \eqref{tameMCformula} is smooth and tame; thus smooth tame dependence on $\Phi$ implies smooth dependence on $(\xi,\alpha,\ph, \varrho)$. Lemmas \ref{hyp1verification},\ref{hyp2verification}, and \ref{hyp3verification} ensure Hypotheses (I)--(III) of Theorem \ref{nashmoser} hold respectively, where the notation coincides, up to decorations, and $\mathbb X^\infty=\mathbb D^\infty \times \mathbb X^{\mu,\infty}_D.$ We conclude that there are three smooth tame implicitly defined functions

\begin{align}
\xi: \mathcal V_2 &\to C^\infty(S^1 ; \R^N) \\ 
\alpha: \mathcal V_2 &\to C^\infty(S^1 ; \R^K) \\ 
\ph: \mathcal V_2 &\to r^{1+\epsilon}C_b^\infty(X_0 ; NX_0) 
\end{align}
\noindent so that 

\[\mathbb Q\Big(\xi(\varrho), \alpha(\varrho), \ph(\varrho), \varrho\Big)=0.\]

The definition \ref{tameMC} of $\mathbb Q$ implies that the corresponding submanifolds 

\[ \Big (  X_{\xi,\alpha,\Phi} \ , \ g_0 \big |_{X_{\xi,\alpha,\Phi}}\Big ) \subseteq (S^1 \times \R^N , g_0)\]

\noindent from Definition  \ref{edgesubmanifolds} are minimal in the Euclidean product metric, where $\Phi$ is as in \eqref{bigPhiagain}. By construction, these have a smooth family of conical singularities along $\Gamma_\xi=(t,\xi(t))\subseteq S^1\times \R^N,$ and at $r=1$ in geodesic normal coordinates around $\Gamma_\xi,$ have boundary values defined by $(\xi_0,\alpha_0)+\varrho$ in exponential normal coordinates.  
\end{proof}

\begin{rem} The boundary, more precisely, is 
\[F_{\xi,\alpha}\left[\text{exp}_{N\Sigma}\big((\xi_0,\alpha_0) + \varrho\big)\right]\subseteq F_{\xi,\alpha}[\{r=1\}]\]

\noindent where $F_{\xi,\alpha}$ are the diffeomorphisms defined in Section \ref{diffssection1}. It is equally well possible to find submanifolds such that $\del X_{\xi,\alpha,\Phi}=\del X_0$ by introducing a radial cut-off un the definition of $F_{\xi,\alpha}.$ This comes at the cost of an additional compact term in the expression for the deformation operator $T_0$ in Theorem \ref{boundarydeformation}, arising from the derivatives of the cut-off, which may result in $T_0$ having a finite-dimensional kernel. Introducing a second, finite-dimensional space $\mathcal B'\subseteq L^2(\Sigma; N\Sigma)$ whose boundary extensions cancel the corresponding kernel, the results of Theorem \ref{maina} then hold with prescribed boundary value modulo $\mathcal B_0+\mathcal B'.$
\end{rem}